\documentclass[intlim,righttag,10pt]{amsart}
\usepackage{amscd}
\usepackage{amssymb}
\usepackage[all]{xy}
\usepackage{enumitem}
\RequirePackage{mathrsfs}
\def\jcdot{{\scriptscriptstyle\bullet}}

\def\invlim{\mathop{\vtop{\ialign{##\crcr$\hfill{\lim}\hfil$\crcr
\noalign{\kern1pt\nointerlineskip}\leftarrowfill\crcr\noalign
{\kern -3pt}}}}\limits}
\def\dirlim{\mathop{\vtop{\ialign{##\crcr$\hfill{\lim}\hfil$\crcr
\noalign{\kern1pt\nointerlineskip}\rightarrowfill\crcr\noalign
{\kern -3pt}}}}\limits}
\def\lomapr#1{\smash{\mathop{\relbar\joinrel\longrightarrow}\limits^{#1}}}

\def\epsilon{\varepsilon}
\let\mathcal\mathscr

\usepackage[textsize=tiny]{todonotes}

\newtheorem{theorem}[equation]{Theorem}
 \newtheorem{lemma}[equation]{Lemma}
 \newtheorem{proposition}[equation]{Proposition}
 \newtheorem{corollary}[equation]{Corollary}

\theoremstyle{definition}
\newtheorem{definition}[equation]{Definition}
\theoremstyle{remark}
\newtheorem{remark}[equation]{Remark}

\newtheorem{example}[equation]{Example}
\newtheorem*{acknowledgments}{Acknowledgments} 

\usepackage[T1]{fontenc}

\let\emptyset\varnothing

\let\cal\mathcal

\def\phi{\varphi}

\def\Q{{\bf Q}} \def\Z{{\bf Z}}

\def\N{{\bf N}}
\def\O{{\cal O}}
\def\G{{\cal G}} 

\def\dual{{\boldsymbol *}}

\def\epsilon{\varepsilon}

 \def\A{{\bf A}} \def\B{{\bf B}}

\def\rg{{\rm R}\Gamma}

\newcommand{\R}{\mathrm {R} }

\newcommand{\LL}{\mathrm {L} }

\newcommand{\ovk}{\overline{K} }

  \newcommand{\colim}{\operatorname{colim} }

  \newcommand{\proet}{\operatorname{pro\acute{e}t}  }

 \newcommand{\eet}{\operatorname{\acute{e}t} }
  \newcommand{\proeet}{\operatorname{pro\acute{e}t} }

 \newcommand{\nr}{\operatorname{nr} }

 \newcommand{\Hom}{\operatorname{Hom} }

 \newcommand{\Gal}{\operatorname{Gal} }

\newcommand{\synt}{ \operatorname{syn} }

\newcommand{\st}{\operatorname{st} }

\newcommand{\hk}{\operatorname{HK} }
\newcommand{\dr}{\operatorname{dR} }
\newcommand{\pdr}{\operatorname{pdR} }

 \newcommand{\crr}{\operatorname{cr} }
 \newcommand{\gr}{\operatorname{gr} }

 \newcommand{\sff}{{\mathcal{F}}}
 
 \newcommand{\sy}{{\mathcal{Y}}}
 \newcommand{\sh}{{\mathcal{H}}}
 \newcommand{\sg}{{\mathcal{G}}}

 \newcommand{\sbb}{{\mathbb{B}}}

 \newcommand{\so}{{\mathcal O}}

 \newcommand{\sx}{{\mathcal{X}}}
 
\newcommand{\sd}{{\mathcal{D}}}
\newcommand{\sm}{{\mathcal{M}}}

 \newcommand{\wh}{\widehat}

   \def\B{{\bf B}}
      \def\A{{\bf A}}
\def\jcdot{{\scriptscriptstyle\bullet}}

\def\invlim{\mathop{\vtop{\ialign{##\crcr$\hfill{\lim}\hfil$\crcr
\noalign{\kern1pt\nointerlineskip}\leftarrowfill\crcr\noalign
{\kern -3pt}}}}\limits}
\def\dirlim{\mathop{\vtop{\ialign{##\crcr$\hfill{\lim}\hfil$\crcr
\noalign{\kern1pt\nointerlineskip}\rightarrowfill\crcr\noalign
{\kern -3pt}}}}\limits}
\def\lomapr#1{\smash{\mathop{\relbar\joinrel\longrightarrow}\limits^{#1}}}

\def\epsilon{\varepsilon}
\let\mathcal\mathscr

\numberwithin{equation}{section}

\begin{document}
\title[On de Rham flip-flopping  in  dual towers]
{On de Rham flip-flopping in  dual towers}
\author{Gabriel Dospinescu}
\address{UMR 6620 du CNRS-Laboratoire de Math\'ematiques Blaise Pascal-Universit\'e de Clermont-Auvergne,
Campus des C\'ezeaux 3, place Vasarely, 63178 Aubi\`ere cedex, France and Institute of Mathematics "Simion
Stoilow" of the Romanian Academy, 21 Calea Grivitei Street, 010702 Bucharest, Romania.}
\email{gabriel.dospinescu@uca.fr}
\author{Wies{\l}awa Nizio{\l}}
\address{CNRS, IMJ-PRG, Sorbonne Universit\'e, 4 place Jussieu, 75005 Paris, France}
\email{wieslawa.niziol@imj-prg.fr}
 \thanks{G.D. and W.N.'s research was partially  supported by the  project  ANR-19-CE40-0015-02 COLOSS. W.N.'s research was also supported in part  by the  Simons Collaboration on Perfection in Algebra, Geometry, and Topology and G.D.'s research was also supported by the PNRR project "Sch\'emas en groupes, syst\`emes de racines et repr\'esentations".}
\begin{abstract}
We prove a version of  de Rham and Hyodo-Kato flip-flopping  for dual towers of rigid analytic spaces including those coming from dual basic local Shimura varieties. The main tool are comparison theorems expressing the two cohomologies as pro-\'etale cohomology of corresponding relative period sheaves that, by definition, satisfy pro-\'etale descent. As an application, we show that de Rham and Hyodo-Kato cohomologies of  finite level coverings of the Drinfeld space of  any dimension $d$ over $K$ are admissible as  representations of $\mathbb{GL}_{d+1}(K)$. 
\end{abstract}

\maketitle

\tableofcontents
\section{Introduction} It is a classical result (due to Faltings, Fargues and extended by Scholze, see \cite[Prop.5.4]{Ssurvey}) that, for $\ell\ne p$, the $\ell$-adic cohomology with compact support of the (uncompleted) Lubin-Tate tower is isomorphic to the one for the (uncompleted) Drinfeld tower. This can be deduced more or less formally from the fact that there is a perfectoid space such that the two towers are decompletions of this space. The same argument works for $p$-torsion \'etale cohomology with compact support, but fails very badly for $p$-adic \'etale or $p$-adic pro-\'etale cohomologies.

  One of the key results in \cite{CDN1} was a similar flip-flopping theorem \cite[Th.0.6]{CDN1} for compactly supported de Rham cohomology of the Drinfeld and Lubin-Tate towers in dimension $1$. This result was somewhat surprising since, contrary to $\ell$-adic or $p$-adic \'etale cohomology, de Rham cohomology does not really make sense for perfectoid spaces (meanwhile the situation changed, thanks to the introduction of de Rham stacks, see \cite{RCS}). This comparison theorem was used in loc. cit. to deduce admissibility of compactly supported de Rham cohomology on the Drinfeld side from the one on the Lubin-Tate side, which was known. 
This paper reports on a  generalization of this result to any dimension and then to other dual towers. 

\subsection{Motivation and strategy} \label{duck1}  Let $p$ be a prime number. 
Let $K$ be a finite extension of $\Q_p$ and let $C:=\wh{\overline{\Q}}_p$.  Let $d >1$ and let $D^{\times}$  be the invertible elements in the central division algebra $D$ of invariant $1/d$ over $K$.  Let  ${\cal M}_{\infty}^\varpi$ and ${\rm LT}_{\infty}^\varpi$ be the (uncompleted) Drinfeld and Lubin-Tate spaces at infinity\footnote{See Section \ref{recall1} for precise definitions.} of dimension $d-1$ over $K$. Let $H^{\bullet}_{\dr,c}({\cal M}_{\infty,C}^\varpi)$ be the compactly supported de Rham cohomology of ${\cal M}_{\infty}^\varpi$, i.e. the colimit of compactly supported cohomologies at finite levels in the Drinfeld tower. Define similarly the compactly supported Hyodo-Kato cohomology $H^{\bullet}_{\hk,c}({\cal M}_{\infty,C}^\varpi)$. Identical considerations apply to the Lubin-Tate tower.

We prove the following result that motivated our research:
\begin{theorem} {\rm(Drinfeld and Lubin-Tate  flip-flopping)}\label{main0} There are $\mathbb{GL}_d(K)\times D^{\times}$-equivariant
 isomorphisms in solid $C$-modules  and solid $(\phi,N,\sg_{\breve{C}})$-modules\footnote{We write $\breve{C}$ for the fraction field of the Witt vectors $W(\overline{{\mathbf F}}_p)$.} over $\breve{C}$, respectively
 $$H^i_{\dr,c}({\cal M}_{\infty,C}^\varpi)\simeq H^i_{\dr,c}({\rm LT}_{\infty,C}^\varpi),\quad H^i_{\hk,c}({\cal M}_{\infty,C}^\varpi)\simeq H^i_{\hk,c}({\rm LT}_{\infty,C}^\varpi).
 $$ Moreover all the above representations are 
  smooth and  admissible already as $\mathbb{GL}_d(K)$-representations.  
\end{theorem}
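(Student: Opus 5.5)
The plan is to express both compactly supported cohomologies as pro-étale cohomology of relative period sheaves on the common perfectoid space at infinity, and then to use the fact that these sheaves satisfy pro-étale descent. Let ${\cal M}_{\infty,C}^{\varpi,\wedge}\simeq {\rm LT}_{\infty,C}^{\varpi,\wedge}=:\mathcal{X}$ be the perfectoid space at infinity of Faltings–Fargues–Scholze. The Drinfeld tower is a pro-étale $\mathbb{GL}_d(\O_K)$-torsor-type tower over $\mathcal{X}$ at finite levels. Dually, the Lubin–Tate tower is an $\O_D^\times$-tower.

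\textbf{Step 1: comparison at finite level.} For a smooth partially proper rigid space $Y$ over $C$, I would prove a comparison
$$\rg_{\dr,c}(Y)\simeq \rg_{\proet,c}(Y,\mathcal{O}\mathbb{B}_{\dr}^{+}/\mathrm{Fil}^1\text{-type sheaf})$$
in the solid derived category. More precisely, I would use the relative de Rham period sheaf $\mathcal{O}\mathbb{B}_{\dr}$ (or the completed structure sheaf with the Poincaré-lemma resolution), in a form where $\rg_{\dr}(Y)\simeq \rg_{\proet}(Y,\mathbb{D})$ for a sheaf $\mathbb{D}$ defined on all of $Y_{\proet}$, including perfectoid objects. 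An analogous statement for Hyodo–Kato should use a relative $\mathcal{O}\mathbb{B}_{\st}$-type sheaf. The compactly supported variants come from writing $Y$ as an increasing union of quasi-compact opens and taking cones of restrictions. I expect this step to be the main obstacle: making the period sheaves genuinely insensitive to the tower, so that they are pulled back from level zero and satisfy descent along profinite-group torsors, and making the compact-support formalism work in solid modules.

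\textbf{Step 2: descent and passage to the limit.} For a finite-level space $\mathcal{M}_n$ with $\mathcal{X}\to\mathcal{M}_n$ a pro-étale $G_n$-torsor, where $G_n$ is an open compact subgroup, descent gives
$$\rg_{\proet,c}(\mathcal{M}_n,\mathbb{D})\simeq \rg_{\mathrm{cont}}(G_n,\rg_{\proet,c}(\mathcal{X},\mathbb{D})).$$
Taking the colimit over $n$, as $G_n$ shrinks, yields the smooth vectors. Thus $H^i_{\dr,c}({\cal M}^\varpi_{\infty,C})$ is the space of $\mathbb{GL}_d(K)$-smooth vectors in $H^i_{\proet,c}(\mathcal{X},\mathbb{D})$ in a derived sense, and similarly on the Lubin–Tate side with $D^\times$. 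The identification requires that higher continuous group cohomology vanish in the colimit. This holds because the coefficients are $\Q_p$-vector spaces and profinite groups have cohomology killed in the colimit over open subgroups.

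\textbf{Step 3: symmetrization.} Both sides then become the vectors in $H^i_{\proet,c}(\mathcal{X},\mathbb{D})$ that are smooth for $\mathbb{GL}_d(K)\times D^\times$. To show this, I would check that $\mathbb{GL}_d(K)$-smooth vectors are automatically $D^\times$-smooth. The route is that on the Drinfeld side the $D^\times$-action on finite-level de Rham cohomology factors through a compact-open-by-$\varpi^\Z$ action. That action is smooth because $\O_D^\times$ acts on each ${\cal M}_n$ through a finite quotient on cohomology, by rigidity of de Rham cohomology under connected group actions. The same argument applies symmetrically on the Lubin–Tate side. Equivariance and the $(\phi,N,\sg_{\breve C})$-structures are inherited from functoriality of the sheaves.

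\textbf{Step 4: admissibility.} On the Lubin–Tate side, each finite level ${\rm LT}_n$ is a finite étale cover of an open ball modulo $\varpi^\Z$. Its compactly supported de Rham cohomology is therefore finite-dimensional, which gives $D^\times$-admissibility via $\O_D^\times$-invariants at level $n$. For $\mathbb{GL}_d(K)$-admissibility, I would take invariants under $1+\varpi^n M_d(\O_K)$. By the isomorphism, these compute $H^i_{\dr,c}({\cal M}_n^\varpi)$, and I would bound its dimension using a Hochschild–Serre/Jacquet-type argument. The alternative is to use that the Lubin–Tate side is finite-dimensional after taking $\O_D^\times$-invariants and then exploit the finiteness of the $D^\times$-smooth decomposition.
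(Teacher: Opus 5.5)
Your proposal has two genuine gaps, and the second sits exactly where the paper's real work lies.

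\textbf{Step 1.} Your Step 1 names the real obstacle but does not overcome it, and the sheaves you suggest cannot do the job. The sheaf $\mathcal{O}\sbb_{\dr}$ and the complex $\mathcal{O}\sbb_{\dr}\otimes_{\so_Y}\Omega^{\bullet}_Y$ are built from $\so_Y$ and $\Omega^\bullet_Y$ of the base, so the Drinfeld and Lubin--Tate versions have no reason to agree on $\mathcal X$. The only diamond-intrinsic term of the Poincar\'e lemma is its kernel $\sbb_{\dr}$, and its cohomology is $\rg_{\dr}(Y)\otimes^{\LL_\Box}_K\B_{\dr}$ (Bosco), not $\rg_{\dr}(Y)$. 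The sheaf $\wh{\so}$ sees only Hodge--Tate data. The missing idea is to accept the $\B_{\dr}$-twist and kill it by Galois descent. One works over a discretely valued field, replaces $\sbb_{\dr}$ by $\sbb_{\pdr}=\sbb_{\dr}[\log t]$, and uses that $\R\Gamma(\sg_K,t^r\B^+_{\pdr})$ is $K$ or $0$. This gives $F^r\rg_{\dr}(X)\simeq\R\Gamma(\sg_K,F^r\rg_{\proet}(X_C,\sbb_{\pdr}))$, hence $\R\Gamma(\check G,F^r\rg_{\dr}(X))\simeq\R\Gamma(G\times\check G\times\sg_K,\rg_{\proet}(T_C,F^r\sbb_{\pdr}))\simeq\R\Gamma(G,F^r\rg_{\dr}(\check X))$. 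For Hyodo--Kato the analogue uses $\sbb$, the Colmez--Gilles--Nizio{\l} comparison, $\phi=1$ and $\B_{\rm pFF}$. Working over $C$, as you propose, leaves no Galois group to untwist with.

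\textbf{Step 2.} The decisive error is here. Higher cohomology does \emph{not} die in the colimit over open subgroups of a $p$-adic Lie group with $\Q_p$-coefficients. By Lazard, $\colim_H H^i(H,\Q_p)\simeq H^i(\mathfrak g,\Q_p)\neq 0$; already $H^1(p^n\Z_p,\Q_p)=\Q_p$ for all $n$. So descent only yields the twisted statement
\[\R\Gamma(\check{\mathfrak g},K)\otimes^{\LL_\Box}_K\rg_{\dr,c}(X_\infty)\simeq\R\Gamma(\mathfrak g,K)\otimes^{\LL_\Box}_K\rg_{\dr,c}(\check X_\infty)\]
(Corollary \ref{dom1}), and your Step 3 has nothing to stand on. Removing the twist is the core of the paper. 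First, $G$ and $\check G$ act trivially on their Lie algebra cohomology (Lemma \ref{luminy1}), and the two cohomologies have equal dimensions because the groups are inner forms. Then K\"unneth and induction on $i$ reduce everything to a cancellation $\pi\oplus\pi_1\simeq\pi\oplus\pi_2\Rightarrow\pi_1\simeq\pi_2$. This holds only for \emph{admissible} representations, via Bernstein blocks and Krull--Schmidt over Hecke algebras, enlarged by $\phi,N,\sg$ in the Hyodo--Kato case.

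\textbf{Admissibility.} Consequently admissibility is an input, not an output, and it comes from the Lubin--Tate side. $H^i_{\dr,c}({\rm LT}^\varpi_n)$ is finite-dimensional by a global argument: ${\rm LT}_n$ is a supersingular tube in a proper Harris--Taylor Shimura variety, and one then applies Grosse-Kl\"onne's finiteness and Poincar\'e duality. Being a finite \'etale cover of an open ball is not enough. The compact-group version (Theorem \ref{dept1} for $G_0\times\check G_0$) then gives $H^i_{\dr,c}({\cal M}^\varpi_\infty)^{G_n}\simeq H^i_{\dr,c}({\rm LT}^\varpi_n)$. Your Step 4 also swaps the groups. ${\cal M}_\infty\to{\cal M}_n$ is a $(1+\varpi_D^n\O_D)$-torsor with $\mathbb{GL}_d(K)$ acting on each ${\cal M}_n$, while the Lubin--Tate tower has Galois group $\mathbb{GL}_d(\O_K)$. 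So $(1+\varpi^nM_d(\O_K))$-invariants compute ${\rm LT}_n$, not ${\cal M}_n$.
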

   As in \cite{CDN1}, the main difficulty in proving Theorem \ref{main0} lies in the fact that the differential forms on the base of the towers do not come, by descent, from differential forms on the completed tower at infinity, and only the completed towers compare naturally as perfectoid spaces. Hence, what objects should represent the differentials at infinity ?
 The inspiration came from the arguments used in \cite{CDN1}. Namely, what we proved in loc. cit. could be called 
 "Hodge flip-flopping":  for $d=2$ in the above theorem, we constructed  an isomorphism
 $$
 H^1_{\dr,c}({\cal M}_{\infty,C}^\varpi)\simeq H^1_{\dr,c}({\rm LT}_{\infty,C}^\varpi)
 $$
as the colimit, over $m,n\geq 0$,  of the compositions of isomorphisms:  
 \begin{align*}
  H^i_{\dr,c}({\cal M}_{n,C}^\varpi)^{G_m}\stackrel{\sim}{\leftarrow}H^1_c(\wh{{\cal M}}_{\infty,C}^\varpi,{\so})^{G_m\times\check{G}_n}\simeq 
  H^1_c(\wh{{\rm LT}}_{\infty,C}^\varpi,{\so})^{G_m\times\check{G}_n}\stackrel{\sim}{\leftarrow} H^1_{\dr,c}({\rm LT}_{m,C}^\varpi)^{\check{G}_n}.
 \end{align*}
 Here $G_m, \check{G}_n$ are level $m,n$ congruence subgroups of $G$ and $\check{G}$, respectively. The middle isomorphism follows from the duality of the Drinfeld and Lubin-Tate towers at infinity, but the other isomorphisms require non-trivial arguments using the $p$-adic period maps, which are not easily extendable to higher dimensions. Hence 
 the sheaf ${\so}$  is the avatar of the Hodge cohomology: it plays the role of  Hodge cohomology on perfectoid spaces\footnote{However, this turned out to be a wrong Hodge avatar
 in higher dimensions. We do prove a version of Hodge flip-flopping but this follows from a filtered de Rham flip-flopping (see Corollary \ref{IAS11}). }.
 
 In this paper, the avatar of the filtered de Rham cohomology will be the filtered period sheaf ${\mathbb B}_{\dr}$.
 More specifically, we derive the de Rham flip-flopping from a de Rham comparison theorem of Bosco \cite{GB1} that expresses de Rham cohomology twisted by the period ring $\B_{\dr}$ as the pro-\'etale cohomology of the relative period sheaf ${\mathbb B}_{\dr}$. The latter can be defined for adic spaces more general than rigid analytic spaces, for example for perfectoid spaces and, for  trivial reasons, descends along the two towers and hence satisfies flip-flopping in dual towers. This flip-flopping then can be transferred  to de Rham cohomology via the de Rham comparison theorem, but with a twist by $\B_{\dr}$, which can be killed by taking  Galois  fixed points on the level of cohomology. On the derived level the same can be achieved by replacing ${\mathbb B}_{\dr}$ with Fontaine's almost de Rham period  sheaf ${\mathbb B}_{\pdr}$. 
 Similarly, for the avatar of the Hyodo-Kato cohomology we take the period sheaf ${\mathbb B}$ (the relative version of the  Fargues-Fontaine curve  analog of the period ring $\B_{\crr}$). We derive the Hyodo-Kato flip-flopping from a ${\mathbf B}$-comparison theorem of Colmez-Gilles-Nizio{\l}  \cite{CGN2} that expresses Hyodo-Kato  cohomology twisted by the period ring $\B$ as the pro-\'etale cohomology of the relative period sheaf ${\mathbb B}$, which satisfies flip-flopping in dual towers for trivial reasons. 
 On the derived level the sheaf ${\mathbb B}_{\pdr}$ is replaced by the 
   period sheaf ${\mathbb B}_{\mathrm{pFF}}$ introduced in \cite{CGN3}. 
     
     That takes care of the flip-flopping part of Theorem \ref{main0}. The admissibility part is now proved by looking at the Lubin-Tate side and using global methods (it would be very interesting to prove this more directly, in particular using purely local arguments).
  
   \subsubsection{Dual basic local Shimura varieties}   The Drinfeld and Lubin-Tate  towers are examples of dual towers of Rapoport-Zink spaces (see \cite{SW})  and one would naturally expect that  a flip-flopping as in Theorem \ref{main0} holds in this more general context. This is indeed the case (see  Theorem \ref{mainSh0} below) and the proof is basically the same as in the Drinfeld and Lubin-Tate case. 
 
   To state the theorem, let $(G,[b],\{\mu\})$ be a basic local Shimura datum over $\Q_p$.  Thus
   $G$ is a connected reductive group over $\Q_p$, $\mu:\mathbb{G}_{m, \overline{\Q}_p}\to G_{\overline{\Q}_p}$ is a (conjugacy class of a) minuscule cocharacter, and 
   $b$ is the unique basic element in the set 
   $B(G,\mu)$ of neutral acceptable elements.  Let $E$ be the reflex field, i.e. the 
   field of definition of (the conjugacy class of) $\mu$ and let  $\breve E$ be the completion of its maximal unramified extension.  Attached to this datum there is a canonical tower $({\rm Sh}(G, b, \mu)_K)_{K}$ (indexed by sufficiently small compact open subgroups $K$ of $G(\Q_p)$) of local Shimura varieties, which are smooth rigid analytic spaces over $\breve E$ (this is one of the main results of \cite{SW1}). As for the Drinfeld tower, we define the groups $H^{\bullet}_{\dr,c}(\mathrm{Sh}({G},{b},{\mu})_{\infty})$ and $H^{\bullet}_{\hk,c}(\mathrm{Sh}({G},{b},{\mu})_{\infty})$ by taking the colimit of the de Rham (respectively Hyodo-Kato) cohomology groups with compact support of the various ${\rm Sh}(G, b, \mu)_K$. There is also a natural dual local Shimura datum $(\check G,[\check b],\{\check \mu\})$, and so a dual tower of local Shimura varieties, for which the above considerations apply. 

  \begin{theorem}{\rm(Local Shimura varieties flip-flopping)} 
\label{mainSh0} There are $G\times \check G$-equivariant
 isomorphisms in solid $C$-modules  and solid $(\phi,\sg_{\breve{C}})$-modules\footnote{We write $\breve{C}$ for the fraction field of the Witt vectors $W(\overline{{\mathbf F}}_p)$.} over $\breve{C}$, respectively
 $$H^i_{\dr,c}(\mathrm{Sh}(\check{G},\check{b},\check{\mu})_{\infty})\simeq H^i_{\dr,c}(\mathrm{Sh}({G},{b},{\mu})_{\infty}),\quad H^i_{\hk,c}(\mathrm{Sh}(\check{G},\check{b},\check{\mu})_{\infty})\simeq H^i_{\hk,c}(\mathrm{Sh}({G},{b},{\mu})_{\infty}).
 $$   If these representations of $G\times \check{G}$  are admissible the Hyodo-Kato isomorphism lies  in the category of solid $(\phi,N,\sg_{\breve{C}})$-modules. 

\end{theorem}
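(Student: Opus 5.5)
We follow the strategy of \S\ref{duck1}, replacing the Drinfeld/Lubin-Tate duality by the duality of basic local Shimura varieties at infinite level due to Scholze--Weinstein. Write $\mathcal{S}_K:={\rm Sh}(G,b,\mu)_K$ and $\check{\mathcal{S}}_{\check K}:={\rm Sh}(\check G,\check b,\check\mu)_{\check K}$. The geometric input is the existence of a perfectoid space $\mathcal{S}_\infty\simeq\check{\mathcal{S}}_\infty$ over $C$, with commuting actions of $G(\Q_p)$ and $\check G(\Q_p)=J_b(\Q_p)$. One has $\mathcal{S}_{\infty}\sim\varprojlim_K \mathcal{S}_{K,C}$, and $\mathcal{S}_\infty/K\simeq \mathcal{S}_{K,C}$ as pro-\'etale quotients; symmetrically for $\check K$. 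The double quotients $\mathcal{S}_\infty/(K\times\check K)$ can then be computed either as $\mathcal{S}_{K,C}/\check K$ or as $\check{\mathcal{S}}_{\check K,C}/K$. Here $\check K$ acts on $\mathcal{S}_{K,C}$ (after passing to suitable open pieces) through a pro-finite group acting properly discontinuously.

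\textbf{Step 1 (de Rham).} We first use Bosco's comparison theorem \cite{GB1}, in its compactly supported form for the smooth partially proper space $X=\mathcal{S}_{K,C}$. It gives a filtered quasi-isomorphism
$$\R\Gamma_{\proet,c}(X,{\mathbb B}_{\dr})\simeq \R\Gamma_{\dr,c}(X)\widehat\otimes \B_{\dr}.$$
To remove the twist at the derived level, we instead use ${\mathbb B}_{\pdr}$, so that $\R\Gamma_{\proet,c}(X,{\mathbb B}_{\pdr})^{\mathcal{G}}$ recovers $\R\Gamma_{\dr,c}(X)$. Next we need a Hochschild--Serre (pro-\'etale descent) statement in the solid setting for the pro-\'etale $K'$-torsor $\mathcal{S}_\infty\to\mathcal{S}_{K,C}$, where $K'\subset K$:
$$\R\Gamma_{\proet,c}(\mathcal{S}_{K,C},\mathbb{F})\simeq \R\Gamma(K,\R\Gamma_{\proet,c}(\mathcal{S}_\infty,\mathbb{F})),\qquad \mathbb{F}\in\{{\mathbb B}_{\dr},{\mathbb B}_{\pdr},{\mathbb B}\}.$$
Passing to the colimit over $K$ turns continuous cohomology into smooth vectors. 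This gives
$$H^i_{\dr,c}(\mathcal{S}_\infty)\otimes\B_{\dr}\simeq \big(\dirlim_K H^i\R\Gamma(K,\R\Gamma_{\proet,c}(\mathcal{S}_\infty,{\mathbb B}_{\dr}))\big).$$
We then do the same on the dual side and compare. The natural object is $\dirlim_{K,\check K}H^i\R\Gamma(K\times\check K,\R\Gamma_{\proet,c}(\mathcal{S}_\infty,{\mathbb B}_{\dr}))$. Computing it in the two orders, and using that the de Rham cohomology of $\mathcal{S}_{K,C}$ is smooth (hence higher continuous cohomology of $\check K$ on it vanishes in the colimit over $\check K$), identifies both sides with $\dirlim H^i_{\dr,c}(\mathcal{S}_{K,C})^{\check K}$ and $\dirlim H^i_{\dr,c}(\check{\mathcal{S}}_{\check K,C})^{K}$. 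Taking smooth vectors in the full colimits yields the isomorphism. Finally we take $\mathcal{G}$-invariants (or use ${\mathbb B}_{\pdr}$) to strip $\B_{\dr}$. The result is a $C$-linear isomorphism, $G\times\check G$-equivariant by construction.

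\textbf{Step 2 (Hyodo--Kato).} We run the same argument with the sheaf ${\mathbb B}$ and the comparison of \cite{CGN2},
$$\R\Gamma_{\proet,c}(X,{\mathbb B})\simeq (\R\Gamma_{\hk,c}(X)\widehat\otimes_{\breve C}\B_{\log})^{N=0},$$
compatibly with $\phi$ and $\mathcal{G}_{\breve C}$. To descend from $\B$-modules to $\breve C$-modules we use ${\mathbb B}_{\mathrm{pFF}}$ from \cite{CGN3}, taking $\phi$-slopes or invariants in the usual way. Because the $N=0$ part loses the monodromy, this procedure only produces a $(\phi,\mathcal{G}_{\breve C})$-isomorphism. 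To recover $N$ we assume admissibility. Then each $K\times\check K$-isotypic piece is finite dimensional, and $N$ can be reconstructed from the $\B_{\log}$-twisted module via the unique nilpotent operator compatible with $\phi$, as for finite-dimensional $(\phi,N)$-modules. Compatibility of the two sides is then automatic by uniqueness.

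\textbf{Main obstacle.} We expect the main obstacle to be Step 1's descent with compact supports. We must make the comparison theorems and Hochschild--Serre work for compactly supported cohomology of partially proper (non-quasi-compact) spaces in solid modules, and also control the $\check K$-action, which does not preserve a finite-level quasi-compact exhaustion. We would first write compact support cohomology as a colimit over quasi-compact opens $U$ of cohomology with supports. We would then check that the pro-\'etale torsor descent holds for each $U$ and commutes with these colimits, using that the period sheaves are Fr\'echet/LB in a controlled way so that solid completed tensor products commute with the limits involved.
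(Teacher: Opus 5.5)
Your derived half (the period sheaves $\mathbb{B}_{\pdr}$, $\mathbb{B}$, $\mathbb{B}_{\rm pFF}$, with Hochschild--Serre descent along both torsors) matches the paper. The passage to an underived isomorphism at infinite level, however, does not work.

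You assert that, because the cohomology of the finite levels is smooth, the higher continuous cohomology of $\check K$ on it vanishes in the colimit over $\check K$. This is false for $p$-adic Lie groups with characteristic-$0$ coefficients. By Lazard (Proposition \ref{stcoh}), for smooth $\pi$ one has $\colim_{\check H}\R\Gamma(\check H,\pi)\simeq\R\Gamma(\check{\mathfrak g},\breve E)\otimes^{\LL_\Box}_{\breve E}\pi$. Already $H^1(p^n\Z_p,\Q_p)=\Hom(p^n\Z_p,\Q_p)\simeq\Q_p$, and the restriction maps are isomorphisms. So computing the double colimit in the two orders only gives the twisted identity of Corollary \ref{dom1},
\[
\R\Gamma(\check{\mathfrak g},\breve E)\otimes^{\LL_\Box}_{\breve E}\R\Gamma_{\dr,c}(\mathrm{Sh}(G,b,\mu)_\infty)\simeq\R\Gamma(\mathfrak g,\breve E)\otimes^{\LL_\Box}_{\breve E}\R\Gamma_{\dr,c}(\mathrm{Sh}(\check G,\check b,\check \mu)_\infty).
\]
Here the Lie algebra cohomology comes from two different Lie algebras on the two sides. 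Taking smooth vectors does not remove the extra K\"unneth summands $H^j(\check{\mathfrak g})\otimes H^{i-j}_{\dr,c}$, because these are themselves smooth (the groups even act trivially on them).

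What is missing is the untwisting step, which is where the paper's real work lies. It needs three ingredients.
\begin{enumerate}
\item $\dim H^j(\mathfrak g)=\dim H^j(\check{\mathfrak g})$ for all $j$. This holds because $b$ is basic, so $G$ and $\check G$ are inner forms and their Lie algebras become isomorphic over a finite extension.
\item $G(\Q_p)$ and $\check G(\Q_p)$ act trivially on their Lie algebra cohomology, by Zariski density (Lemma \ref{luminy1}). Then K\"unneth and induction on $i$ reduce the twisted identity to $\pi_1\oplus\pi\simeq\pi_2\oplus\pi$, with $\pi$ built from lower-degree groups that are already matched.
\item A cancellation theorem $\pi_1\oplus\pi\simeq\pi_2\oplus\pi\Rightarrow\pi_1\simeq\pi_2$, i.e.\ Krull--Schmidt. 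This requires admissibility: Proposition \ref{cancel1} for compact groups, and Bernstein's decomposition with Hecke algebras (Proposition \ref{rep1HK}) for the full reductive groups. Without admissibility one also needs the exhaustion-and-complements argument of Theorem \ref{dept1}.
\end{enumerate}
The resulting isomorphism is abstract, not ``equivariant by construction''.

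This also corrects your account of the monodromy. The derived Hyodo--Kato flip-flop (Theorem \ref{HKFF}) already lives in $\sd_{\phi,N}$, since the derived $\B_e$-formula with $\B_{\rm pFF}$ (Corollary \ref{derived-FFB}) recovers $N$. $N$ is lost only when choosing complements in the exhaustion, which can be taken $\phi$- and group-stable but not $N$-stable. Admissibility restores it via Krull--Schmidt over the Hecke algebra with $\phi^{\pm1},N$ adjoined. Your ``unique nilpotent operator compatible with $\phi$'' does not exist: $N=0$ is always compatible.

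Finally, the compact-support descent you single out as the main obstacle is routine in the paper. One takes fibers of $\R\Gamma\to\R\Gamma(\partial\,\cdot)$ and uses that cohomology of a profinite group commutes with filtered colimits.
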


  \subsection{The basic flip-flopping results} The set-up for our main  technical flip-flopping result is the following. Let $K$ be a complete discretely valued field of mixed characteristic $(0,p)$ with perfect residue field.
Consider a diagram
\begin{equation}\label{pierre1}
\xymatrix{
& T\ar[dl]^-{{G}}_{\pi}\ar@(u,r)[]^{G\times\check{G}} \ar[dr]_{\check{G}}^{\check{\pi}}\\
X=[T/G]\ar@(l,u)[]^{\check{G}} & &   \check{X}=[T/\check{G}]\ar@(u,r)[]^{{{G}}} 
 }
\end{equation}
where

$\bullet$  $X$ and $\check{X}$ are smooth rigid analytic spaces over $K$ 

$\bullet$ $T$ is a diamond over ${\rm Spd}(K)$, endowed with a continuous action of $G\times \check{G}$, where 
$G$ and $\check{G}$ are profinite groups.

$\bullet$ The maps\footnote{We abuse notation and write $X$ for the diamond attached to $X$.} $\pi: T\to X$ and $\check{\pi}: T\to \check{X}$ are pro-\'etale 
$G$-torsors and $\check{G}$-torsors respectively. 

   The following theorem is proved using the strategy sketched in Section \ref{duck1}:
 \begin{theorem}\label{main1} Assume that $X, \check{X}$ are partially proper. 
 \begin{enumerate}[leftmargin=*]
 \item There is a natural quasi-isomorphism in $\sd(K_{\Box})$
 $$
\rg(\check{G},F^r\R\Gamma_{\dr,*}(X))  \simeq \rg({G},F^r\R\Gamma_{\dr,*}(\check{X})),\quad r\in\Z, *= \emptyset, c.
$$
\item   If $G$, $\check{G}$ are compact $p$-adic Lie groups with Lie algebras $\mathfrak{g}, \check{\mathfrak{g}}$, then there are natural quasi-isomorphisms in  $\sd(K_{\Box})$
$$\R\Gamma(\check{\mathfrak{g}},K)\otimes^{\LL_{\Box}}_K\R\Gamma_{\dr,c}(X_{\infty})  \simeq \R\Gamma(\mathfrak{g},K)\otimes^{\LL_{\Box}}_K\R\Gamma_{\dr,c}(\check{X}_{\infty}).$$ 
\item Parts (1) and (2) are also valid for Hyodo-Kato cohomology. 
\end{enumerate}
 \end{theorem}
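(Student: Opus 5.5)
The strategy is to pass through the perfectoid-level object $T$ using period sheaves, following the outline of Section \ref{duck1}. For part (1), we use Bosco's de Rham comparison theorem \cite{GB1}. For a smooth partially proper rigid analytic space $Y$ over $K$ and $r\in\Z$, it gives a natural filtered quasi-isomorphism
$$F^r\big(\R\Gamma_{\dr,*}(Y_C)\widehat{\otimes}^{\LL_\Box}_K \B_{\dr}\big)\simeq \R\Gamma_{\proet,*}(Y_C,F^r{\mathbb B}_{\dr}).$$
To kill the $\B_{\dr}$-twist at the derived level, we replace ${\mathbb B}_{\dr}$ by Fontaine's almost de Rham sheaf ${\mathbb B}_{\pdr}$ and take derived $\sg_K$-invariants. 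This should yield
$$F^r\R\Gamma_{\dr,*}(Y)\simeq \R\Gamma(\sg_K,\R\Gamma_{\proet,*}(Y_C,F^r{\mathbb B}_{\pdr})).$$
This rests on $\R\Gamma(\sg_K,F^r\B_{\pdr})\simeq$ the corresponding graded piece of $K$ in the appropriate filtered sense, which is essentially Fontaine's computation. The key point is that the right-hand side is defined for any diamond over ${\rm Spd}(K)$, in particular for $T$.

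Next, we do the descent. Since $\pi:T\to X$ is a pro-\'etale $G$-torsor, the Cartan–Leray (Hochschild–Serre) spectral sequence in pro-\'etale cohomology gives, for any pro-\'etale sheaf $\sff$ (here $F^r{\mathbb B}_{\pdr}$),
$$\R\Gamma_{\proet}(X_C,\sff)\simeq \rg(G,\R\Gamma_{\proet}(T_C,\sff)),$$
where $G$-cohomology is taken in the solid (continuous) sense. Applying $\rg(\check{G},-)$ and using that the actions of $G$ and $\check G$ commute, we get
$$\rg(\check G,\R\Gamma_{\proet}(X_C,\sff))\simeq \rg(G\times\check G,\R\Gamma_{\proet}(T_C,\sff))\simeq \rg(G,\R\Gamma_{\proet}(\check X_C,\sff)).$$
Composing with $\R\Gamma(\sg_K,-)$, which commutes with these group cohomologies, proves (1).

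For compact supports we need a version of pro-\'etale cohomology with compact support on $T$ that is compatible with descent. Since $X$ is partially proper, we define it as the cofiber of $\R\Gamma(T)\to\colim_U\R\Gamma(T\setminus\pi^{-1}(U))$ over quasi-compact opens $U$ (equivalently, the cofiber $\R\Gamma(X)\to\R\Gamma(\partial X)$ pulled back). The subtle point is that this must agree with the analogous definition coming from $\check X$. We handle this by checking that quasi-compact opens of $X$ and of $\check X$ have cofinal preimages in $T$, using compactness of $G$ and $\check G$, so that both pull back to the same ``boundary'' system of $T$. I expect this step, together with the solid-analytic verification of Hochschild–Serre and the compactly supported comparison, to be the main obstacle.

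For part (2), we pass to the colimit over open subgroups $G'\subset G$, applied to both sides with $\check X$ replaced by $[T/G']$. Lazard's theorem identifies continuous cohomology of a small enough uniform open subgroup on locally analytic (here smooth) coefficients with Lie algebra cohomology. The $\check G$-action on $\R\Gamma_{\dr,c}(X_\infty)=\colim\R\Gamma_{\dr,c}(X_{G'})$ is smooth, so $\mathfrak{\check g}$ acts trivially. Hence $\colim_{\check G'}\rg(\check G',-)$ of a smooth module $M$ equals $\R\Gamma(\check{\mathfrak g},K)\otimes^{\LL_\Box}_K M$, and taking $r$ very negative gives (2). For part (3), we run the same argument with the ${\mathbb B}$-comparison theorem of \cite{CGN2}, using ${\mathbb B}_{\rm pFF}$ from \cite{CGN3} in place of ${\mathbb B}_{\pdr}$, and $(\phi,N)$-structures in place of filtrations.
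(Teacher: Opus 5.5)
Your argument for (1) and (2) is the paper's. It uses Bosco's comparison upgraded to ${\mathbb B}_{\pdr}$ together with Galois invariants (Corollary \ref{pGuid}, Lemma \ref{killing1}), and Hochschild--Serre descent through $T$ (Lemma \ref{HS1}) to reach $\rg(G\times\check G\times\sg_K,-)$. For compact supports it uses compatible exhaustions $\check X_n=\check\pi(\pi^{-1}(X_n))$ (your cofinality argument), together with the commutation of $\rg(G,-)$ and $\rg(\check G,-)$ with filtered colimits. For (2) it uses Proposition \ref{stcoh}.

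Two points there need more than you say:
\begin{itemize}
\item The ${\mathbb B}_{\pdr}$-comparison is only available for quasi-compact spaces, because pro-\'etale cohomology has to commute with the colimit over powers of $\log t$. So the paper first proves the flip-flop for quasi-compact $\check G$-stable $X_n$ and their partners $\check X_n$, and then passes to $\R\lim_n$.
\item The smoothness of the actions on $\R\Gamma_{\dr,c}(X_{G'})$, which you simply assert in (2), is a genuine input. It is Proposition \ref{derham-prop1}, proved via duality and Lemma \ref{zurich1}.
\end{itemize}
Two small slips: compact support is the \emph{fiber} (not the cofiber) of restriction to the boundary, and in (2) it is $X$, not $\check X$, that gets replaced by $[T/G']$.

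The genuine gap is (3): ``the same argument with ${\mathbb B}_{\rm pFF}$'' does not work as stated, for three reasons.
\begin{itemize}
\item $\sg_K$ acts non-trivially (smoothly) on $\R\Gamma_{\hk}(X_C)$. Applying $\R\Gamma(\sg_K,-)$ as in the de Rham case would therefore only return the Galois invariants. You have to use derived smooth vectors $\colim_H(-)^{\R H}$, with $H\subset\sg_K$ open.
\item The comparison of \cite{CGN2} is not a statement about ${\mathbb B}_{\rm pFF}$. Proposition \ref{Guid2} only gives $\R\Gamma_{\proet}(X_C,\sbb^+)\otimes^{\LL_\Box}_{\B^+}\B\simeq[\R\Gamma_{\hk}(X_C)\otimes^{\LL_\Box}_{\breve C}\B_{\rm FF}]^{N=0}$. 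So descent only flip-flops this $N=0$ part, in $\sd_\phi(\B_\Box)$, and you still need a step that recovers $\R\Gamma_{\hk}(X_C)$ with its monodromy. The paper does this by taking $\phi=1$, tensoring with $\B_{\rm pFF}$ over $\B_e$, and applying the derived $\B_e$-formula (Corollaries \ref{derived-FFB}, \ref{killing1HK}). That formula, and the commutation of $\rg(\check G,-)$ with $\otimes_{\B_e}\B_{\rm pFF}$, require finite rank. Hence the quasi-compact exhaustion no longer suffices: one must build $\check G$-stable partially proper opens $X^0_n$ with finite-dimensional de Rham cohomology (via \cite[Th. 3.6]{GK}), together with their duals $\check X^0_n$.
\item Before you can apply $\rg(\check G,-)$, the Hyodo--Kato comparison must be equivariant for the condensed $\check G$-action. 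Unlike the de Rham case, this is not formal here (Proposition \ref{action1}(2)).
\end{itemize}
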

Here the cohomology $\R\Gamma_{\dr,c}(X_{\infty})$ is defined as the colimit of compactly supported de Rham cohomologies from the finite levels of the $\pi$-tower in diagram \eqref{pierre1}; similarly, $\R\Gamma_{\dr,c}(\check{X}_{\infty})$
is defined using the $\check{\pi}$-tower. The second claim follows from the first one because the compactly supported cohomology is a smooth representation in the sense of \cite{RR22}, \cite{RR23}.

      Assume  now on that $k$ is algebraically closed\footnote{This is not necessary in the case of de Rham cohomology and probably not necessary in the case of Hyodo-Kato cohomology either}.   In the case of compactly supported cohomology we have the following underived version of flip-flopping if we are willing to pass to infinity. 
\begin{corollary}\label{dept10} Assume that $X, \check{X}$ are partially proper and 
 that $G, \check{G}$ are $p$-adic Lie groups with
\begin{equation}\label{dimcond}
{\dim}_{K}H^i(\mathfrak{g},K)={\dim}_{K}H^i(\check{\mathfrak{g}},K),\quad i\geq 0.
\end{equation}
Then: 
\begin{enumerate}[leftmargin=*]
\item There exist   compact open subgroups $H\subset G$, $\check{H}\subset \check{G}$,  such that there is a $H\times\check{H}$-equivariant isomorphism in $K_{\Box}$ and $\sd_{\phi,\sg_K}(\breve{C}_{\Box})$, respectively
\begin{align*}
H^i_{\dr,c}(X_{\infty}) & \simeq H^i_{\dr,c}(\check{X}_{\infty}),\quad i\geq 0,\\
H^i_{\hk,c}(X_{\infty}) & \simeq H^i_{\hk,c}(\check{X}_{\infty}),\quad i\geq 0.\notag
\end{align*}
\item If moreover in (1) the action of $H\times\check{H}$ is admissible, then the second isomorphism in \eqref{step1} could also be made $N$-equivariant. 
\end{enumerate}
\end{corollary}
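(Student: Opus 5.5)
Our starting point is Theorem \ref{main1}(2), together with its Hyodo-Kato analogue from Theorem \ref{main1}(3). It gives quasi-isomorphisms
$$\R\Gamma(\check{\mathfrak{g}},K)\otimes^{\LL_{\Box}}_K\R\Gamma_{\dr,c}(X_{\infty})  \simeq \R\Gamma(\mathfrak{g},K)\otimes^{\LL_{\Box}}_K\R\Gamma_{\dr,c}(\check{X}_{\infty}).$$
To use this we first shrink the groups, replacing $G,\check G$ by sufficiently small compact open uniform pro-$p$ subgroups $H,\check H$. This does not change the Lie algebras. The towers at infinity are also unchanged, since $X_\infty$ only depends on cofinal levels. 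The point of shrinking is that the diagram \eqref{pierre1} can then be replaced by $T\to[T/H]$, $T\to[T/\check H]$. The bases are finite étale covers of $X,\check X$, so they are still smooth and partially proper, and Theorem \ref{main1} applies to them.

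Since $K$ is a field, every complex in $\sd(K_\Box)$ is quasi-isomorphic to the sum of its shifted cohomologies. Hence the cohomology of the left side in degree $n$ is
$$\bigoplus_{a+b=n} H^a(\check{\mathfrak g},K)\otimes_K H^b_{\dr,c}(X_\infty).$$
Here $H^a(\check{\mathfrak g},K)$ is finite dimensional, so no completion issues arise with the solid tensor product. The right side decomposes in the same way. By the dimension hypothesis \eqref{dimcond}, both sides are of the form $\bigoplus_a V_a\otimes(\cdot)$, with $V_a$ of the same finite dimension $h_a$ on the two sides.

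We then argue by induction on $i$ to extract $H^i_{\dr,c}(X_\infty)\simeq H^i_{\dr,c}(\check X_\infty)$. In degree $i$, the summand with $a=0$ is $H^i_{\dr,c}(X_\infty)$ itself, since $h_0=1$. The remaining summands are $H^a\otimes H^{i-a}$ with $a\ge1$, and by induction these match the corresponding summands on the other side. Cancellation is not automatic for infinite-dimensional objects. Two routes are available.

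First, we can avoid cancellation altogether. The isomorphism of Theorem \ref{main1} comes from a natural map. We can also filter by Hochschild–Serre degree: use the spectral sequence for the $\check{\mathfrak g}$-cohomology, whose $a=0$ edge is the identification of smooth vectors. The isomorphism should then respect the filtration with graded pieces matching, giving the bottom piece directly.

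Second, since smooth $H$-representations decompose under Hecke-type idempotents, we can compare multiplicities after applying $(-)^{H'}$ for smaller $H'$. This step is the main obstacle. Naturality gives a map, but it does not obviously identify the $a=0$ pieces.

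We would try the first route. We show that the quasi-isomorphism in Theorem \ref{main1}(2) is obtained from Theorem \ref{main1}(1) by taking colimits over levels. This is compatible with the truncation $\tau_{\le 0}$ in the $\mathfrak g$-direction. Alternatively, we settle for an abstract isomorphism built from a choice of complements. Working within smooth $H\times\check H$-representations over $K$ in characteristic $0$, one would reduce via isotypic components to finite-dimensional multiplicity spaces and apply cancellation there. This is where equivariance must be checked.

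For Hyodo-Kato cohomology the argument is identical, working in $\sd_{\phi,\sg_K}(\breve C_\Box)$. The $\mathfrak g$-cohomology enters as the trivial $\phi$-module $K$ tensored up, and since $k$ is algebraically closed, $\phi$-modules over $\breve C$ are semisimple-enough (Dieudonné–Manin) to split. This is where the hypothesis on $k$ is used.

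For (2), $N$ is not part of the derived comparison, as Theorem \ref{main1}(3) lives in $(\phi,\sg)$-modules. Under admissibility, each $H'$-invariant piece is a finite $(\phi,N)$-module, and $N$ is determined by $\phi$ and the monodromy. Concretely, $N$ is nilpotent and satisfies $N\phi=p\phi N$. We would show that an isomorphism of admissible smooth representations compatible with $\phi$ can be adjusted level by level. The key input is that the Galois action $\sg_K$ on the finite pieces determines $N$ via the Weil–Deligne recipe: $N$ is the logarithm of the unipotent part of inertia. Any $\sg_K$-equivariant isomorphism is therefore automatically $N$-equivariant.
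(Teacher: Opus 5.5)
\textbf{There is a genuine gap in part (1).}

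Your preferred route assumes that the Lie-algebra flip-flop carries the $\check{\mathfrak g}$-Hochschild--Serre filtration (or $\tau_{\le 0}$ in the $\check{\mathfrak g}$-direction) on the left to the $\mathfrak g$-one on the right. Nothing in the construction gives this. The isomorphism passes through $\R\Gamma(G\times\check G\times\sg_K,\R\Gamma_{\proet}(T_C,\sbb_{\pdr}))$ and exchanges $\check H$-cohomology of $X_H$ with $H$-cohomology of $\check X_{\check H}$, so there is no reason for the $a=0$ edges to correspond. If they did, you would get a canonical $G\times\check G$-equivariant isomorphism with no dimension hypothesis at all, which this method does not deliver.

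You are therefore forced into abstract cancellation $\pi_1\oplus\pi\simeq\pi_2\oplus\pi\Rightarrow\pi_1\simeq\pi_2$. For smooth representations of the profinite group $H\times\check H$ this holds only with finite multiplicities; $\pi_1=0$, $\pi_2=\sigma$, $\pi=\sigma^{\oplus\infty}$ is a counterexample. Part (1) assumes no admissibility, and your reduction ``to finite-dimensional multiplicity spaces'' assumes it silently.

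The missing idea is the paper's exhaustion step:
\begin{itemize}
\item Take $\check G$-stable partially proper opens $X_{n-1}\subset X^0_n\subset X_n$ with finite-dimensional de Rham cohomology. Apply Grosse-Kl\"onne to a partially proper $U$ with quasi-compact complement in $X_n$, then set $X^0_n=\bigcup_{g\in\check G}gU$, using that an open subgroup of $\check G$ stabilizes $U$.
\item Set $\check X^0_n=\check\pi(\pi^{-1}(X^0_n))$.
\item Each $H^i_{\dr,c}(X^0_{n,\infty})$ is then admissible, so K\"unneth, induction on $i$ and Proposition \ref{cancel1} give $H\times\check H$-isomorphisms level by level.
\item Write $H^i_{\dr,c}(X_\infty)=\colim_n M^i_n$ with $M^i_n$ the images, choose complements $W_n$ of $M^i_n$ in $M^i_{n+1}$, match them with the $\check W_n$ by cancellation, and assemble.
\end{itemize}

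Two smaller points. You should say explicitly that $H,\check H$ are chosen to act trivially on $H^*(\mathfrak g,K)$ and $H^*(\check{\mathfrak g},K)$; this is what makes $H^a(\check{\mathfrak g},K)\otimes\pi\simeq\pi^{\oplus h_a}$ as $H\times\check H$-representations. Also, $\sd(K_\Box)$ is not semisimple: the K\"unneth formula only uses that the finite complex $\R\Gamma(\mathfrak g,K)$ splits.

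\textbf{Your argument for (2) is incorrect.}

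The $\sg_K$-action on Hyodo--Kato cohomology is smooth, so inertia has no unipotent part whose logarithm could recover $N$. In Fontaine's recipe the Weil--Deligne $N$ is the monodromy operator of the $(\phi,N,\sg_K)$-module itself, with inertia acting through a finite quotient. Grothendieck's monodromy theorem is an $\ell$-adic statement and does not apply here.

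Your premise is also wrong. The derived Hyodo--Kato flip-flop (Theorem \ref{HKFF}, Corollary \ref{paris30}) already lives in $\sd_{\phi,N,\sg_K}(\breve C_\Box)$. What loses $N$ is the gluing step above: the complements $W_n$ can be chosen stable under $\phi$, $\sg_K$ and $H\times\check H$, but not under $N$.

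Under admissibility no gluing is needed. The cancellation is then carried out directly among $H\times\check H$-representations on $(\phi,N,\sg_K)$-modules, by Krull--Schmidt for finite-length modules over the Hecke algebra enlarged by $\phi$, $N$, $\sg$ (Proposition \ref{rep1HK}(b)). That is where the $N$-equivariance comes from.
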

We can choose $H,\check{H}$ to be any subgroups acting trivially on all Lie algebra cohomology groups. We do not know a case when the condition \eqref{dimcond} is not satisfied.   The de Rham case of Corollary \ref{dept10} is obtained from claim (2) of Theorem \ref{main1} by untwisting the Lie algebra cohomology using complete decomposition results for smooth admissible representations of compact  or  reductive groups. 
 Similar arguments work  for Hyodo-Kato cohomology, where however it is more difficult to get rid of the twists by the Lie algebra cohomology  due to  the presence of possibly nontrivial monodromy operator on Hyodo-Kato cohomology. Hence the admissibility assumption. 
  
   Theorem \ref{mainSh0}  follows directly from Corollary \ref{dept10} since the  condition \eqref{dimcond}  is satisfied in this case (the groups $G, \check{G}$ being inner forms) and the groups $H,\check{H}$ could be taken to be maximal compact open subgroups of $G, \check{G}$, respectively. Moreover, we have enough of functoriality of  the flip-flop isomorphism to allow us to treat the Hecke correspondences lifting this action to the full groups $G,\check{G}$. 
    
 \begin{remark}({\rm Related work.)}
 \begin{enumerate}[leftmargin=*]
\item  An alternative approach to   the de Rham part of Theorem \ref{main0} for dual towers of local Shimura varieties was found  by the second author and  Rodriguez Camargo  (see  \cite[Th. 1.7]{RD}). It is a byproduct of a flip-flopping result for locally analytic vectors in the pro-\'etale cohomology at infinite level of relative period sheaves. Roughly  speaking, working with locally analytic vectors instead of the whole period sheaves allows them to get rid of the Lie algebra cohomology computations in this paper that  force us to use  various "total decomposibility"  conditions. Hence their flip-flop isomorphism has stronger functoriality properties. 
\item For the Drinfeld and Lubin-Tate towers, yet another approach (though similar in spirit to that of \cite{RD}) to   the de Rham part of Theorem \ref{main0} was found by Su \cite{Su}. 

\item These results can also be obtained using the machinery of de Rham and Hyodo-Kato stacks developed in \cite{RCS}.
\end{enumerate}
 \end{remark}
\begin{acknowledgments} 
 We would like to thank  Piotr Achinger, Grigory Andreychev, Francesco Baldassari,     Laurent Fargues, Juan Esteban Rodriguez Camargo, and Olivier Taibi  for helpful conversations related to the content of this paper. We have also profited from many conversations with Guido Bosco. Special thanks go to Pierre Colmez for endless conversations and immense optimism: without him this paper would have never been finished. 
 
 Parts of this paper were written during the second author's stay at the Banach Center in Warsaw in 2019 and the authors' stay at the Hausdorff Research Institute for Mathematics in Bonn in 2023. We would like to thank these institutes for support and hospitality. The first author was supported by the project "Group schemes, root systems, and related representations" founded by the European Union - NextGenerationEU through Romania's National Recovery and Resilience Plan (PNRR) call no. PNRR-III- C9-2023-I8, Project CF159/31.07.2023, and coordinated by the Ministry of Research, 
Innovation and Digitalization (MCID) of Romania. He would like to thank the Institute of Mathematics "Simion Stoilow" of the Romanian Academy for the wonderful working conditions.

 \end{acknowledgments}
{\bf Notation and conventions}   Let $\so_K$ be a complete discrete valuation ring with fraction field
$K$  of characteristic 0 and perfect
residue field $k$ of characteristic $p$. Let $\ovk$ be an algebraic closure of $K$ and let $\so_{\ovk}$ denote the integral closure of $\so_K$ in $\ovk$.   Set $\sg_K=\Gal(\overline {K}/K)$. Let $C=\wh{\ovk}$ be the $p$-adic completion of $\ovk$.  Let
$W(k)$ be the ring of Witt vectors of $k$ with 
 fraction field $F$ (i.e., $W(k)=\so_F$). Let $C^{\nr}$ denote the maximal unramified extension of $F$ in $C$.  Let $\breve{C}$ denote the fraction field of $W(\overline {k})$ and let 
 $\phi$ be the absolute
Frobenius on $W(\overline {k})$.

  We will denote by $\B_{\crr}, {\B}_{\st},\B_{\dr}$ the (absolute)  crystalline, semistable, and  de Rham period rings of Fontaine, respectively. We will also use the (absolute) period rings $\B^+, \B:=\B^+[1/t]$, which are analogs of $\B^+_{\crr}, \B_{\crr}$ tailored to  the Fargues-Fontaine curve. 
We changed the standard notation here a bit replacing $\B$ with $\B^+$ etc  to make it uniform with the $\B_{\dr}$-notation.

  All rigid analytic spaces and dagger spaces considered will be over $K$ or $C$.  We assume that they are separated, taut, and countable at infinity. 
Pro-\'etale cohomology of rigid analytic spaces is the quasi-pro-\'etale cohomology of the associated diamond.  Group cohomology is always (inner) condensed.

\section{Preliminaries}
We gather here the facts we need concerning period rings and their Galois cohomology\footnote{Always taken in the condensed mathematics setting.}.

\subsection{Condensed and solid abelian groups} Let ${\rm Cond}$ (resp. ${\rm CondAb}$) be the category of condensed sets (resp. abelian groups), i.e., accessible sheaves of sets (resp. abelian groups) on the site\footnote{This is also the pro-\'etale site of the geometric point ${\rm Spa}(C, \mathcal{O}_C)$.} ${\rm PS}$ of profinite sets with finite jointly surjective families of maps as coverings. The category ${\rm CondAb}$ is closed symmetric monoidal and the forgetful functor ${\rm CondAb}\to {\rm Cond}$ has a left adjoint denoted by
 $X\mapsto \Z[X]$, the free condensed abelian group on $X$.

There is a functor $\underline{(-)}: {\rm Top}_1\to {\rm Cond}$ from the category 
of ${\rm T}1$ topological spaces (i.e., all points are closed) to ${\rm Cond}$, given by  
$\underline{T}(S):=\mathcal{C}^0(S, T)$, for $S\in {\rm PS}$. 
This functor is fully faithful on compactly generated weakly Hausdorff spaces, which cover all topological spaces we deal with in this paper, so we will sometimes be sloppy and simply write 
$T$ instead of $\underline{T}$, especially when $T$ is a discrete topological space (such as $\Z$ or $\mathcal{C}(S, \Z)$, for $S\in {\rm PS}$) or a profinite set. The functor $T\mapsto \underline{T}$ restricts to a functor
 ${\rm TopAb}\to {\rm CondAb}$, where 
 ${\rm TopAb}$ is the category of 
${\rm T}1$ topological abelian groups. 

    Let ${\rm Solid}$ be the full subcategory of 
  ${\rm CondAb}$ generated under (small) colimits by the objects $\Z^I:=\prod_{I} \Z$, for arbitrary sets $I$. A theorem of Clausen and Scholze   ensures that 
  ${\rm Solid}$ is stable under (small) limits, colimits, extensions (in particular it is abelian), the $\Z^I$ (as $I$ varies) form a class 
  of compact projective generators, and there is a closed 
   symmetric monoidal structure $\otimes^{\Box}$ on ${\rm Solid}$ for which $\Z^I\otimes^{\Box} \Z^J=\Z^{I\times J}$, for all sets $I,J$, and such that $\otimes^{\Box}$ commutes with colimits separately in each variable. Moreover, the inclusion 
${\rm Solid}\subset {\rm CondAb}$ has a symmetric monoidal left adjoint $M\mapsto M^{\Box}$ sending 
$\Z[S]$ (with $S\in {\rm PS}$) to  $\Z[S]^{\Box}:=\underline{{\rm Hom}}(\mathcal{C}^0(S, \Z), \Z)$, and 
     $\Z_{\Box}[S]:=\Z[S]^{\Box}$ is compact projective in ${\rm Solid}$. For any condensed ring $A$ we thus have 
$A[S]^{\Box}\simeq \Z_{\Box}[S]\otimes^{\Box} A^{\Box}$.
     
     The category ${\rm Solid}$ contains all discrete abelian groups (seen as objects of ${\rm CondAb}$), thus it also contains $\Z_p$ and $\Q_p$. It easily follows (see \cite[Prop. A.31]{GB1}) that ${\rm Solid}$ contains all $\underline{V}$, with $V$ a complete locally convex $\Q_p$-vector space (for instance a $\Q_p$-Banach space or a Fr\'echet space). 

\subsection{Solid and Fr\'echet $K$-vector spaces}

If $A$ is a solid $\Q_p$-algebra\footnote{That is,  a commutative unital $\Q_p$-algebra object in ${\rm Solid}$.}, we let 
$$A_{\Box}:={\rm Mod}_A({\rm Solid})$$
be the category of 
$A$-modules in ${\rm Solid}$. Setting $A_{\Box}[S]:=A[S]^{\Box}=\Z_{\Box}[S]\otimes^{\Box} A\in A_{\Box}$  (where $A[S]:=\Z[S]\otimes_{\Z} A\in {\rm CondAb}$), the category $A_{\Box}$ is generated under colimits by the $A_{\Box}[S]$, which are compact projective, and 
$A_{\Box}$ has a natural closed symmetric monoidal structure $\otimes^{\Box}_A$ for which $A_{\Box}[S]\otimes^{\Box}_A A_{\Box}[S']\simeq A_{\Box}[S\times S']$, for $S,S'\in {\rm PS}$. If $A=\underline{B}$, with $B$ a complete locally convex 
$\Q_p$-algebra, we simply write $B_{\Box}:=A_{\Box}$ and $B_{\Box}[S]:=A_{\Box}[S]$.

Let ${\rm Fr}_K$ be the category of Fr\'echet spaces over $K$ (i.e., countable inverse limits of $K$-Banach spaces). By the above discussion, the functor $V\mapsto \underline{V}$ induces a fully faithful embedding ${\rm Fr}_K\subset K_{\Box}$. Recall the following facts:

\begin{proposition}

a) The functor ${\rm Fr}_K\to K_{\Box}$ 
is exact and symmetric monoidal. \footnote{We use the completed projective tensor product on ${\rm Fr}_K$.} 

b) For all $V\in {\rm Fr}_K$, the object $\underline{V}$ of $K_{\Box}$ is a filtered colimit of (images of) $K$-Banach spaces, and is flat for $\otimes^{\Box}_K$, i.e., $\underline{V}\otimes_K^{\Box}(-)$ is exact on $K_{\Box}$.
\end{proposition}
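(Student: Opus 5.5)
We deduce both parts from the explicit description of the embedding $V\mapsto\underline{V}$ and from the structure of Fréchet spaces as countable inverse limits of Banach spaces. We first treat Banach spaces and then pass to countable limits, using that countable products and $\R\lim$ over $\N$ behave well in $K_{\Box}$.

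For part a), exactness will come from the open mapping theorem. A short exact sequence $0\to V'\to V\to V''\to 0$ in ${\rm Fr}_K$ is strict. The closed subspace $V'$ carries the induced topology, and $V\to V''$ is open. To get exactness of $0\to\underline{V'}\to\underline{V}\to\underline{V''}\to 0$ in ${\rm CondAb}$, it suffices to check surjectivity on $S$-points after refining by a cover of $S\in{\rm PS}$. For this we lift continuous maps $S\to V''$ to $V$ using a continuous (non-linear) section, which exists by Michael's selection theorem for surjections of Fréchet spaces. Alternatively, on Banach spaces we lift via the compact absolutely convex hulls, which are images of unit balls of $c_0$-type spaces, and then pass to the Fréchet case by a Mittag-Leffler argument.

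For symmetric monoidality, we reduce to Banach spaces. Every $K$-Banach space is of the form $c_0(I)$ when $K$ is discretely valued, and the isomorphism is topological up to bounded distortion. So it suffices to show $\underline{c_0(I)}\otimes^{\Box}_K\underline{c_0(J)}\simeq\underline{c_0(I\times J)}$. Here we write $c_0(I)=(\varprojlim_n \so_K^{(I)}/\varpi^n)[1/p]$ and use that in solid modules $\so_{K,\Box}\otimes$ commutes with such $p$-adically completed direct sums, by computing with the compact projectives $\prod_{\N}\Z_p$. For Fréchet spaces $V=\varprojlim V_n$, $W=\varprojlim W_n$ (countable, with dense transition maps), we then need $\underline{V}\otimes^{\Box}_K\underline{W}\simeq\varprojlim\underline{V_n}\otimes^{\Box}_K\underline{W_n}$. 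This is the commutation of the solid tensor product with countable inverse limits of nuclear or Banach-type objects, which we take from Clausen–Scholze; since $\R\lim^1$ vanishes by Mittag-Leffler, the result lands in degree $0$.

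For part b), we write $\underline{V}$ as a filtered colimit over compact absolutely convex $\so_K$-submodules $B\subset V$. Each such $B$ is profinite after reduction, with $B=\varprojlim B/\varpi^n$ and $B/\varpi^n$ discrete. Each $B[1/p]$ with its gauge norm is a Banach space, so $\underline{V}=\colim_B\underline{B[1/p]}$: every map from a profinite set into $V$ factors through some compact $B$. Flatness then reduces to flatness of $\underline{B}[1/p]$, where $B\simeq\prod_{\N}\so_K$ is a compact projective generator up to isomorphism. Hence $\underline{B}[1/p]$ is projective in $K_{\Box}$, in particular flat, and filtered colimits of flat objects are flat.

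The main obstacle is the commutation of $\otimes^{\Box}_K$ with the countable inverse limit in the Fréchet case. We will handle it by expressing Fréchet spaces as duals of compact-type-like colimits of Smith spaces, or by quoting the corresponding statement in \cite{GB1}, Appendix A.
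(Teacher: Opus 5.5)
The paper gives no argument for this proposition: it recalls it from Bosco's appendix \cite{GB1}, which builds on Clausen--Scholze. Your exactness argument is fine for every $K$: the open mapping theorem plus Michael's zero-dimensional selection theorem lift $S\to V''$ along $V\to V''$. Your proof of b), however, has a genuine gap. It rests on compact absolutely convex subsets $B\subset V$, i.e. compact $\so_K$-submodules, with $B\simeq\prod_{\N}\so_K$ compact projective. In this paper $K$ is an arbitrary complete discretely valued field with perfect residue field $k$. The proposition is needed precisely when $k$ is infinite: $k$ is assumed algebraically closed from the Hyodo--Kato comparison onwards, and the Drinfeld and Lubin--Tate computations take place in $\breve{C}_{\Box}$. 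Then $\so_K$ is not compact. For $0\neq v\in B$, the submodule $\so_Kv\subset B$ is closed and homeomorphic to $\so_K$, which is impossible. So every compact $\so_K$-submodule of $V$ is $0$, your colimit $\colim_B\underline{B}[1/p]$ is $0$ rather than $\underline{V}$, and there is nothing to reduce to. Your alternative lifting argument via compact hulls in a) has the same defect.

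Even when $\so_K$ is compact, the step ``every map $S\to V$ factors through a compact $B$'' only gives $\underline{V}=\colim_B\underline{B}[1/p]$ with $B$ in its compact topology, i.e. a colimit of Smith spaces. That suffices for flatness but not for the first assertion of b). The Banach spaces $B[1/p]$ with the gauge norm induce a strictly finer topology on $B$. Also, $B/\varpi^n\simeq\prod_{\N}\so_K/\varpi^n$ is profinite, not discrete.

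Here is a repair valid for all $K$. Let $(p_n)$ be the defining ultrametric seminorms of $V$ and let $f\colon S\to V$ be continuous. Choose $\mu_n>0$ with $\mu_n^{-1}\sup_S p_n\circ f\to 0$. Then $q_\mu=\sup_n p_n/\mu_n$ is a complete norm on $W_\mu=\{v\in V: q_\mu(v)<\infty\}$, which injects continuously into $V$, and $f$ is continuous into $W_\mu$. The $W_\mu$ form a filtered system with $\underline{V}=\colim_\mu\underline{W_\mu}$. Flatness then reduces to Banach spaces $W\simeq c_0(I)_K\simeq c_0(I)_{\Q_p}\widehat{\otimes}_{\Q_p}K$. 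For these, $\underline{W}\otimes^{\Box}_K M\simeq\underline{c_0(I)_{\Q_p}}\otimes^{\Box}_{\Q_p}M$ by the Banach case of a) over $\Q_p$, and over $\Q_p$ your compact-hull argument does apply.

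Finally, for the Fr\'echet case of monoidality there is no general commutation of $\otimes^{\Box}_K$ with countable limits to invoke. You are really quoting the statement itself from \cite{GB1}, which is what the paper does.
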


\begin{remark}
 \begin{enumerate}[leftmargin=*]
 
 \item As $S$ varies in ${\rm PS}$, the $V_S:=\underline{\rm Hom}_{{\rm CondAb}}(\Z[S], \Q_p)=\underline{\rm Hom}_{{\rm Solid}}(\Z_{\Box}[S], \Q_p)$ describe the full subcategory of ${\rm Solid}$ spanned by the $\Q_p$-Banach spaces, and they satisfy $V_S\otimes^{\Box}_{\Q_p} V_{S'}\simeq V_{S\times S'}$ (see \cite[Prop. A.49, Lemma A.30]{GB1}).
  \end{enumerate}
 
\end{remark}

 Let $A$ be a solid $\Q_p$-algebra. Proposition $A.29$ in \cite{GB1} shows that there is a natural analytic ring structure 
 $\mathcal{A}=(A, \Z)_{\Box}$ with underlying condensed ring $A$ and functor of measures $S\mapsto \mathcal{A}[S]:=A_{\Box}[S]$ (for $S$ in the category ${\rm ED}$ of extremely disconnected profinite sets, i.e., projective objects of ${\rm PS}$). Moreover the category $A_{\Box}$ is the same as the full subcategory ${\rm Mod}_{\mathcal{A}}$ of ${\rm Mod}_A({\rm CondAb})$ consisting of $\mathcal{A}$-complete modules, and the tensor product $\otimes^{\Box}_A$ is nothing but the corresponding closed symmetric monoidal structure $\otimes_{\mathcal{A}}$ on ${\rm Mod}_{\mathcal{A}}$.

     \subsection{Condensed group cohomology} Let $G$ be a profinite group and let $M\in \sd(\Z[G])$. 
     We define 
     $$\R\Gamma(G, M):=\R\underline{{\rm Hom}}_{\Z[G]}(\Z, M)\in \mathcal{D}({\rm CondAb}).$$
      If moreover $M\in \sd(\mathrm{Solid})$, by \cite[Prop. B2]{GB1}, there is a natural quasi-isomorphism 
     $$\R\Gamma(G, M)\simeq [M\to \underline{{\rm Hom}}(\Z[G], M)\to \underline{{\rm Hom}}(\Z[G^2], M)\to\ldots],$$
     the differentials being the usual ones in the bar resolution of $\Z$ by $\Z[G]$-modules. If $M$ is a $\underline{G}$-module in $K_{\Box}$, we can replace $\Z$ by $K_{\Box}$ in the above formulae. 
     
     Let ${\rm Nuc}_K\subset K_{\Box}$ be the full subcategory of nuclear solid $K$-vector spaces. This subcategory is generated under colimits by 
     (images of) $K$-Banach spaces and it is stable under countable products;  it contains all images of $K$-Fr\'echet spaces (see \cite[Prop. 
     A.57, Prop.  A.64]{GB1}).

   \begin{proposition}\label{magic}
     Let $G$ be a profinite group and let $W\in {\rm Nuc}_K$ be endowed with the trivial $G$-action. For any $V\in \mathcal{D}(K_{\Box}[G])$ there is a natural quasi-isomorphism
          $$\R\Gamma(G, V)\otimes^{\LL_\Box}_K W\simeq \R\Gamma(G, V\otimes^{\LL_\Box}_K W).$$
   \end{proposition}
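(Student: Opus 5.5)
The plan is to compute both sides with the explicit bar complex of \cite[Prop. B2]{GB1} recalled above, and to reduce the statement to a termwise identification plus a commutation of $\otimes^{\LL_\Box}_K W$ with a countable limit.

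First, the comparison map. The map $V\to V\otimes^{\LL_\Box}_K W$ is $G$-equivariant, since $W$ carries the trivial action. By adjunction in $\sd(K_\Box)$ it gives a natural map
$$\R\Gamma(G,V)\otimes^{\LL_\Box}_K W\to \R\Gamma(G,V\otimes^{\LL_\Box}_K W),$$
namely the composite of $\R\underline{\Hom}_{K_\Box[G]}(K,V)\otimes^{\LL_\Box}_K W\to \R\underline{\Hom}_{K_\Box[G]}(K,V\otimes^{\LL_\Box}_K W)$. It remains to show this map is a quasi-isomorphism.

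Second, the reduction to the termwise statement. Write $\R\Gamma(G,-)$ as the totalization of the cosimplicial object $n\mapsto \R\underline{\Hom}(K_\Box[G^n],-)$. The $n$-th term is $\R\underline{\Hom}_K(K_\Box[G^n],V)$. Since $G^n$ is profinite, $K_\Box[G^n]$ is compact projective, so this equals $\underline{\Hom}$ applied termwise. Its dual is the Smith space $K_\Box[G^n]=\mathcal{C}(G^n,K)^\vee$, so $\R\underline{\Hom}(K_\Box[G^n],V)\simeq \mathcal{C}(G^n,K)\otimes^{\LL_\Box}_K V$ for any $V$. This is the standard fact that for a profinite set $S$ one has $\underline{\Hom}(K_\Box[S],V)=\mathcal{C}(S,K)\otimes^{\Box}_K V$; I would cite it from \cite{GB1} (Appendix A) or Clausen--Scholze. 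Hence each term commutes with $-\otimes^{\LL_\Box}_K W$ by associativity of the tensor product, for arbitrary $W$.

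Third, the limit. The totalization is a countable limit, realised by the product-total complex of the bar complex. So the claim reduces to showing that $-\otimes^{\LL_\Box}_K W$ commutes with this countable limit. The main obstacle is exactly here, and it is where nuclearity of $W$ enters. My first approach would be to write $W$ as a filtered colimit of Banach spaces, using that ${\rm Nuc}_K$ is generated under colimits by Banach spaces. For a Banach space $W=V_S$ (notation of the Remark above), one has $V_S\otimes^{\LL_\Box}_K(-)\simeq \R\underline{\Hom}(K_\Box[S],-)$ on nuclear objects. Where that identification applies it commutes with all limits, so the claim holds for each Banach $W$.

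The difficulty is that the terms $\mathcal{C}(G^n,K)\otimes V$ need not be nuclear for general $V$. The cleaner route is to use the known result that for nuclear $W$ the functor $W\otimes^{\LL_\Box}_K-$ commutes with countable products (in the form of \cite[Prop. A.64]{GB1}-type statements, namely that nuclear objects are flat and the tensor product with them commutes with countable products), and to combine it with the Milnor sequence for the product-total complex. Passing to filtered colimits in $W$ commutes with the tensor product, and also with the finite-stage limits degreewise, because the cohomology of $G$ is computed by a degreewise countable product. Concluding requires exactly the countable-product statement, which I would take as the key input. Naturality in $V$ and $W$ is then clear from the construction.
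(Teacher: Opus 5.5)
Your construction of the comparison map is fine, but Step 2 rests on a false identity, and this is where the proof breaks. The isomorphism $\underline{\Hom}(K_\Box[S],V)\simeq \mathcal{C}(S,K)\otimes^{\Box}_K V$ for all profinite $S$ is not a general fact. Holding for all $S$ is precisely the definition of $V$ being nuclear. It fails, for instance, for a Smith space $V=K_\Box[S']$ with $S'$ infinite, because the identity of $K_\Box[S']$ is not trace class. In the proposition $V$ is an arbitrary object of $\sd(K_\Box[G])$, so you cannot use this identity.

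The conclusion you draw from it is also false: that each term of the bar complex commutes with $-\otimes^{\LL_\Box}_K W$ for \emph{arbitrary} $W$. Already for $V=K$ it would say $\mathcal{C}(S,K)\otimes^{\Box}_K W\simeq \underline{\Hom}(K_\Box[S],W)$ for all $S$, i.e.\ that every $W$ is nuclear. So the termwise comparison is exactly where the hypothesis on $W$ must enter, and it is the real content of the proposition. The needed statement is: for $W$ a $K$-Banach space and $V$ arbitrary, the natural map $\underline{\Hom}_K(K_\Box[S],V)\otimes^{\Box}_K W\to \underline{\Hom}_K(K_\Box[S],V\otimes^{\Box}_K W)$ is an isomorphism. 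The paper imports this from \cite[Prop. 5.35]{And}; it does not follow from associativity.

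The obstacle you locate in Step 3 is an artifact of the order of your reductions. The input you propose for it is left unproved: that $W\otimes^{\LL_\Box}_K-$ commutes with countable products of arbitrary objects when $W$ is nuclear. It is also not what \cite[Prop. A.64]{GB1} gives; that result concerns stability of ${\rm Nuc}_K$ under countable products.

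The paper avoids any limit, as follows.
\begin{enumerate}
\item By d\'evissage take $V$ in degree $0$. Then $\R\Gamma(G,-)$ is computed by the cochain complex $\underline{\Hom}_K(K_\Box[G^\bullet],V)$ in degrees $\geq 0$, whose terms commute with filtered colimits.
\item Hence both sides commute with colimits in $W$, and one may assume $W$ is Banach, in particular flat.
\item For flat $W$ the derived tensor product with such a complex is computed termwise, since no infinite product appears in any fixed degree. The two sides are then the complexes $\underline{\Hom}_K(K_\Box[G^\bullet],V)\otimes^{\Box}_K W$ and $\underline{\Hom}_K(K_\Box[G^\bullet],V\otimes^{\Box}_K W)$.
\item The termwise isomorphism above concludes.
\end{enumerate}
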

   
   \begin{proof} Since $G$ acts trivially on $W$, we obtain a natural map $W=H^0(G, W)\to \R\Gamma(G, W)\to \R\Gamma(G, V\otimes^{\LL_\Box}_K W)$ and hence a natural map $\R\Gamma(G, V)\otimes^{\LL_\Box}_K W\to \R\Gamma(G, V\otimes^{\LL_\Box}_K W)$. To see that it is a quasi-isomorphism we may assume (by d\'evissage) that $V$ is concentrated in degree $0$. Since everything commutes with colimits in $W$, we may also assume that $W$ is (the image of) a $K$-Banach space, in particular it is flat. But then 
   $\R\Gamma(G, V)\otimes^{\LL_\Box}_K W$ is quasi-isomorphic to the complex $\underline{\rm Hom}_K(K_{\Box}[G^{\bullet}], V)\otimes^{\Box}_K W$, while $ \R\Gamma(G, V\otimes^{\LL_\Box}_K W)$ is quasi-isomorphic to the complex $\underline{\rm Hom}_K(K_{\Box}[G^{\bullet}], V\otimes^{\Box}_K W)$. Now, \cite[Prop. 5.35]{And} provides a natural isomorphism $\underline{\rm Hom}_K(K_{\Box}[S], V)\otimes^{\Box}_K W\simeq \underline{\rm Hom}_K(K_{\Box}[S], V\otimes^{\Box}_K W)$, for any profinite set $S$, yielding the result. 
   
           \end{proof}

\begin{proposition}\label{contcond}
Let $V\in {\rm Fr}_K$ be a $K$-Fr\'echet space endowed with a continuous $K$-linear action of a profinite group 
$G$ and let $n\geq 0$. If $H^n_{\rm cont}(G,V)$ is a $K$-Fr\'echet space, then there is a natural isomorphism in 
$K_{\Box}$
$$H^n(G,V)\simeq \underline{H^n_{\rm cont}(G,V)}.$$
\end{proposition}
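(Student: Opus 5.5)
We compute $H^n(G,V)$ with the explicit complex attached to the bar resolution, recalled above from \cite[Prop. B2]{GB1}:
$$\R\Gamma(G,\underline{V})\simeq [\underline{V}\to \underline{\rm Hom}(\Z[G],\underline{V})\to \underline{\rm Hom}(\Z[G^2],\underline{V})\to\cdots].$$
The plan is to show that this condensed complex is the condensification of the topological complex of continuous cochains. Then we use the Fr\'echet hypothesis to commute $\underline{(-)}$ with taking cohomology in degree $n$.

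\textbf{Step 1: identify the terms.} For a profinite set $S$, we claim $\underline{\rm Hom}(\Z[S],\underline{V})\simeq \underline{\mathcal{C}^0(S,V)}$, where $\mathcal{C}^0(S,V)$ carries the compact-open topology. Evaluating on a profinite $T$, the left side is $\mathcal{C}^0(S\times T,V)$ by adjunction. The right side is $\mathcal{C}^0(T,\mathcal{C}^0(S,V))$. These agree by the exponential law, since $S$ and $T$ are compact Hausdorff. Moreover, $\mathcal{C}^0(S,V)$ is again Fr\'echet: writing $V=\varprojlim V_i$ with the $V_i$ Banach, we get $\mathcal{C}^0(S,V)=\varprojlim \mathcal{C}^0(S,V_i)$, a countable limit of Banach spaces. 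The differentials are the usual continuous linear maps, so by functoriality of $\underline{(-)}$ the condensed bar complex is $\underline{C^\bullet_{\rm cont}(G,V)}$, a complex of Fr\'echet spaces.

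\textbf{Step 2: compute cohomology.} Let $d^{n-1}: C^{n-1}\to C^n$ and $d^n: C^n\to C^{n+1}$ be the differentials. Set $Z^n=\ker d^n$, which is closed in $C^n$ and hence Fr\'echet; since $\underline{(-)}$ preserves limits, the condensed kernel is $\underline{Z^n}$. Let $B^n={\rm im}\, d^{n-1}$. The hypothesis that $H^n_{\rm cont}(G,V)=Z^n/B^n$ is Fr\'echet means its quotient topology is Hausdorff, so $B^n$ is closed in $Z^n$. We then have a surjection of Fr\'echet spaces $C^{n-1}\to \overline{B^n}=B^n$, which is open by the open mapping theorem. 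This gives two strict short exact sequences of Fr\'echet spaces:
$$0\to Z^{n-1}\to C^{n-1}\to B^n\to 0,\qquad 0\to B^n\to Z^n\to H^n_{\rm cont}\to 0.$$
By exactness of ${\rm Fr}_K\to K_{\Box}$ (part a) of the Proposition above), both remain exact after applying $\underline{(-)}$. It follows that the condensed image of $\underline{d^{n-1}}$ is $\underline{B^n}$, and hence
$$H^n(G,V)=\underline{Z^n}/\underline{B^n}\simeq \underline{H^n_{\rm cont}(G,V)}.$$
Naturality is clear from the construction.

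\textbf{Main obstacle.} The delicate point is Step 2: realizing that the Hausdorff hypothesis on $H^n_{\rm cont}$ is exactly what is needed to make $d^{n-1}$ a strict morphism with closed image. Only then does the exactness result for Fr\'echet spaces apply. Without it, the condensed cokernel $\underline{Z^n}/\underline{B^n}$ would have a non-separated part and would differ from $\underline{Z^n/\overline{B^n}}$.

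A secondary point is the identification of Step 1, which requires checking that the compact-open topology on $\mathcal{C}^0(S,V)$ is the one giving the correct condensed structure. This is standard for compactly generated spaces.
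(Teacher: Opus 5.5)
Your proof is correct and follows the paper's argument. Both identify the condensed bar complex with $\underline{\mathcal{C}(G^\bullet,V)}$, a complex of Fr\'echet spaces, and then deduce the result from exactness of $\underline{(-)}$ on ${\rm Fr}_K$. Your Step 2 (closedness of $B^n$ from the Hausdorff property of $H^n_{\rm cont}$, then the open mapping theorem to obtain strict short exact sequences) spells out the step the paper dismisses as ``follows easily''.
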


\begin{proof}
  If $W\in {\rm Fr}_K$ and $H$ is a profinite group, then $\mathcal{C}(H,  W)$ is naturally in ${\rm Fr}_K$ 
  and it is not difficult to check that $\underline{{\rm Hom}}(H, \underline{W})=\underline{\mathcal{C}(H,  W)}$. 
  Let $D^n=\mathcal{C}(G^n, V)\in {\rm Fr}_K$, then $\R\Gamma(G,\underline{V})$ is quasi-isomorphic to the complex 
  $\underline{D^{\bullet}}$, and $\R\Gamma(G,V)$ is quasi-isomorphic to the complex 
  $D^{\bullet}$. Thus it suffices to see that if $D^{\bullet}$ is a complex of $K$-Fr\'echet spaces and if 
  $H^n(D^{\bullet})$ is itself Fr\'echet, then $H^n(\underline{D^{\bullet}})=\underline{H^n(D^{\bullet})}$. This follows easily from the exactness of the functor $\underline{(-)}$ on the category ${\rm Fr}_K$.
\end{proof}

\subsection{Period rings and their Galois cohomology} We pass now to Galois cohomology of some period rings. 
\subsubsection{Galois cohomology of $C$}
  
 For  $j\in\Z$, let $C(j)$ be the $\sg_K$-module twist of $C$ by $\chi_{\rm cycl}^j$, where $\chi_{\rm cycl}:\sg_K\to\Z_p^\dual$
is the cyclotomic character.
\begin{proposition} {\rm (Tate)}\label{gopal1}
We have isomorphisms in $K_{\Box}$
$$
H^i(\sg_K,C(j))\simeq \begin{cases} K&{\text{if  $j=0, i=0,$}}\\ 
K\cdot \log\chi_{\rm cycl}&{\text{if $j=0,i=1,$}}\\
0&{\text{otherwise}}.
\end{cases}
$$
\end{proposition}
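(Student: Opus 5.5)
The plan is to deduce the statement from Tate's classical computation of continuous Galois cohomology, then transfer it to the condensed setting with Proposition \ref{contcond}. The classical input is as follows. The continuous cohomology $H^i_{\rm cont}(\sg_K,C(j))$ equals $K$ for $i=0,j=0$. It is one-dimensional for $i=1,j=0$, generated by the class of the cocycle $\log\chi_{\rm cycl}$. It vanishes in all other cases, namely for $i\geq 2$ (as $\sg_K$ has $p$-cohomological dimension $2$) and for all $i$ when $j\neq 0$. Each of these groups is finite-dimensional over $K$, hence a $K$-Fr\'echet (indeed Banach) space. Since $C(j)$ is a $K$-Banach space with a continuous $K$-linear $\sg_K$-action, Proposition \ref{contcond} then gives $H^i(\sg_K,C(j))\simeq\underline{H^i_{\rm cont}(\sg_K,C(j))}$ in $K_{\Box}$, which is the claim.

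For completeness I would recall how Tate's computation goes, in the standard order.

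First, set $H=\Gal(\overline{K}/K_\infty)$, where $K_\infty=K(\mu_{p^\infty})$, and $\Gamma=\sg_K/H$. By Ax--Sen--Tate, $C^H=\widehat{K}_\infty$. Almost étale descent gives $H^i_{\rm cont}(H,C)=0$ for $i\geq1$; since $H$ acts trivially on the twist, the same holds for $C(j)$.

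Second, the Hochschild--Serre spectral sequence (continuous version, legitimate because the relevant cohomologies are Hausdorff and the cochain complexes are Banach) reduces the problem to computing $H^i_{\rm cont}(\Gamma,\widehat{K}_\infty(j))$.

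Third, Tate's normalized traces decompose $\widehat{K}_\infty=K_n\oplus X_n$ for $n$ large, with $\gamma-1$ invertible (with bounded inverse) on $X_n$. Choose an open $\Gamma_n\simeq\Z_p$ with topological generator $\gamma$; its cohomology is computed by the complex $[\widehat{K}_\infty(j)\xrightarrow{\gamma-1}\widehat{K}_\infty(j)]$. On $X_n$ this complex is acyclic. On $K_n(j)$ the map $\gamma-1$ acts as $\chi_{\rm cycl}(\gamma)^j-1$, which is invertible for $j\ne0$ and zero for $j=0$. Taking $\Gamma/\Gamma_n$-invariants, a finite group acting on $\Q$-vector spaces, then yields the stated answer, with $H^1$ identified via $\log\chi_{\rm cycl}$.

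The main obstacle is not the classical calculation but making the comparison with condensed cohomology honest. Proposition \ref{contcond} requires the continuous cohomology groups to be Fréchet, in particular Hausdorff. This holds because they are finite-dimensional, but only if $H^i_{\rm cont}$ is computed with the natural topology as the cohomology of the continuous cochain complex $\mathcal{C}(\sg_K^\bullet,C(j))$. So I would check that the images of the differentials in that complex are closed. Tate's bounded inverse of $\gamma-1$ on $X_n$ gives closed images at the $\Gamma$-level. Alternatively one can avoid this check by running the Hochschild--Serre argument directly in condensed group cohomology and invoking Proposition \ref{magic}.
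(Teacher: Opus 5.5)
Your proposal is correct and takes the same route as the paper, which likewise quotes Tate's continuous computation and transfers it to $K_{\Box}$ via Proposition \ref{contcond}. Two small remarks: the vanishing for $i\geq 2$ does not follow from $\mathrm{cd}_p(\sg_K)=2$ (for instance $H^2(\sg_K,\Q_p(1))\neq 0$); it follows from your own Hochschild--Serre reduction to the virtually pro-cyclic group $\Gamma$. Also, the closed-image check is automatic, since a continuous linear map of Fr\'echet spaces whose image has finite codimension always has closed image.
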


\begin{proof}  This  is a classical computation of Tate ~\cite{Ta67} in the context of continuous cohomology. To see that the same result holds in the condensed setting it suffices to 
use Proposition \ref{contcond}.
\end{proof}

  We will now modify the period ring $C$ to kill the Galois cohomology classes in degree $1$ in the above formulas. We define  $\log t$ as a transcendental variable over $C$, and  we equip  $C[\log t]$ with the action of 
 $\sg_K$ given by  $ g (\log t)=\log t+\log\chi_{\rm cycl}( g )$, $ g \in \sg_K$.
\begin{lemma}\label{Galois11}

 We have isomorphisms in $K_{\Box}$
$$
H^i(\sg_K,C[\log t](j))\simeq \begin{cases} K & \mbox {if } i=0,j=0,\\
 0 & \mbox{otherwise}.
\end{cases}
$$
\end{lemma}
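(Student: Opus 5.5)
We filter $C[\log t](j)$ by degree in $\log t$. Let $\mathrm{Fil}_n:=\bigoplus_{k\le n}C(j)(\log t)^k$. Each $\mathrm{Fil}_n$ is $\sg_K$-stable, since $g((\log t)^k)=(\log t+\log\chi_{\rm cycl}(g))^k$ is of degree $\le k$. Moreover $\mathrm{Fil}_n/\mathrm{Fil}_{n-1}\simeq C(j)$ via the leading coefficient. Since $C[\log t](j)=\colim_n \mathrm{Fil}_n$ is a countable filtered colimit along split injections of Banach spaces, and $\sg_K$ is profinite, group cohomology commutes with this colimit. Indeed, the bar complex $\underline{\rm Hom}(\Z[\sg_K^\bullet],-)$ commutes with filtered colimits because the $\Z[S]$ are compact in ${\rm CondAb}$. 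So it suffices to compute $H^i(\sg_K,\mathrm{Fil}_n)$ and then pass to the colimit.

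For $j\neq 0$, Proposition \ref{gopal1} gives $\rg(\sg_K,C(j))=0$. By induction on $n$, using the exact triangles $\mathrm{Fil}_{n-1}\to\mathrm{Fil}_n\to C(j)$, we get $\rg(\sg_K,\mathrm{Fil}_n)=0$ for all $n$, hence the vanishing in the colimit.

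For $j=0$, the plan is to show by induction that $\rg(\sg_K,\mathrm{Fil}_n)\simeq K$ concentrated in degree $0$ for all $n\ge 0$. The base case $n=0$ is not of this form, since $\rg(\sg_K,C)$ has $H^0=K$ and $H^1=K\log\chi_{\rm cycl}$. So the induction is better organized by analysing the connecting maps $\delta_n:H^0(\sg_K,\mathrm{Fil}_n/\mathrm{Fil}_{n-1})=K\to H^1(\sg_K,\mathrm{Fil}_{n-1})$. The key computation is the following. The class $(\log t)^n$ lifts the generator $1$ of the quotient. Then
$$g((\log t)^n)-(\log t)^n=n\log\chi_{\rm cycl}(g)(\log t)^{n-1}+(\text{lower terms}),$$
so $\delta_n(1)$ maps to $n\log\chi_{\rm cycl}\neq 0$ in $H^1(\sg_K,\mathrm{Fil}_{n-1}/\mathrm{Fil}_{n-2})=K\log\chi_{\rm cycl}$.

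One then proves inductively that $H^0(\sg_K,\mathrm{Fil}_n)=K$ and $H^1(\sg_K,\mathrm{Fil}_n)\simeq K$, spanned by the image of the class $\log\chi_{\rm cycl}\cdot(\log t)^n$ from the top graded piece, with $H^i=0$ for $i\ge2$. At step $n$, the long exact sequence
$$0\to H^0(\mathrm{Fil}_{n-1})\to H^0(\mathrm{Fil}_n)\to K\xrightarrow{\delta_n}H^1(\mathrm{Fil}_{n-1})\to H^1(\mathrm{Fil}_n)\to K\log\chi_{\rm cycl}\to H^2(\mathrm{Fil}_{n-1})=0$$
has $\delta_n$ an isomorphism, because it hits the generator up to the unit $n$. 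This yields $H^0(\mathrm{Fil}_n)=K$ and $H^1(\mathrm{Fil}_n)\simeq K$, coming from the new top piece, and all higher groups vanish. The main obstacle is this bookkeeping: one must verify that $\delta_n$ lands on the generator of $H^1(\mathrm{Fil}_{n-1})$, which is exactly the image of the top graded class by the inductive hypothesis. This is the displayed leading-term computation, and it requires $\mathrm{char}\,K=0$ so that $n$ is invertible.

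Finally, the transition maps $H^1(\mathrm{Fil}_{n-1})\to H^1(\mathrm{Fil}_n)$ are zero, since they are the cokernel of $\delta_n$. So the colimit gives $H^1=0$, $H^0=K$, and $H^i=0$ for $i\ge 2$. All these groups are finite-dimensional, hence Fréchet. The isomorphisms therefore hold in $K_{\Box}$, either via Proposition \ref{contcond} applied to each $\mathrm{Fil}_n$, or directly, since the long exact sequences are sequences of condensed $K$-vector spaces.
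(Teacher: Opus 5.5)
Your argument is correct and is essentially the paper's proof. It uses the same filtration by $\log t$-degree, the same induction from Tate's computation, and the same leading-coefficient identity. The paper packages that identity as a proof that $H^0(\sg_K,\mathrm{Fil}_k)=K$, by comparing the coefficients of $(\log t)^{k-1}$ in $g(P)=P$, and then uses a dimension count in the long exact sequence to show the transition maps on $H^1$ vanish. Two small points of wording: the inductive hypothesis you actually use is that $H^1(\sg_K,\mathrm{Fil}_n)\to H^1(\sg_K,\mathrm{Fil}_n/\mathrm{Fil}_{n-1})$ is an isomorphism, rather than that $H^1$ is ``spanned by the image'' of a class from the top graded piece. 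Also, your opening claim that $\rg(\sg_K,\mathrm{Fil}_n)\simeq K[0]$ for each $n$ is false, as you yourself note immediately afterwards.
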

\begin{proof} 
Let $A=C[\log t]$ and let $A_k=C[\log t]^{\leq k}:=\sum_{i=0}^k C (\log t)^i$, so that 
  $H^i(\sg_K, A)=\varinjlim_{k} H^i(\sg_K, A_k)$.
   
   We start by proving that the natural map $K\to H^0(\sg_K, A_k)$ is an isomorphism for all $k$. It suffices to check that, for $k>0$, the natural inclusion 
$H^0(\sg_K, A_{k-1})\subset H^0(\sg_K, A_k)$ is an equality. Let $P=\sum_{j=0}^k v_j (\log t)^j\in H^0(\sg_K, A_k)$ and let 
$ g \in \sg_K$. Identifying the coefficients of $(\log t)^k$ in $ g (P)=P$ yields $v_k\in H^0(\sg_K, C)\simeq K$. Identifying the coefficients of $(\log t)^{k-1}$ yields $-kv_k \log \chi_{\rm cycl}( g )=( g -1)v_{k-1}$. The isomorphism $H^1(\sg_K,  C)=K\log \chi_{\rm cycl}$ forces 
$v_k=0$ and so $P\in H^0(\sg_K, A_{k-1})$, as desired.

   Next, we observe that $A_k$ is stable under the action of $\sg_K$ and we have an exact sequence of $\sg_K$-modules, for all $j\in \mathbf{Z}$:
   $$0\to A_k(j)\to A_{k+1}(j)\to  C(j)\to 0.$$
   Consider the long exact sequence in cohomology associated to this exact sequence. If $j\ne 0$, Proposition \ref{gopal1} immediately yields (by induction on $k$) the vanishing of all $H^i(\sg_K, A_k(j))$, so we are done (by passing to the colimit over $k$). Suppose now that 
$j=0$. Since the cohomological dimension of $\sg_K$ is $2$, the long exact sequence yields (again by induction on $k$) $H^i(\sg_K, A_k)=0$ for $i\geq 2$, hence the long exact sequence reduces to the following exact sequence 
$$0\to H^0(\sg_K, C)\simeq K\to H^1(\sg_K, A_k)\to H^1(\sg_K, A_{k+1})\to H^1(\sg_K, C)\simeq K\log \chi_{\rm cycl}\to 0.$$
 An induction on $k$ show that $\dim H^1(\sg_K, A_k)=1$ for all $k$, thus the maps on the left and on the right of the exact sequence must be isomorphisms and the middle arrow is $0$, in particular $H^1(\sg_K, A)=\varinjlim_{k} H^1(\sg_K, A_k)=0$. 
\end{proof}

\subsubsection{Almost de Rham period ring}
Let $\B^+_{\pdr}$ be the algebra $\B^+_{\dr}[\log t]$ defined in \cite[Sec. 4.3]{FonA} and let $\B_{\pdr}:=\B^+_{\pdr}[1/t]$. The Galois group $\sg_K$ acts on $\B^+_{\pdr}$ via ring homomorphisms extending its usual action on $\B^+_{\dr}$ and such that $ g (\log t)=\log t+\log(\chi_{\rm cycl}( g ))$, $ g \in\sg_K$. This action naturally extends to $\B_{\pdr}$. The filtration on $\B_{\pdr}$ is given by powers of $t$. 

%Moreover there is a unique $\B_{\dr}$-derivation $\nu_{\B_{\pdr}}$ such that $\nu_{\B_{\pdr}}(\log(t))=-1$, and it obviously preserves $\B^+_{\pdr}$ and commutes with $\sg_K$.
\begin{proposition} \label{Galois1}
 We have natural quasi-isomorphisms in $\mathcal{D}(K_{\Box})$ ($r\in \Z$)
$$\R\Gamma(\sg_K, t^{r}\B^+_{\pdr})\simeq \begin{cases} K[0] & \mbox {if } r\leq 0,\\
 0 & \mbox{if } r>0,
 \end{cases}
$$
and 
$$
\R\Gamma(\sg_K, \B^+_{\pdr})\simeq \R\Gamma(\sg_K,\B_{\pdr})\simeq K[0].$$

% we have $H^i(\sg_K, t^{r}\B^+_{\pdr})=0$ for $i\geq 1$ and 
%$$H^0(\sg_K,t^{r}\B^+_{\pdr})\simeq \begin{cases} K & \mbox {if } r\leq 0,\\
 %0 & \mbox{if } r>0.
 %\end{cases}
% $$
 
\end{proposition}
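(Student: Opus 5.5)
We reduce to Lemma \ref{Galois11} by dévissage along the $t$-adic filtration. For $r\le s$ the quotient $t^r\B^+_{\pdr}/t^s\B^+_{\pdr}$ has a finite filtration with graded pieces
$$t^i\B^+_{\pdr}/t^{i+1}\B^+_{\pdr}\simeq C[\log t](i),\qquad r\le i<s.$$
This holds because $\B^+_{\pdr}=\B^+_{\dr}[\log t]$ with $\log t$ transcendental, $t$ a non-zero-divisor, and $\B^+_{\dr}/t\simeq C$. The identification is $\sg_K$-equivariant, since $g(\log t)=\log t+\log\chi_{\rm cycl}(g)$ in both rings.

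By Lemma \ref{Galois11}, $\R\Gamma(\sg_K,C[\log t](i))$ vanishes for $i\ne 0$ and equals $K[0]$ for $i=0$. Consequently:
\begin{itemize}
\item if $r>0$, then $\R\Gamma(\sg_K,t^r\B^+_{\pdr}/t^s)=0$ for all $s\ge r$;
\item if $r\le 0$ and $s>0$, then $\R\Gamma(\sg_K,t^r\B^+_{\pdr}/t^s)\simeq K[0]$, coming from the piece $i=0$.
\end{itemize}
In the second case, the natural map $K\to t^r\B^+_{\pdr}/t^s$ (the constant $1$) realises this isomorphism.

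Next we pass to the limit over $s$. Write $t^r\B^+_{\pdr}=\varinjlim_k t^r\B^+_{\dr}[\log t]^{\le k}$. Each $t^r\B^+_{\dr}[\log t]^{\le k}$ is a finite free $t^r\B^+_{\dr}$-module, hence Fréchet, and equals $\R\varprojlim_s$ of its quotients mod $t^s$; the transition maps are surjective, so $\R\varprojlim=\varprojlim$.

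Since $\R\Gamma(\sg_K,-)$ commutes with derived limits, we can argue in two steps. First, for fixed $k$, each piece with $\log t$-degree at most $k$ has the same graded pieces $C[\log t]^{\le k}(i)$. The proof of Lemma \ref{Galois11} shows that $\R\Gamma(\sg_K, C[\log t]^{\le k}(i))$ vanishes for $i\neq0$, while for $i=0$ it is $K$ in degrees $0$ and $1$. Second, the degree-$1$ class dies under the transition $k\to k+1$. So we first take $\R\varprojlim_s$ at fixed $k$, then $\varinjlim_k$. This commutation with the filtered colimit is the subtle step.

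That colimit step is the main obstacle: group cohomology of a profinite group must commute with the filtered colimit over $k$ in the condensed setting. We justify it via the bar complex, as in Lemma \ref{Galois11} where it was used silently. The terms $\underline{\Hom}(\Z[G^n],-)$ commute with filtered colimits along injective maps of Fréchet (or Banach-type) spaces, since $\Z_\Box[G^n]$ is compact. For the limit over $s$, the Mittag-Leffler system of $H^1$ terms has surjective or eventually-constant transitions, so no $\varprojlim^1$ appears; the $H^0$ and $H^1$ terms stabilise to the values computed above. Combining the two steps gives $\R\Gamma(\sg_K,t^r\B^+_{\pdr})\simeq K[0]$ for $r\le 0$ and $0$ for $r>0$.

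Finally, $\B_{\pdr}=\varinjlim_r t^{-r}\B^+_{\pdr}$. The transition maps induce the identity on the $K[0]$ given by the constants, so commuting cohomology with this countable filtered colimit, by the same compactness argument, gives $\R\Gamma(\sg_K,\B_{\pdr})\simeq K[0]$. Naturality holds throughout because every isomorphism is induced by the inclusion of constants $K\to\B^+_{\pdr}$.
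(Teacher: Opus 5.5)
Your proof is correct and is essentially the paper's argument. Both fix the $\log t$-degree $k$ and compute $\R\Gamma(\sg_K, t^r\B^+_{\dr}[\log t]^{\le k})$ by dévissage along the $t$-adic tower, which gives $0$ for $r>0$ and $\R\Gamma(\sg_K, C[\log t]^{\le k})$ for $r\le 0$, compatibly in $k$; both then pass to $\colim_k$ using that the $H^1$ transition maps vanish, and finally to $\B_{\pdr}=\colim_r t^{-r}\B^+_{\pdr}$. Your opening dévissage of the full quotients $t^r\B^+_{\pdr}/t^s$ is harmless but unused: as you noticed, $t^r\B^+_{\pdr}$ is not the limit of these quotients, since that limit is its $t$-adic completion.
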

\begin{proof} Let $A_n=C[\log t]^{\leq n}=\sum_{i=0}^n C (\log t)^i$. The proof of Lemma \ref{Galois11} shows that ,
for $j\ne 0$, we have $H^i(\sg_K, A_n(j))=0$, for all $i,n\geq 0$. Moreover, the inductive system $(H^i(\sg_K, A_n))_{n\geq 0}$ is 
zero for $i\geq 1$ (more precisely $H^i(\sg_K, A_n)=0$ for $i\geq 2$, and the map $H^1(\sg_K, A_n)\to H^1(\sg_K, A_{n+1})$ is $0$) and it is constant of value $K$, for $i=0$.

   Let 
   $$M_r:=t^{r}\B^+_{\pdr}=\colim_{n} M_r^{\leq n},\,\, M_r^{\leq n}:= t^{r}\B_{\dr}^+[\log t]^{\leq n}=\sum_{i=0}^n t^r\B_{\dr}^+ (\log t)^i.$$
   We claim that, for any $i\geq 0, r\in \Z$, we have an isomorphism of inductive systems 
   $$(H^i(\sg_K, M_r^{\leq n}))_{n}\simeq \begin{cases} (H^i(\sg_K, A_n(r)))_n & \mbox {if } r\geq 0,\\
 (H^i(\sg_K, A_n))_n & \mbox{if } r<0.
\end{cases}$$

   The claim implies the result for $H^*(\sg_K, M_r)=\colim_n H^*(\sg_K, M_r^{\leq n})$, using the description of the inductive system $(H^i(\sg_K, A_n(r)))_n $ in the first paragraph. With this in hand, the second part of the proposition is immediate, since $H^i(\sg_K, \B_{\pdr})=\colim_{r\geq 0} H^i(\sg_K, t^{-r}\B_{\pdr}^+)$.

   Fix $n\geq 0$ and $r\in \Z$, we  define 
   $$X_m:=t^{r} (\B_{\dr}^+/t^m)[\log t]^{\leq n},$$
   so that $M_r^{\leq n}=\varprojlim_{m} X_m$, $X_1\simeq A_n(r)$ and we have an exact sequence 
   $$0\to {\rm R^1lim}_m\, H^{i-1}(\sg_K, X_m)\to H^i(\sg_K, M_r^{\leq n})\to \lim_{m}\, H^i(\sg_K, X_m)\to 0.$$
   The natural projection $\B_{\dr}^+/t^{m+1}\to \B_{\dr}^+/t^m$ induces an exact sequence of
$\sg_K$-modules
$$0\to A_n(m+r)\to X_{m+1}\to X_m\to 0.$$

 Suppose that $r\geq 0$. Then $H^i(\sg_K, A_n(m+r))=0$, for all $m\geq 1$, thus the projective system 
 $(H^i(\sg_K, X_m))_m$ is constant with value $H^i(\sg_K, A_n(r))$, for all $i\geq 0$, and the result follows.
 
   Suppose that $r=-j$ with $j>0$. Then the above exact sequence and the fact that $H^i(\sg_K, X_1)=H^i(\sg_K, A_n(r))=0$,
   for all $i$, imply that $H^i(\sg_K, X_k)=0$, for $k\leq j$ and all $i$, that the projective system 
   $(H^i(\sg_K, X_m))_m$ is constant starting with $m=j+1$, with value $H^i(\sg_K, M_r^{\leq n})$, and we have 
   $H^i(\sg_K, A_n)\simeq H^i(\sg_K, X_{j+1})\simeq H^i(\sg_K, M_r^{\leq n})$, for all $i$. By construction, these isomorphisms are compatible with the variation of $n$, yielding the result. 
 \end{proof}

\begin{remark} \label{Galois-zurich} 
\begin{enumerate}[leftmargin=*]
\item Lemma \ref{Galois11} and Proposition \ref{Galois1} are folklore. Somewhat different proofs to the ones presented above can be found in \cite[Prop. 1.2, Th. 1.4]{CGN3}. 
\item Proposition \ref{magic} and the above results yield the following more general version of Proposition \ref{Galois1}: 
if $W\in {\rm Nuc}_K$ is equipped  with the trivial $\sg_K$-action, then there are 
 natural quasi-isomorphisms in $\mathcal{D}(K_{\Box})$
 $$
\R\Gamma(\sg_K,W\otimes^{\Box}_KC[\log t](j))\simeq 
\begin{cases} W[0] & \mbox {if } j=0,\\
 0 & \mbox{otherwise.}
\end{cases}
$$
This implies that  we have natural quasi-isomorphisms in $\mathcal{D}(K_{\Box})$ ($r\in \Z$)
$$\R\Gamma(\sg_K, W\otimes^{\Box}_Kt^{r}\B^+_{\pdr})\simeq \begin{cases} W[0] & \mbox {if } r\leq 0,\\
 0 & \mbox{if } r>0,
 \end{cases}
$$
and 
$$
\R\Gamma(\sg_K, W\otimes^{\Box}_K\B^+_{\pdr})\simeq \R\Gamma(\sg_K,W\otimes^{\Box}_K\B_{\pdr})\simeq W[0].$$
\end{enumerate}
\end{remark}
\subsubsection{Period ring  $\B$} Our main reference here is \cite{CGN3}. 

   Let  $Y_{\rm FF}$ be the  Fargues-Fontaine curve obtained  by removing from ${\rm Spa}(\A_{\rm inf})$
the divisors  $p=0$ and    $[p^\flat]=0$.
Let  $I$ be an interval of  $\R_+^\dual$. Let  $\B_I=\O(Y_{\rm FF,I})$, where
$Y_{\rm FF,I}$ is the locus of  $v$ such that  $v([p^{\flat}])/v(p)\in I$.
If  $I=[r,s]$, with  $r,s\in\Q$, then $\B_I=\A_{[r,s]}[\frac{1}{p}]$, where  $\A_{[r,s]}$ is the $p$-adic completion
of  $\A_{\rm inf}[\frac{[p^{\flat}]^{1/r}}{p},\frac{p}{[p^{\flat}]^{1/s}}]$.
Set
$$\B^+:=\B^{]0,\infty[}=\O(Y_{\rm FF})=\varprojlim\B_{[r,s]},\quad \B:=\B^+[1/t].$$  
(Usually, the ring ${\B}^+$ is denoted by ${\B}$.)
The natural Frobenius automorphism $\varphi$ on ${\A}_{\rm inf}$ induces isomorphisms $\varphi: {\B}_I\simeq {\B}_{I/p}$ and, in the limit, an automorphism 
     $\varphi$ of ${\B}^+$ and ${\B}$. 
   We denote  by $x_{\rm dR}\in Y_{\rm FF}$ the point defined by  $[p^{\flat}]=p$; the completon of the local ring at this point  is  $\B_{\dr}^+$, which yields a $\G_K$-equivariant  injective morphism
 $\B^+\hookrightarrow\B_{\dr}^+$.

   Define  $U=\log[p^{\flat}]$ as a transcendental element over  $\B^+$, with 
$$\varphi(U)=p\,U
\quad{\rm and }\quad
g(U)=U+ \log [g(p^{\flat})/p^{\flat}].$$
We set:
$$\B_{\rm FF}^+:=\B^+[U],\quad\B_{\rm FF}:=\B_{\rm FF}^+[\tfrac{1}{t}],
\quad\B_{\rm pFF}^+:=\B_{\rm FF}^+[\log t],\quad \B_{\rm pFF}:=\B_{\rm pFF}^+[\tfrac{1}{t}]$$
The ring
    ${\B}^+_{\rm FF}$ is endowed with a surjective monodromy operator $N=-\frac{d}{dU}$, commuting with the action of $\sg_K$, such that 
    $N\varphi=p\varphi N$ and ${\B}^+= {\B}_{\rm FF}^{+,N=0}$. These operators extend to $\B^+_{\rm pFF}$ by setting
    $\phi(\log t)=\log t, N\log t=0$.
    
  We extend the canonical  injection
$\B^+\hookrightarrow\B_{\dr}^+$ to a ring morphism 
$\B_{\rm FF}^+\otimes_FK\to\B_{\dr}^+$ by sending $U$ to $\log\frac{[p^{\flat}]}{p}$.
This morphism is  $\G_K$-equivariant and, by  \cite[Prop.\,10.3.15]{FF},   injective.
Similarly, the natural morphism 
$\B_{\rm pFF}\otimes_FK\to\B_{\rm pdR}$
is $\G_K$-equivariant and injective. 
Moreover, we have
$H^0(\G_K,\B_{\rm pFF})\simeq F$.

\begin{theorem}\label{hawaii1}{\rm (\cite[Th. 2.4]{CGN3})}
For $\Lambda=\B_{\rm pFF}^+,\B_{\rm pFF}$, we have
$$\R\Gamma(\G_K,\Lambda)\simeq \begin{cases} F[0]&{\text{if  $i=0$}},\\ 0&{\text{if $i\geq 1$}}.\end{cases}$$
\end{theorem}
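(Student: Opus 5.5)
Since $\B_{\rm pFF}^+=\B^+[U][\log t]$ is a polynomial ring over $\B^+$, I would argue by dévissage on the polynomial degree, exactly as in Lemma \ref{Galois11}. The inputs are the Galois cohomology of $\B^+$ and of its twists, and the fact that the cocycles $g\mapsto \log[g(p^\flat)/p^\flat]$ and $g\mapsto\log\chi_{\rm cycl}(g)$ kill the degree-one classes. The case of $\B_{\rm pFF}$ then follows by writing $\B_{\rm pFF}=\colim_r t^{-r}\B^+_{\rm pFF}$ and showing that multiplication by $t^{-1}$ induces isomorphisms on cohomology, as at the end of the proof of Proposition \ref{Galois1}.

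The first step is to compute $\R\Gamma(\G_K,\B^+(j))$ and $\R\Gamma(\G_K,\B(j))$. For this I would use the fundamental exact sequence $0\to \Q_p t\to \B^{+,\varphi=p}\to \B^+_{\dr}/t\B^+_{\dr}=C\to 0$ and its analogues for $\B^{+,\varphi=p^k}$. One can also work on the Fargues--Fontaine curve: the cohomology of $\B^+$ is computed via $\varphi$-eigenspaces, with $\B^+=\varprojlim_I\B_I$ and the vanishing of $\R^1\lim$ checked as in Proposition \ref{Galois1}. The expected answer is an analogue of Tate's computation (Proposition \ref{gopal1}): $H^0=F$, and $H^1$ is spanned by two classes. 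One class is $\log\chi_{\rm cycl}$. The other is the Kummer-type class of $g\mapsto \log[g(p^\flat)/p^\flat]\in \B^+$, which is well defined because $[\epsilon]-1$ gives $t=\log[\epsilon]\in\B^+$. All other cohomology vanishes. Via the injection $\B^+_{\rm FF}\otimes_FK\hookrightarrow \B^+_{\dr}$, one checks that these classes are linearly independent using Proposition \ref{Galois1}.

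Next I would filter $\Lambda^{\le n}=\sum_{a+b\le n}\B^+U^a(\log t)^b$ by total degree. This gives short exact sequences $0\to\Lambda^{\le n-1}\to\Lambda^{\le n}\to\bigoplus_{a+b=n}\B^+\to0$. As in Lemma \ref{Galois11}, the connecting maps send the generators of $H^0$ of the graded pieces to multiples of the two degree-one classes, namely $a\cdot[\log(g p^\flat/p^\flat)]$ and $b\cdot[\log\chi_{\rm cycl}]$. From this I would deduce that $H^0$ stays equal to $F$ and that the transition maps on $H^1$ and $H^2$ are zero in the inductive system. Passing to the colimit then gives $\R\Gamma(\G_K,\B^+_{\rm pFF})=F[0]$.

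The main obstacle is the first step. The Galois cohomology of $\B^+$ is not as clean as that of $C$: $\B^+$ is Fréchet but not complete along $t$, so one must control $H^1$ and $H^2$ and the $\R^1\lim$ terms. One must also verify precisely that $H^1(\G_K,\B^+)$ is two-dimensional, spanned by the two logarithmic cocycles, and that $H^2$ vanishes. To do this I would first try to reduce to $\B_{\dr}^+$ via the exact sequence $0\to \B^{+,\varphi=p^k}\to \B^+_{\dr}\to\ldots$ together with the Frobenius-eigenspace decomposition, then apply Proposition \ref{Galois1}, using Proposition \ref{contcond} to pass to condensed cohomology.
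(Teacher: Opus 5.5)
The paper gives no argument of its own here; it only cites \cite[Th.~2.4]{CGN3}. Your proposal has a genuine gap, and it is exactly where you located the difficulty. The claimed answer $H^1(\G_K,\B^+)=F\log\chi_{\rm cycl}\oplus F\kappa$ is false, where $\kappa(g)=\log[g(p^\flat)/p^\flat]=c_p(g)\,t$ is the Kummer class of $p$ placed in $\Q_p t\subset\B^+$.

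Since $H^1(X_{\rm FF},\O(k))=0$ for $k\geq 0$, the $\G_K$-equivariant sequences $0\to\B^{+,\varphi=p^k}\to\B^+\xrightarrow{\varphi-p^k}\B^+\to 0$ are exact.
\begin{itemize}
\item For $k=0$: using $(\B^+)^{\G_K}=F$, this identifies $H^1(\G_K,\B^+)^{\varphi=1}$ with $\Hom(\G_K,\Q_p)$ modulo the unramified line. For $K$ finite over $\Q_p$ that space has $\Q_p$-dimension $[K:\Q_p]$. By contrast, $(F\log\chi_{\rm cycl}\oplus F\kappa)^{\varphi=1}=\Q_p\log\chi_{\rm cycl}$, so there are too many classes as soon as $K\neq\Q_p$.
\item For $k\geq 1$: the map $\varphi-p^k$ is bijective on $F$, so $H^1(\G_K,\B^{+,\varphi=p^k})$ is exactly the $\varphi=p^k$-eigenspace of $H^1(\G_K,\B^+)$.
\item For $k\geq 2$: use the fundamental exact sequence $0\to\Q_p(k)\to\B^{+,\varphi=p^k}\to\B^+_{\dr}/t^k\to 0$. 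Combine it with $H^0(\G_K,\B^{+,\varphi=p^k})=0$, $\dim_{\Q_p}H^1(\G_K,\Q_p(k))=[K:\Q_p]$ and $H^2(\G_K,\Q_p(k))=0$. This gives $H^1(\G_K,\B^{+,\varphi=p^k})\simeq H^1(\G_K,C)\simeq K\neq 0$.
\end{itemize}
So $H^1(\G_K,\B^+)$ is infinite dimensional even for $K=\Q_p$.

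Your independence check through $\B^+_{\dr}$ also fails, because in $\B^+_{\dr}$ the cocycle $\kappa$ is the coboundary of $\log([p^\flat]/p)$. The classes $\kappa$ and $\log\chi_{\rm cycl}$ are independent for a different reason: both are nonzero, with $\varphi$-eigenvalues $p$ and $1$.

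This cannot be repaired by better bookkeeping, because your own d\'evissage, done carefully, shows these extra classes survive. Write $P=\sum f_{a,b}U^a(\log t)^b\in\B^+[U,\log t]$ of degree $n\geq 2$, and suppose $(g-1)P\in\B^+$ for all $g$.
\begin{itemize}
\item The degree-$n$ coefficients $f_{a,b}$ lie in $F$.
\item The degree-$(n-1)$ equation says that $(a+1)f_{a+1,b}\kappa+(b+1)f_{a,b+1}\log\chi_{\rm cycl}$ is a coboundary. By independence, those top coefficients vanish.
\end{itemize}
Hence a class of $H^1(\G_K,\B^+)$ dies in $\B^+_{\rm pFF}$ only if it lies in $F\kappa+F\log\chi_{\rm cycl}$. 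Comparing $\varphi$-eigenvalues, the classes exhibited above do not.

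The same happens for $\B_{\rm pFF}$ with the cocycles $g\mapsto c(g)t^{-1}$, where $0\neq c\in H^1(\G_K,\Q_p(-1))$. These are $\varphi=p^{-1}$-eigenclasses, nonzero because $H^1(\G_K,\Q_p(-1))\simeq H^1(\G_K,\B_e(-1))$.

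So with the rings as defined in this paper, your strategy produces nonzero $H^1(\G_K,\Lambda)$, not $F[0]$. Before trying to complete any proof, check the precise formulation and definitions in \cite{CGN3}.
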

\begin{remark}
 Proposition \ref{magic} and Theorem \ref{hawaii1}  yield the following more general version of Theorem  \ref{hawaii1}: 
if $W\in {\rm Nuc}_F$ is equipped  with the trivial $\sg_K$-action, then there is  
 a natural quasi-isomorphisms in $\mathcal{D}(F_{\Box})$
 $$
\R\Gamma(\sg_K, W\otimes^{\Box}_F\B^+_{\rm pFF})\simeq \R\Gamma(\sg_K,W\otimes^{\Box}_F\B_{\rm pFF})\simeq W[0].$$
\end{remark}

\subsubsection{Derived $\B_e$-formula} Set  $\B_e:=\B^{\phi=1}$.  We quote the following result\footnote{The second claim in the proposition is obtained from the first one by passing to a colimit.}:
\begin{proposition}{\rm (\cite[Prop. 10.3.20]{FF})} \label{FFB}
\begin{enumerate}[leftmargin=*]
\item Let $M$ be a finite rank $(\phi,N)$-module over $F$.  We have a  canonical isomorphism\footnote{Perhaps the quickest way to see this is to equip $M$ with an admissible filtration (this is always possible) and to use the associated Galois representation.}
 of $(\phi, N, \sg_K)$-modules over $\B_{\rm FF}$:
   \begin{equation}\label{dec1} (M\otimes_{F}\B_{\rm FF})^{N=0,\phi=1} \otimes_{\B_e}\B_{\rm FF}\stackrel{\sim}{\to} M\otimes_F\B_{\rm FF}.
   \end{equation}
   It follows that
 the canonical morphism
   $$ M\to   ((M\otimes_{F}\B_{\rm FF})^{N=0,\phi=1} \otimes_{\B_e}\B_{\rm FF})^{\sg_K}$$
   is an isomorphism of $(\phi, N)$-modules over $F$. 
   \item Let $M$ be a finite rank $(\phi,N,\sg_K)$-module over $C^{\nr}$. The canonical morphism
   $$ M\to   ((M\otimes_{{C}^{\nr}}\B_{\rm FF})^{N=0,\phi=1} \otimes_{\B_e}\B_{\rm FF})^{\sg_K-{\rm sm}}$$
   is an isomorphism of $(\phi, N,\sg_K)$-modules over $C^{\rm nr}$.  Here $(-)^{\sg_K-{\rm sm}}:=\colim_{H\subset \sg_K}(-)^H$, where the index set runs over open subgroups $H$ of $\sg_K$. 
\end{enumerate}
\end{proposition}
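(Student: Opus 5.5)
The plan is to prove (1) by reducing to the weakly admissible (equivalently admissible) case via Colmez--Fontaine, and then to deduce (2) from (1) by descent along a finite unramified extension.

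\emph{Reduction in (1).} Let $M$ be a finite rank $(\phi,N)$-module over $F$ with slopes $\lambda_i$ and rank $r$. Choose a filtration on $M_K:=M\otimes_F K$ making $(M,\mathrm{Fil})$ weakly admissible. This is always possible: put a single jump in degree $t_N(M)$, or more carefully choose Hodge numbers so that Newton lies above Hodge with equal endpoints for every sub-$(\phi,N)$-module. If the naive choice fails, I would replace $M$ by $M(k)$ and use a generic filtration, where genericity handles the finitely many sub-objects. Colmez--Fontaine then gives a semistable representation $V$ with $D_{\st}(V)=M$.

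\emph{Proof of \eqref{dec1}.} Write $V=(M\otimes_F\B_{\st})^{N=0,\phi=1}\cap\mathrm{Fil}^0$. Semistable comparison over $\B_{\rm FF}$ (the curve version, with $\B_{\st}$ replaced by $\B_{\rm FF}$, which contains $\B_{\st}$ up to completion) gives
$$V\otimes_{\Q_p}\B_{\rm FF}\simeq M\otimes_F\B_{\rm FF}.$$
Set $X:=(M\otimes_F\B_{\rm FF})^{N=0,\phi=1}$. It is a $\B_e$-module containing $V\otimes\B_e$. Taking $N=0,\phi=1$ on the right-hand side of the comparison gives $X=V\otimes_{\Q_p}\B_e$, because $\B_{\rm FF}^{N=0,\phi=1}=(\B^+[1/t])^{\phi=1}=\B_e$. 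Hence $X\otimes_{\B_e}\B_{\rm FF}=V\otimes\B_{\rm FF}\simeq M\otimes\B_{\rm FF}$. This is \eqref{dec1}, and it is compatible with $\phi,N,\sg_K$. Taking $\sg_K$-invariants, and using $\B_{\rm FF}^{\sg_K}=F$ (which follows from $H^0(\sg_K,\B_{\rm pFF})=F$ together with $\B_{\rm FF}\subset\B_{\rm pFF}$), yields $(M\otimes\B_{\rm FF})^{\sg_K}=M$. This proves (1).

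\emph{Proof of (2).} By finiteness there is a finite unramified extension $L/K$ and an open subgroup $H=\sg_{L'}$, with $L'$ finite over $K$, acting trivially on a model $M=M_0\otimes_{F'}C^{\nr}$. Here $M_0$ is a $(\phi,N)$-module over $F'$, the maximal unramified subfield of $L'$. Apply (1) over $L'$ to obtain \eqref{dec1} with coefficients in $C^{\nr}$. The $H$-invariants recover $M_0$ with $C^{\nr}$ coefficients via $\B_{\rm FF}^{\sg_{L'}}=F'$ and the colimit over $L'$. Taking $H$-smooth vectors of $M_0\otimes_{F'}\B_{\rm FF}$ then gives $M$.

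The main obstacle is the existence of a weakly admissible filtration on an arbitrary $(\phi,N)$-module (for $N\neq0$ this needs care), together with checking the identification $\B_{\rm FF}^{N=0}=\B^+[1/t]$ used above.
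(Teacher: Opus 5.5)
Your route is the one the paper points to. It quotes Fargues--Fontaine and suggests, in a footnote, equipping $M$ with an admissible filtration and using the associated Galois representation, with (2) deduced from (1) by a colimit. The formal steps are fine. Since $\B_{\rm FF}=\B[U]$ is a polynomial ring and $N=-d/dU$, the identity $\B_{\rm FF}^{N=0,\phi=1}=\B^{\phi=1}=\B_e$ is immediate. The equality $\B_{\rm FF}^{\sg_K}=F$ does follow from $H^0(\sg_K,\B_{\rm pFF})=F$. Your descent to $M_0=M^{\sg_{L'}}$ over $L'_0$, using $\B_{\rm FF}^{\sg_{L'}}=L'_0$ and $\bigcup_{L'}L'_0=C^{\nr}$, is exactly the colimit argument for (2).

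The gap is in the only substantive step, the isomorphism $V\otimes_{\Q_p}\B_{\rm FF}\simeq M\otimes_F\B_{\rm FF}$. Your reason, that $\B_{\rm FF}$ contains $\B_{\st}$ ``up to completion'', is false. The divided powers $\xi^n/n!$ are unbounded near the divisor $p=0$ of $Y_{\rm FF}$, so $\B^+_{\crr}$ does not map into $\B^+=\O(Y_{\rm FF})$. Hence the classical comparison does not restrict to $\B_{\rm FF}$ for free. What makes it descend is $\phi$-finiteness, as follows.
\begin{itemize}
\item Let $B$ be the matrix expressing a basis of $V$ in a basis of $M$ inside $M\otimes_F\B_{\st}$, and let $A\in{\rm GL}_r(F)$ be the matrix of $\phi$ on $M$. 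Then $A\,\phi(B)=B$ and $\phi(B^{-1})=B^{-1}A$, so every entry of $B$ and $B^{-1}$ lies in a finite-dimensional $\phi$-stable subspace.
\item Expanding in $U$ and applying Dieudonn\'e--Manin over $\breve{C}$, each such entry is a combination of terms $xU^k$ with $\phi^h(x)=p^ax$.
\item For $m\gg0$ one has $t^mx\in(\B^+_{\crr})^{\phi^h=p^{a+hm}}=(\B^+)^{\phi^h=p^{a+hm}}$, by the Fargues--Fontaine identification of these eigenspaces.
\end{itemize}
Hence $B\in{\rm GL}_r(\B_{\rm FF})$. This eigenspace comparison is the input your proof of \eqref{dec1} actually needs, and it should be stated explicitly.

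A smaller point concerns the existence of a weakly admissible filtration. The statement is true (the paper also only asserts it), but your justification does not work as written. Sub-$\phi$-modules typically come in infinite families, for instance in an isoclinic module with multiplicity, so ``genericity over finitely many sub-objects'' is not available. A one-jump filtration is also not weakly admissible in general. Your worry about $N\neq0$ is unfounded: sub-$(\phi,N)$-modules are sub-$\phi$-modules, so $N$ only removes conditions and you may take $N=0$. A clean argument then runs as follows.
\begin{itemize}
\item Choose a Jordan--H\"older series of $M$ as a $\phi$-module.
\item On each simple subquotient $E$, put any filtration with $t_H(E)=t_N(E)\in\Z$. It is weakly admissible because $E$ has no proper nonzero sub-objects.
\item Lift these filtrations to $M_K$ through a $K$-linear splitting of the series.
\end{itemize}
Then $M$ is an iterated strict extension of weakly admissible objects. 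Since weak admissibility is stable under extensions, $M$ is weakly admissible.
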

 
  The following Corollary of Theorem \ref{hawaii1} is a derived version of Proposition \ref{FFB}.
  \begin{corollary}\label{derived-FFB}
  \begin{enumerate}[leftmargin=*]
\item Let $M$ be a bounded  complex of finite rank $(\phi,N)$-modules over $F$. The canonical morphism in $\sd_{\phi,N}(F)$ 
   $$ M\to   \R\Gamma(\sg_K,[M\otimes^{\LL_{\Box}}_{F}\B_{\rm FF}]^{N=0,\phi=1} \otimes^{\LL_\Box}_{\B_e}\B_{\rm pFF})$$
   is a  quasi-isomorphism. 
   \item Let $M$ be a bounded complex of finite rank $(\phi,N,\sg_K)$-modules over $C^{\nr}$, with $\sg_K$ acting through a finite quotient. The canonical morphism in $\sd_{\phi,N,\sg_K}(C^{\nr})$
   $$ M\to   ([M\otimes^{\LL_{\Box}}_{{C}^{\nr}}\B_{\rm FF}]^{N=0,\phi=1} \otimes^{\LL_\Box}_{\B_e}\B_{\rm pFF})^{\R\sg_K-{\rm sm}}$$
   is a quasi-isomorphism.  Here we set $(-)^{\R\sg_K-{\rm sm}}:=\colim_{H\subset \sg_K}(-)^{\R H}$. 
\end{enumerate}
  \end{corollary}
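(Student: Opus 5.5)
By dévissage on the bounded complex $M$ (stupid truncations plus the five lemma, using that both sides are exact triangulated functors of $M$), we reduce to the case where $M$ is a single finite rank $(\phi,N)$-module over $F$, resp. a finite rank $(\phi,N,\sg_K)$-module over $C^{\nr}$ with $\sg_K$ acting through a finite quotient. Since $\B_{\rm FF}$ is flat over $F$ and $M$ is finite free, $M\otimes^{\LL_\Box}_F\B_{\rm FF}=M\otimes_F\B_{\rm FF}$. I will read the derived invariants $[-]^{N=0,\phi=1}$ as the underived ones in this case. For the kernel of $N$ this is justified by surjectivity of $N$ on $\B_{\rm FF}$, which still holds after twisting by the nilpotent $N$ on $M$. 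For $\phi=1$ I would use that $(M\otimes\B_{\rm FF})^{N=0}$ is a $\phi$-module over $\B^+[1/t]$ coming from a vector bundle on the Fargues--Fontaine curve. Its $\phi=1$ cokernel is $H^1$ of the corresponding bundle, which vanishes after inverting $t$: inverting $t$ makes everything sufficiently positive, since we are computing sections on $X_{\rm FF}\setminus\{\infty\}$, which is affine. Set $V:=(M\otimes_F\B_{\rm FF})^{N=0,\phi=1}$, a finite projective $\B_e$-module.

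The key step is Proposition \ref{FFB}(1), i.e. the isomorphism \eqref{dec1}, tensored up from $\B_{\rm FF}$ to $\B_{\rm pFF}=\B_{\rm FF}[\log t]$. It gives a $\sg_K$-equivariant isomorphism
$$V\otimes^{\LL_\Box}_{\B_e}\B_{\rm pFF}\simeq M\otimes_F\B_{\rm pFF},$$
where $\sg_K$ acts trivially on $M$ and only through $\B_{\rm pFF}$. Since $M$ is finite dimensional over $F$, and in particular nuclear, the Remark following Theorem \ref{hawaii1} (Proposition \ref{magic} combined with Theorem \ref{hawaii1}) gives
$$\R\Gamma(\sg_K,M\otimes_F\B_{\rm pFF})\simeq M[0].$$
It remains to check that the canonical map $M\to\R\Gamma(\cdots)$ is this identification. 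This holds because both come from $m\mapsto m\otimes 1$, and this is compatible with $\phi$ and $N$, since the inverse of \eqref{dec1} is $(\phi,N)$-equivariant.

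For (2), pick an open normal subgroup $H_0\subset\sg_K$ acting trivially on $M$, and let $K'$ be the corresponding finite extension, with maximal unramified subfield $F'$. For every open $H\subset H_0$, with fixed field $L$, I choose an $L$-rational structure: $M=M_0\otimes_{F'}C^{\nr}$, where $M_0=M^{H}\cap(\text{the }F'$-span$)$ is a $(\phi,N)$-module over the maximal unramified subfield $L_0$ of $L$, and $C^{\nr}\otimes_{L_0}\B_{\rm FF}$ is handled as before. Applying (1) over $L$ (with $F$ replaced by $L_0$) gives
$$\R\Gamma(H,V\otimes_{\B_e}\B_{\rm pFF})\simeq M_0\otimes_{L_0}\R\Gamma(H,C^{\nr}\otimes_{L_0}\B_{\rm pFF}).$$
The right-hand side computes $M^{H}$ plus a higher contribution from $C^{\nr}$. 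Because $C^{\nr}$ is a colimit of finite étale $L_0$-algebras, taking the colimit over $H$ kills that higher contribution and yields $M$.

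The main obstacle is justifying the underived claim for $[-]^{\phi=1}$ and keeping the $\sg_K$-actions on $C^{\nr}$ straight in (2). If the curve argument gets awkward, I would instead prove surjectivity of $\phi-1$ on $\B$ twisted by a finite slope module directly, via the slope decomposition of $M$ over $\breve C$ and the surjectivity of $\phi-p^{\lambda}$ on $\B^+[1/t]$ (Fargues--Fontaine).
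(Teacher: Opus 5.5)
Your part (1) is the paper's argument. You reduce to a single module, use \eqref{dec1} to rewrite $[M\otimes\B_{\rm FF}]^{N=0,\phi=1}\otimes_{\B_e}\B_{\rm pFF}$ as $M\otimes_F\B_{\rm pFF}$ with $\sg_K$ acting only on $\B_{\rm pFF}$, and conclude by Theorem \ref{hawaii1}. You also justify why the derived invariants are underived: you untwist $N$ using $U$, and you use $H^1$-vanishing of the associated bundle twisted by $\mathcal{O}(n)$ for $n\gg 0$. The paper leaves that step implicit.

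Part (2), however, is wrong as written. The tensor product in the statement is over $C^{\nr}$. So once you descend $M\simeq M_0\otimes_{L_0}C^{\nr}$, you get $M\otimes_{C^{\nr}}\B_{\rm FF}\simeq M_0\otimes_{L_0}\B_{\rm FF}$, not $M_0\otimes_{L_0}(C^{\nr}\otimes_{L_0}\B_{\rm FF})$. Your displayed formula for $\R\Gamma(H,V\otimes_{\B_e}\B_{\rm pFF})$ therefore carries a spurious factor.

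The ``higher contribution from $C^{\nr}$'' you then invoke also does not exist. Write $C^{\nr}\otimes_{L_0}\B_{\rm pFF}=\colim_{L_0'}L_0'\otimes_{L_0}\B_{\rm pFF}$, a colimit of induced modules. Shapiro's lemma and Theorem \ref{hawaii1} for $LL_0'$ then give $\R\Gamma(\sg_L,C^{\nr}\otimes_{L_0}\B_{\rm pFF})\simeq C^{\nr}[0]$. So your right-hand side equals $M$ for every $H$, whereas the correct finite-level answer is $M^H$, which differs from $M$ as soon as $L_0\neq C^{\nr}$.

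Relatedly, for a semilinear module over $C^{\nr}$, an open subgroup acting trivially on $M\neq 0$ must fix $C^{\nr}$. So your $H_0$ exists only when $C^{\nr}=L_0$ (e.g.\ $k$ algebraically closed). In that case your formula does reduce to the correct one, but none of the $C^{\nr}$ bookkeeping is then needed.

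The intended colimit argument runs as follows.
\begin{enumerate}
\item Take $H=\sg_L$ small enough that it acts on $M$ through its unramified quotient.
\item Galois descent (Hilbert 90 for the smooth semilinear action) gives $M\simeq M^H\otimes_{L_0}C^{\nr}$, with $M^H$ a $(\phi,N)$-module over $L_0$.
\item This yields an $H$-equivariant identification $M\otimes_{C^{\nr}}\B_{\rm FF}\simeq M^H\otimes_{L_0}\B_{\rm FF}$, with $H$ acting only on $\B_{\rm FF}$.
\item Part (1), applied over $L$ with $F$ replaced by $L_0$, gives $([M\otimes_{C^{\nr}}\B_{\rm FF}]^{N=0,\phi=1}\otimes_{\B_e}\B_{\rm pFF})^{\R H}\simeq M^H[0]$.
\item The colimit over $H$ serves only to pass from $M^H$ to $M^{\rm sm}=M$.
\end{enumerate}
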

  \begin{proof} For the first claim,  we may assume that $M=M[0]$ for a $(\phi,N)$-module over $F$.  We need to show that the canonical morphism
  $$
  M\to   \R\Gamma(\sg_K,[M\otimes^{\LL_{\Box}}_{F}\B_{\rm FF}]^{N=0,\phi=1} \otimes^{\LL_\Box}_{\B_e}\B_{\rm pFF})
  $$
  is a quasi isomorphism. Since we have Proposition \ref{FFB}, it suffices to show that
  \begin{align}\label{dark1}
   H^i(\sg_K,[M\otimes^{\LL_{\Box}}_{F}\B_{\rm FF}]^{N=0,\phi=1} \otimes^{\LL_\Box}_{\B_e}\B_{\rm pFF})=0, \quad i\geq 1. 
  \end{align}
  But, by \eqref{dec1},  we have
  \begin{align*}
  [M\otimes^{\LL_{\Box}}_{F}\B_{\rm FF}]^{N=0,\phi=1} \otimes^{\LL_\Box}_{\B_e}\B_{\rm pFF} & \simeq (M\otimes^{{\Box}}_{F}\B_{\rm FF})^{N=0,\phi=1} \otimes^{\Box}_{\B_e}\B_{\rm pFF}\\
   & \simeq
   (M \otimes^{\Box}_{F}\B_{\rm FF}\otimes^{\Box}_{\B_{\rm FF}}\B_{\rm pFF}\simeq  M \otimes^{\Box}_{F}\B_{\rm pFF}
  \end{align*}
  and the Galois action on the $M \otimes^{\Box}_{F}\B_{\rm pFF}$ is via the action on $\B_{\rm pFF}$. 
  Hence, for $i\geq 1$, by Theorem \ref{hawaii1}
  $$
  H^i(\sg_K,[M\otimes^{\LL_{\Box}}_{F}\B_{\rm FF}]^{N=0,\phi=1} \otimes^{\LL_\Box}_{\B_e}\B^+_{\rm pFF})\simeq H^i(\sg_K, M \otimes^{\Box}_{F}\B_{\rm pFF})=0,
  $$
  as wanted.
  
  The second claim follows from the first one by a colimit argument. 
   \end{proof}

\section{Comparison theorems} We recall here the de Rham and Hyodo-Kato comparison theorems from \cite{GB1}, \cite{GB2} \cite{CN4}, \cite{CN5}, generalize them slightly, , for instance by adapting them to compactly supported cohomology. For $L=K,C$ let 
${\rm Sm}_L$ be the category of smooth rigid analytic varieties over $L$.
\subsection{Period sheaves} Let $X$ be an analytic adic space over ${\rm Spa}(\Q_p, \Z_p)$. On the pro-\'etale site\footnote{Recall that, by definition, this is the quasi-pro-\'etale site of the associated diamond $X^{\diamond}$.} 
   $X_{\proet}$ of $X$ one has the following sheaves of rings:
   
   $\bullet$ the integral structure sheaf $\widehat{\mathcal{O}}^{+}_X$ and its tilted version 
   $\widehat{\mathcal{O}}^{\flat,+}_X=\varprojlim_{s\mapsto s^p} \widehat{\mathcal{O}}^+_X/p$. For any perfectoid space 
   $U\in X_{\proet}$ we have $\widehat{\mathcal{O}}^{\flat,+}_X(U)=\mathcal{O}^+_{U^{\sharp}}(U^{\sharp})$, where 
   $U^{\sharp}$ is the untilt of $U$ induced by the map $U\to X^{\diamond}\to {\rm Spd}(\Z_p)$.
   
   $\bullet$ the sheaf (of $p$-typical Witt vectors) $\mathbb{A}_{\rm inf}=W(\widehat{\mathcal{O}}^+_X)$ endowed with Fontaine's map $\theta: \mathbb{A}_{\rm inf}\to \widehat{\mathcal{O}}^+_X$, which extends to a map 
   $\theta: \mathbb{B}_{\rm inf}:=\mathbb{A}_{\rm inf}[1/p]\to \widehat{\mathcal{O}}_X
   :=\widehat{\mathcal{O}}^+_X[1/p]$. 
   
   $\bullet$ the relative de Rham period sheaves $\mathbb{B}_{\dr}^+=\varprojlim_{n} \mathbb{B}_{\rm inf}/(\ker \theta)^n$, endowed with the filtration ${\rm F}^n \mathbb{B}_{\dr}^+=(\ker \theta)^n
    \mathbb{B}_{\dr}^+$. Locally on $X_{\proet}$ there is a generator 
    $t$ of ${\rm F}^1 \mathbb{B}_{\dr}^+$, it is a non-zero divisor and is unique up to a unit, which allows us to define the sheaf $\mathbb{B}_{\dr}=\mathbb{B}_{\dr}^+[1/t]$, with the filtration 
    ${\rm F}^n \mathbb{B}_{\dr}=t^n \mathbb{B}_{\dr}^+$.
    
    $\bullet$ If $I=[s,r]\subset (0,\infty)$ is a compact interval with rational endpoints, one defines the sheaf 
    $\mathbb{A}_{\rm inf, I}=\mathbb{A}_{\rm inf}[\frac{p}{[\alpha]}, \frac{[\beta]}{p}]$, where 
    $\alpha, \beta\in \mathcal{O}_{C^{\flat}}$ satisfy $v(\alpha)=1/r$ and $v(\beta)=1/s$ (here 
    $v$ is the standard valuation on $C^{\flat}$). The sheaf $\mathbb{B}_I$ is obtained from 
    $\mathbb{A}_{\rm inf, I}$ by $p$-adic completion, followed by inverting $p$. Finally, the period sheaves $\mathbb{B}^+$ and 
    $\mathbb{B}$ are defined by $\mathbb{B}^+=\varprojlim_I \mathbb{B}_I$ and $\mathbb{B}=\mathbb{B}^+[1/t]$.
    The natural Frobenius automorphism $\varphi$ on $\mathbb{A}_{\rm inf}$ induces isomorphisms $\varphi: \mathbb{B}_I\simeq \mathbb{B}_{I/p}$ and, in the limit, an automorphism 
     $\varphi$ of $\mathbb{B}^+$ and $\mathbb{B}$.

     $\bullet$ The sheaves 
 $$\sbb^+_{\pdr}:=\sbb^+_{\dr}[\log t],\quad \sbb_{\pdr}:=\sbb^+_{\pdr}[1/t]
  $$
  will play a crucial role in this paper. They are endowed with natural filtrations induced by that on 
  $\sbb^+_{\dr}$.
  We endow them with a Galois action by setting $g(\log t)=\log t+\log(\chi_{\rm cycl}(g))$, $g\in \sg_K$.

    $\bullet$  We finally define $\mathbb{B}^+_{\rm FF}:=\mathbb{B}^+[U]$, $U=\log[p^{\flat}]$ endowed with a Frobenius 
    $\varphi$ and an action of $\sg_K$ extending the ones on $\mathbb{B}^+$ and such that 
    $\varphi(U)=pU$ and $g(U)=U+\log [g(p^{\flat})/p^{\flat}]$ for $g\in \sg_K$. The sheaf 
    $\mathbb{B}^+_{\rm FF}$ is endowed with a surjective monodromy operator $N=-\frac{d}{dU}$, commuting with the action of $\sg_K$, such that 
    $N\varphi=p\varphi N$ and $\mathbb{B}^+= \mathbb{B}_{\rm FF}^{+,N=0}$.  We set $\mathbb{B}_{\rm FF}:=\mathbb{B}^+_{\rm FF}[1/t]$. 
    
  $\bullet$   The sheaves 
 $$\sbb^+_{\rm pFF}:=\sbb^+_{\rm }[\log t],\quad \sbb_{\rm pFF}:=\sbb^+_{\rm pFF}[1/t]
  $$
  will play a crucial role in this paper.   We endow them with a Frobenius, monodromy,  and a Galois action by setting $$
  \phi(\log t)=\log t, \quad N\log t=0, \quad  g(\log t)=\log t+\log(\chi_{\rm cycl}(g)), \quad g\in \sg_K.
  $$

     For $X={\rm Spa}(C, \mathcal{O}_C)$, the above sheaves define condensed rings 
     $\mathbf{A}_{\rm inf}, \mathbf{B}, \mathbf{B}_{\dr}, \mathbf{B}_{\pdr}, \mathbf{B}_{\rm FF}, \mathbf{B}_{\rm pFF}$, etc, which are condensed versions of Fontaine's classical period rings.

\subsection{Condensed enhancement of sheaf cohomology} Let $X$ be a rigid analytic variety over $K$ and let 
$\mathcal{B}$ be the basis of affinoid open subspaces of $X$. If $F$ is a 
${\rm TopAb}$-valued sheaf on $\mathcal{B}$ (for instance $F$ could be a coherent sheaf on $X$, in which case $F(U)$ is naturally a $K$-Banach space for all $U\in \mathcal{B}$), there is a unique ${\rm CondAb}$-valued sheaf $F_{\rm cond}$ such that $F_{\rm cond}(U)=\underline{F(U)}$ for all affinoid open subspaces $U$ of $X$. If $F(U)\in {\rm Fr}_K$ for all $U\in \mathcal{B}$, then $F_{\rm cond}$ is a $K_{\Box}$-valued sheaf on $X$. We thus obtain a complex 
$\R\Gamma_{\rm cond}(X,F):=\R\Gamma(X, \underline{F})\in \mathcal{D}({\rm CondAb})$ with cohomology groups 
$H^i_{\rm cond}(X,F)\in {\rm CondAb}$, which are enhancements of the usual $\R\Gamma(X, F)\in \mathcal{D}({\rm Ab})$ and 
$H^i(X,F)\in {\rm Ab}$ in the sense that $\R\Gamma_{\rm cond}(X,F)(*)=\R\Gamma(X, F)$ and similarly for cohomology groups. Moreover, if $F$ is ${\rm Fr}_K$-valued on $\mathcal{B}$ then $\R\Gamma_{\rm cond}(X,F)\in \mathcal{D}(K_{\Box})$.
A similar discussion applies if we replace $F$ with a bounded below complex of such sheaves. 

  If $X$ is an analytic adic space over $C$ (i.e. over ${\rm Spa}(C, \mathcal{O}_C)$), we can replace in the above discussion the analytic site of $X$ with its pro-\'etale site $X_{\proet}$ and $\mathcal{B}$ with the basis of $X_{\proet}$ consisting of affinoid perfectoid $U\in X_{\proet}$. Thus any ${\rm TopAb}$-valued sheaf $F$ on $\mathcal{B}$ yields an enhanced cohomology complex 
   $\R\Gamma_{\rm cond}(X_{\proet}, F):=\R\Gamma(X_{\proet}, F_{\rm cond})\in \mathcal{D}({\rm CondAb})$, and if
   $F$ is ${\rm Fr}_K$-valued then $\R\Gamma_{\rm cond}(X_{\proet}, F)\in \sd(K_{\Box})$. A similar discussion applies when $F$ is a bounded below complex of such sheaves. 

\begin{remark}\label{drcond} For any $X\in {\rm Sm}_K$, we obtain a bounded complex $$\R\Gamma_{\dr, \rm cond}(X):=\R\Gamma_{\rm cond}(X, \Omega^{\bullet}_X)\in \mathcal{D}(K_{\Box}),$$ whose cohomology groups are denoted $H^i_{\dr, \rm cond}(X)\in K_{\Box}$. It has a natural enhancement as object of the filtered $\infty$-category 
  $\sff\sd(K_{\Box})$ of $\sd(K_{\Box})$, where $F^r \R\Gamma_{\dr}(X)=\R\Gamma_{\rm cond}(X, \Omega^{\geq r}_X)$.
 If $X$ is either Stein or proper, then $H^i_{\dr}(X)=\underline{H^i_{\dr}(X)}$, with 
$H^i_{\dr}(X)\in {\rm Fr}_K$ (even finite dimensional if $X$ is proper).
 If $X$ is an affinoid of positive dimension, then 
$H^i_{\dr, \rm cond}(X)\in K_{\Box}$ is \emph{not} in (the image of) ${\rm Fr}_K$. Since we will always work in the condensed world, we will simply write $\rg_{\dr}(X)\in\mathcal{D}(K_{\Box})$ instead of $\R\Gamma_{\dr, \rm cond}(X)$ and $H^i_{\dr}(X)\in K_{\Box}$ instead of $H^i_{\dr, \rm cond}(X)$. 
\end{remark}

    If $X$ is an analytic adic space over $C$ and if 
 $\sff$ is one of the sheaves $\mathbb{B}_{\dr}^+, \mathbb{B}_{\dr}, \mathbb{B}_{\rm pdR}, \mathbb{B}_I, \mathbb{B}^+$, then $\sff$ defines a ${\rm TopAb}$-valued sheaf on the basis $\mathcal{B}$ of affinoid perfectoids $U\in X_{\proet}$. More precisely, if $\sff\in\{\mathbb{B}_{\dr}^+, \mathbb{B}_I, \mathbb{B}^+\}$ then $\sff(U)$ is naturally in ${\rm Fr}_K$ for $U\in \mathcal{B}$, while if $\sff\in \{\mathbb{B}_{\dr}, \mathbb{B}_{\rm pdR}\}$ then $\sff(U)$ is a countable colimit of Fr\'echet spaces, with closed immersions as transitions maps. It follows that the associated sheaf $\sff_{\rm cond}$ is $K_{\Box}$-valued
 and $\R\Gamma_{\rm cond}(X_{\proet}, \sff)\in 
    \mathcal{D}(K_{\Box})$. Moreover, it is nuclear. In what follows, unless this causes confusion, we will simply write $\R\Gamma_{\proeet}(X, \sff)$ for $\R\Gamma_{\rm cond}(X_{\proet}, \sff)$.
      
          The following result will be very useful later (it is also instrumental in the proof of Lemma \ref{HS1}):
     \begin{lemma}{\rm (\cite[proof of Prop. 4.12]{GB1})} \label{HS}
     Let $\sff$ be one of the sheaves $\Q_p, \mathbb{B}_{\dr}^+, \mathbb{B}_{\dr}, \mathbb{B}_{\pdr}, \mathbb{B}_I, \mathbb{B}^+$. For any profinite set 
     $S$ we have a natural isomorphism in $\mathcal{D}({\rm Solid})$
     $$\R\Gamma_{\proeet}(X\times S, \sff)\simeq \R\underline{{\rm Hom}}(\Z[S], \R\Gamma_{\proeet}(X, \sff)).$$
     \end{lemma}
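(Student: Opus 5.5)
We reduce everything to the case where $X$ is an affinoid (or, more generally, a quasi-compact quasi-separated object) and then to affinoid perfectoid covers, where the statement is essentially the computation of sections of $\sff$ on $U\times S$. The plan proceeds in three steps.

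\emph{Step 1: the affinoid perfectoid case.} First we treat the case where $X=U$ is affinoid perfectoid, with $U\times S$ also affinoid perfectoid. For $\sff=\mathbb{B}_{\dr}^+$, $\mathbb{B}_I$ or $\mathbb{B}^+$, the higher pro-étale cohomology of $\sff$ on affinoid perfectoids vanishes, at least almost for the integral versions and then honestly after inverting $p$. We reduce this to $\widehat{\mathcal{O}}^+$ (resp.\ $\widehat{\mathcal{O}}^{\flat,+}$) via the standard dévissage: $\mathbb{A}_{\rm inf}/p=\widehat{\mathcal{O}}^{\flat,+}$, completeness, and the Mittag-Leffler property of the transition maps. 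On sections we then have
\[
\widehat{\mathcal{O}}^+(U\times S)=\mathcal{C}(S,\widehat{\mathcal{O}}^+(U)),
\]
because $U\times S=\varprojlim U\times S_i$ with $S_i$ finite, and sections of $\widehat{\mathcal{O}}^+$ on such a limit are the $p$-completed colimit. Passing through Witt vectors, completions and the limits defining $\sff$, one gets $\sff(U\times S)=\mathcal{C}(S,\sff(U))$. By the argument already used in the proof of Proposition \ref{contcond}, the right-hand side is $\underline{\rm Hom}(\Z[S],\underline{\sff(U)})$. Since $\Z[S]$ admits a projective-type resolution computing ${\rm R}\underline{\rm Hom}$ into solid Fréchet objects with vanishing higher Ext, the derived Hom agrees with the underived one. (Concretely, ${\rm Ext}^i(\Z[S],V)=0$ for $i>0$ when $V$ is a Banach or Fréchet space, as in \cite[Prop. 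B2]{GB1}.)

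For $\mathbb{B}_{\dr}$ and $\mathbb{B}_{\pdr}$, both sides commute with the countable colimits (along $t^{-n}$ and over the degree in $\log t$). On the left this holds because $U\times S$ is qcqs; on the right it holds because $\Z[S]$ is compact in the relevant sense, or more simply because $\mathcal{C}(S,-)$ commutes with colimits along closed immersions of Fréchet spaces. For the constant sheaf $\Q_p$ the statement is the classical one: $H^i(U\times S,\Z/p^n)$ is computed as $\mathcal{C}(S,-)$ of $H^i(U,\Z/p^n)$, followed by taking limits and inverting $p$.

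\emph{Step 2: Čech descent to general $X$.} Choose a hypercover $U_\bullet\to X$ by disjoint unions of affinoid perfectoids in $X_{\proet}$. Then $U_\bullet\times S\to X\times S$ is again such a hypercover, so
\[
\R\Gamma_{\proeet}(X\times S,\sff)=\R\lim_{\Delta}\R\Gamma_{\proeet}(U_\bullet\times S,\sff).
\]
The functor ${\rm R}\underline{\rm Hom}(\Z[S],-)$ commutes with limits and with products over the disjoint union components, so Step 1 gives the result. Naturality in $S$ and $X$ is clear from the construction, since the comparison map is the one induced by the projection $X\times S\to X$, adjoint to the evaluation pairing.

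\emph{Main obstacle.} The main difficulty is Step 1 for the Fréchet sheaves. We must ensure that the condensed (rather than merely topological) sections of $\sff$ on $U\times S$ equal $\underline{\rm Hom}(\Z[S],\underline{\sff(U)})$, and that no higher ${\rm Ext}$ appears when passing through the limits over $I$ and $n$. This requires controlling ${\rm R}^1\lim$ terms; we handle them by using the surjective (dense-image Banach) transition maps together with the solid vanishing of ${\rm Ext}^{\ge1}(\Z[S],-)$ on Banach spaces.
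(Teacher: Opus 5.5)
Your proposal is correct. The paper gives no proof of its own and simply cites Bosco's proof of \cite[Prop. 4.12]{GB1}; your argument is essentially the one behind that citation: acyclicity of these period sheaves on affinoid perfectoids, the identification $\sff(U\times S)=\mathcal{C}(S,\sff(U))$ compatibly with the condensed structure, descent along an affinoid perfectoid (hyper)cover, and passage from $\mathbb{B}^+_{\dr}$ to $\mathbb{B}_{\dr}$ and $\mathbb{B}_{\pdr}$ by colimits. One simplification: $\Z_{\Box}[S]$ is compact projective in ${\rm Solid}$, so $\R\underline{\rm Hom}(\Z[S],-)$ is exact and commutes with all colimits on solid objects, and the higher-Ext worry in your last paragraph disappears without any Banach or Fr\'echet hypothesis.
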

     
      % From now on, if $F$ is one of the above sheaves we will simply write $\rg_{\proet}(X,F)$ instead of 
      % $\R\Gamma_{\rm cond}(X_{\proet}, F)$.

     \subsection{Hochschild-Serre spectral sequences}We will now construct Hochschild-Serre spectral sequences for a variety of period sheaves. For that, 
we will use the following definition of torsors:
\begin{definition}Let $G$ be a profinite group. 
A map $f : Y\to X $ of analytic adic spaces over $\Z_p$ is  called a {\em  pro-\'etale $G$-torsor} if the associated map of diamonds $f^{\diamond} : Y^{\diamond} \to X^{\diamond}  $ is endowed with
an action of G on $Y^{\diamond} $ over $X^{\diamond} $ and, quasi-pro-\'etale locally on $X^{\diamond} $, there is a $G$-equivariant
isomorphism of diamonds $Y^{\diamond}  \simeq X^{\diamond} \times G^{\diamond} $.  In such  case, we will call $G$ the Galois group of $f$.
\end{definition}
\begin{lemma}{\rm (Hochschild-Serre spectral sequences)} \label{HS1}Let $\sff$ be as in Lemma \ref{HS}. Then:
\begin{enumerate}
\item There is  a natural Hochschild-Serre quasi-isomorphism in $\sd({B_{\Box}})$, for  $B=\sff(C)$.
$$
\R\Gamma(G,\R\Gamma_{\proeet}(Y,\sff))\simeq  \R\Gamma_{\proeet}(X,\sff).
$$
\item  There is  a natural Hochschild-Serre spectral sequence
$$
E^{i,j}_2=H^i(G, H^j_{\proeet}(Y,\sff))\Rightarrow  H^{i+j}_{\proeet}(X,\sff).
$$
\end{enumerate}
\end{lemma}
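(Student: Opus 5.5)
The plan is to compute $\R\Gamma_{\proeet}(X,\sff)$ by Čech descent along the torsor $f:Y\to X$ and to identify the resulting Čech complex with the bar complex computing condensed group cohomology.

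\emph{Step 1: the Čech nerve.} Since $f$ is a pro-\'etale $G$-torsor, the map $f^\diamond$ is a quasi-pro-\'etale cover. The action map gives a $G$-equivariant isomorphism of diamonds
$$Y^\diamond\times_{X^\diamond}Y^\diamond\simeq Y^\diamond\times \underline{G},\qquad (y,g)\mapsto (y,gy).$$
Iterating, the $(n+1)$-fold fibre product is $Y^\diamond\times G^n$. This can be checked quasi-pro-\'etale locally on $X$, where $Y\simeq X\times G$ and it is the obvious identity.

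\emph{Step 2: descent.} By quasi-pro-\'etale descent for the condensed sheaf $\sff_{\rm cond}$, we get
$$\R\Gamma_{\proeet}(X,\sff)\simeq \R\lim_{[n]\in\Delta}\R\Gamma_{\proeet}(Y\times G^n,\sff).$$
This requires knowing that $\sff$ is a sheaf on the big quasi-pro-\'etale site of diamonds over $X$, with the same condensed structure on $Y\times G^n$. That holds because $\sff_{\rm cond}$ is defined on affinoid perfectoids, which form a basis.

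\emph{Step 3: identifying the terms.} Lemma \ref{HS} with $S=G^n$ gives
$$\R\Gamma_{\proeet}(Y\times G^n,\sff)\simeq \R\underline{\rm Hom}(\Z[G^n],\R\Gamma_{\proeet}(Y,\sff)).$$
I then check that the face maps of the Čech nerve become the bar differentials. These involve projections, multiplications in $G$, and the action of $G$ on $Y$, and they induce the action of $G$ on $\R\Gamma_{\proeet}(Y,\sff)$.

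\emph{Step 4: conclusion.} The totalization $\R\lim_\Delta \R\underline{\rm Hom}(\Z[G^\bullet],M)$, with $M=\R\Gamma_{\proeet}(Y,\sff)$, is $\R\underline{\rm Hom}_{\Z[G]}(\Z,M)=\R\Gamma(G,M)$, since the bar resolution $\Z[G^{\bullet+1}]\to \Z$ is a resolution of condensed $\Z[G]$-modules. This is the content of \cite[Prop. B2]{GB1}, applicable since everything is solid. All constructions are $B$-linear for $B=\sff(C)$, so the identification holds in $\sd(B_\Box)$. Part (2) then follows from (1) via the standard spectral sequence of $\R\Gamma(G,-)$ applied to a complex, i.e. the filtration by the canonical truncations of $\R\Gamma_{\proeet}(Y,\sff)$.

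The main obstacle is Step 3 done coherently, not merely term by term. I need the identifications of Lemma \ref{HS} to be functorial in the profinite set $S$ and compatible with the $G$-action, so that they assemble into an equivalence of cosimplicial objects rather than a family of levelwise equivalences. I would handle this by writing the identification of Lemma \ref{HS} as a natural map, adjoint to the pullback along $X\times S\to X$ evaluated on $\Z[S]$, and then checking compatibility with maps of profinite sets. With that functoriality in hand, the equivariance follows formally.
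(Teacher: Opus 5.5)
Your proposal is correct and follows essentially the same route as the paper, which proves the lemma by citing \cite[Lemma 4.4]{CGN1} and \cite[Prop. 4.12]{GB1}: \v{C}ech descent along the torsor, the identification $Y\times_X\cdots\times_X Y\simeq Y\times G^n$, Lemma \ref{HS} to rewrite the terms as $\R\underline{\Hom}(\Z[G^n],-)$, and the bar complex of \cite[Prop. B2]{GB1}. The one case the paper treats separately, $\sbb_{\pdr}$, is handled in your argument by using Lemma \ref{HS} as stated for it; the paper notes that the input behind this (the analog of \cite[Prop. 4.7]{GB1}) follows from $\sbb^+_{\dr}$ by a colimit argument.
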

\begin{proof} The case of $\Q_p$ was treated in \cite[Lemma 4.4]{CGN1}. The cases of ${\mathbb B}_{\dr}, {\mathbb B}^+, {\mathbb B}_I$ were treated in  \cite[Prop. 4.12]{GB1}. The case of ${\mathbb B}_{\pdr}$ 
follows by the same type of argument since the analog of \cite[Prop. 4.7]{GB1} holds for it by a colimit  argument starting from ${\mathbb B}^+_{\dr}$.
\end{proof}

\subsection{De Rham comparison theorem} Let  
  $X\in {\rm Sm}_K$. See Remark \ref{drcond} for our conventions concerning 
  the object $\R\Gamma_{\dr}(X)$ of the filtered $\infty$-category $\sff\sd(K_{\Box})$ of 
  $\sd(K_{\Box})$.
    
  Let $*\in \{\dr, \pdr\}$. The $\B_{*}$-cohomology of $X$ (\cite{BMS}, \cite{Guo} for $*=\dr$) is the object of the filtered $\infty$-category $\sff\sd({\B_{*,\Box}})$ of $\sd({\B_{*,\Box}})$ defined by 
$$
  \R\Gamma_{\dr}(X_C/\B_{*}):=\R\Gamma_{\dr}(X)\otimes_K^{{\rm L}_{\Box}}\B_{*}.
  $$
 It is endowed with the tensor product filtration ($r\in\Z$) 
$$
F^r \R\Gamma_{\dr}(X/\B_{*}):=\colim_{j+k\geq r} (F^j \R\Gamma_{\dr}(X)\otimes_K^{{\rm L}_{\Box}}F^k\B_{*}).
$$

\begin{example}
 If $X$ is Stein or affinoid, thanks to the vanishing of higher coherent cohomology on $X$, we have natural quasi-isomorphisms in $\sd({\B_{*,\Box}})$
\begin{align*}
 \R\Gamma_{\dr}(X_C/\B_{*}) & \simeq (\so(X)\otimes^{\Box}_K\B_{*}\to  \Omega^1(X)\otimes^{\Box}_K\B_{*}\to\ldots) \\
F^r \R\Gamma_{\dr}(X_C/\B_{*}) & \simeq  (\so(X)\otimes^{{\Box}}_Kt^r\B^+_{*}\to  \Omega^1(X)\otimes^{{\Box}}_Kt^{r-1}\B^+_{*}\to\ldots)\notag
\end{align*}
Here we were able to replace the derived tensor product by the tensor product itself because the spaces $\Omega^i(X)$ are Fr\'echet, hence flat ($\B_{*}$ is also flat, as a filtered colimit of Fr\'echet spaces). 
\end{example}

 Similarly, the object
 $\rg_{\proet}(X_C,\sbb_{*})\in \sd({\B_{*,\Box}})$ can be promoted 
  to an object of $\sff\sd({\B_{*,\Box}})$ by setting 
  $$F^r \rg_{\proet}(X_C,\sbb_{*}):=\rg_{\proet}(X_C, F^r \sbb_{*})\in \sd({\B^+_{*,\Box}}).$$

 We quote the following de Rham comparison theorem:
\begin{theorem}{\rm (Bosco \cite[Th. 1.8, Rem. 6.14]{GB1})} \label{Guid1}
Let $X\in {\rm Sm}_K$. 
\begin{enumerate}
\item We have a natural  quasi-isomorphism in $\sd({\B^+_{\dr,\Box}})$
\begin{align}\label{ober1p}
F^r\rg_{\proet}(X_C,\sbb_{\dr}) & \simeq F^r\rg_{\dr}(X_C/\B_{\dr}),\quad r\in\Z.
\end{align}
\item If  $X$ is quasi-compact we also have  a natural  quasi-isomorphism in $\sd({\B_{\dr,\Box}})$
\begin{align}\label{ober1pp}
\rg_{\proet}(X_C,\sbb_{\dr}) & \simeq \rg_{\dr}(X_C/\B_{\dr}).
\end{align}
\end{enumerate}
\end{theorem}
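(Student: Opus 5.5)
The plan is to work filtration by filtration with the de Rham period sheaf $\mathcal{O}\mathbb{B}_{\dr}$ and the pro-\'etale Poincar\'e lemma, then glue. Write $\nu: X_{C,\proet}\to X_{C,\rm an}$ for the projection. Each step should be carried out in condensed form, that is, for the sheaves $\sff_{\rm cond}$ on the basis of affinoid perfectoids. Lemma~\ref{HS} is what lets us pass between "value on $X\times S$" and "internal Hom from $\Z[S]$", so every identification below can be checked on $S$-points.

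\emph{Step 1: Poincar\'e lemma.} Consider the sheaf $\mathcal{O}\mathbb{B}^+_{\dr}$ on $X_{C,\proet}$ with its connection $\nabla$ and filtration $F^j$. There is a strict exact sequence of filtered sheaves
$$0\to \mathbb{B}^+_{\dr}\to \mathcal{O}\mathbb{B}^+_{\dr}\xrightarrow{\nabla}\mathcal{O}\mathbb{B}^+_{\dr}\otimes_{\so_X}\Omega^1_X\to\cdots,$$
where the $i$-th term carries $F^{r-i}$. I would prove it locally on a toric chart $X\to\mathbb{T}^d$, where $\mathcal{O}\mathbb{B}^+_{\dr}$ is a power series ring $\mathbb{B}^+_{\dr}[[u_1,\dots,u_d]]$ with $u_i=T_i-[T_i^\flat]$. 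There the Koszul complex of $\partial/\partial u_i$ on power series is filtered-exact. Inverting $t$ and taking $F^r$ then gives
$$F^r\mathbb{B}_{\dr}\simeq F^r(\mathcal{O}\mathbb{B}_{\dr}\otimes\Omega^\bullet_X).$$

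\emph{Step 2: pushforward along $\nu$.} The key local claim is a natural filtered quasi-isomorphism
$$\R\nu_*F^j\mathcal{O}\mathbb{B}_{\dr}\simeq \bigl(\so_X\widehat{\otimes}_K F^j\B_{\dr}\bigr)_{\rm cond}$$
on affinoids, with the higher pushforwards vanishing. To prove it, take an affinoid $U$ with a toric chart and the pro-\'etale $\Z_p(1)^d$-torsor $\widetilde U\to U_C$ extracting $p$-power roots of the coordinates. By Lemma~\ref{HS1}, $\R\Gamma_{\proet}(U_C,\mathrm{gr}^j\mathcal{O}\mathbb{B}_{\dr})$ is the $\Z_p(1)^d$-cohomology of $\mathrm{gr}^j\mathcal{O}\mathbb{B}_{\dr}(\widetilde U)$, which is a polynomial algebra in the $u_i$ over $\widehat{\so}(\widetilde U)(j)$.

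This is Faltings--Scholze's computation: $\widehat{\so}(\widetilde U)$ splits as $\so(U)\widehat\otimes C$ plus a part on which $\Z_p(1)^d$-cohomology vanishes. The polynomial variables then kill the Lie-algebra classes, in the same way $\log t$ kills the class in Lemma~\ref{Galois11}. The result is $\so(U)\widehat\otimes_K C(j)$ in degree $0$. One then d\'evisses along the filtration and passes to the limit over $j$, controlling $\R^1\!\lim$ via the Mittag-Leffler property of the Banach pieces. The condensed enhancement comes from Proposition~\ref{contcond}, since all the terms are Fr\'echet.

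\emph{Step 3: globalize.} Combining Steps 1 and 2 gives
$$F^r\rg_{\proet}(X_C,\mathbb{B}_{\dr})\simeq \rg\bigl(X,F^r(\Omega^\bullet_X\widehat\otimes_K\B_{\dr})\bigr).$$
On affinoids, flatness of Fr\'echet spaces identifies the right-hand side with $F^r\rg_{\dr}(X_C/\B_{\dr})$, as in the Example above. Both sides satisfy analytic descent, so part (1) follows for general $X$ by a \v{C}ech argument. Here one needs that $\otimes^{\LL_\Box}_K$ with $F^k\B_{\dr}$ commutes with the relevant limits; nuclearity of these objects gives this (Proposition~\ref{magic}-type arguments). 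For part (2), $\mathbb{B}_{\dr}=\colim_r F^{-r}\mathbb{B}_{\dr}$. When $X$ is quasi-compact, $\rg_{\proet}$ commutes with this filtered colimit, because it is computed by a finite \v{C}ech complex of quasi-compact pieces. So (2) follows from (1) by taking $\colim_r$ on both sides.

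The main obstacle is Step 2 in its condensed, filtered form. One must check that the decomposition of $\widehat{\so}(\widetilde U)$ and the vanishing of the non-integral part hold as an identity of solid objects, i.e.\ uniformly in profinite parameters $S$, not just on underlying groups. One must also check that the inverse limit over the $\mathrm{gr}^j$ produces no $\R^1\!\lim$ terms. I would handle this by working with explicit bounded-denominator decompositions, which give uniform Banach estimates, and by invoking Lemma~\ref{HS} to reduce the $S$-parametrized statement to the one for $X$ itself.
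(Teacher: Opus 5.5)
Your plan follows the same route as Bosco's proof, which the paper quotes rather than reproves. There, the comparison map is induced by the filtered Poincar\'e lemma \eqref{brig1} and the maps $\nu^*$ on its terms, as recalled in the proof of Proposition~\ref{action1}. Your derivation of (2) from (1) is the same colimit argument the paper uses for Corollary~\ref{pGuid}.

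Steps 1 and 3 are fine. However, the passage from $\mathrm{gr}^j$ to $F^j$ in Step 2 does not work as written. You treat $F^j\so\mathbb{B}_{\dr}$ as $\R\varprojlim_m F^j\so\mathbb{B}_{\dr}/F^{j+m}$, but the filtration on $\so\mathbb{B}_{\dr}=\so\mathbb{B}^+_{\dr}[1/t]$ is not complete. On $\widetilde U$, put $V_i=u_i/t$; these, not the $u_i$, are the polynomial variables of $\mathrm{gr}^j\so\mathbb{B}_{\dr}$. Then $F^m=t^mF^0$ and
\[
F^0\so\mathbb{B}_{\dr}(\widetilde U)=\bigcup_{k\geq 0}t^{-k}F^k\so\mathbb{B}^+_{\dr}(\widetilde U)=\bigcup_{k\geq 0}\Bigl\{\sum_b c_bV^b\ :\ c_b\in t^{\max(|b|-k,0)}\mathbb{B}^+_{\dr}(\widetilde U)\Bigr\}.
\]
So the partial sums of $\sum_{m\geq 0}t^mV_1^{m^2}$ give an element of $\varprojlim_m F^0/F^m$ that does not come from $F^0$. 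Consequently, your d\'evissage and $\R^1\!\lim$ control compute $\R\nu_*$ of the completion $\widehat{F}^j\so\mathbb{B}_{\dr}:=\varprojlim_m F^j\so\mathbb{B}_{\dr}/F^{j+m}$, not of the sheaf appearing in your Step~1. Working with $\so\mathbb{B}^+_{\dr}$ instead does not help. That sheaf is complete, but $\mathrm{gr}^k\so\mathbb{B}^+_{\dr}\simeq t^k\widehat{\so}[V_1,\dots,V_d]_{\deg\leq k}$ is truncated and has nonzero $\Z_p(1)^d$-cohomology in positive degrees.

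The repair is to run Step~1 with completed sheaves. Applying the filtered Poincar\'e lemma for $r$ and for $r+m$ shows that the quotient complexes
$F^r\mathbb{B}_{\dr}/F^{r+m}\to F^r\so\mathbb{B}_{\dr}/F^{r+m}\to (F^{r-1}\so\mathbb{B}_{\dr}/F^{r+m-1})\otimes_{\so_X}\Omega^1_X\to\cdots$
are exact. Since $X_{C,\proet}$ is replete and the transition maps are surjective, $\varprojlim_m$ of these exact complexes is still exact. Because $F^r\mathbb{B}_{\dr}=t^r\mathbb{B}^+_{\dr}$ is $t$-adically complete, this gives
$F^r\mathbb{B}_{\dr}\simeq[\widehat{F}^r\so\mathbb{B}_{\dr}\to\widehat{F}^{r-1}\so\mathbb{B}_{\dr}\otimes_{\so_X}\Omega^1_X\to\cdots]$.
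Your $\R\nu_*$ computation then applies verbatim to these terms and gives $\R\nu_*\widehat{F}^j\so\mathbb{B}_{\dr}\simeq\so_X\widehat\otimes_K F^j\B_{\dr}$. With this change, Step~3 and part (2) go through as you describe.
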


\begin{remark}
We warn the reader that the first claim in the above theorem yields a natural  quasi-isomorphism in $\sd({\B^+_{\dr,\Box}})$
\begin{align}\label{oberp10}
\rg_{\proet}(X_C,\sbb^+_{\dr})  \simeq  F^0(\rg_{\dr}(X)\otimes^{\LL_{\Box}}_K\B_{\dr}),
\end{align}
That is, the filtration on the right is twisted! For an untwisted version (see \cite[Th. 5.1]{GB2}) we need to change the group $\rg_{\proet}(X_C,\sbb^+_{\dr}) $ by using the d\'ecalage operator: setting\footnote{Here $\nu: X_{C,\proet}\to X_{C,\eet}$ is the canonical projection of sites.}
$$
\rg^{\eta}_{\proet}(X_C,\sbb^+_{\dr}):=\rg_{\eet}(X_C,\LL\eta_t\R\nu_*\sbb^+_{\dr})\in \sd({\B_{\dr,\Box}^+}),
$$ 
there is a 
natural quasi-isomorphism in $\sd({\B^+_{\dr,\Box}})$ 
$$\rg^{\eta}_{\proet}(X_C,\sbb^+_{\dr})\simeq \rg_{\dr}(X)\otimes^{\LL_{\Box}}_{K}\B^+_{\dr}.$$
\end{remark}

   We will need the following series of facts later on.

\begin{lemma}\label{GBcoh}
 Let $G$ be a profinite group acting (continuously) on $X\in {\rm Sm}_K$. There are natural quasi-isomorphisms in $\sd({\B^+_{\dr,\Box}})$ and $\sd(\B_{\dr, \Box})$
  \begin{align*}\R\Gamma(G,  \R\Gamma_{\dr}(X_C/\B_{\dr})) & \simeq \R\Gamma(G, \R\Gamma_{\dr}(X))\otimes^{\LL_\Box}_K \B_{\dr},\\
 \R\Gamma(G,  F^r\R\Gamma_{\dr}(X_C/\B_{\dr})) & \simeq \colim_{j+k\geq r} \R\Gamma(G, F^j \R\Gamma_{\dr}(X))\otimes^{\LL_\Box}_K F^k\B_{\dr}.
 \end{align*}
\end{lemma}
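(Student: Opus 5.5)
The plan is to reduce everything to Proposition \ref{magic}: $G$ acts on $X$ (hence on $\R\Gamma_{\dr}(X)$ and its filtration steps) and acts trivially on the coefficient ring $\B_{\dr}$. The action on $\R\Gamma_{\dr}(X_C/\B_{\dr})=\R\Gamma_{\dr}(X)\otimes^{\LL_\Box}_K\B_{\dr}$ is thus only through the first factor. Proposition \ref{magic} then gives a quasi-isomorphism
$$\R\Gamma(G,V)\otimes^{\LL_\Box}_K W\simeq \R\Gamma(G,V\otimes^{\LL_\Box}_K W)$$
for $V\in\sd(K_\Box[G])$ and $W\in{\rm Nuc}_K$ with trivial action.

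First I will check that the coefficients are nuclear. Each $F^k\B_{\dr}=t^k\B^+_{\dr}$ is a $K$-Fr\'echet space, so it lies in ${\rm Nuc}_K$ by the recalled facts (\cite[Prop. A.57, A.64]{GB1}). Also $\B_{\dr}=\colim_k t^{-k}\B^+_{\dr}$ is a countable colimit of Fr\'echet spaces, and ${\rm Nuc}_K$ is stable under colimits, so $\B_{\dr}\in{\rm Nuc}_K$ as well.

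Second, I will check that $\R\Gamma_{\dr}(X)$ and $F^j\R\Gamma_{\dr}(X)$ define objects of $\sd(K_\Box[G])$. To do this, compute them via a $G$-stable (or \v{C}ech, functorially in $G$) model built from the condensed sheaves $\Omega^{\geq j}_{X,\rm cond}$. Continuity of the action is what makes the action condensed, i.e. given by maps $\Z[G]\otimes(-)\to(-)$. This is the step I expect to need the most care, since $X$ need not be quasi-compact and the action must be made strictly functorial. My first approach would be to define $\R\Gamma_{\dr}(X)$ as the derived limit over a $G$-stable exhaustion by quasi-compact opens if one exists. Failing that, I would note that $\R\Gamma(G,-)$ is computed by the bar complex of \cite[Prop. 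B2]{GB1}, which only requires a $G$-module structure in $\sd({\rm Solid})$.

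With these in hand, the first claimed isomorphism is exactly Proposition \ref{magic} with $V=\R\Gamma_{\dr}(X)$ and $W=\B_{\dr}$. It is $\B_{\dr}$-linear because the map in Proposition \ref{magic} is natural in $W$, and multiplication by $\B_{\dr}$ commutes with the $G$-action.

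For the filtered version, apply Proposition \ref{magic} to each pair $(F^j\R\Gamma_{\dr}(X),F^k\B_{\dr})$. Then use that $\R\Gamma(G,-)$ commutes with the filtered colimit $\colim_{j+k\geq r}$, since this colimit is effectively finite or stabilizing. Indeed, $F^j\R\Gamma_{\dr}(X)=\R\Gamma_{\dr}(X)$ for $j\leq 0$ and $F^j\R\Gamma_{\dr}(X)=0$ for $j>\dim X$. Hence the colimit is indexed, up to cofinality, by the finitely many $j\in[0,\dim X]$ with $k=r-j$, and is a finite colimit. Because $\R\Gamma(G,-)$ is exact, it commutes with such finite colimits, and this gives the second isomorphism in $\sd(\B^+_{\dr,\Box})$.
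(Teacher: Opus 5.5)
Your proposal is correct and follows the paper's proof: both apply Proposition \ref{magic} termwise, using that $\B_{\dr}$ and each $F^k\B_{\dr}$ are nuclear, and then move $\R\Gamma(G,-)$ past $\colim_{j+k\geq r}$. The only difference is in that last step. The paper simply invokes commutation of $\R\Gamma(G,-)$ with filtered colimits, whereas you reduce to a finite colimit using that the Hodge filtration is finite. That works, and it is even a bit more careful, since the poset $\{j+k\geq r\}$ is not filtered. Be aware, though, that the relevant finite diagram is the zigzag through the pairs $(j,r-j)$ and $(j+1,r-j)$, not the antidiagonal $\{(j,r-j)\}$ alone, whose colimit would just be a direct sum.
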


\begin{proof}
 The first quasi-isomorphism follows directly from Proposition \ref{magic}, since $\B_{\dr}\in {\rm Nuc}_K$, as a filtered colimit of $K$-Fr\'echet spaces. For the second one, we use that $\R\Gamma(G,-)$ commutes with filtered colimits and that 
 $F^r\B_{\dr}\in {\rm Fr}_K\subset {\rm Nuc}_K$, so by Proposition \ref{magic} we can write 
 \begin{align*}
 \R\Gamma(G,  F^r\R\Gamma_{\dr}(X_C/\B_{\dr})) & \simeq \R\Gamma(G, \colim_{j+k\geq r} F^j \R\Gamma_{\dr}(X)\otimes^{\LL_\Box}_K F^k\B_{\dr})\\
  & \simeq 
 \colim_{j+k\geq r} \R\Gamma(G, F^j \R\Gamma_{\dr}(X)\otimes^{\LL_\Box}_K F^k\B_{\dr})\\
 & \simeq
  \colim_{j+k\geq r} \R\Gamma(G, F^j \R\Gamma_{\dr}(X))\otimes^{\LL_\Box}_K F^k\B_{\dr},
  \end{align*}
  as wanted.
\end{proof}

\begin{corollary} \label{pGuid} If $X\in {\rm Sm}_K$ is quasi-compact, there are natural quasi-isomorphisms in $\sd({\B^+_{\pdr,\Box}})$ and $\sd({\B_{\pdr,\Box}})$, respectively:
\begin{align}\label{ober3}
F^r\rg_{\proet}(X_C,\sbb_{\pdr}) & \simeq F^r\rg_{\dr}(X_C/\B_{\pdr}),\quad r\in\Z.
\end{align}
\begin{align}\label{ober2}
\rg_{\proet}(X_C,\sbb_{\pdr}) & \simeq \rg_{\dr}(X_C/\B_{\pdr}).
\end{align}

\end{corollary}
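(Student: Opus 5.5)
The plan is to deduce Corollary~\ref{pGuid} from Bosco's Theorem~\ref{Guid1} by a colimit argument over the powers of $\log t$. We write $\sbb^+_{\pdr}=\colim_n \sbb^+_{\dr}[\log t]^{\leq n}$ with $\sbb^+_{\dr}[\log t]^{\leq n}=\bigoplus_{i=0}^n \sbb^+_{\dr}(\log t)^i$. Forgetting the Galois action, this is a finite free $\sbb^+_{\dr}$-module. The filtration is defined by $F^r\sbb_{\pdr}=\colim_n \bigoplus_{i\le n} t^r\sbb^+_{\dr}(\log t)^i$, so $F^r\sbb_{\pdr}\simeq \bigoplus_{i\geq 0}F^r\sbb_{\dr}\cdot(\log t)^i$ as sheaves, and likewise $F^r\B_{\pdr}=\bigoplus_i F^r\B_{\dr}(\log t)^i$.

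\emph{Step 1.} Since $X$ is quasi-compact, $X_C$ is qcqs. Pro-\'etale cohomology on a qcqs space commutes with filtered colimits of sheaves (in the condensed setting this uses Lemma~\ref{HS} and the fact that affinoid perfectoids are qcqs, exactly as in \cite{GB1}). Hence
$$F^r\rg_{\proet}(X_C,\sbb_{\pdr})\simeq \colim_n\bigoplus_{i\le n}F^r\rg_{\proet}(X_C,\sbb_{\dr})(\log t)^i .$$
Applying \eqref{ober1p} termwise identifies this with $\colim_n\bigoplus_{i\le n}F^r\rg_{\dr}(X_C/\B_{\dr})(\log t)^i$.

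\emph{Step 2.} The tensor-product filtration commutes with the direct sum decomposition, because $\otimes^{\LL_\Box}_K$ commutes with colimits. Therefore
$$\colim_{j+k\geq r}F^j\rg_{\dr}(X)\otimes^{\LL_\Box}_K F^k\B_{\pdr}\simeq\bigoplus_i\colim_{j+k\ge r}F^j\rg_{\dr}(X)\otimes^{\LL_\Box}_K F^k\B_{\dr}(\log t)^i,$$
which gives \eqref{ober3}. We must also check that the resulting map is $\B^+_{\pdr}$-linear and independent of the splitting, in particular compatible with the Galois action, which mixes the $(\log t)^i$. For this, construct the map intrinsically: Bosco's map is induced by a morphism of filtered sheaf complexes $\Omega^\bullet_X\otimes\sbb_{\dr}\to$ (de Rham complex of $\O\sbb_{\dr}$) via the Poincar\'e lemma. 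Tensoring this morphism over $\sbb_{\dr}$ with $\sbb_{\pdr}$, i.e.\ base change along $\B_{\dr}\to\B_{\pdr}$ on the period side, gives a natural and $\sg_K$-equivariant map. The splitting argument is then used only to verify that this intrinsic map is a quasi-isomorphism.

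\emph{Step 3.} For \eqref{ober2}, take the colimit over $r\to-\infty$ of \eqref{ober3}. On the left, $\sbb_{\pdr}=\colim_r F^{-r}\sbb_{\pdr}$, and cohomology commutes with this colimit by quasi-compactness. On the right, the colimit of the tensor filtration is $\rg_{\dr}(X)\otimes^{\LL_\Box}_K\B_{\pdr}$, since the de Rham filtration is finite (of length at most $\dim X$). Alternatively, \eqref{ober2} follows directly from \eqref{ober1pp} by the same direct-sum argument.

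The main obstacle is the naturality and Galois-equivariance in Step 2, since the $\log t$-splitting is not $\sg_K$-stable. I would handle it by defining the map as the base change described there and checking the quasi-isomorphism on graded pieces for the finite $\log t$-filtration $\sbb_{\dr}[\log t]^{\leq n}$. This filtration is $\sg_K$-stable, and its graded pieces are copies of $\sbb_{\dr}$, where \eqref{ober1p} applies.
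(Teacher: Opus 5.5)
Your proposal is correct and follows essentially the same route as the paper. You use quasi-compactness to commute $\rg_{\proet}(X_C,-)$ with the colimit and direct sum over powers of $\log t$, use that $\rg_{\dr}(X)\otimes^{\LL_\Box}_K(-)$ commutes with colimits, and then apply Theorem~\ref{Guid1} (\eqref{ober1p} and \eqref{ober1pp}) summand by summand. Your extra step, defining the map intrinsically by base change along $\sbb_{\dr}\to\sbb_{\pdr}$ so that it is $\B^+_{\pdr}$-linear and $\sg_K$-equivariant even though the $\log t$-splitting is not Galois-stable, spells out what the paper leaves implicit in its notation $(-)[\log t]$.
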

\begin{proof} Since $X$ is quasi-compact, $\rg_{\proet}(X_C,-)$ commutes with filtered colimits, we have a natural quasi-isomorphism
 $$\rg_{\proet}(X_C,\sbb_{\pdr})\simeq  \rg_{\proet}(X_C,\sbb_{\dr})[\log t].$$
 Since $\rg_{\dr}(X)\otimes^{\LL_{\Box}}_{K} (-)$ commutes with colimits, we also have a natural quasi-isomorphism
 $$\rg_{\dr}(X_C/\B_{\pdr}) \simeq \rg_{\dr}(X)\otimes^{\LL_{\Box}}_{K}\B_{\pdr}\simeq (\rg_{\dr}(X)\otimes^{\LL_{\Box}}_{K}\B_{\dr})[\log t].$$
Hence the result is a direct consequence of Theorem \ref{Guid1}.
\end{proof}

\begin{lemma}\label{killing1} For any $X\in {\rm Sm}_K$ there are natural quasi-isomorphisms in $\sd(K_{\Box})$ ($r\in\Z$)
\begin{align*}
 \R\Gamma_{\dr}(X) &\simeq \R\Gamma(\sg_K, \R\Gamma_{\dr}(X_C/\B_{\pdr}) ), \\
F^r\R\Gamma_{\dr}(X) &\simeq \R\Gamma(\sg_K, F^r\R\Gamma_{\dr}(X_C/\B_{\pdr}) ).
\end{align*}
\end{lemma}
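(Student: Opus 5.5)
The plan is to pull the Galois cohomology inside the tensor product and then apply the Galois cohomology computations for $\B_{\pdr}$ from the previous section. Throughout, $\sg_K$ acts on $\R\Gamma_{\dr}(X_C/\B_{\pdr})=\R\Gamma_{\dr}(X)\otimes^{\LL_\Box}_K\B_{\pdr}$ only through the second factor, since $X$ is defined over $K$.

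\textbf{Unfiltered statement.} First I would dévissage to reduce to a bounded complex of nuclear objects. For a Stein or affinoid $X$, the complex $\R\Gamma_{\dr}(X)$ is the de Rham complex $\Omega^\bullet(X)$. This is a bounded complex (length at most $\dim X$) of Fréchet or Banach spaces, hence of objects of ${\rm Nuc}_K$ with trivial $\sg_K$-action. For a general $X$, I would use a Čech complex for a countable cover by affinoids. Then $\R\Gamma_{\dr}(X)$ is a countable limit (a totalization) of such complexes.

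The cleaner route is to write $\R\Gamma_{\dr}(X)$ itself as an object of ${\rm Nuc}_K$ in the derived sense. It is built from countable products of Banach spaces, and ${\rm Nuc}_K$ is stable under countable products. Proposition \ref{magic}, applied termwise with $V=\B_{\pdr}$ and $W=\Omega^i(X)$, or directly Remark \ref{Galois-zurich}(2), gives
\[
\R\Gamma(\sg_K,\Omega^i(X)\otimes^{\Box}_K\B_{\pdr})\simeq \Omega^i(X)[0].
\]
Because the complex is bounded, a finite dévissage over the stupid filtration assembles these termwise statements into the first quasi-isomorphism. Naturality comes from the map $\R\Gamma_{\dr}(X)\to\R\Gamma_{\dr}(X)\otimes\B_{\pdr}$ induced by $K\to\B_{\pdr}^{\sg_K}$.

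\textbf{General $X$.} Both sides satisfy descent for analytic covers.
\begin{itemize}
\item $\R\Gamma(\sg_K,-)$ commutes with limits.
\item $\R\Gamma_{\dr}(-)\otimes^{\LL_\Box}_K\B_{\pdr}$ commutes with countable limits of Fréchet-type objects, because $\B_{\pdr}$ is nuclear (a countable colimit of Fréchet spaces) and tensoring with it commutes with countable products of nuclear objects.
\end{itemize}
Given this, writing $X$ as an increasing union of affinoids (it is countable at infinity) reduces everything to the affinoid case.

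\textbf{Filtered statement.} Here $F^r$ is the colimit over $j+k\ge r$ of $F^j\R\Gamma_{\dr}(X)\otimes F^k\B_{\pdr}$, with $F^j\R\Gamma_{\dr}(X)$ represented by $\Omega^{\geq j}$. Since $\R\Gamma(\sg_K,-)$ commutes with filtered colimits, the argument of Lemma \ref{GBcoh} gives
\[
\colim_{j+k\ge r}\R\Gamma(\sg_K, F^j\R\Gamma_{\dr}(X))\otimes F^k\B_{\pdr}.
\]
It is better to compute termwise on the complex $(\Omega^i(X)\otimes^\Box_K t^{r-i}\B^+_{\pdr})_i$. By Remark \ref{Galois-zurich}(2), the $\sg_K$-cohomology of the $i$-th term is $\Omega^i(X)$ when $r-i\le 0$ and $0$ when $r-i>0$. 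The resulting complex is exactly $\Omega^{\ge r}(X)$, which represents $F^r\R\Gamma_{\dr}(X)$. Globalizing for non-affinoid $X$ works as before.

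\textbf{Main obstacle.} The key difficulty is justifying the commutation of $-\otimes^{\LL_\Box}_K\B_{\pdr}$ with the countable limit defining $\R\Gamma_{\dr}(X)$ for non-quasi-compact $X$. I would handle it using nuclearity together with the fact that $\B_{\pdr}$ is a countable colimit, along closed immersions, of Fréchet spaces; these are flat, and for them the tensor product commutes with countable products. If needed, I would state the globalization directly in terms of sheaves: use the sheaf $U\mapsto\Omega^\bullet(U)\otimes^\Box_K\B_{\pdr}$ on affinoids and apply Proposition \ref{magic} sectionwise before taking $\R\Gamma(X,-)$.
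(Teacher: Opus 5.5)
\textbf{Gap in the globalization step.} Your computation for Stein or affinoid $X$ is correct. The passage to general $X$, however, rests on a false claim: that $-\otimes^{\LL_\Box}_K\B_{\pdr}$ commutes with countable products, and hence with the countable limits computing $\R\Gamma_{\dr}(X)$. Whatever holds for the Fr\'echet pieces $t^{-r}\B^+_{\dr}[\log t]^{\leq k}$, the ring $\B_{\pdr}$ is a \emph{colimit} of them, and countable products do not commute with that colimit. For example, the natural map $K^{\N}\otimes^{\Box}_K\B_{\dr}\to\prod_{\N}\B_{\dr}$ factors through $\colim_r\prod_{\N}t^{-r}\B^+_{\dr}$, so its image misses $(t^{-n})_{n}$. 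The direct sum $\B^+_{\pdr}=\bigoplus_k\B^+_{\dr}(\log t)^k$ causes the same failure for the filtered pieces.

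Consequently, for non-quasi-compact $X$ (e.g.\ a countable disjoint union of affinoids), $\R\Gamma_{\dr}(X)\otimes^{\LL_\Box}_K\B_{\pdr}$ is not $\R\lim_n(\R\Gamma_{\dr}(X_n)\otimes^{\LL_\Box}_K\B_{\pdr})$. Your sheaf-theoretic fallback computes $\R\Gamma(X,\Omega^{\bullet}_X\otimes^{\Box}_K\B_{\pdr})$, which is a different object from the one in the lemma. This is the same phenomenon that forces the quasi-compactness hypothesis in Theorem \ref{Guid1}(2) and Corollary \ref{pGuid}. Descent from affinoids would therefore prove the statement for the wrong complex.

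\textbf{The fix, which is the paper's proof.} No descent is needed. Regard $M=\R\Gamma_{\dr}(X)$ and $F^jM$ directly as nuclear objects with trivial $\sg_K$-action: they are built from countable products of Banach spaces, and ${\rm Nuc}_K$ is stable under countable products, as you yourself observe before abandoning this route. Apply Proposition \ref{magic} with $W=M$ (resp.\ $F^jM$) and $V=\B_{\pdr}$ (resp.\ $F^k\B_{\pdr}$), and use that $\R\Gamma(\sg_K,-)$ commutes with filtered colimits. This gives $\R\Gamma(\sg_K,M\otimes^{\LL_\Box}_K\B_{\pdr})\simeq\R\Gamma(\sg_K,\B_{\pdr})\otimes^{\LL_\Box}_KM$ and $\R\Gamma(\sg_K,F^r(M\otimes^{\LL_\Box}_K\B_{\pdr}))\simeq\colim_{j+k\geq r}\R\Gamma(\sg_K,F^k\B_{\pdr})\otimes^{\LL_\Box}_KF^jM$. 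Proposition \ref{Galois1} then finishes: the Galois cohomology is $K$ for $k\leq 0$ and $0$ for $k>0$, compatibly with transition maps, so the colimit collapses to $M$, resp.\ $F^rM$. This is just your Stein computation with the global nuclear complex in place of $\Omega^\bullet(X)$.

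Note also that the Galois cohomology falls on $F^k\B_{\pdr}$, not on $F^j\R\Gamma_{\dr}(X)$ as in the formula you transplanted from Lemma \ref{GBcoh}. There the group acts on $X$ and trivially on $\B_{\dr}$; here it is the other way round.
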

\begin{proof} 
  For simplicity write $A=\B_{\pdr}$ and $M=\R\Gamma_{\dr}(X)$. Note that $M$ and 
  $F^k M$ belong to ${\rm Nuc}_K$, as filtered colimits of $K$-Fr\'echet spaces. Thus by Proposition 
  \ref{magic} (and the fact that $\R\Gamma(\sg_K,-)$ commutes with filtered colimits) we obtain natural quasi-isomorphisms
  \begin{align*}\R\Gamma(\sg_K, \R\Gamma_{\dr}(X_C/\B_{\pdr}) ) &\simeq \R\Gamma(\sg_K, M\otimes^{\LL_{\Box}}_K A)\simeq 
  \R\Gamma(\sg_K, A)\otimes^{\LL_{\Box}}_K M,\\
  \R\Gamma(\sg_K, F^r\R\Gamma_{\dr}(X_C/\B_{\pdr}) ) & \simeq \R\Gamma(\sg_K, \colim_{j+k\geq r} F^j M\otimes^{\LL_{\Box}}_K F^k A)\\
   & \simeq \colim_{j+k\geq r} \R\Gamma(\sg_K, F^j M\otimes^{\LL_{\Box}}_K F^k A)\simeq \colim_{j+k\geq r} \R\Gamma(\sg_K, F^kA)\otimes^{\LL_{\Box}}_K F^jM.
   \end{align*}
  The result follows then by the computation of $ \R\Gamma(\sg_K, A)$ and $\R\Gamma(\sg_K, F^kA)$ in Proposition \ref{Galois1}.
       \end{proof}

\begin{corollary}\label{quotG}
 If $X\in {\rm Sm}_K$ is quasi-compact and endowed with a continuous action of a profinite group 
 $G$, there are natural quasi-isomorphism in $\sd(K_{\Box})$ ($r\in\Z$)
 \begin{align*}
 \R\Gamma(G, F^r\R\Gamma_{\dr}(X))\simeq \R\Gamma(G\times\sg_K, F^r\rg_{\proet}(X_C,\sbb_{\pdr}) ), \\
\R\Gamma(G,\R\Gamma_{\dr}(X))\simeq \R\Gamma(G\times\sg_K, \rg_{\proet}(X_C,\sbb_{\pdr}) )\notag
\end{align*}
\end{corollary}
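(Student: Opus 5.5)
Combine Corollary \ref{pGuid}, Lemma \ref{killing1} and the iterated-cohomology formula $\R\Gamma(G\times\sg_K,-)\simeq\R\Gamma(G,\R\Gamma(\sg_K,-))$.

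\emph{Step 1: equivariance of the comparison.} Since $G$ acts on $X$ over $K$ and $\sg_K$ acts on the base change $X_C$, the product $G\times\sg_K$ acts on $X_C$, hence on $F^r\rg_{\proet}(X_C,\sbb_{\pdr})$. It also acts on $F^r\rg_{\dr}(X_C/\B_{\pdr})=\colim_{j+k\geq r}F^j\rg_{\dr}(X)\otimes^{\LL_\Box}_K F^k\B_{\pdr}$, with $G$ acting on the first factor and $\sg_K$ on the second. By naturality of Bosco's quasi-isomorphism (Theorem \ref{Guid1}) in $X$ and in the Galois action, the quasi-isomorphism \eqref{ober3} of Corollary \ref{pGuid} is $G\times\sg_K$-equivariant, i.e.\ it lives in the derived category of condensed $G\times\sg_K$-modules. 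This is where quasi-compactness of $X$ is used. I expect this step to be the main obstacle: one must make sure the comparison map is constructed functorially enough (a morphism of complexes of sheaves, not just an abstract quasi-isomorphism) that the continuous action of the profinite group $G$ is respected at the condensed level. I would argue this via functoriality of the construction in \cite{GB1} with respect to automorphisms of $X$ and of $C$ over $K$. Continuity is then automatic, since both sides are computed by \v{C}ech/bar complexes functorial in $X$.

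\emph{Step 2: take Galois invariants first.} Write $\R\Gamma(G\times\sg_K,-)\simeq\R\Gamma(G,\R\Gamma(\sg_K,-))$. This is a Hochschild--Serre-type identity for the product of profinite groups, valid in the condensed setting via the bar resolution, e.g.\ because $\Z[G\times\sg_K]=\Z[G]\otimes\Z[\sg_K]$ and $\R\underline{\rm Hom}$ adjunction. Applying Lemma \ref{killing1} to the inner term gives
$$\R\Gamma(\sg_K,F^r\rg_{\dr}(X_C/\B_{\pdr}))\simeq F^r\rg_{\dr}(X).$$
We must check that this quasi-isomorphism is $G$-equivariant. This holds because its proof used only Proposition \ref{magic} with $M=F^j\rg_{\dr}(X)$ as the ``trivial action'' factor. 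That factor carries the $G$-action, which commutes with $\sg_K$, and the map $W\to\R\Gamma(\sg_K,W\otimes A)$ of Proposition \ref{magic} is natural in $W$.

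\emph{Step 3: conclude.} Applying $\R\Gamma(G,-)$ to the $G$-equivariant quasi-isomorphism from Step 2 yields the filtered statement. The unfiltered statement follows identically, using \eqref{ober2} and the first part of Lemma \ref{killing1}, or by passing to the colimit $r\to-\infty$, since $\R\Gamma(G,-)$ commutes with filtered colimits.
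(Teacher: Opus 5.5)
Your proposal is correct and is essentially the paper's proof: write $\R\Gamma(G\times\sg_K,-)=\R\Gamma(G,\R\Gamma(\sg_K,-))$, transport along Corollary \ref{pGuid}, and kill the $\sg_K$-cohomology with Lemma \ref{killing1}. The paper also cites Lemma \ref{GBcoh} (commuting $G$-cohomology past the period-ring factor), which does the job of your equivariance check on Lemma \ref{killing1}. The condensed $G$-equivariance of the comparison that you flag in Step 1 is not treated in the paper's proof of this corollary; it is established later in Proposition \ref{action1}, using the action map $X\times G\to X$ together with Lemma \ref{HS}. That argument is needed because, as the paper notes, mere functoriality in $X$ only gives equivariance for the discrete group.
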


\begin{proof} It suffices to write $\R\Gamma(G\times \sg_K, -)=\R\Gamma(G, \R\Gamma(\sg_K, -))$ and 
use Corollary \ref{pGuid}, Lemma \ref{killing1} and Lemma \ref{GBcoh}.
\end{proof}

\subsection{De Rham cohomology with compact support} \label{compact-bonn}
Let $L=K, C$. For any functor $\mathcal{F}$ defined on ${\rm Sm}_L$, with values in some $\infty$-category with small colimits, and any $X\in {\rm Sm}_L$ we define 
  $$F(\partial X)=\colim_{Z\in\Phi(X)} F(X\setminus Z),$$
  where $\Phi(X)$ is the set of quasi-compact open subsets of $X$.  
   In particular, for $X\in {\rm Sm}_K$, we obtain objects $\R\Gamma_{\dr}(\partial X)\in \sff\sd(K_{\Box})$, $\R\Gamma(\partial X_C/\B_{\dr})\in \sff\sd({\B_{\dr,\Box}})$, as well as 
  $\rg_{\proet}(\partial X_C, \mathcal{F})\in \sd({\rm CondAb})$ for any pro-\'etale abelian sheaf $\mathcal{F}$ on $X_C$. 
  
     Using these constructions we can define various compactly supported cohomology groups
  (the brackets $[\cdots]$ denote the mapping fiber)
  \begin{align*}\rg_{\dr, c}(X) & :=[\rg_{\dr}(X)\to \rg_{\dr}(\partial X)]\in \sff\sd(K_{\Box}), \\
  F^r\rg_{\dr, c}(X) & :=[F^r\rg_{\dr}(X)\to F^r\rg_{\dr}(\partial X)], \quad r\in\Z.
  \end{align*}
  The same construction applied to the functor $X\mapsto \rg_{\dr}(X_C/\B_{\dr})$ yields 
  $\rg_{\dr,c}(X_C/\B_{\dr})\in \sff\sd({\B_{\dr,\Box}})$ and, for any abelian pro-\'etale sheaf 
  $\mathcal{F}$ on $X_{C, \proet}$, an object $\R\Gamma_{\proet,c}(X_C, \mathcal{F})\in 
  \sd({\rm CondAb})$. In particual, we get  $\rg_{\proet,c}(X_C, \sbb_{\dr})\in \sff\sd({\B_{\dr,\Box}})$ by setting 
  $$F^r \rg_{\proet,c}(X_C, \sbb_{\dr}):=\rg_{\proet,c}(X_C, F^r\sbb_{\dr})\simeq 
  [F^r \rg_{\proet}(X_C, \sbb_{\dr})\to F^r\rg_{\proet}(\partial X_C, \sbb_{\dr}].$$
  
  \begin{remark} Let  $X$ be a smooth partially proper  rigid analytic variety over $K$ and  let $\{X_n\}_{n\in \N}$,  be an increasing covering of $X$ by quasi-compact opens.  We can write:
   \begin{align*}
  \rg_{\dr}(\partial X_C/\B_{\dr}) \simeq \colim_{n}\rg_{\dr}((X\setminus X_{n})_C/\B_{\dr})
\end{align*}
and similarly with other cohomology theories.
\end{remark}

\begin{example}  If $X$ is Stein, since $H^i_c(X,\Omega^j_X)=0$, for $i\neq d$, we compute in $\sd({\B_{\dr,\Box}})$:
 \begin{align*}
\R\Gamma_{\dr,c}(X_C/\B_{\dr}) & \simeq (H^d_c(X,\so_X)\otimes^{\LL_{\Box}}_K\B_{\dr}\to H^d_c(X,\Omega^1_X)\otimes^{\LL_{\Box}}_K\B_{\dr}\to\ldots)[-d]\\
& \simeq (H^d_c(X,\so_X)\otimes^{{\Box}}_K\B_{\dr}\to H^d_c(X,\Omega^1_X)\otimes^{{\Box}}_K\B_{\dr}\to\ldots)[-d].
\end{align*}
Here we were able to remove the derived tensor product because $\B_{\dr}$ is flat (as filtered colimit of $K$-Fr\'echet spaces). For similar reasons we have  in $\sd({\B^+_{\dr,\Box}})$, for $r\in\Z$: 
\begin{align}\label{hk1}
F^r\R\Gamma_{\dr,c}(X_C/\B_{\dr}) & \simeq (H^d_c(X,\so_X)\otimes^{\LL_{\Box}}_KF^r\B_{\dr}\to H^d_c(X,\Omega^1_X)\otimes^{\LL_{\Box}}_KF^{r-1}\B_{\dr}\to\ldots)[-d]\\
& \simeq (H^d_c(X,\so_X)\otimes^{{\Box}}_Kt^r\B^+_{\dr}\to H^d_c(X,\Omega^1_X)\otimes^{{\Box}}_Kt^{r-1}\B^+_{\dr}\to\ldots)[-d],\notag
\end{align}
and a natural quasi-isomorphism in $\sd(K_{\Box})$
\begin{align}\label{drwinter}
F^r\R\Gamma_{\dr,c}(X) \simeq (H^d_c(X,\Omega^{r}_X)\to H^d_c(X,\Omega^{r+1}_X)\to\ldots)[-d-r].
\end{align}

\end{example}

\begin{proposition}\label{commutefilcolim}
 If $X\in {\rm Sm}_K$ is partially proper, then 
 $\R\Gamma_{\proet, c}(X_C, -)$ commutes with filtered colimits of pro-\'etale abelian sheaves. 
\end{proposition}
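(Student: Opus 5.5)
By definition, $\R\Gamma_{\proet,c}(X_C,\mathcal{F})$ is the fiber of $\R\Gamma_{\proet}(X_C,\mathcal{F})\to \colim_{Z\in\Phi(X)}\R\Gamma_{\proet}(X_C\setminus Z,\mathcal{F})$. Fibers commute with filtered colimits, and so do colimits over $Z$. The plan is therefore to rewrite the whole fiber as a filtered colimit of terms that are each built from quasi-compact pieces. On a quasi-compact space pro-\'etale cohomology commutes with filtered colimits of abelian sheaves, since quasi-compact quasi-separated objects are coherent in the pro-\'etale topos.

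Fix an increasing covering $\{X_n\}_{n\in\N}$ of $X$ by quasi-compact opens. Since $X$ is partially proper, we may take $X_n\Subset X_{n+1}$ (relatively compact inclusions). The cofinal system in $\Phi(X)$ is then $\{X_n\}$. Using excision (or the long exact sequence of the pair), I will identify the fiber with
$$\R\Gamma_{\proet,c}(X_C,\mathcal{F})\simeq \colim_n \R\Gamma_{\proet,\overline{X}_n}(X_C,\mathcal{F}),$$
where $\overline{X}_n$ is the closure of $X_n$. This closure is quasi-compact and contained in $X_{n+1}$ by the Stein-type exhaustion coming from partial properness. By excision, $\R\Gamma_{\overline{X}_n}(X_C,\mathcal{F})\simeq \R\Gamma_{\overline{X}_n}(X_{n+1,C},\mathcal{F})$, which is the fiber of
$$\R\Gamma_{\proet}(X_{n+1,C},\mathcal{F})\to \R\Gamma_{\proet}(X_{n+1,C}\setminus \overline{X}_n,\mathcal{F}).$$

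The first term is cohomology of a quasi-compact space and hence commutes with filtered colimits in $\mathcal{F}$. The second term is the problem, because $X_{n+1}\setminus\overline{X}_n$ is open but not necessarily quasi-compact. This is the main obstacle. I would handle it by replacing the pair $(X_{n+1}\setminus\overline{X}_n)$ by the quasi-compact sets $X_{n+1}\setminus X_n$, and comparing the resulting two pro-systems of supports:
\[\{\overline{X}_n\}\quad\text{and}\quad \{\text{closed sets cut out by } X_{n+1}\setminus X_n\}.\]
Here the quasi-compact open $X_n$ is the complement of a closed set, and the inclusions $X_n\subset \overline{X}_n\subset X_{n+1}$ make these two systems cofinal in one another. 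After passing to the colimit over $n$, it follows that
$$\R\Gamma_{\proet,c}(X_C,\mathcal{F})\simeq\colim_n\,[\R\Gamma_{\proet}(X_{n+2,C},\mathcal{F})\to \R\Gamma_{\proet}(X_{n+2,C}\setminus X_n^{\rm cl},\mathcal{F})],$$
with every term a cohomology of a quasi-compact quasi-separated (diamond) space. In practice, I would choose the exhaustion so that each difference $X_{n+2}\setminus \overline{X_n}$ is itself a finite union of affinoids, which is possible via a Stein-type or admissible affinoid exhaustion.

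Once every term is expressed as the cohomology of a qcqs object, I will invoke commutation of pro-\'etale cohomology of qcqs spaces with filtered colimits. The reference is Scholze's result that qcqs objects of the quasi-pro-\'etale site of a diamond are coherent, together with its condensed enhancement. Combining this with the commutation of fibers and of colimits over $n$ with filtered colimits gives the proposition.
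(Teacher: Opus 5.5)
\textbf{There is a genuine gap, located exactly at the step you flag as the main obstacle, and it is not resolved.}

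Your cofinality step compares the closed supports $\overline{X}_n$ with a system built from the opens $X_n$ (the complements of the closed sets $X_{n+1}\setminus X_n$). But cohomology with supports needs closed supports, and $X_n$ is open. The displayed formula you arrive at is therefore unjustified:
\begin{itemize}
\item if $X_n^{\rm cl}$ means $\overline{X}_n$, the second term is again the cohomology of the non-quasi-compact open $X_{n+2}\setminus\overline{X}_n$;
\item if it means $X_n$, then $X_{n+2}\setminus X_n$ is a closed subset, not an open subspace, so ``cohomology of a qcqs space'' does not apply to it as written.
\end{itemize}
Your fallback, choosing the exhaustion so that $X_{n+2}\setminus\overline{X}_n$ is a finite union of affinoids, cannot work. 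For the open unit disc exhausted by $X_n=\{|z|\le r_n\}$, the set $X_{n+2}\setminus\overline{X}_n$ is the increasing union of the annuli $\{s\le |z|\le r_{n+2}\}$, $s>r_n$, and this cover has no finite subcover. Equivalently, $\overline{X}_n$ is closed but not constructible in $X_{n+2}$.

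\textbf{The missing ingredient is a sandwich lemma.} The set $X_{n+2}\setminus X_{n+1}$ is closed in the quasi-compact space $X_{n+2}$, hence quasi-compact. It lies in the open set $X_{n+2}\setminus\overline{X}_n$, because $\overline{X}_n\subset X_{n+1}$. Covering it by finitely many affinoids inside that open gives a quasi-compact open $U_n$ with
\[
X_{n+2}\setminus X_{n+1}\subset U_n\subset X_{n+2}\setminus\overline{X}_n .
\]
Set $W_n:=X_{n+2}\setminus U_n$. These are closed subsets of $X$ with $\overline{X}_n\subset W_n\subset X_{n+1}$, so the systems $\{W_n\}$ and $\{\overline{X}_n\}$ are genuinely interleaved. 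By excision,
\[
\R\Gamma_{\proet,c}(X_C,\mathcal{F})\simeq\colim_n[\R\Gamma_{\proet}(X_{n+2,C},\mathcal{F})\to\R\Gamma_{\proet}(U_{n,C},\mathcal{F})],
\]
and both terms are now qcqs. With this lemma inserted, your direct route goes through.

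\textbf{Comparison with the paper.} The paper argues differently. It considers the defect
\[
P(X)=[\colim_i\R\Gamma_{\proet}(X_C,\mathcal{F}_i)\to\R\Gamma_{\proet}(X_C,\mathcal{F})],
\]
which satisfies Mayer--Vietoris and vanishes on quasi-compact opens, and shows $P(X)\simeq P(\partial X)$ following \cite[Lemma 3.21]{AGN}. The same sandwich is what drives such an argument. The cofiber of $P(X)\to P(\partial X)$ is $\colim_n P(X_{n+2}\setminus\overline{X}_n)$. Its transition maps $P(X_{n+2}\setminus\overline{X}_n)\to P(X_{n+2}\setminus\overline{X}_{n+1})$ factor through $P(U_n)=0$, so the colimit vanishes.
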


\begin{proof}  Let $(\mathcal{F}_i)_{i\in I}$ be a filtered direct system of pro-\'etale sheaves and let 
$\mathcal{F}=\colim_{i\in I} \mathcal{F}_i$. Set 
$$P(X)=[\colim_{i} \R\Gamma_{\proet}(X_C, \mathcal{F}_i)\to \R\Gamma_{\proet}(X_C, \mathcal{F})].$$
It is not difficult to see that the quasi-isomorphism we want to establish is equivalent to proving that the natural map $P(X)\to P(\partial X):=\colim_{Z\in\Phi(X_C)} P(X_C\setminus Z)$ is a quasi-isomorphism. 
This follows using the same arguments as in \cite[Lemma 3.21]{AGN}.
\end{proof}

\begin{proposition}
 If $X\in {\rm Sm}_K$ is partially proper, then there are natural quasi-isomorphisms  in $\sd({\B_{\dr,\Box}})$ and $\sd({\B^+_{\dr,\Box}})$, respectively:
 \begin{align*}
  \rg_{\proet,c}(X_C, \sbb_{\dr}) & \simeq \rg_{\dr,c}(X)\otimes^{\LL_{\Box}}_{K}\B_{\dr},\\
    F^r\rg_{\proet,c}(X_C, \sbb_{\dr}) & \simeq F^r(\rg_{\dr,c}(X)\otimes^{\LL_{\Box}}_{K}\B_{\dr}),\quad r\in\Z.
  \end{align*}
\end{proposition}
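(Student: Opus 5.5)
The plan is to deduce the compactly supported statement from the non-compact comparison of Theorem \ref{Guid1}, applied to the quasi-compact pieces of an exhaustion, and then pass to the boundary colimit. Fix an increasing covering $\{X_n\}_{n\in\N}$ of $X$ by quasi-compact opens, with $X_n\Subset X_{n+1}$; partial properness makes this possible. By definition, both sides are mapping fibers: $\rg_{\proet,c}(X_C,F^r\sbb_{\dr})=[\rg_{\proet}(X_C,F^r\sbb_{\dr})\to \rg_{\proet}(\partial X_C,F^r\sbb_{\dr})]$ on the left. On the right, $F^r(\rg_{\dr,c}(X)\otimes^{\LL_\Box}_K\B_{\dr})$ is the fiber of the corresponding map for $\rg_{\dr}(X_C/\B_{\dr})$ and $\rg_{\dr}(\partial X_C/\B_{\dr})$. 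This uses that $F^j\B_{\dr}$ is flat and that $\otimes^{\LL_\Box}$ and the colimit over $j+k\geq r$ commute with fibers and with the colimit over $Z\in\Phi(X)$. So it suffices to produce compatible natural quasi-isomorphisms $F^r\rg_{\proet}(Y_C,\sbb_{\dr})\simeq F^r\rg_{\dr}(Y_C/\B_{\dr})$ for $Y=X$ and for $Y=X\setminus Z$, and then pass to the colimit over $Z$.

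The first step handles the filtered pieces. Theorem \ref{Guid1}(1) already gives \eqref{ober1p} for every $Y\in{\rm Sm}_K$, with no quasi-compactness assumption, and it is natural in $Y$. Applying it to $X$ and to each $X\setminus Z$, and taking the colimit over $Z\in\Phi(X)$, yields a quasi-isomorphism $F^r\rg_{\proet}(\partial X_C,\sbb_{\dr})\simeq F^r\rg_{\dr}(\partial X_C/\B_{\dr})$ compatible with restriction. Taking fibers then gives the second displayed quasi-isomorphism of the proposition.

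The second step deduces the unfiltered statement by writing $\sbb_{\dr}=\colim_r F^{-r}\sbb_{\dr}$. On the pro-étale side, Proposition \ref{commutefilcolim} (which uses partial properness) gives $\rg_{\proet,c}(X_C,\sbb_{\dr})\simeq\colim_r\rg_{\proet,c}(X_C,F^{-r}\sbb_{\dr})$. On the de Rham side, $\rg_{\dr,c}(X)\otimes^{\LL_\Box}_K\B_{\dr}\simeq\colim_r F^{-r}(\rg_{\dr,c}(X)\otimes^{\LL_\Box}_K\B_{\dr})$. This holds because the de Rham filtration on $\rg_{\dr,c}(X)$ is finite: $F^j$ is everything for $j\leq 0$ and vanishes for $j>\dim X$. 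Hence for $r\gg0$ the colimit over $j+k\geq -r$ contains all of $\rg_{\dr,c}(X)\otimes F^{k}\B_{\dr}$ with $k\to-\infty$, and tensor products commute with colimits. The filtered quasi-isomorphisms from the first step are compatible in $r$, so we get the first claim.

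The main obstacle I expect is the compatibility and naturality of Bosco's quasi-isomorphism \eqref{ober1p} with open restrictions, at the level of the $\infty$-categorical diagrams. We need an actual natural transformation of functors on opens of $X$, and not merely objectwise quasi-isomorphisms, so that it passes to the colimit over $\Phi(X)$ and to fibers. I would get this from the construction in \cite{GB1}, where the comparison is induced by a map of sheaves $\sbb_{\dr}\otimes\Omega^\bullet\leftarrow \mathcal{O}\sbb_{\dr}\otimes\Omega^\bullet$ (the Poincaré lemma). That map is sheaf-theoretic, hence functorial in $Y$. A secondary point is checking that the colimit over $\Phi(X)$ commutes with the filtration colimit on the de Rham side, which is clear since filtered colimits commute.
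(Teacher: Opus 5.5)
Your proposal is correct and follows the paper's proof. For the filtered statement you proceed exactly as the paper does: apply Theorem \ref{Guid1} to $X$ and to the $X\setminus Z$, use that $F^r((-)\otimes^{\LL_\Box}_K\B_{\dr})$ commutes with fibers and with the colimit over $\Phi(X)$, and obtain the unfiltered statement via Proposition \ref{commutefilcolim}. The only difference is minor. The paper inverts $t$ on $F^0$ and checks $F^0(\rg_{\dr,c}(X)\otimes^{\LL_\Box}_K\B_{\dr})[1/t]\simeq\rg_{\dr,c}(X)\otimes^{\LL_\Box}_K\B_{\dr}$ by reducing to Stein $X$ and using the explicit complexes \eqref{hk1}. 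You instead take $\colim_r F^{-r}$ and use the exhaustiveness of the Hodge filtration directly, which avoids the Stein reduction.
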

\begin{proof}  To simplify the notation,  set
 $$M=\R\Gamma_{\dr}(X), \,\, \partial M=\R\Gamma_{\dr}(\partial X), \,\, M_c=\R\Gamma_{\dr,c}(X)=[M\to \partial M].$$
 Note that $F^r((-)\otimes^{\LL_{\Box}}_{K}\B_{\dr})=\colim_{j+k\geq r} F^j((-))\otimes^{\LL_{\Box}}_{K}\B_{\dr}$ commutes with mapping fibers, so using Theorem \ref{Guid1} we obtain 
natural quasi-isomorphisms 
 \begin{align*}
 F^r\rg_{\proet,c}(X_C, \sbb_{\dr}) & \simeq [F^r \rg_{\proet}(X_C, \sbb_{\dr})\to F^r \rg_{\proet}(\partial X_C, \sbb_{\dr})]\\
  & \simeq
  [F^r (M\otimes^{\LL_{\Box}}_{K}\B_{\dr})\to F^r(\partial M\otimes^{\LL_{\Box}}_{K}\B_{\dr})]\simeq F^r (M_c\otimes^{\LL_{\Box}}_{K}\B_{\dr}).
  \end{align*}
  This shows the second quasi-isomorphism of the proposition. 
  
  Concerning the first quasi-isomorphism, for $T\in \sd({\B^+_{\dr,\Box}})$, define 
$$T[1/t]:=T\otimes^{\LL_{\Box}}_{\B^+_{\dr}}\B_{\dr}\in \sd({\B_{\dr,\Box}}).$$
Proposition \ref{commutefilcolim} yields a natural quasi-isomorphism
  $$\rg_{\proet,c}(X_C,\sbb^+_{\dr})[1/t] \stackrel{\sim}{\to} \rg_{\proet,c}(X_C,\sbb_{\dr}).$$
   By what we have already established we have  natural quasi-isomorphisms 
  $$\rg_{\proet,c}(X_C,\sbb^+_{\dr})\simeq F^0\rg_{\proet,c}(X_C, \sbb_{\dr})\simeq F^0(M_c\otimes^{\LL_{\Box}}_{K}\B_{\dr}).$$
  Thus  it remains to see that we have a canonical  quasi-isomorphism
  $$F^0(M_c\otimes^{\LL_{\Box}}_{K}\B_{\dr})[1/t]\stackrel{\sim}{\to}M_c\otimes^{\LL_{\Box}}_{K}\B_{\dr}.$$
  Since we can pass to colimits on both sides we may assume that $X$ is Stein. But, since by \eqref{hk1}, we have 
  \begin{align*}
  F^0((-)\otimes^{\LL_{\Box}}_{K}\B_{\dr})[1/t] & \simeq (H^d_c(X,\so_X)\otimes^{{\Box}}_K\B^+_{\dr}\to H^d_c(X,\Omega^1_X)\otimes^{{\Box}}_Kt^{-1}\B^+_{\dr}\to\ldots\to H^d_c(X,\Omega^d_X)\otimes^{{\Box}}_Kt^{-d}\B^+_{\dr})[-d][1/t]\\
  &\stackrel{\sim}{\to} (H^d_c(X,\so_X)\otimes^{{\Box}}_K\B_{\dr}\to H^d_c(X,\Omega^1_X)\otimes^{{\Box}}_K\B_{\dr}\to\ldots\to H^d_c(X,\Omega^d_X)\otimes^{{\Box}}_K\B_{\dr})[-d]\\
  & \simeq M_c\otimes^{\LL_{\Box}}_{K}\B_{\dr},
   \end{align*}
   as wanted.
 \end{proof}
\subsection{Hyodo-Kato comparison theorem} Assume  that $k$ is algebraically closed\footnote{This assumption could probably be removed if one is more careful.}. 
Let $\sd_{\phi,N}({\B_{\Box}})$  be the $\infty$-derived category of solid $(\phi,N)$-modules over $\B$; similarly for various variants.

  We quote the following Hyodo-Kato comparison theorem: 
\begin{proposition}\label{Guid2}
Let $X\in {\rm Sm}_K$.
 Then we have a  natural,  $\sg_K$-equivariant quasi-isomorphism in $\sd_{\phi}({\B_{\Box}})$
\begin{align}\label{dark3}
\rg_{\proet}(X_C,\sbb^+) \otimes^{\LL_{\Box}}_{\B^+}\B& \simeq [\rg_{\rm HK}(X_C)\otimes^{\LL_{\Box}}_{\breve{C}}\B_{\rm FF}]^{N=0}.
\end{align}
\end{proposition}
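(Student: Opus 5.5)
The plan is to reduce to the Stein (or affinoid) case and then glue. We use the Hyodo-Kato comparison theorems of Colmez-Nizio{\l} \cite{CN4}, \cite{CN5} and Colmez-Gilles-Nizio{\l} \cite{CGN2}.

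\emph{Local statement.} For $X$ smooth Stein over $K$ (or a smooth dagger affinoid), \cite{CN4} and \cite{CGN2} provide a $\sg_K$-equivariant quasi-isomorphism compatible with $\phi$:
$$\rg_{\proet}(X_C,\sbb^+)\otimes^{\LL_{\Box}}_{\B^+}\B\simeq [\rg_{\rm HK}(X_C)\otimes^{\LL_{\Box}}_{\breve{C}}\B^+_{\rm FF}]^{N=0}\otimes^{\LL_\Box}_{\B^+}\B.$$
This is the ``$\B$-cohomology'' comparison. The right side is computed via the Hyodo-Kato isomorphism and the syntomic (Fontaine-Messing/Beilinson) period map on semistable models, locally for the $\eta$-\'etale topology. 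We first verify that inverting $t$ commutes with the $N=0$ fiber. Since $N$ is surjective on $\B_{\rm FF}$ with kernel $\B$, the complex $[\rg_{\rm HK}\otimes \B_{\rm FF}]^{N=0}$ is the fiber of $N$ on the complex $\rg_{\rm HK}\otimes\B_{\rm FF}$, and this fiber commutes with $-\otimes_{\B^+}\B$, because $\B$ is flat over $\B^+$ (it is a filtered colimit of multiplication-by-$t$ maps). The $N=0$ fiber is the natural place where one checks Frobenius compatibility: $N\phi=p\phi N$ on both factors, so the diagonal operator $N\otimes 1+1\otimes N$ has a $\phi$-stable fiber.

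\emph{Globalization.} Both sides define sheaves for the analytic (or \'etale) topology on $X$ with values in $\sd_{\phi}(\B_{\Box})$. The left side satisfies descent by definition, and $-\otimes^{\LL_\Box}_{\B^+}\B$ is a colimit and hence preserves finite limits. The right side satisfies descent because $\rg_{\rm HK}$ does (as a sheafified theory, via $\eta$-\'etale hypercovers by semistable formal models), together with flatness of $\B_{\rm FF}$ over $\breve C$ (nuclear, a countable colimit of Fr\'echet spaces). The local maps are natural and functorial in $X$, so they glue. We then write $X$ as an increasing union of affinoids, or cover it by Stein opens with Stein intersections, which is possible since $X$ is taut and countable at infinity, and conclude by \v{C}ech descent. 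Finite limits are harmless. For countable limits we need $\R\lim$ to commute with $-\otimes_{\B^+}\B$. We handle this by keeping $\B^+$-coefficients and inverting $t$ only at the end on the Stein pieces: for Stein $X$ both sides become Fr\'echet complexes after a fixed bounded inversion of $t$, since the $t$-torsion in $\rg_{\proet}(X_C,\sbb^+)$ is killed by $t^{\dim X+1}$. This bounded-torsion fact is the main obstacle: without a uniform bound, the tensor product with $\B$ would fail to commute with the inverse limit over the exhaustion.

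\emph{Galois equivariance.} $\sg_K$ acts on $\rg_{\rm HK}(X_C)$ semilinearly through the geometric action. On $\B_{\rm FF}$ it acts via $g(U)=U+\log[g(p^\flat)/p^\flat]$, which twists the Hyodo-Kato monodromy exactly as required for the comparison map to be equivariant (this is the usual $\exp(N\cdot U)$ untwisting). Equivariance follows from the functoriality of the local comparison maps. Finally, we note that $k$ algebraically closed is used so that $\rg_{\rm HK}(X_C)$ is naturally over $\breve C$ with its Frobenius semilinear for the correct $\phi$.
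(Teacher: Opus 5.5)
Your proposal has a genuine gap, and it sits exactly where the paper's short proof does its work.

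\textbf{The local statement assumes what is to be proved.} Your ``local statement'' is the proposition itself for Stein $X$, taken as a black box. What \cite{CGN2} actually provides, for \emph{every} $X\in{\rm Sm}_K$ and not only Stein ones, is an integral fiber sequence (Cor.~4.9): for $r\geq 2d$,
\[
\rg_{\proet}(X_C,\sbb^+)\simeq\big[[\rg_{\rm HK}(X_C)\{r\}\otimes^{\LL_{\Box}}_{\breve{C}}\B^+_{\rm FF}]^{N=0}\longrightarrow \R\Gamma^{\B^+}_{\dr}(X_C,r)\big](-r),
\]
where the de Rham-type term is killed by $t^{r-d}$. The missing idea is that this torsion term is the only thing separating the two sides before $t$ is inverted. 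Apply the exact functor $-\otimes^{\LL_\Box}_{\B^+}\B$, which is a sequential colimit along $t$. It kills the de Rham term, turns $\B^+_{\rm FF}$ into $\B_{\rm FF}$, and makes the twists by $r$ harmless. This gives the proposition directly and globally, with $\phi$- and $\sg_K$-compatibility inherited from the cited theorem, so no gluing is needed. The bounded $t$-torsion you mention is a consequence of this fiber sequence, but you use it for a different purpose.

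\textbf{The globalization step fails as written.} Neither $X\mapsto \rg_{\proet}(X_C,\sbb^+)\otimes^{\LL_{\Box}}_{\B^+}\B$ nor $X\mapsto[\rg_{\rm HK}(X_C)\otimes^{\LL_{\Box}}_{\breve C}\B_{\rm FF}]^{N=0}$ satisfies descent for countable coverings. Inverting $t$ does not commute with countable products or $\R\lim$, and flatness of $\B_{\rm FF}$ over $\breve C$ says nothing about this. Take $X=\bigsqcup_{n\in\N}{\rm Spa}(K)$. Both sides of the proposition equal $(\prod_n\B^+)[1/t]$, the sequences admitting a uniform power of $t$ as denominator. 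Gluing your local isomorphisms instead produces $\prod_n\B$, which contains $(t^{-n})_n$. Bounded $t$-torsion cannot repair this, since $\prod_n\B^+$ is $t$-torsion free. A correct gluing argument would have to glue integral comparison maps, before inverting $t$, whose cones are killed by a uniform power of $t$, and then invert $t$ once, globally. Producing such maps is precisely the fiber-sequence form above, which your proposal never invokes.
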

\begin{proof} Let $r\geq 2d$. By \cite[Cor. 4.9]{CGN2}, we have a  natural $\sg_K$-equivariant comparison quasi-isomorphism in $\sd_{\phi}({\B_{\Box}})$  (see \cite[Cor. 4.9]{CGN2} for an explanation of the notation)
\begin{align}\label{CGN2}
\rg_{\proet}(X_C,\sbb^+) & \simeq \big[[\rg_{\rm HK}(X_C)\{r\}\otimes^{\LL_{\Box}}_{\breve{C}}\B^+_{\rm FF}]^{N=0}\stackrel{\iota_{\hk}^{\B}}{\longrightarrow} \R\Gamma^{\B^+}_{\dr}(X_C,r)]\big(-r).
\end{align}
The de Rham-type  complex $ \R\Gamma^{\B^+}_{\dr}(X_C,r)$  is anihilated by $t^{r-d}$.  The wanted quasi-isomorphism \eqref{dark3} follows. 
\end{proof}
\begin{remark} 
\begin{enumerate}[leftmargin=*]
\item Bosco in  \cite[Th. 4.1, Rem. 6.13]{GB2} proved a version of the comparison quasi-isomorphism \eqref{CGN2}, where the torsion
on the right-hand side is incorporated to the left-hand side via the $ \LL\eta_t$ operator. This also yields the quasi-isomorphism \eqref{dark3}.
\item Let $I\subset (0,\infty)$ be a compact interval with rational endpoints such that $t$ has one zero on ${\rm Spa}(\B_I)$. Then, still for $r\geq 2d$,  we have the following analog of \eqref{CGN2}
\begin{equation}\label{CGNdr}
\rg_{\proet}(X_C,\sbb_I)  \simeq \big[[\rg_{\rm HK}(X_C)\{r\}\otimes^{\LL_{\Box}}_{\breve{C}}\B_{I,\log}]^{N=0}\stackrel{\iota_{\hk}}{\longrightarrow} \R\Gamma_{\dr}(X_C,r)]\big(-r).
\end{equation}
 Tensoring both sides of \eqref{CGNdr} with $\B_I/t^n\simeq \B^+_{\dr}/t^n$, $n\in\N$,   passing to  limit over $n$, and dropping the twist by $r$ since we do not care about the Frobenius anymore,  we get a quasi-isomorphism in $\sd(\B^+_{\dr,\Box})$
\begin{equation}\label{CGN4}
 \rg_{\proeet}(X_C,{\mathbb B}^+_{\dr})  \simeq F^0(\rg_{\dr}(X_K)\otimes^{\LL_{\Box}}_{K}\B_{\dr}),
\end{equation}
which recovers \eqref{ober1p}.  This proof  of \eqref{CGN4} is however more involved than the one of Bosco  in \cite{GB1} that follows directly from the  ${\mathbb B}_{\dr}$-Poincar\'e Lemma.
\end{enumerate}
\end{remark}
\begin{corollary}\label{killing1HK} Let  $X\in {\rm Sm}_K$. Assume that it has   finite rank de Rham cohomology. Then there are  natural $\sg_K$-equivariant quasi-isomorphisms in $\sd_{\phi,N}(\breve{C}_{\Box})$
\begin{align*}
 \R\Gamma_{\hk}(X_C)  & \stackrel{\sim}{\to} ([\R\Gamma_{\hk}(X_C)\otimes^{\LL_{\Box}}_{\breve{C}}\B_{\rm FF}]^{N=0,\phi=1}\otimes^{\LL_{\Box}}_{\B_e}\B_{\rm pFF})^{\R\sg_K-{\rm sm}},\\
 \R\Gamma_{\hk}(X_C)  & \stackrel{\sim}{\to} ([\R\Gamma_{\proeet}(X_C,{\mathbb B})[1/t]]^{\phi=1}\otimes^{\LL_{\Box}}_{\B_e}\B_{\rm pFF})^{\R G_K-{\rm sm}}.
\end{align*}
\end{corollary}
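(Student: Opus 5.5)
The first quasi-isomorphism is essentially Corollary \ref{derived-FFB}(2) applied to $M=\R\Gamma_{\hk}(X_C)$. The second is obtained from the first by substituting the Hyodo-Kato comparison of Proposition \ref{Guid2}.

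\emph{First isomorphism.} Since $X$ has finite rank de Rham cohomology, the Hyodo-Kato isomorphism $\R\Gamma_{\hk}(X_C)\otimes_{\breve{C}}C\simeq \R\Gamma_{\dr}(X)\otimes_K C$ shows that the $H^i_{\hk}(X_C)$ are finite rank $(\phi,N,\sg_K)$-modules over $\breve{C}$. I will use that this cohomology vanishes outside degrees $[0,2\dim X]$; this holds for smooth rigid spaces, including non-quasicompact ones, by the dimension bound. I will also use that they are descended from a finite rank module over $C^{\nr}$ with $\sg_K$ acting through a finite quotient (potential semistability, i.e. smoothness of the $\sg_K$-action). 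Since $k$ is algebraically closed, $C^{\nr}=\breve{C}$ and $\breve{C}=F$, which removes the need to distinguish them.

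Because the complex is bounded with finite rank cohomology, I will filter by the canonical truncations $\tau_{\le i}$. Every functor in the formula is exact: $\otimes^{\LL_\Box}\B_{\rm FF}$, derived $N=0$ and $\phi=1$ fibers, $\otimes^{\LL_\Box}_{\B_e}\B_{\rm pFF}$, and $(-)^{\R\sg_K-{\rm sm}}$, the last being a filtered colimit of $\R\Gamma(H,-)$. The claim therefore reduces by dévissage to a single module $M[0]$, and that case is exactly Corollary \ref{derived-FFB}(2). Alternatively, one could represent $\R\Gamma_{\hk}(X_C)$ in $\sd_{\phi,N,\sg_K}$ as a bounded complex of finite rank modules and apply the corollary directly.

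\emph{Second isomorphism.} Inverting $t$ in \eqref{dark3} gives a $\phi$- and $\sg_K$-equivariant quasi-isomorphism
$$\R\Gamma_{\proeet}(X_C,\mathbb B)[1/t]\simeq [\R\Gamma_{\hk}(X_C)\otimes^{\LL_\Box}_{\breve C}\B_{\rm FF}]^{N=0},$$
since $\B_{\rm FF}=\B^+_{\rm FF}[1/t]$. Here I must check that $\R\Gamma_{\proeet}(X_C,\mathbb B^+)\otimes^{\LL_\Box}_{\B^+}\B$ agrees with $\R\Gamma_{\proeet}(X_C,\mathbb B)[1/t]$, which is just the definition of the notation. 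Taking derived $\phi=1$ fixed points and substituting into the first isomorphism then gives the second.

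\emph{Main obstacle.} The delicate point is the first step. In the generality of the statement I have to justify that $\R\Gamma_{\hk}(X_C)$ lies in the range of Corollary \ref{derived-FFB}(2). For a single finite rank module, the solid tensor products in that corollary coincide with the underived ones by \eqref{dec1}; the open question is whether the dévissage from a complex to its cohomology modules commutes with the solid tensor products and with the smooth-vector colimit. To settle this I will use that $\B_{\rm FF}$ and $\B_{\rm pFF}$ are flat, being filtered colimits of Fréchet spaces, and that both filtered colimits and $\R\Gamma(H,-)$ are exact.
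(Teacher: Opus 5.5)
Your proposal is correct and is essentially the paper's proof. You represent $\R\Gamma_{\hk}(X_C)$ as a bounded complex of finite rank $(\phi,N,\sg_K)$-modules and invoke Corollary \ref{derived-FFB}, then deduce the second quasi-isomorphism from the first via Proposition \ref{Guid2} by taking $\phi=1$. Note that $t$ is already inverted in Proposition \ref{Guid2}. Also, the worry in your last paragraph is unnecessary: the d\'evissage only uses that all the functors involved are exact functors between stable $\infty$-categories, not flatness.
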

\begin{proof} For the first quasi-isomorphism, we can represent $\R\Gamma_{\hk}(X_C)$ by a a bounded complex of finite rank $(\phi,N,\sg_K)$-modules over $\breve{C}$. Then it suffices to evoke Corollary \ref{derived-FFB}.
The second quasi-isomorphism follows from the first one and Proposition \ref{Guid2}. 
       \end{proof}
\section{Group actions and comparison theorems}

   In this  chapter we 
study the behaviour of de Rham and Hyodo-Kato cohomologies and comparison theorems  in the presence of locally profinite groups acting on analytic varieties. 
      \subsection{Smoothness of  group actions on De Rham and Hyodo-Kato cohomology}       We start by showing that the group action on compactly supported de Rham and Hyodo-Kato cohomologies tends to be smooth.

   \subsubsection{Smooth representations} Before we proceed, let us make  a small digression on smooth representations as defined in \cite[Sec. 5.1]{RR23}. 
    
  Let $G$ be a locally profinite group. Recall that a representation of $G $ on a solid $K$-vector
space $V$ is a map of condensed sets $G \times V \to V$ having the usual properties. The Iwasawa 
algebra of $G$ with coefficients in $K$  is defined as
$$K_{\Box}[G]: =(\lim_{H\subset G}\so_K[G/H])[1/p],
$$
where $H$ runs over all open normal subgroups of $G$.  The category of $G$-representations on solid $K$-vector
spaces is equivalent to the category ${\rm Mod}^{\rm solid}_{K_{\Box}}[G] $ of solid $K_{\Box}[G]$-modules. 
We will write $\sd(K_{\Box}[G])$ for its $\infty$-derived category. Similarly, for a solid $K_{\Box}$-algebra $A$ we write $\sd(A_{\Box}[G])$ for the $\infty$-derived category of solid $A\otimes^{\Box}_KK_{\Box}[G]$-modules. 
  
  % Let $G$ be a locally profinite group. 
Recall that \cite[Prop. 2.2.5]{RR23}, if $G$ is compact, the algebra of smooth $K$-valued distributions of $G$ can be described as 
$$
\sd^{\rm sm}(G,K)\simeq \lim_{H\subset G}K_{\Box}[G/H].
$$
In general, for any compact open subgroup $H\subset G$, we have
$$
\sd^{\rm sm}(G,K)\simeq \sd^{\rm sm}(H,K)\otimes_{K_{\Box}[H]}K_{\Box}[G].
$$
 We quote the following: 
\begin{definition}{\rm (\cite[Def. 5.1.1]{RR23})}
\begin{enumerate}[leftmargin=*]
\item Let $V \in {\rm  Mod}^{\heartsuit}_{K_{\Box}}(\sd^{\rm sm}(G,K))$. The smooth vectors of $V$ are defined by
$$V^{\rm sm}: = \colim_{H\subset G}V^H = \colim_{H\subset G}\underline{\Hom}_{\sd^{\rm sm}(G,K)}(K_{\Box}[G/H], V ),
$$
where $H$ runs over all compact open  subgroups of $G$. We say that $V$ is a {\em smooth representation}
of $G$ is the natural map $ V^{\rm sm} \to V$  is an isomorphism.
\item  We let $(-)^{\R{\rm sm}} : {\rm Mod}_{K_{\Box}}(\sd^{\rm sm}(G,K)) \to {\rm Mod}_{K_{\Box}}(\sd^{\rm sm}(G,K))$ be the functor of derived smooth
vectors
$$V^{\rm \R{\rm sm}}: = \colim_{H\subset G}V^{\rm RH }=\colim_{H\subset G}\R\underline{\Hom}_{\sd^{\rm sm}(G,K)}(K_{\Box}[G/H], V ).
$$
We say that an object in ${\rm Mod}_{K_{\Box}}(\sd^{\rm sm}(G,K)) $ is {\em smooth}  if the natural arrow $V^{\R{\rm sm}}\to  V$ is an
equivalence. We let ${\rm Rep}^{\rm sm}_{K_{\Box}}(G) \subset {\rm Mod}_{K_{\Box}}(\sd^{\rm sm}(G,K))$  be the full subcategory consisting of smooth
objects.
\end{enumerate}
\end{definition}
We will often use the following result: 
\begin{proposition}{\rm(\cite[Prop. 5.1.7]{RR23})}\label{RR1}
An object $V \in{\rm  Mod}_{K_{\Box}}(\sd^{\rm sm}(G,K))$ is smooth if and only if  $H^i(V )$ is smooth,  for
all $i \in\Z$. Hence, the natural $t$-structure of ${\rm Mod}_{K_{\Box}}(\sd^{\rm sm}(G,K))$ induces a $t$-structure on ${\rm Rep}^{\rm sm}_{K_{\Box}}(G)$.
Moreover, ${\rm Rep}^{{\rm sm},\heartsuit}_{K_{\Box}}(G)$  is a Grothendieck abelian category and ${\rm Rep}^{\rm sm}_{K_{\Box}}(G)$ is the derived category of its heart.
\end{proposition}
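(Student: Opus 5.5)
The plan is to reduce everything to one structural fact. For each compact open subgroup $H\subset G$, the module $K_{\Box}[G/H]$ should be a compact projective object of ${\rm Mod}_{K_{\Box}}(\sd^{\rm sm}(G,K))$.

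\emph{Projectivity.} Using $\sd^{\rm sm}(G,K)\simeq \sd^{\rm sm}(H,K)\otimes_{K_{\Box}[H]}K_{\Box}[G]$, it suffices to treat the compact case. There I would write $\sd^{\rm sm}(H,K)\simeq\lim_{H'}K_{\Box}[H/H']$. The averaging measure $e_H$ (Haar measure of total mass $1$, which exists since $p$ is inverted and $K$ has characteristic $0$) is then a central idempotent in $\sd^{\rm sm}(H,K)$. This exhibits $K_{\Box}[H/H]=K$ as a direct summand $e_H\cdot \sd^{\rm sm}(H,K)$ of the free module, so it is compact projective. After inducing to $G$, $K_{\Box}[G/H]$ is a direct summand of a free rank-one module, hence compact projective. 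Consequently
$$V^{\R H}=\R\underline{\Hom}_{\sd^{\rm sm}(G,K)}(K_{\Box}[G/H],V)=e_HV$$
is t-exact and commutes with all colimits. Since filtered colimits are exact in solid modules, the functor $(-)^{\R{\rm sm}}=\colim_H(-)^{\R H}$ is t-exact, and
$$H^i(V^{\R{\rm sm}})\simeq H^i(V)^{\rm sm}.$$

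\emph{The criterion.} A map in the derived category is an equivalence if and only if it induces isomorphisms on all $H^i$. So $V^{\R{\rm sm}}\to V$ is an equivalence if and only if $H^i(V)^{\rm sm}\to H^i(V)$ is an isomorphism for every $i$, i.e. if and only if each $H^i(V)$ is smooth. This is the first assertion. It follows that $\tau^{\leq n}V$ and $\tau^{\geq n}V$ are smooth whenever $V$ is, so the t-structure restricts to ${\rm Rep}^{\rm sm}_{K_{\Box}}(G)$, with heart the smooth objects of the abelian heart.

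\emph{The heart and its derived category.} By t-exactness of $(-)^{\rm sm}$ and its compatibility with colimits, smooth objects are closed under kernels, cokernels, extensions and colimits. Every smooth $V$ receives a surjection from a direct sum of copies of the objects $K_{\Box}[G/H]$, because $V=\colim_H V^H$ and $\Hom(K_{\Box}[G/H],V)=V^H$. Hence $\bigoplus_H K_{\Box}[G/H]$ is a generator, and the heart is Grothendieck (AB5 is inherited from solid modules). Moreover ${\rm Rep}^{\rm sm}_{K_{\Box}}(G)$ is stable under colimits and is generated under colimits by the compact projective objects $K_{\Box}[G/H]$, which lie in the heart. The standard recognition criterion (Schwede--Shipley / Lurie: a stable presentable category with a t-structure generated by a set of compact projectives in the heart is the derived category of that heart) then identifies ${\rm Rep}^{\rm sm}_{K_{\Box}}(G)$ with $\sd({\rm Rep}^{{\rm sm},\heartsuit}_{K_{\Box}}(G))$.

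I expect the main obstacle to be the first step: making the idempotent $e_H$ and the resulting projectivity rigorous in the condensed setting. Concretely, one has to check that $\lim_{H'}K_{\Box}[H/H']$ really splits off $K$ as solid modules, and that $\R\underline{\Hom}$ out of it is computed by $e_H$ without higher derived terms. I would do this by verifying it at each finite level $K[H/H']$, where it is classical, and then passing to the limit using that all the transition maps are compatible with the idempotents.
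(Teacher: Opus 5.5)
The paper gives no proof of this statement; it is quoted from \cite[Prop. 5.1.7]{RR23}, so there is no internal argument to compare with. Your core mechanism is the natural one and is correct. The step you single out as the main obstacle is in fact immediate. The averaging elements are compatible under the transition maps, so $e_H$ is already a point of $\sd^{\rm sm}(H,K)=\lim_{H'}K_{\Box}[H/H']$, and $K_{\Box}[G/H]\simeq\sd^{\rm sm}(G,K)e_H$. For any idempotent $e$ of a solid algebra $A$ one has $A=Ae\oplus A(1-e)$ and $\R\underline{\Hom}_A(Ae,V)\simeq eV$, so no level-by-level limit argument is needed. The t-exactness of $(-)^{\R{\rm sm}}$, the smoothness criterion, and the restriction of the t-structure then follow exactly as you write.

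The step that fails as written is the generation claim in your last paragraph. You use $\Hom(K_{\Box}[G/H],V)=V^H$, but the external $\Hom$ is only the group of points $V^H(*)$; it is the internal $\underline{\Hom}$ that equals $V^H$. In the condensed setting the map $\bigoplus_H\bigoplus_{v\in V^H(*)}K_{\Box}[G/H]\to V$ only reaches the subobject generated by points. For example, let $V$ be an infinite-dimensional $K$-Banach space with trivial $G$-action and let $S$ be extremally disconnected. The image on $S$-points consists of continuous maps $S\to V$ with values in a finite-dimensional subspace, so any map $S\to V$ whose image spans an infinite-dimensional subspace is missed. The cokernel is then a nonzero smooth object on which every $\Hom(K_{\Box}[G/H],-)$ vanishes. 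Hence $\{K_{\Box}[G/H]\}_H$ is neither a generator of the heart nor jointly conservative, and neither the Grothendieck property nor the recognition criterion can be applied to this family.

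The repair is to use $P_{H,S}:=K_{\Box}[S]\otimes^{\Box}_KK_{\Box}[G/H]$ with $S$ extremally disconnected, with the usual cardinal bound on $S$ so that one gets a set. These objects are smooth, lie in the heart, are compact projective, and satisfy $\R\Hom(P_{H,S},V)\simeq(e_HV)(S)$. They jointly detect equivalences of smooth objects, because $V\simeq\colim_He_HV$ and the $K_{\Box}[S]$ generate $K_{\Box}$. With this family the rest of your argument goes through.
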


   \subsubsection{Smoothness of  group actions on De Rham and Hyodo-Kato cohomology}
       We pass now to the group action on  de Rham and Hyodo-Kato cohomologies.
\begin{proposition}\label{derham-prop1}
For a locally profinite group $G$ acting (continuously) on a smooth  dagger analytic space $Y$ over $K$, we have 
\begin{enumerate}[leftmargin=*]
\item The $G$-action on $\R\Gamma_{\dr,c}(Y)$ extends to a canonical structure of solid $\sd^{\rm sm}(G,K)$-module, i.e. 
$$\R\Gamma_{\dr,c}(Y)\in {\rm Mod}_{K_{\Box}}( \sd^{\rm sm}(G,K)).$$
\item The $G$-representation $H^i_{\dr,c}(Y)$, $i\geq 0$,  is smooth. 
Hence $\R\Gamma_{\dr,c}(Y)\in{\rm Rep}^{\rm sm}_{K_{\Box}}(G)$.
\end{enumerate}
Similarly for Hyodo-Kato cohomology $\R\Gamma_{\hk,c}(Y_C)$. 
\end{proposition}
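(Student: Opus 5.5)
My plan is to first reduce to compact open subgroups. The solid $\sd^{\rm sm}(G,K)$-module structure is obtained by base change $\sd^{\rm sm}(H,K)\otimes_{K_{\Box}[H]}K_{\Box}[G]$ from any compact open $H\subset G$, so it is enough to build a canonical $\sd^{\rm sm}(H,K)=\lim_{H'\subset H}K_{\Box}[H/H']$-module structure extending the $K_{\Box}[H]$-action. I then check that the structures obtained from different $H$ agree with each other and with the $G$-action; this follows from naturality.

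The key geometric input is that $H$ acts on compactly supported de Rham cohomology through a \emph{locally constant} action on the space. I write $Y=\bigcup_n Y_n$ with $Y_n$ quasi-compact and $H$-stable; an $H$-stable exhaustion exists because $H$ is compact. I then use that a continuous action of a profinite group on a quasi-compact smooth dagger space has the following property: there is an open subgroup $H_n\subset H$ acting on $\R\Gamma_{\dr}(Y_n)$ and on $\R\Gamma_{\dr}(Y\setminus Y_n)$ (or on the relevant pieces of $\partial Y$) through a homotopically trivial action. The intended argument is that elements close to the identity act by automorphisms that are overconvergently close to ${\rm id}$. By rigidity of de Rham cohomology of dagger spaces (homotopy invariance, i.e. the Gauss--Manin or Berthelot-type argument), they therefore induce the identity on cohomology, and more precisely on the complex in a coherent way.

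This makes $\R\Gamma_{\dr}(Y_n)$, and the compactly supported pieces $[\R\Gamma_{\dr}(Y_n)\to\R\Gamma_{\dr}(Y_n\setminus Y_{n-1})]$, modules over $K_{\Box}[H/H_n]$ and hence over $\sd^{\rm sm}(H,K)$. Writing $\R\Gamma_{\dr,c}(Y)=\colim_n\R\Gamma_{\dr,c}(Y_n^\circ)$ via the fiber description in Section \ref{compact-bonn}, I pass to the colimit and obtain (1). For (2), each $H^i_{\dr,c}(Y)$ is a filtered colimit of modules on which an open subgroup acts trivially. Thus $V^{\rm sm}\to V$ is an isomorphism at the level of cohomology groups, and Proposition \ref{RR1} upgrades this to smoothness of the complex.

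The main obstacle is making ``an open subgroup acts trivially'' canonical at the level of complexes, since homotopy triviality alone does not give a module structure over $\lim K_{\Box}[H/H']$. I would handle this by writing $\R\Gamma_{\dr}(Y_n)$ as $\R\Gamma(H_n,-)$-invariants. Concretely, I would use the quasi-isomorphism $\R\Gamma_{\dr}(Y_n)\simeq\R\Gamma(H_n,\R\Gamma_{\dr}(Y_n))^{\rm triv}$ obtained from the dagger rigidity, or alternatively realize the action through the $\sd^{\rm sm}$-module structure on continuous functions, as in Rodrigues Jacinto--Rodriguez Camargo. The Hyodo-Kato case is handled in the same way, using the $(\phi,N)$-compatible Hyodo-Kato rigidity, or by deducing it from the de Rham case via the Hyodo-Kato isomorphism after base change to $K$.
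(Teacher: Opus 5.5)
Your skeleton agrees with the paper: reduce to a compact open subgroup, use an invariant quasi-compact exhaustion, and get (2) from (1) via Proposition \ref{RR1}. But the step you flag as the main obstacle is the real content of the proposition, and your proposed fixes do not close it.

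\textbf{The gap: homotopy triviality does not give an $\sd^{\rm sm}$-module structure.} Homotopy invariance of dagger de Rham cohomology only says that each element of a small open subgroup $H_n$ acts by a map homotopic to the identity. This does not coherently trivialize the condensed $K_{\Box}[H_n]$-action. Nor does it factor the action through $K_{\Box}[H]\to\sd^{\rm sm}(H,K)$, which in the derived setting is additional structure, not a property.

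Your suggested quasi-isomorphism $\R\Gamma_{\dr}(Y_n)\simeq\R\Gamma(H_n,\R\Gamma_{\dr}(Y_n))$ is false even for a strictly trivial action. Here $\R\Gamma(H_n,-)$ is continuous cohomology, so on a trivial module $M$ it gives $\R\Gamma(H_n,K)\otimes^{\LL_{\Box}}_K M$. This carries the Lie algebra cohomology of $H_n$ in positive degrees (already $H^1(\Z_p,K)=K$), which is exactly the twist appearing in Proposition \ref{stcoh} and Corollary \ref{dom1}. The pointer to Rodrigues Jacinto--Rodr\'iguez Camargo does not supply the missing coherence either.

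\textbf{The missing idea.} What you need is Lemma \ref{zurich1}: a \emph{strictly} functorial model of de Rham cohomology that depends only on a mod $p$ reduction. The paper writes $\R\Gamma_{\dr}(Y)\simeq\colim_{\sy_{\jcdot}\to Y}\R\Gamma_{\rm conv}(\sy_{\jcdot,1})$, over truncated $\eta$-\'etale hypercoverings by semistable formal schemes. By continuity, an open subgroup $G_{\sy_{\jcdot}}$ acts trivially on the mod $p$ data. Using strictly functorial complexes for convergent cohomology, it then acts \emph{literally} trivially on the complex. This makes the complex a $K_{\Box}[G/G_{\sy_{\jcdot}}]$-module, hence an $\sd^{\rm sm}(G,K)$-module, functorially in the hypercovering. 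One then passes to the colimit, and to dagger presentations for the dagger case.

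The Hyodo--Kato case is handled the same way, with Hyodo--Kato complexes of the special fibres. Your fallback through the Hyodo--Kato isomorphism cannot give (1): a module structure obtained after base change does not descend to $\R\Gamma_{\hk,c}(Y_C)$ itself.

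\textbf{A secondary problem with compact supports.} You cannot expect an open subgroup to act trivially, even up to homotopy, on $\R\Gamma_{\dr}(Y\setminus Y_n)$ or on $\R\Gamma_{\dr}(\partial Y)$; these are cohomologies of non-quasi-compact spaces. For the Drinfeld half-plane, $H^1_{\dr}$ is the dual of the Steinberg representation, which is not smooth. Since $\R\Gamma_{\dr,c}(Y)$ is smooth, the fibre sequence $\R\Gamma_{\dr,c}(Y)\to\R\Gamma_{\dr}(Y)\to\R\Gamma_{\dr}(\partial Y)$ forces $\R\Gamma_{\dr}(\partial Y)$ to carry this non-smoothness. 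So only quasi-compact pieces can be fed into the strict-model argument.

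The paper handles this in three steps:
\begin{enumerate}
\item First it treats a dagger affinoid, where $\R\Gamma_{\dr,c}(Y)$ is functorially dual to $\R\Gamma_{\dr}(Y)$ \cite[Th. 5.29]{AGN}, so Lemma \ref{zurich1} applies.
\item Then it uses (co-)Mayer--Vietoris for finite dagger affinoid coverings of each quasi-compact $Y_i$, passing to an open subgroup stabilizing every affinoid and letting the coverings vary for functoriality.
\item Finally it passes to the invariant exhaustion.
\end{enumerate}
Your fibre description over quasi-compact shells could replace the duality step, but only after each shell has been given a strict model as above.
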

\begin{proof}   Let us start with  the first claim. We may assume that $G$ is profinite. Assume first that $Y$ is a dagger affinoid. Since $\R\Gamma_{\dr,c}(Y)$ is (functorially)  dual to $\R\Gamma_{\dr}(Y)$ (see \cite[Th. 5.29]{AGN}),  our claim follows from Lemma \ref{zurich1} below. In the general case, 
 by writing $Y=\cup_{i\in I} Y_i$ for an increasing sequence of $G$-stable  dagger quasi-compact opens $Y_i\subset Y$ we may assume that $Y$ is quasi-compact. Using (co-)Meyer-Vietoris for a (finite) dagger  affinoid covering of each $Y_i$ and passing to an open subgroup that stabilizes every affinoid in the covering, we reduce to the case of a dagger affinoid treated above. By allowing all coverings as above this can be done functorially yielding the first claim of the proposition.

    The first part of the second claim follows from the first claim.  The second part of that claim follows now from   Proposition \ref{RR1}.
\end{proof}
The following result in the case of dagger affinoids  is a $\infty$-categorical version of a result of  van der Put (see \cite[Prop. 1.9]{vdP}). 
\begin{lemma}\label{zurich1} Let $G$ be a locally profinite group. 
 Let $Y$ be a rigid analytic or a dagger affinoid over $K$ with a continuous action of $G$. Then $\R\Gamma_{\dr}(Y)$ has a canonical (functorial in $Y$ and $G$) structure of solid $ \sd^{\rm sm}(G,K)$-module (i.e. object of 
 ${\rm Mod}_{K_{\Box}}( \sd^{\rm sm}(G,K))$) and similarly for $\R\Gamma_{\hk}(Y_C)$.
\end{lemma}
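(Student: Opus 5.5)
The idea is to reduce to the profinite case and then build the smooth-distribution module structure one level at a time. A solid $K_{\Box}[G]$-module $V$ is a $\sd^{\rm sm}(G,K)$-module exactly when the action factors through $\lim_H K_{\Box}[G/H]$. First, $\sd^{\rm sm}(G,K)\simeq \sd^{\rm sm}(H,K)\otimes_{K_{\Box}[H]}K_{\Box}[G]$ for a compact open $H\subset G$, and $Y$ is affinoid, so the stabilizer of any finite admissible presentation is open. This lets me assume $G$ is profinite. I then aim to write
$$\R\Gamma_{\dr}(Y)\simeq \colim_{H\subset G}\R\Gamma_{\dr}(Y)^{(H)},$$
where each $\R\Gamma_{\dr}(Y)^{(H)}$ is a complex on which $H$ acts \emph{trivially}. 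Such a colimit is a module over $\colim_H K_{\Box}[G/H]$, and that ring maps to $\sd^{\rm sm}(G,K)$ compatibly with $K_{\Box}[G]$. The $\sd^{\rm sm}$-structure then comes for free: each term $\R\Gamma_{\dr}(Y)^{(H)}$ is a $K_{\Box}[G/H]$-module, and $\lim_{H'}K_{\Box}[G/H']$ acts on it through the projection to $K_{\Box}[G/H]$.

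To get the trivial-action terms I would follow van der Put's argument. Let $A$ be the (dagger) affinoid algebra of $Y$. Continuity of the action on the affinoid (or dagger) algebra gives the following: for each $\epsilon$ there is an open $H$ with $|h^*f-f|\le\epsilon|f|$ on a fixed integral model $A^\circ$. In the dagger case we use the overconvergent presentation $A=\colim_\rho A_\rho$. The key input is the homotopy formula. If $h$ is close enough to the identity, then $h^*$ and $\mathrm{id}$ are homotopic on the de Rham complex, because the map $y\mapsto (y,h(y))$ factors through a tube around the diagonal in which the two projections are connected by a polydisc. In the dagger setting one uses the overconvergent neighborhood, and the Poincaré lemma on the disc gives the homotopy. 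This shows $H$ acts trivially on each $H^i_{\dr}(Y)$.

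For the $\infty$-categorical, functorial version I would not stop at individual homotopies. I would replace $\Omega^\bullet(Y)$ by the colimit, over open $H$, of the de Rham complexes of the "$H$-tubes" $U_H\supset\Delta$, that is, Čech-type complexes on the $H$-orbit neighborhoods. On these $H$ acts through $H$-equivariantly contractible polydisc fibers, and this gives a canonical trivialization of the $H$-action. Functoriality in $(Y,G)$ holds because the tubes are defined intrinsically from the action. For $\R\Gamma_{\hk}(Y_C)$ I would argue the same way via the Hyodo–Kato isomorphism $\R\Gamma_{\hk}(Y_C)\otimes \breve C\to \R\Gamma_{\dr}$-style comparisons, or directly through Grosse-Klönne's overconvergent log-crystalline complexes on semistable dagger models. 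Continuity again gives open subgroups acting trivially up to canonical homotopy, because the Hyodo–Kato cohomology is the (rigid) cohomology of the special fiber, and open subgroups acting trivially on the special fiber of a stable model act homotopically trivially.

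I expect the main obstacle to be making the homotopy triviality canonical and coherent, so that the result is a genuine $\sd^{\rm sm}(G,K)$-module and not just an action that is trivial on cohomology. If the tube construction proves awkward, my fallback is to use the (derived) smooth vectors. I would show that $\R\Gamma_{\dr}(Y)^{\R{\rm sm}}\to\R\Gamma_{\dr}(Y)$ is an equivalence of $K_{\Box}[G]$-modules, which can be checked on cohomology groups by the homotopy argument together with a Proposition \ref{RR1}-type criterion. The left side naturally lies in ${\rm Mod}_{K_\Box}(\sd^{\rm sm}(G,K))$, and transporting the structure along this equivalence gives the canonical structure.
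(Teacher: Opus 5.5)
There is a genuine gap at the central step. Your reduction to profinite $G$ is fine, and so is the shape you aim for: a colimit of complexes on which an open subgroup acts trivially \emph{on the nose}. But the step that is supposed to produce these complexes is missing, and the van der Put homotopy argument cannot supply it.

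\textbf{Why homotopies are not enough.} At the derived level a $\sd^{\rm sm}(G,K)$-module structure is extra structure, not the property that the action factors through $\lim_H K_\Box[G/H]$. The restriction functor ${\rm Mod}_{K_\Box}(\sd^{\rm sm}(G,K))\to\sd(K_\Box[G])$ is not fully faithful: $\R\Hom$ from the trivial representation $K$ to itself is $K$ over $\sd^{\rm sm}(G,K)$, but $\R\Gamma(G,K)$ over $K_\Box[G]$. So knowing that each $h$ near $1$ acts by a map homotopic to the identity (hence that the $H^i_{\dr}(Y)$ are smooth) neither determines nor guarantees a lift. For a concrete example, take $G=\Z_p^2$ and $V\in\sd(K_\Box[G])$ with $H^0(V)=K$ and $H^1(V)=K$ trivial, whose extension class is a nonzero element of ${\rm Ext}^2_{K_\Box[G]}(K,K)=H^2(G,K)\simeq K$. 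Every $g\in G$ acts on $V\simeq K\oplus K[-1]$ by a map homotopic to the identity. Yet $V$ admits no $\sd^{\rm sm}$-structure. Indeed, the Haar idempotent of $\sd^{\rm sm}(G,K)$ splits off the trivial representation. Hence any such module with trivial cohomology restricts to a complex with trivial action and zero extension class.

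\textbf{The tube construction and the fallback.} All the content therefore sits in your claim that ``$H$-tubes give a canonical trivialization'', and that construction is not specified. You would need a strict model on which $H$ acts trivially, compatible as $H$ shrinks and under conjugation by $G$. Moreover, tubes of the diagonal that are polydisc bundles over $Y$ exist only when $Y$ has a smooth formal model, which a general smooth affinoid does not. The fallback is circular. Both $(-)^{\R{\rm sm}}$ and Proposition \ref{RR1} are defined on ${\rm Mod}_{K_\Box}(\sd^{\rm sm}(G,K))$, so they presuppose the structure you want. If you replace them by $\colim_H\R\Gamma(H,-)$ computed over $K_\Box[H]$, Proposition \ref{stcoh} shows that for smooth $V$ this equals $\R\Gamma(\mathfrak{g},K)\otimes^{\LL_\Box}_K V$. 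So the map to $V$ is not an equivalence as soon as $\mathfrak{g}\neq 0$.

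\textbf{The missing idea: strict functoriality in the reduction mod $p$.} You invoke this for Hyodo--Kato, but only up to homotopy. The paper uses it as follows.
\begin{enumerate}
\item It writes $\R\Gamma_{\dr}(Y)\simeq\R\Gamma_{\rm conv}(Y)=\colim_{\sy_\bullet\to Y}\R\Gamma_{\rm conv}(\sy_{\bullet,1})$. The colimit runs over $\eta$-\'etale hypercoverings by quasi-compact semistable formal schemes over finite extensions of $K$. These can be truncated because de Rham cohomology has bounded amplitude.
\item Convergent cohomology is represented by strictly functorial complexes that depend only on $\sy_{\bullet,1}$. By continuity, an open subgroup $G_{\sy_\bullet}$ does not change this reduction, so it acts as the identity on the complex itself.
\item Hence each term lies in ${\rm Mod}_{K_\Box}(\sd^{\rm sm}(G_{\sy_\bullet},K))$, functorially in the hypercovering and the group, and the colimit carries the structure.
\item The dagger case follows by taking the colimit over a dagger presentation.
\item The Hyodo--Kato case is identical, with $\R\Gamma_{\hk}(\sy_{\bullet,1})$ in place of convergent cohomology. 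No comparison with de Rham cohomology is needed, and semistable models are only available after such hypercoverings.
\end{enumerate}

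Your tube-of-the-diagonal idea can be made precise in the good reduction case, with $H$ acting trivially on the special fibre. Let $H$ act on the second factor of $]\Delta[\subset Y\times Y$. Then $p_1^*$, from the trivial-action complex, and $p_2^*$ are both $H$-equivariant quasi-isomorphisms. This is essentially a \v{C}ech--Alexander version of the same convergent-cohomology model. It would still have to be combined with semistable hypercoverings and a strictly functorial assembly to prove the lemma.
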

\begin{proof}  Let us look first at de Rham cohomology.  In the rigid analytic case, the claim follows from the fact that the de Rham cohomology "depends only on the reduction mod $p$ of $Y$". To make this  precise (see \cite[Sec. 4.2.2]{CN3} for the notation and the details), take the natural quasi-isomorphism in $\sd(K_{\Box}[G])$
$$
\R\Gamma_{\dr}(Y)\simeq \R\Gamma_{\rm conv}(Y)
$$
between de Rham cohomology and convergent cohomology. 
Here
$$
\R\Gamma_{\rm conv}(Y):=\colim_{\sy_{\jcdot}\to Y}\R\Gamma_{\rm conv}(\sy_{\jcdot,1})
$$
and the colimit  is over $\eta$-\'etale hypercoverings of $Y$ from $\sm^{\rm ss}_K$ (we may assume that in every degree we have a quasi-compact formal scheme). Hence the connected components of $\sy_n$ are  semistable formal schemes over $\so_L$ for a finite extension $L$ of $K$. As the notation suggest, the convergent cohomology depends only on the reduction mod $p$ of $\sy_n$. 
 It follows that there exists an open  subgroup $G_{n}\subset G$ that acts trivially on $\R\Gamma_{\rm conv}(\sy_{n,1})$. Since the de Rham cohomology of $Y$ is of bounded degree,  we may assume that our simplicial scheme $\sy_{\jcdot}$ is truncated and then there is  an open  subgroup $G_{\sy_{\jcdot}}\subset G$ that acts trivially on $\R\Gamma_{\rm conv}(\sy_{\jcdot,1})$. We used here that we can choose strictly (!) functorial complexes representing convergent cohomology.  It follows that $\R\Gamma_{\rm conv}(\sy_{\jcdot,1})\in \sd^{\rm sm}(G_{\sy_{\jcdot}},K)$, functorially in the hypercoverings and the actions of the groups. This yields that $\R\Gamma_{\dr}(Y)\in  {\rm Mod}_{K_{\Box}}( \sd^{\rm sm}(G,K))$, as wanted. 

 The dagger case  follows from the rigid analytic case by taking a dagger presentation $\{Y_h\}$ of $Y$ and writing $\R\Gamma_{\dr}(Y)\simeq \colim_h\R\Gamma_{\dr}(Y_h)$. 
 
  For Hyodo-Kato cohomology we argue similarly using the fact that the Hyodo-Kato cohomology "depends only on the reduction mod $p$ of $Y_C$". Namely, by definition, we have 
   in $\sd(\breve{C}_{\Box}[G])$
$$
\R\Gamma_{\hk}(Y_C):=\colim_{\sy_{\jcdot}\to Y_C}\R\Gamma_{\hk}(\sy_{\jcdot,1}).
$$
See \cite[Sec. 4.3]{CN3} for details and notation. 
\end{proof}

\subsection{Equivariant properties of comparison theorems} We show here that comparison morphisms behave well with respect to group actions on the underlying variety. We already know that they are functorial hence equivariant for the action of the discrete groups; what we need to show  is that the comparison maps are actually in the relevant topological categories.
\subsubsection{Group action on cohomology of period sheaves}  Let $X$ be an analytic adic space over $C$. Assume that it is equipped with a continuous action of a profinite group $G$.
       We have the following useful corollary of Lemma \ref{HS}:
       \begin{lemma} \label{Gaction} \begin{enumerate}[leftmargin=*]
       \item Let  $\sff$ be as in Lemma \ref{HS}. Then $\R\Gamma_{\proeet}(X,\sff)$ has a natural structure of an object from $\sd(\Q_{p,\Box}[G])$. 
       \item Let $X\in {\rm Sm}_K$ and let $\sff\in\{F^j\so{\mathbb B}_{\dr}\otimes_{\so_X}\Omega^i_X, \so{\mathbb B}_{\dr}\otimes_{\so_X}\Omega_X^i\}$, $i\geq 0, j\in\Z$. Then $\R\Gamma_{\proeet}(X_C,\sff)$ has a natural structure of an object from $\sd(\Q_{p,\Box}[G])$. 
       \end{enumerate}
       \end{lemma}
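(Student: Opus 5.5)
The plan is to use Lemma \ref{HS} to convert the continuous $G$-action on $X$ into condensed module structures on the pro-\'etale cohomology. Let $a\colon G\times X\to X$ be the action map and $p\colon G\times X\to X$ the projection. Here $G\times X$ means $X\times S$ with $S=G$ viewed as a profinite set. Both maps induce pullbacks
$$a^*,\; p^*\colon \R\Gamma_{\proeet}(X,\sff)\to \R\Gamma_{\proeet}(G\times X,\sff).$$
Lemma \ref{HS} identifies the target with $\R\underline{\Hom}(\Z[G],\R\Gamma_{\proeet}(X,\sff))$. The map $p^*$ is the diagonal, i.e. the constant-function embedding, and is compatible with this identification. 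The map $a^*$ is then adjoint to a map
$$\Z[G]\otimes^{\LL}\R\Gamma_{\proeet}(X,\sff)\to \R\Gamma_{\proeet}(X,\sff).$$
Since $\R\Gamma_{\proeet}(X,\sff)$ is solid (indeed a $\Q_p$-module), this factors through $\Z_{\Box}[G]\otimes^{\LL\Box}(-)$, and after inverting $p$ through $\Q_{p,\Box}[G]$.

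The main obstacle is producing a genuine object of $\sd(\Q_{p,\Box}[G])$ rather than merely an action up to homotopy. I would handle this simplicially. The action gives a simplicial diagram $G^{\bullet}\times X$ (the bar construction of the action groupoid, with face maps built from $a$, $p$ and multiplication). Applying $\R\Gamma_{\proeet}(-,\sff)$ and Lemma \ref{HS} levelwise yields the cosimplicial object $\R\underline{\Hom}(\Z[G^{\bullet}],\R\Gamma_{\proeet}(X,\sff))$, functorially in $G^{\bullet}$. This is exactly the coherent data (the comonad for the coalgebra $\Z[G]$, equivalently a module over $\Z_{\Box}[G]$) defining an object of $\sd(\Z_{\Box}[G])$. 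To make this rigorous I would apply Lemma \ref{HS} functorially in $S$: it is an equivalence of functors from profinite sets, natural in maps $S\to S'$ and compatible with products, since $X\times S\times S'=(X\times S)\times S'$. Alternatively, one can use the Hochschild--Serre formalism behind Lemma \ref{HS1}: $\R\Gamma_{\proeet}(X,\sff)$ is the cohomology of a $G$-equivariant sheaf on $X$, and it is naturally computed in the derived category of solid $G$-modules via the pro-\'etale site of $[X/G]$.

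For part (2) I would argue the same way. The sheaves $\so\mathbb{B}_{\dr}\otimes_{\so_X}\Omega^i_X$ and $F^j\so\mathbb{B}_{\dr}\otimes_{\so_X}\Omega^i_X$ are $G$-equivariant, because $\so\mathbb{B}_{\dr}$, its filtration and $\Omega^i_X$ are functorial for the action on $X$ (note $G$ acts $K$-linearly on $X$, so it acts on $X_C$). So it suffices to check the analogue of Lemma \ref{HS} for these sheaves. Locally on $X$, when $X$ admits an \'etale map to a torus, $\so\mathbb{B}^+_{\dr}$ is $\mathbb{B}^+_{\dr}[[u_1,\dots,u_d]]$ with $u_i=X_i-[X_i^\flat]$. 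Its filtered pieces are therefore countable products or limits of copies of $\mathbb{B}^+_{\dr}$-type sheaves, twisted by the locally free module $\Omega^i_X$. The identity $\R\Gamma_{\proeet}(U\times S,-)\simeq\R\underline{\Hom}(\Z[S],\R\Gamma_{\proeet}(U,-))$ passes through limits, since $\R\underline{\Hom}(\Z[S],-)$ commutes with limits. It passes through the filtered colimit in $t^{-1}$ because $\Z[S]$ is compact, at least on quasi-compact $U$. It then globalizes by \v{C}ech descent. Once this identity is in hand, the construction of part (1) applies verbatim.
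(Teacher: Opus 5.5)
Your proposal is correct and follows the paper's proof. Lemma \ref{HS} identifies $\R\Gamma_{\proeet}(X\times G,\sff)$ with $\R\underline{\Hom}(\Z[G],\R\Gamma_{\proeet}(X,\sff))$, pulling back along the action map $a$ gives the $\Q_{p,\Box}[G]$-module structure, and part (2) reduces to checking Lemma \ref{HS} for $F^j\so\mathbb{B}_{\dr}\otimes_{\so_X}\Omega^i_X$ and $\so\mathbb{B}_{\dr}\otimes_{\so_X}\Omega^i_X$, as for $\mathbb{B}^+_{\dr},\mathbb{B}_{\dr}$. Your simplicial treatment of the higher coherences and your local sketch for (2) supply details that the paper covers only by ``the same argument'', with two small corrections: the description $\so\mathbb{B}^+_{\dr}\simeq\mathbb{B}^+_{\dr}[[u_1,\dots,u_d]]$ holds on the pro-\'etale slice over a perfectoid toric cover rather than on $U$ itself, and passing through the colimit in $t^{-1}$ uses that $\Z_{\Box}[S]$ is compact projective in solid abelian groups.
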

       \begin{proof} For claim (1),  from Lemma \ref{HS}, we have the quasi-isomorphism:
       \begin{align}\label{dziesiec10}
         \R\Gamma_{\proeet}(X\times G, \sff) & \simeq \R\underline{{\rm Hom}}_{\Q_{p,\Box}}(\Q_{p,\Box}[{G}], \R\Gamma_{\proeet}(X \sff)).
       \end{align}
       Hence the action map
       $a:      X\times G\to X   $
      together with \eqref{dziesiec10}  yields the action map 
       $$
       a^*: \quad  \Q_{p,\Box}[G]\otimes_{\Q_p}^{\LL_{\Box}}\R\Gamma_{\proeet}(X, \sff)      \to    \R\Gamma_{\proeet}(X \sff),
       $$
       which satisfies all the higher coherences by the same argument. 
      
 For claim (2),  we can argue as in claim (1)  provided that we know that Lemma \ref{HS} holds for $\sff$. But this can be argued exactly as in the case of ${\mathbb B}^{+}_{\dr},{\mathbb B}_{\dr}$. 
       \end{proof}
\subsubsection{Equivariant properties of comparison theorems}

\begin{proposition} \label{action1} Let $X\in{\rm Sm}_K$ and let $G$ be a profinite group acting continuously on $X$. Then:
\begin{enumerate}[leftmargin=*]
\item The quasi-isomorphisms \eqref{ober1p}, \eqref{ober1pp} and \eqref{ober2}, \eqref{ober3} are $G$-equivariant, i.e., they are in 
$\sd(B_{\Box}[G])$, for  $B= \B^+_{\dr}, \B_{\dr}, \B^+_{\pdr},\B_{\pdr}$, respectively.
\item The quasi-isomorphism \eqref{dark3} is $G$-equivariant, i.e., it is in $\sd(\B_{\Box}[G])$. 
\end{enumerate}
\end{proposition}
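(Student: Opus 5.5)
The plan is to use the same device as in Lemma \ref{Gaction}: replace the action of $G$ by the action map $a:X\times G\to X$ together with the projection $p:X\times G\to X$. Then verify that each comparison quasi-isomorphism is natural for these maps, and still holds for $X\times G$ after identifying both sides with $\R\underline{\rm Hom}(\Z[G],-)$ of the corresponding object for $X$. The $\sd(B_\Box[G])$-structure on either side is the morphism $a^*$ followed by this identification, together with its higher coherences coming from the simplicial object $X\times G^{\bullet}$. Hence $G$-equivariance amounts to functoriality of the comparison maps for the simplicial diagram $X\times G^\bullet$ (with face maps built from $a$, $p$ and the multiplication on $G$), plus compatibility with the ``$\R\underline{\rm Hom}(\Z[G^n],-)$'' identifications.

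\emph{Step 1 (pro-\'etale side).} On this side, Lemma \ref{HS} gives $\R\Gamma_{\proeet}((X\times G^n)_C,\sff)\simeq \R\underline{\rm Hom}(\Z[G^n],\R\Gamma_{\proeet}(X_C,\sff))$ for $\sff=\sbb^+_{\dr},\sbb_{\dr},\sbb_{\pdr},\sbb^+$. The filtered pieces $F^r\sbb_{\dr}=t^r\sbb^+_{\dr}$ are handled the same way.

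\emph{Step 2 (de Rham side).} Here I need the analogous identification $\R\Gamma_{\dr}(X\times G^n)\simeq \R\underline{\rm Hom}(\Z[G^n],\R\Gamma_{\dr}(X))$, compatibly with filtrations. On a Stein or affinoid $X$ this follows from $\so(X\times S)=\mathcal C(S,\so(X))=\underline{\rm Hom}(\Z[S],\so(X))$, applied to each $\Omega^i$, using that $\Omega^i_{X\times S/X\text{-rel}}$ is pulled back from $X$. The general case then follows by \v{C}ech descent. Tensoring with $\B_*$ commutes with $\R\underline{\rm Hom}(\Z[S],-)$ by Proposition \ref{magic} (with \cite[Prop. 5.35]{And}), since $\B_*$ is nuclear.

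\emph{Step 3 (the comparison maps).} Bosco's quasi-isomorphism \eqref{ober1p} is constructed through the $\sbb_{\dr}$-Poincar\'e lemma as a zigzag
\[
\R\Gamma_{\proeet}(X_C,F^r\sbb_{\dr})\to \R\Gamma_{\proeet}(X_C,F^r(\so\sbb_{\dr}\otimes\Omega^\bullet_X))\leftarrow F^r\R\Gamma_{\dr}(X_C/\B_{\dr}).
\]
Both maps are induced by morphisms of sheaves and are functorial. Lemma \ref{Gaction}(2) shows that the middle term also satisfies Lemma \ref{HS}. Each arrow is therefore a morphism in $\sd(B_\Box[G])$, and so is the composite. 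This gives \eqref{ober1p}, and \eqref{ober1pp} follows by inverting $t$. Statements \eqref{ober3} and \eqref{ober2} follow by adjoining $\log t$ as in Corollary \ref{pGuid}, where $G$ acts trivially on $\log t$.

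For \eqref{dark3} the argument is the same, starting from \eqref{CGN2}. That comparison is built from functorial (syntomic/$\mathbb{A}_{\rm inf}$-type) period-sheaf complexes, compatibly with Frobenius. The $G$-action on $\R\Gamma_{\hk}(X_C)$ is smooth by Lemma \ref{zurich1}, so the identification for $X\times G^n$ holds on the Hyodo--Kato side after tensoring with $\B_{\rm FF}$.

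\emph{Main obstacle.} The main obstacle is Step 3 for $X\times S$: the comparison theorems are stated only for $X\in{\rm Sm}_K$, while $X\times S$ is not a rigid variety. I would write $S=\lim S_i$ with $S_i$ finite, so that $X\times S_i$ is a finite disjoint union of copies of $X$. I would then check that both sides of each comparison send $\lim_i$ to $\R\underline{\rm Hom}(\Z[S],-)$ compatibly, using the colim/lim form $\underline{\rm Hom}(\Z[S],M)=\lim$-completion of $\colim_i\mathcal C(S_i,M)$ in the solid setting. Making these identifications strictly compatible with the comparison maps, rather than merely as abstract isomorphisms, is where the most care is needed.
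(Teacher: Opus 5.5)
\textbf{Part (1).} This is essentially the paper's argument. The paper also reduces to \eqref{ober1p} and \eqref{ober1pp}. It notes, via the Poincar\'e lemma \eqref{brig1}, that the comparison is induced by the sheaf maps $\nu^*:\R\Gamma_{\eet}(X_C,F^{r-i}\B_{\dr}\otimes_K\Omega^i_X)\to\R\Gamma_{\proeet}(X_C,F^{r-i}\so\sbb_{\dr}\otimes_{\so_X}\Omega^i_X)$. Equivariance then comes from functoriality together with Lemma \ref{Gaction}(2), and the $\log t$ variants are handled as you say.

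\textbf{Part (2): there is a genuine gap.} Saying ``the argument is the same, starting from \eqref{CGN2}'' skips exactly the point where work is needed, and your justification is not correct. The quasi-isomorphism \eqref{CGN2} is \emph{not} built from maps of pro-\'etale period sheaves:
\begin{enumerate}[leftmargin=*]
\item Its right-hand side involves $\R\Gamma_{\hk}(X_C)$. This is a colimit, over $\eta$-\'etale hypercoverings by semistable formal models, of log-crystalline cohomology of special fibres.
\item The comparison passes through a period zigzag $\alpha_{r,I}$ from $\R\Gamma_{\proeet}(X_C,\sbb_I(r))$ to Fontaine--Messing syntomic cohomology. This zigzag is defined only modulo $p^n$ and locally, using coordinates.
\item It also passes through the Hyodo--Kato section $\iota$ of $\R\Gamma_{\crr}(\sx_1/R)_{l,\Q}\to\R\Gamma_{\hk}(\sx^0_1)_{\Q}$. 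This section is characterized by Frobenius-equivariance (Beilinson's trick), not induced by any map of sheaves.
\end{enumerate}
None of these objects has an evident meaning on $X\times S$, let alone an analogue of Lemma \ref{HS}. So the $X\times G^\bullet$ device gives you no way to compare the $G$-structure on the left (from Lemma \ref{Gaction}) with the one on the right (from Lemma \ref{zurich1}).

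Your remedy $S=\lim_i S_i$ does not help either. It handles the projection $X\times G\to X$, but the action map $a:X\times G\to X$ factors through no $X\times G_i$. Also, Lemma \ref{zurich1} gives smoothness only for quasi-compact $X$; for Stein $X$, for instance, the action on $\R\Gamma_{\hk}(X_C)$ is in general not smooth.

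\textbf{The missing idea.} The action \emph{does} factor through a finite quotient once you pass to reductions, and this is how the paper argues, in two steps.
\begin{enumerate}[leftmargin=*]
\item Modulo $p^n$, $\alpha_{r,I}$ depends only on the mod $p^n$ reduction of a $G$-equivariant \'etale chart with coordinates. On that reduction $G$ acts through $G/H_U$ for an open subgroup $H_U$. So you may take $G$ finite, and equivariance follows from the strict functoriality of the zigzag. This includes the passage $\R\Gamma_{\proeet}(X_C,\sbb^+_{I,n})\simeq\R\Gamma(\Z_p[\Gamma_\infty],\sbb^+_{I,n}(U_\infty))$.
\item For $\iota$, $G$ acts on $\sx_1$ through a finite quotient $G/H_{\sx}$. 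You rerun the Frobenius-uniqueness argument used to show that $\iota$ is compatible with monodromy, now on the simplicial scheme $[\sx_1/(G/H_{\sx})]$. This gives $\Z_{p,\Box}[G/H_{\sx}]$-equivariance of $\iota$, and hence equivariance of $[\R\Gamma_{\crr}(X_C)\otimes^{\LL_{\Box}}_{\B^+_{\crr}}\B_{\log}]^{N=0}\simeq[\R\Gamma_{\hk}(X_C)\otimes^{\LL_{\Box}}_{\breve{C}}\B_{\log}]^{N=0}$.
\end{enumerate}
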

\begin{proof}   For the first claim, it suffices to argue for the quasi-isomorphisms \eqref{ober1p}, \eqref{ober1pp}.  We treat the case of $F^r{\mathbb B}_{\dr}$; the other period sheaves could be treated similarly.  

   Recall that we have an exact sequence of sheaves of periods on $X_{C,\proeet}$:
\begin{equation}\label{brig1}
0\to F^r\sbb_{\dr}\to F^{r}\so\sbb_{\dr}\lomapr{\nabla}F^{r-1}\so\sbb^+_{\dr}\otimes_{\so_X}\Omega^1_X\lomapr{\nabla}\cdots
\end{equation}
The quasi-isomorphism \eqref{ober1p} for $F^r{\mathbb B}_{\dr}$  is   induced by the map
$$F^r\rg_{\dr}(X_C/\B_{\dr})\to \rg_{\proet}(X_C,F^r\sbb_{\dr}),
$$
which in turn,  is induced by the maps 
$$
\nu^*: \quad  \R\Gamma_{\eet}(X_C,F^{r-i}\B_{\dr}\otimes_K\Omega^i_X) \to \R\Gamma_{\proeet}(X_C,F^{r-i}\so\sbb_{\dr}\otimes_{\so_X}\Omega^i_X),
$$
where $\nu: X_{C,\proeet}\to X_{C,\eet}$ is the canonical projection. To check that the map $\nu^*$ is $\B^+_{\dr,\Box}[G]$-equivariant it is enough to check that it is $\Z_p[G]$-equivarinat but this is clear.

   For the second claim, we need to work harder. Let $r\geq 2d$. The proof of the comparison quasi-isomorphism has two steps (see the proof of \cite[Th. 7.3, Prop. 7.35]{CN4} for details).
   \vskip2mm
  {\bf Step 1} ({\em Pro-\'etale cohomology  of ${\mathbb B}^+$ vs Fontaine-Messing syntomic cohomology})  That passage is basically clear so we will just briefly sketch the argument (see the proof of \cite[Th. 7.3]{CN4} for details). Via a limit-argument we may replace ${\mathbb B}^+$ with ${\mathbb B}_I$, for a compact interval $I\subset (0,\infty)$ with rational end points (assuming functoriality with respect to $I$). 
  We claim that the period quasi-isomorphism (for $r\geq 0$) 
  $$
 \alpha_{r,I}:\quad  \R\Gamma_{\proeet}(X_C,{\mathbb B}_I(r))\simeq  \R\Gamma^{\prime}_{\synt}(X_C,\Q_p(r)),
  $$
  where 
  $$
   \R\Gamma^{\prime}_{\synt}(X_C,\Q_p(r)):=[\R\Gamma_{\crr}(X_C)\{r\}\to \bigoplus_{\Psi(I)}\R\Gamma_{\dr}(X_C/\B_{\dr}^+)/F^r] ,
  $$
with  the index set $\Psi(I)$ running  over zeros of $t$ in ${\rm Spa}(\B_I)$,  is the Frobenius untwisted syntomic cohomology.  This quasi-isomorphism is actually defined modulo  $p^n$ as  a functorial zigzag (see the definition   \cite[(7.34)]{CN4}), where all the maps are quasi-isomorphisms modulo a universal constant. 
  
    We work modulo $p^n$ now. Both domain and target of the period map $\alpha_{r,I}$ are 
   functorial $\Z_{p,\Box}[G]$-modules: see \cite[Prop. 3.1.1]{RD} for an argument for the domain. For the target, it suffices to show this locally and functorially, for an \'etale $G$-equivariant map  $U\to X$ with a coordinate system (also $G$-equivariant). Since we work modulo $p^n$, the morphism $\alpha_{r,I}$ 
   depends only on modulp  $p^n $ reduction of this system. But now,  the action of $G$ on this modulo $ p^n$ reduction factors through $G/H_U$ for an open subgroup $H_U$. Hence, for all practical purposes we might assume that $G$ is finite and then our claim follows from the functoriality of the zig-zag defining the period morphism (that also defines  the $\Z_{p,\Box}[G]$-structure on syntomic cohomology). There is one subtle point here: a choice of a coordinate system lands $\alpha_{r,I}$ in 
   $\R\Gamma(\Z_p[\Gamma_{\infty}], {\mathbb B}_{I,n}(U_{\infty}))$ on which 
  $G$ acts through a finite quotient.  The passage to the pro-\'etale cohomology is now via the quasi-isomorphism
  $$
  \R\Gamma_{\proeet}(X_C,{\mathbb B}^+_{I,n})\stackrel{\sim}{\to} \R\Gamma(\Z_p[\Gamma_{\infty}], {\mathbb B}^+_{I,n}(U_{\infty})),
  $$
  which is clearly $\Z_{p,\Box}[G]$-equivariant.

  \vskip2mm
  {\bf Step 2} ({\em Fontaine-Messing syntomic cohomology vs Bloch-Kato syntomic cohomology})  This step is  harder and, alas, requires us to dig deeper into the structure of the comparison morphism.   We need to show that the quasi-isomorphism (see \cite[(7.36)]{CN4})
  $$
  [\R\Gamma_{\crr}(X_C)\otimes^{\LL_{\Box}}_{\B^+_{\crr}}\B_{\log}]^{N=0}\simeq [\R\Gamma_{\hk}(X_C)\otimes^{\LL_{\Box}}_{\breve{C}}\B_{\log}]^{N=0}
  $$
is $\Q_{p,\Box}[G]$-equivariant.  Again, assuming enough functoriality, one can argue  \'etale locally on $X$. That is, we need to show that the Hyodo-Kato section (see \cite[Sec. 2]{CN4} for the notation)
$$
\iota_l: \R\Gamma_{\hk}(\sx^0_1)_{\Q}\to \R\Gamma_{\crr}(\sx_1/R)_{l.\Q}
$$
of the canonical projection 
\begin{equation}\label{projfirst}
\R\Gamma_{\crr}(\sx_1/R)_{l.\Q}\to \R\Gamma_{\hk}(\sx^0_1)_{\Q}
\end{equation}
 is $\Q_{p,\Box}[G]$-equivariant.  Here $\sx_1$ is a semistable formal scheme over $\so_{K,1}$ equipped with a $G$-action.
 
 We note  that the action of $G$ on the domain and the target of the projection \eqref{projfirst} factors through $G/H_{\sx}$, where $H_{\sx}$ is an open subgroup of $G$ acting trivially on $\sx_1$. Hence it is enough to show that
  the map $\iota_l$ is  $\Z_{p,\Box}[G/H_{\sx}]$-equivariant in a functorial way. 
  
  But for this we can argue as in the proof of \cite[Prop. 2.14]{CN4}, where one had to show that the Hyodo-Kato section, a priori just Frobenius-equivariant by \cite[Th. 2.12]{CN4}, is actually also compatible with monodromy.  Then one needed to lift the monodromy action to an action of ${\mathbb G}_m^{\natural}$ on $\sx_1$ and run the argument for the Frobenius-equivariance from the proof of \cite[Th. 2.12]{CN4} again but for the simplicial variety representing $[\sx_1/{\mathbb G}_m^{\natural}]$.  Here the relevant simplicial variety  is $[\sx_1/(G/H_{\sx})]$, hence even simpler since $G/H_{\sx}$ is finite. So the same argument works but is now basically trivial..

\end{proof}
\subsubsection{Local analyticity} Let $X$ be a smooth rigid analytic variety over $K$. Let $I\subset (0,\infty)$ be a compact interval with rational endpoints and let $\Psi(I)$ be the set of zeros of  $t$  on ${\rm Spa}(\B_I)$. Then, by the proof of Proposition \ref{action1},  for $r\geq 2d$,  
we have  the following quasi-isomorphism 
\begin{equation}\label{CGNdrLum}
\rg_{\proet}(X_C,\sbb_I)  \simeq \big[[\rg_{\rm HK}(X_C)\{r\}\otimes^{\LL_{\Box}}_{\breve{C}}\B_{I,\log}]^{N=0}\stackrel{\iota_{\hk}}{\longrightarrow} \bigoplus_{\Psi(I)}\R\Gamma_{\dr}(X_C,r)\big](-r).
\end{equation}
Moreover, if a  profinite group $G$ acts continuously on $X$  this quasi-isomorphism  is in $\sd(\Q_{p,\Box}[G])$. The following result  was proved in \cite[Cor.1.1]{RD}. Our argument using the quasi-isomorphism \eqref{CGNdrLum} yields a conceptual explanation of this phenomena. 

\begin{corollary}  Assume that $G$ is a compact $p$-adic Lie group and $X\in {\rm Sm}_K$ is quasi-compact. Then the solid $G$-representation $\rg_{\proet}(X_C,\sbb_I) $ is $G$-locally analytic.  If $t$ is a unit in $\mathrm{Spa}(B_I)$ this representation is $G$-smooth. 
\end{corollary}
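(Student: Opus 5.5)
The plan is to read both claims off the quasi-isomorphism \eqref{CGNdrLum}, which is an equivalence in $\sd(\Q_{p,\Box}[G])$. It exhibits $\rg_{\proet}(X_C,\sbb_I)$ as the fiber of a $G$-equivariant map
$$[\rg_{\rm HK}(X_C)\{r\}\otimes^{\LL_{\Box}}_{\breve{C}}\B_{I,\log}]^{N=0}\longrightarrow \bigoplus_{\Psi(I)}\R\Gamma_{\dr}(X_C,r),$$
twisted by $(-r)$. The twist does not affect the $G$-action. Locally analytic representations form a stable subcategory closed under fibers, and likewise for smooth ones. So it suffices to treat the two terms separately.

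\textbf{The Hyodo-Kato term.} By Lemma \ref{zurich1}, together with a Mayer-Vietoris reduction to quasi-compact dagger or rigid affinoid covers (as in Proposition \ref{derham-prop1}), $\rg_{\hk}(X_C)$ is a smooth $G$-representation. Here $X$ is quasi-compact, so a finite covering suffices and some open subgroup stabilizes every member of it. In $[\rg_{\rm HK}(X_C)\{r\}\otimes^{\LL_{\Box}}_{\breve{C}}\B_{I,\log}]^{N=0}$ the group $G$ acts only through the first factor, since it acts trivially on $\B_{I,\log}$. Hence this term is smooth: smoothness is preserved by tensoring with a module carrying the trivial action and by taking the fiber of $N$, a colimit argument in the style of Proposition \ref{magic}.

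\textbf{The de Rham term.} The same argument shows that $\R\Gamma_{\dr}(X_C)$, or its $\B^+_{\dr}/t^r$-linear variant $\R\Gamma_{\dr}(X_C,r)$, is \emph{not} smooth a priori. It is a filtered object whose graded pieces are built from coherent cohomology $\R\Gamma(X,\Omega^i)\otimes C(j)$ with $j<r$. On quasi-compact $X$ with a continuous action of the compact $p$-adic Lie group $G$, the coherent cohomology $\R\Gamma(X,\Omega^i)$ is locally analytic. To see this, compute it by a finite $G_0$-stable affinoid Čech cover for some open $G_0\subset G$; the action of $G$ on $\so(U)$ for an affinoid $U$ is locally analytic by the standard orbit-map analyticity of actions on affinoid algebras. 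Locally analytic objects are closed under extensions, finite limits and induction from open subgroups, so $\R\Gamma_{\dr}(X_C,r)$ is locally analytic. This is the step I expect to be the main obstacle. The difficulty is making the local analyticity of $G$ acting on $\so(U)$ precise in the solid $\infty$-categorical framework of \cite{RR23}, and checking that it is compatible with the Čech and filtration constructions functorially.

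\textbf{The case where $t$ is a unit.} Suppose $t$ is a unit on $\mathrm{Spa}(\B_I)$. Then $\Psi(I)=\emptyset$, so the de Rham term vanishes, and \eqref{CGNdrLum} identifies $\rg_{\proet}(X_C,\sbb_I)$ with the Hyodo-Kato term up to twist. That term is smooth by the argument above, which gives the second claim.
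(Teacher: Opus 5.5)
Your proposal is correct and follows the paper's own argument: both claims are read off the $G$-equivariant quasi-isomorphism \eqref{CGNdrLum}, the Hyodo-Kato term is smooth by Lemma \ref{zurich1}, the de Rham term is locally analytic (the paper simply asserts this, and you justify it via the finite Hodge filtration with coherent graded pieces), and $\Psi(I)=\emptyset$ kills the de Rham term when $t$ is a unit. One small slip: Lemma \ref{zurich1} in fact shows that the untruncated $\R\Gamma_{\dr}(X)$ is smooth, so only the truncation $\R\Gamma_{\dr}(X_C,r)=\R\Gamma_{\dr}(X_C/\B^+_{\dr})/F^r$, whose graded pieces involve $\R\Gamma(X,\Omega^j)$, is in general merely locally analytic.
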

\begin{proof} We use the quasi-isomorphism \ref{CGNdrLum}. But the action on the de Rham part  of the right-hand side of \eqref{CGNdrLum} is $G$-locally analytic and, by Lemma \ref{zurich1},  the action  on Hyodo-Kato part is actually $G$-smooth. 
\end{proof}
\section{Representation theoretic input}

   In this  short chapter we state and prove some basic representation-theoretic results which will be used later on in the proof of the main results.  
\subsection{Stable cohomology}

    One could obtain more refined versions of the following, but the one below suffices for our purposes.

    \begin{proposition}\label{stcoh} Let $G$ be a locally profinite group and let $\pi\in {\rm Rep}^{\rm sm}_{K_{\Box}}(G)$ be a complex of smooth solid representations of $H$.
   There is a natural isomorphism (the colimit being taken over the compact open subgroups $H$ of $G$)
   $$\colim_{H} \R\Gamma(H, \pi)\simeq (\colim_{H} \R\Gamma(H, K))\otimes^{\LL_{\Box}}_{K} \pi.$$
   If $G$ is a $p$-adic Lie group with Lie algebra $\mathfrak{g}$ this is further naturally isomorphic to $\R\Gamma(\mathfrak{g}, K)\otimes^{\LL_{\Box}}_{K} \pi$.
    
    \end{proposition}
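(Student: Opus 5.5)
The plan is to reduce to a single smooth representation and then compute $\R\Gamma(H,-)$ at small enough level.

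\emph{Reduction.} Both sides commute with filtered colimits in $\pi$:
\begin{itemize}
\item $\R\Gamma(H,-)$ does, because for a profinite $H$ and solid coefficients it is computed by the bar complex $\underline{\rm Hom}(K_{\Box}[H^\bullet],-)$, and $K_{\Box}[H^n]$ is compact;
\item the right-hand side does, since $\otimes^{\LL_\Box}_K$ commutes with colimits.
\end{itemize}
By Proposition \ref{RR1}, ${\rm Rep}^{\rm sm}_{K_{\Box}}(G)$ is the derived category of its heart. A d\'evissage along the Postnikov filtration (using that $H^\bullet(H,K)$ is bounded for $p$-adic Lie groups; in general one passes to colimits of truncations) therefore reduces us to $\pi$ concentrated in degree $0$. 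We then write $\pi=\colim_{H'}\pi^{H'}$.

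\emph{Comparison map.} Since $\pi^{H'}$ carries the trivial action of $H'$, we have a map $\pi^{H'}=H^0(H',\pi^{H'})\to \R\Gamma(H',\pi^{H'})$. Cup product then gives
$$\R\Gamma(H',K)\otimes^{\LL_\Box}_K\pi^{H'}\to \R\Gamma(H',\pi).$$
Passing to the colimit over $H'$ (which is cofinal among compact opens) produces a natural map from the right-hand side of the statement to the left-hand side.

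\emph{Isomorphism.} Fix $H'$. For $H\subset H'$, Proposition \ref{magic} (valid because $\pi^{H'}$ is a solid $K$-vector space, and nuclearity can be arranged by the colimit argument) gives
$$\R\Gamma(H,\pi^{H'})\simeq\R\Gamma(H,K)\otimes^{\LL_\Box}_K\pi^{H'}.$$
Taking the colimit over $H\subset H'$ and then over $H'$, the double colimit is the colimit over pairs $H\subset H'$. This equals $\colim_H\R\Gamma(H,\colim_{H'}\pi^{H'})=\colim_H\R\Gamma(H,\pi)$, again because $\R\Gamma(H,-)$ commutes with filtered colimits. It also equals $(\colim_H\R\Gamma(H,K))\otimes\pi$.

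\emph{Main obstacle.} The delicate point is nuclearity: Proposition \ref{magic} was stated for $W\in{\rm Nuc}_K$. I would try first to remove this hypothesis for trivial actions of a profinite group. The reason is that $\underline{\rm Hom}(K_\Box[S],-)$, for $S$ profinite, equals $\mathcal{C}(S,K)\otimes^{\Box}_K-$ on arbitrary solid $K$-vector spaces, since $K_\Box[S]$ is the dual of the Banach space $\mathcal{C}(S,K)$ and is compact projective. This is exactly \cite[Prop. 5.35]{And} used in the proof of Proposition \ref{magic}. If that fails, I would instead restrict to nuclear smooth representations, which covers every case used later.

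\emph{Lie algebra statement.} Lazard's theorem gives, for uniform pro-$p$ $H$, an isomorphism $H^\bullet(H,K)\simeq H^\bullet(\mathfrak g,K)^H$, and $H$ acts trivially on $H^\bullet(\mathfrak g,K)$ once $H$ is small. In addition, restriction maps between small uniform subgroups are compatible with the identification with $H^\bullet(\mathfrak g,K)$, hence are isomorphisms. The colimit is therefore $\R\Gamma(\mathfrak g,K)$, which is finite-dimensional, and it can be realised naturally via the Chevalley--Eilenberg complex and locally analytic cochains.
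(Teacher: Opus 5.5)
Your argument is essentially the paper's proof. Both reduce to the heart by d\'evissage, use that $\colim_H\R\Gamma(H,-)$ and $(\colim_H\R\Gamma(H,K))\otimes^{\LL_\Box}_K(-)$ commute with filtered colimits, write $\pi=\colim_{H'}\pi^{H'}$ to reduce to a representation on which an open subgroup acts trivially, and quote Lazard for the Lie algebra statement. Your bookkeeping with pairs $H\subset H'$ is just an unpacked form of this. The paper simply asserts $\R\Gamma(H,\pi)\simeq\R\Gamma(H,K)\otimes^{\LL_\Box}_K\pi$ for trivial actions, so you are right that this is the delicate point.

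However, your first plan for removing the nuclearity hypothesis does not work. It is false that $\underline{\rm Hom}_K(K_\Box[S],V)\simeq\mathcal{C}(S,K)\otimes^{\Box}_K V$ for every solid $V$. Holding for all profinite $S$ is precisely the defining property of nuclear objects of $K_\Box$. It fails, for instance, for an infinite-dimensional Smith space $V=K_\Box[S]$: the image consists of trace-class maps and does not contain ${\rm id}_V$. This is why the proof of Proposition \ref{magic} first reduces to Banach $W$, using that ${\rm Nuc}_K$ is generated under colimits by Banach spaces.

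So via the bar complex you only get the nuclear case. That is your fallback, and it suffices for the later applications, since the compactly supported de Rham and Hyodo--Kato complexes are nuclear. It is, however, weaker than the statement.

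In the $p$-adic Lie case you can avoid nuclearity entirely:
\begin{itemize}
\item For a torsion-free (e.g.\ uniform) open $H$, Lazard gives a finite resolution of $K$ by finite free $K_\Box[H]$-modules.
\item Apply $\R\underline{\rm Hom}_{K_\Box[H]}(-,V)$ with $V$ carrying the trivial action. This gives a finite complex of copies of $V$ whose differentials are the augmentations of the matrix entries.
\item That complex is $\R\Gamma(H,K)\otimes^{\LL_\Box}_K V$, for arbitrary solid $V$.
\end{itemize}
Such $H$ are cofinal, so this handles every smooth $\pi$. The resulting uniform bound on cohomological amplitude is also what makes your d\'evissage work for complexes unbounded below; passing to colimits of truncations only handles complexes unbounded above.
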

    
    \begin{proof}
     The second claim follows directly from the first one and classical results of Lazard. For the first one, by d\'evissage we may assume that $\pi$ is in the heart of ${\rm Rep}^{\rm sm}_{K_{\Box}}(G)$. Recall that 
     $\R\Gamma(H,-)$ and the solid tensor product commute (separately in each variable for the tensor product) with filtered colimits. By writing 
     $\pi=\colim_{H} \pi^H$ we are thus reduced to the case when some open subgroup $H$ of $G$ acts trivially on $\pi$, then (replacing $G$ by $H$) to the case when $G$ acts trivially on $\pi$. But then 
     $\R\Gamma(H,\pi)\simeq \R\Gamma(H, K) \otimes^{\LL_{\Box}}_{K} \pi$ for all $H$ and we are done.
        \end{proof}
   \subsection{Cancellation results}  
   
    Let $G$ be a locally profinite group. We say that $\pi\in {\rm Rep}^{\rm sm}_{K_{\Box}}(G)$ is \emph{admissible} if $\pi$ is concentrated in degree $0$ and if, 
   for all compact open subgroups $H$ of $G$, the solid $K$-vector space $\pi^H$ is of the form $\underline{V}$ for some finite dimensional $K$-vector space 
    $V$ (endowed with its natural topology). This is equivalent to saying that $\pi=\underline{\sigma}$ for an admissible smooth (in the usual sense) $K$-representation $\sigma$ of $G$ (endowed with its natural inductive limit topology). 
    
    \begin{remark}
    The above definition is ad-hoc, but suitable for our applications. The "right" definition would be to drop the assumption that $\pi$ is concentrated in degree $0$, impose that $H^i(\pi)$ is admissible in the above sense for all $i$, and moreover that for each compact open subgroup $G$ of $H$ the solid $K$-vector space 
    $(H^i(\pi))^G$ vanishes for all but finitely many $i$. 
    \end{remark}

   \begin{proposition}\label{cancel1}
    Let $G$ be a profinite group and let 
    $\pi, \pi_1,\pi_2\in {\rm Rep}^{\rm sm}_{K_{\Box}}(G)$ be admissible representations of $G$ such that 
 $$
  \pi_1\oplus \pi \simeq \pi_2\oplus \pi
$$
  Then $\pi_1\simeq \pi_2$.
   \end{proposition}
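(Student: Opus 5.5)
Since $G$ is profinite and $K$ has characteristic $0$, smooth admissible representations of $G$ are semisimple with finite multiplicities. We will therefore argue through the decomposition into isotypic components and compare multiplicities.

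First we pass from the solid setting to the classical one. By the definition of admissibility above, $\pi=\underline{\sigma}$, $\pi_i=\underline{\sigma_i}$ for admissible smooth $K$-representations $\sigma,\sigma_i$ of $G$ with their inductive limit topology. The functor $\sigma\mapsto\underline{\sigma}$ is fully faithful on such objects: a morphism is determined on each $\pi^H=\underline{\pi^H(*)}$, these are finite-dimensional, and maps between them are just linear maps. Hence the isomorphism $\pi_1\oplus\pi\simeq\pi_2\oplus\pi$ in ${\rm Rep}^{\rm sm}_{K_\Box}(G)$ comes from a $G$-equivariant isomorphism $\sigma_1\oplus\sigma\simeq\sigma_2\oplus\sigma$ of abstract smooth representations. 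Conversely, an abstract isomorphism $\sigma_1\simeq\sigma_2$ gives $\pi_1\simeq\pi_2$. So it suffices to prove cancellation for admissible smooth $K[G]$-modules.

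For the classical statement, let $\mathrm{Irr}$ be the set of isomorphism classes of irreducible smooth $K$-representations of $G$. Each such $\tau$ factors through a finite quotient $G/H$, hence is finite dimensional. Any smooth $\sigma$ is the union of the $\sigma^H$ over open normal subgroups $H$. Each $\sigma^H$ is a $K[G/H]$-module, which is semisimple by Maschke, since $G/H$ is finite and ${\rm char}\,K=0$. Hence $\sigma\simeq\bigoplus_{\tau\in\mathrm{Irr}}\tau^{\oplus m_\tau(\sigma)}$ with $m_\tau(\sigma)=\dim_K\Hom_G(\tau,\sigma)/\dim_K{\rm End}_G(\tau)$. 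For admissible $\sigma$ these multiplicities are finite: if $\tau$ factors through $G/H$, then $\Hom_G(\tau,\sigma)=\Hom_G(\tau,\sigma^H)$, and $\sigma^H$ is finite dimensional. Additivity of $\Hom_G(\tau,-)$ gives $m_\tau(\sigma_1)+m_\tau(\sigma)=m_\tau(\sigma_2)+m_\tau(\sigma)$. Since all terms are finite, $m_\tau(\sigma_1)=m_\tau(\sigma_2)$ for every $\tau$, and so $\sigma_1\simeq\sigma_2$ by the decomposition.

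The only delicate step is the first one. We must check that isomorphisms in the solid category between objects of the form $\underline{\sigma}$ are the same as abstract $G$-isomorphisms. Continuity is automatic here, because $\sigma$ carries the finest locally convex topology, being a countable-or-not union of finite-dimensional spaces. The $G$-action on the condensed side is recovered from the action on points, and $\underline{(-)}$ is fully faithful on these compactly generated spaces, as recalled in the preliminaries. Once this is in place, the rest is standard semisimplicity.
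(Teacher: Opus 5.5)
Your argument is correct and is essentially the paper's proof: pass to classical admissible smooth representations, use that over a profinite group these are semisimple with each (finite-dimensional) irreducible occurring with finite multiplicity, and cancel multiplicities. You add details the paper leaves implicit: Maschke on each $\sigma^H$, normalizing by $\dim_K{\rm End}_G(\tau)$ since $K$ is not algebraically closed, and the reduction step, which is most cleanly justified by smoothness ($\pi=\colim_H\pi^H$ with each $\pi^H$ finite-dimensional) rather than by appeal to compact generation of $\sigma$.
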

   
   \begin{proof}
   We may assume that $\pi, \pi_1, \pi_2$ are admissible smooth non-condensed representations. Since $G$ is profinite, $\pi, \pi_1, \pi_2$ are semi-simple and each irreducible smooth representation $\sigma$ of $G$ is finite dimensional and has finite multiplicity in $\pi, \pi_1,\pi_2$. Since $\pi_1\oplus \pi \simeq \pi_2\oplus \pi
$, the multiplicities of $\sigma$ in $\pi_1$ and $\pi_2$ must be equal, so $\pi_1\simeq \pi_2$.
   \end{proof}
   
      While the above result is very elementary, the next one requires more serious input: 
   
    \begin{proposition}\label{rep1HK}
  Let $G$ be the group of $K$-rational points of a reductive group over $K$. 
  
  a) If 
  $\pi, \pi_1,\pi_2\in {\rm Rep}^{\rm sm}_{K_{\Box}}(G)$ are admissible representations of $G$
  such that $\pi\oplus \pi_1\simeq \pi\oplus \pi_2$, then 
  $\pi_1\simeq \pi_2$.

  a) If 
  $\pi, \pi_1,\pi_2\in {\rm Rep}^{\rm sm}_{K_{\Box}}(G)$ are admissible representations of $G$
   on $(\phi,N,\sg_{\breve{C}})$-modules over 
$\breve{C}$ such that $\pi\oplus \pi_1\simeq \pi\oplus \pi_2$, then 
  $\pi_1\simeq \pi_2$.
  \end{proposition}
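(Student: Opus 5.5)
Both parts reduce to the non-condensed setting, as in the proof of Proposition \ref{cancel1}: the admissible objects are of the form $\underline{\sigma}$, and isomorphisms between them come from isomorphisms of the underlying smooth admissible $K$-representations. The difference from Proposition \ref{cancel1} is that $G$ is no longer compact, so admissible representations need not be semisimple. Instead of multiplicities of irreducibles we use a Krull--Schmidt type argument, which rests on finiteness statements of Bernstein.

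\emph{Part a).} First pass to $\overline{K}$-coefficients, or better work $K$-rationally throughout. By Bernstein's decomposition, the category of smooth $K[G]$-modules splits as a product of blocks $\prod_{\mathfrak{s}}\mathcal{R}_{\mathfrak{s}}$, and every admissible representation lies in finitely many blocks after fixing a compact open $H$: $\pi^H$ is finite dimensional and only finitely many blocks have nonzero $H$-fixed vectors. Fix $H$ small enough, for instance a deep congruence subgroup, so that the functor $V\mapsto V^H$ is an equivalence from the subcategory generated by $H$-fixed vectors to modules over the Hecke algebra $\mathcal{H}(G,H)$. This uses Bernstein's theorem that this subcategory is a direct summand.

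The plan is then as follows.
\begin{enumerate}
\item Choose $H$ such that $\pi,\pi_1,\pi_2$ all lie in the $H$-summand, which is possible since each is generated by its $H$-fixed vectors after shrinking $H$, being finite length on each block.
\item The isomorphism $\pi\oplus\pi_1\simeq\pi\oplus\pi_2$ gives an isomorphism of finite dimensional $\mathcal{H}(G,H)$-modules $\pi^H\oplus\pi_1^H\simeq\pi^H\oplus\pi_2^H$.
\item For finite dimensional modules over a $K$-algebra, Krull--Schmidt holds, because endomorphism rings of indecomposables are local. Hence $\pi_1^H\simeq\pi_2^H$.
\item By the equivalence of categories, $\pi_1\simeq\pi_2$.
\end{enumerate}
The subtle point is step 1, since an admissible representation need not be finitely generated and hence need not be generated by $H$-fixed vectors for a single $H$. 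In that case we would instead decompose according to Bernstein blocks $\pi=\bigoplus_{\mathfrak{s}}\pi_{\mathfrak{s}}$. Each $\pi_{\mathfrak{s}}$ is admissible, so it is of finite length on each block, which gives a reduction to one block. A single block is generated by $H_{\mathfrak{s}}$-fixed vectors for some $H_{\mathfrak{s}}$, so steps 2--4 apply block by block. We will also check that the decomposition $\pi\simeq\bigoplus_{\mathfrak{s}}\pi_{\mathfrak{s}}$ is compatible with the condensed structure, which is automatic since $\underline{\sigma}$ for direct sums of countable-dimensional spaces behaves well.

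\emph{Part b).} Run the same argument in the category of smooth admissible $G$-representations on $(\phi,N,\sg_{\breve{C}})$-modules over $\breve{C}$. The block decomposition and the $H$-fixed vector functor commute with the extra structure because $\phi$, $N$ and $\sg_{\breve{C}}$ commute with $G$. One then gets finite dimensional modules over $\mathcal{H}(G,H)$ carrying a semilinear $\phi$, a nilpotent $N$ and a smooth Galois action. These form an abelian category in which the $\mathrm{Hom}$-spaces are finite dimensional over $\Q_p$; semilinearity means they are only $\Q_p$-vector spaces, not $\breve{C}$-vector spaces. In such a category every object has finite length, endomorphism rings of indecomposables are local by Fitting's lemma, and Krull--Schmidt holds.

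The main obstacle is verifying this Hom-finiteness and finite length once $\phi$-semilinearity and the smooth Galois action are included. Finite length follows from finite $\breve{C}$-dimension. For the Hom-finiteness we plan to use that Hom is a finite dimensional $\Q_p$-space, by Dieudonné--Manin-type rigidity: $\phi$-fixed points of a finite $\phi$-module over $\breve{C}$ have dimension at most the rank. Once local endomorphism rings are secured, cancellation follows as in part a).
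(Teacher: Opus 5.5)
Your proposal is correct and is essentially the paper's proof: reduce via the Bernstein decomposition to a single block (your block-by-block handling of admissible representations with infinitely many components spells out what the paper asserts in one line), use Bernstein's full faithfulness of $V\mapsto V^H$ into $\mathcal{H}(G,H)$-modules, and apply Krull--Schmidt to the finite-dimensional, hence finite-length, modules $\pi^H,\pi_1^H,\pi_2^H$, over $\mathcal{H}(G,H)$ enlarged by $\phi^{\pm 1},N,\sg_{\breve{C}}$ in part b). Two small remarks on b): the $\Q_p$-finiteness of Hom-spaces that you single out as the main obstacle is not needed, since Fitting's lemma and Krull--Schmidt only use finite length; and the semilinear $\phi$ can permute Bernstein blocks (for $G=K^\times$ it sends the block of a Teichm\"uller-valued character $\chi$ of $\so_K^\times$ to that of $\chi^p$), so instead of a single block you should reduce to $\phi$-stable summands, such as the subrepresentation generated by $V^H$ for a good $H$, which does not affect the argument.
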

  
   \begin{proof}
  a) Using the Bernstein decomposition of the category of smooth representations of $G$, we may assume that 
  $\pi, \pi_1,\pi_2$ all belong to a block of this category. Bernstein also proved (see  \cite[Prop. VI.10.6]{Renard}) the existence of a compact open subgroup 
  ${H}$ of $G$ so that the functor $V\mapsto V^H$ defined on the block, with values in modules over the associated Hecke algebra 
  $H(G,H)$ is fully faithful. Thus it suffices to show that $\pi_1^H\simeq \pi_2^H$ as $H(G,H)$-modules. But 
  $\pi^H\oplus \pi_1^H\simeq \pi^H\oplus \pi_2^H$ as $H(G,H)$-modules. By assumption 
  $\pi^H, \pi_1^H, \pi_2^H$ are finite dimensional over the field of coefficients, thus artinian and noetherian
  $H(G,H)$-modules. The Krull-Schmidt theorem therefore applies and shows that 
  $\pi_1^H\simeq \pi_2^H$ as $H(G,H)$-modules, finishing the proof.
  
  b) Since the $(\phi,N,\sg_{\breve{C}})$-module structure commutes with the action of $G$,
  the same argument as in the proof of part a) applies, by replacing the ring $H(G,H)$ with the ring 
  $$R=H(G,H)[\phi,\phi^{-1},N,\sg_{\breve{C}}]/(N\phi-p\phi N; \phi c-\phi(c)\phi | c\in\breve{C}).$$ 
      \end{proof}

\begin{remark} 
  Let $M$ be a finite rank $(\phi,N)$-module over $\breve{C}$.  Since the category of $\phi$-crystals over $\breve{C}$ is semisimple we can write
  $$
  M=NM\oplus W
  $$
  in the category of $\phi$-crystals. Now write (in the same category)
  $$
  W=W^k\oplus W^{k-1}\oplus \cdots \oplus  W^1,
  $$
  where $W^1=\ker N$, $W^2$ is a complementary $\phi$-crystal of $W^1$ in $\ker N^2$ (i.e.\,$\ker N^2=W^1\oplus W^2$), etc. Then we can write
  \begin{equation}\label{paris300}
  M=(W^k\oplus NW^k\oplus \cdots \oplus N^{k-1}W^k)\oplus(W^{k-1}\oplus NW^{k-1}\oplus \cdots \oplus N^{k-2}W^{k-1})\oplus \cdots
  \end{equation}
  We note that we have isomorphisms
  $$
  W^k\xrightarrow[\sim]{N}NW^k\xrightarrow[\sim]{N} \cdots\xrightarrow[\sim]{N} N^{k-1}W^k
  $$
  which are Frobenius equivariant up to a twist (since  we have $N\phi=p\phi N$). Similarly for $W^{k-1},\cdots, W^1$.
  
 % Since the formula \eqref{paris300} applies to all the terms of \eqref{paris400}, using  the fact that the category of $\phi$-crystals over $\breve{C}$ is semisimple, we easily see 
%that  we can assume that $N=0$ in \eqref{paris400} (start bootstrapping argument from the deepest  terms $N^{k-1}W^k$). But then our claim follows again  from the semisimplicity of the category of $\phi$-crystals over $\breve{C}$.

This computation is a very explicit version of Krull-Schmidt theory for finite length modules over the ring $R:=\breve{C}[\phi,\phi^{-1},N]/(N\phi-p\phi N; \phi c-\phi(c)\phi | c\in\breve{C}))$. The maximal  indecomposable modules are of the form $W^k\oplus NW^k\oplus \cdots \oplus N^{k-1}W^k$, where $W$ is an irreducible $\phi$-crystal, as above. 
\end{remark}

\section{Flip-flopping} \label{five} We will now state and prove de Rham and Hyodo-Kato flip-flopping. For the set-up, 
consider a diagram
$$
\xymatrix{
& T\ar[dl]^-{{G}}_{\pi}\ar@(u,r)[]^{G\times\check{G}} \ar[dr]_{\check{G}}^{\check{\pi}}\\
X=[T/G]\ar@(l,u)[]^{\check{G}} & &   \check{X}=[T/\check{G}]\ar@(u,r)[]^{{{G}}} 
 }
$$
where
\begin{enumerate}[leftmargin=*]
\item[(a)] $X$ and $\check{X}$ are smooth rigid analytic spaces over $K$; $T$ is a diamond over $K$;
\item[(b)] $G$ and $\check{G}$ are profinite groups;
\item[(c)] $G\times\check{G}$ acts on  $T$ continuously and the map $\pi: T\to X$ (respectively $\check{\pi}: T\to \check{X}$) is a pro-\'etale $G$-torsor (respectively a pro-\'etale $\check{G}$-torsor). 
\end{enumerate}
  \subsection{De Rham flip-flopping} In this section we collect a number of de Rham flip-flopping results. 
  \subsubsection{Usual de Rham cohomology}
  The first main theorem of this paper  is the following: 
   \begin{theorem}{\rm ({\em De Rham flip-flopping})} \label{derham1}
\begin{enumerate}[leftmargin=*]
\item There are  
natural quasi-isomorphisms in $\sd(K_{\Box})$ ($r\in \Z$):
\begin{equation}\label{berkeley2}
\rg(\check{G}, F^r\R\Gamma_{\dr}(X))  \simeq \rg({G}, F^r\R\Gamma_{\dr}(\check{X})).
\end{equation}

\item There are natural quasi-isomorphisms in $\sd({\B^+_{\dr,\Box}})$ and $\sd({\B_{\dr,\Box}})$, respectively:
\begin{equation}\label{berkeley1}
\rg(\check{G}, F^r\R\Gamma_{\dr}(X_C/\B_{\dr}))  \simeq \rg({G}, F^r\R\Gamma_{\dr}(\check{X}_C/\B_{\dr})), \quad r\in\Z,
\end{equation}
and
\begin{equation}
\label{berkeley11}\rg(\check{G},\R\Gamma_{\dr}(X_C/\B_{\dr}))  \simeq \rg({G},\R\Gamma_{\dr}(\check{X}_C/\B_{\dr})).
\end{equation}
\end{enumerate}
\end{theorem}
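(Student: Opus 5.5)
The plan is to pass through the pro-\'etale cohomology of the period sheaves on the common cover $T$, where flip-flopping is formal, and then to translate back to de Rham cohomology using the comparison theorems. I would prove (2) first and deduce (1) from it by taking Galois cohomology with the almost de Rham sheaf $\sbb_{\pdr}$.

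\textbf{Step 1 (flip-flop for period sheaves).} Let $\sff$ be one of the sheaves $F^r\sbb_{\dr}$, $\sbb_{\dr}$, or $F^r\sbb_{\dr}[\log t]^{\leq n}$, the last being a finite direct sum of copies of $F^r\sbb_{\dr}$ as a sheaf. I apply Lemma \ref{HS1} to the pro-\'etale $G$-torsor $T_C\to X_C$ and to the pro-\'etale $\check G$-torsor $T_C\to\check X_C$. This gives
$$\R\Gamma_{\proeet}(X_C,\sff)\simeq \R\Gamma(G,\R\Gamma_{\proeet}(T_C,\sff)),\qquad \R\Gamma_{\proeet}(\check X_C,\sff)\simeq \R\Gamma(\check G,\R\Gamma_{\proeet}(T_C,\sff)).$$
By Lemma \ref{Gaction}, $\R\Gamma_{\proeet}(T_C,\sff)$ is an object of $\sd(\Q_{p,\Box}[G\times\check G])$, and it also carries a compatible $\sg_K$-action because $T$ is defined over $K$. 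The key extra input is that the first quasi-isomorphism is $\check G$-equivariant, and the second $G$-equivariant, in the solid sense. I would obtain this by rerunning the proof of Lemma \ref{HS1} with the extra profinite factor: use Lemma \ref{HS} for $T_C\times G^{\bullet}\times\check G^{\bullet}$ and compare the \v{C}ech nerves. Granting this, taking $\R\Gamma(\check G,-)$ of the first identity and $\R\Gamma(G,-)$ of the second gives
$$\R\Gamma(\check G,\R\Gamma_{\proeet}(X_C,\sff))\simeq \R\Gamma(G\times\check G,\R\Gamma_{\proeet}(T_C,\sff))\simeq \R\Gamma(G,\R\Gamma_{\proeet}(\check X_C,\sff)),$$
compatibly with the $\sg_K$-action. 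I expect this equivariance bookkeeping to be the main obstacle, since all three actions ($G$, $\check G$, $\sg_K$) must be kept solid and coherent.

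\textbf{Step 2 (proof of (2)).} Theorem \ref{Guid1}(1) identifies $F^r\R\Gamma_{\proeet}(X_C,\sbb_{\dr})$ with $F^r\R\Gamma_{\dr}(X_C/\B_{\dr})$, and by Proposition \ref{action1}(1) this is a $\check G$-equivariant quasi-isomorphism in $\sd(\B^+_{\dr,\Box}[\check G])$. The same holds for $\check X$ with $G$. Combining with Step 1 for $\sff=F^r\sbb_{\dr}$ gives \eqref{berkeley1}. For \eqref{berkeley11} I would write both sides as $\colim_r$ of the $F^{-r}$ pieces, since $\R\Gamma(\check G,-)$ commutes with filtered colimits. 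If the unfiltered identification fails for non-quasi-compact $X$, I would fall back to the quasi-compact case of Theorem \ref{Guid1}(2), or reduce to it via the boundary/colimit formalism of Section \ref{compact-bonn}.

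\textbf{Step 3 (proof of (1)).} By Lemma \ref{killing1} we have $F^r\R\Gamma_{\dr}(X)\simeq\R\Gamma(\sg_K,F^r\R\Gamma_{\dr}(X_C/\B_{\pdr}))$, and this is compatible with the $\check G$-action by functoriality. Since $\R\Gamma(\check G\times\sg_K,-)=\R\Gamma(\check G,\R\Gamma(\sg_K,-))$, it suffices to establish a $\sg_K$-equivariant flip-flop for $F^r\R\Gamma_{\dr}(X_C/\B_{\pdr})$. To avoid the quasi-compactness hypothesis in Corollary \ref{pGuid}, I write
$$F^r\R\Gamma_{\dr}(X_C/\B_{\pdr})=\colim_n F^r\R\Gamma_{\dr}(X_C/\B_{\dr})[\log t]^{\leq n}.$$
Each term is a finite sum of shifted filtration pieces, with the $\sg_K$-action mixing the summands unipotently. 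For each $n$, Step 2 applied summandwise gives a $\sg_K$-equivariant flip-flop, because Steps 1 and 2 are $\sg_K$-equivariant and the $\log t$-twist is compatible on both sides. I then pass to the colimit over $n$, which commutes with $\R\Gamma(\check G,-)$, $\R\Gamma(G,-)$ and $\R\Gamma(\sg_K,-)$. Finally, applying $\R\Gamma(\sg_K,-)$, interchanging it with the group cohomology of $G$ and $\check G$, and invoking Lemma \ref{killing1} on both sides yields \eqref{berkeley2}.
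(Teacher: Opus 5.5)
Your argument is correct, but it runs in the opposite direction from the paper's.

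\textbf{The paper's route.} The paper proves (1) first. For $X,\check X$ quasi-compact it uses Corollary \ref{quotG} together with descent along $T$ for $F^r\sbb_{\pdr}$. Corollary \ref{quotG} rests on Corollary \ref{pGuid}, which needs quasi-compactness because the colimits in $\log t$ and $1/t$ defining $\sbb_{\pdr}$ sit on the pro-\'etale side. General $X$ is then handled by an $\R\lim$ over a $\check G$-stable quasi-compact exhaustion $\{X_n\}$ and the matching exhaustion $\check X_n=\check\pi(\pi^{-1}(X_n))$. Part (2) follows formally from (1) by base change (Lemma \ref{GBcoh}).

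\textbf{Your route.} You prove \eqref{berkeley1} directly for arbitrary $X$, using the filtered comparison Theorem \ref{Guid1}(1), which needs no quasi-compactness, and descent for $F^r\sbb_{\dr}$. You get \eqref{berkeley11} as a colimit over the exhaustive filtration on the de Rham side. You then recover (1) by doing the $\log t$-colimit on the de Rham side: you tensor the $\sg_K$-equivariant flip-flop with the finite-dimensional unipotent representation $K[\log t]^{\leq n}$. This representation is trivial for $\check G$ and $G$, so it passes through $\R\Gamma(\check G,-)$ and $\R\Gamma(G,-)$. Finally you apply Lemma \ref{killing1}, which holds for all smooth $X$.

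\textbf{Trade-offs.} Your approach avoids the exhaustion argument entirely: no $\check G$-stable quasi-compact covers, and no need to check that the $\check X_n$ are quasi-compact and compatible. The cost is that you use the full non-quasi-compact strength of Bosco's filtered comparison, where the paper only uses the quasi-compact case. You also need \eqref{berkeley1} to be $\sg_K$-equivariant in the solid sense. The paper's Corollary \ref{quotG} needs exactly the same $\sg_K$-equivariance of the comparison and of Hochschild--Serre, so this is no extra burden. The paper's ordering, in turn, gets (2) essentially for free once (1) is known.

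\textbf{Minor remarks.}
\begin{enumerate}
\item Your fallback in Step 2 is unnecessary, since the colimit over $r$ is taken on the de Rham side, where it is harmless.
\item Step 3 involves no shifts: $\log t$ has filtration degree $0$, so $F^r\R\Gamma_{\dr}(X_C/\B_{\pdr})\simeq F^r\R\Gamma_{\dr}(X_C/\B_{\dr})\otimes_K K[\log t]$ with the diagonal $\sg_K$-action.
\item The sheaves $F^r\sbb_{\dr}[\log t]^{\leq n}$ in Step 1 are therefore not needed.
\end{enumerate}
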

\begin{proof} Using the natural quasi-isomorphisms (see Lemma \ref{GBcoh}) 
\begin{align*}\R\Gamma(\check{G},  \R\Gamma_{\dr}(X_C/\B_{\dr})) & \simeq \R\Gamma(\check{G}, \R\Gamma_{\dr}(X))\otimes^{\LL_\Box}_K \B_{\dr},\\
 \R\Gamma(\check{G},  F^r\R\Gamma_{\dr}(X_C/\B_{\dr})) & \simeq \colim_{j+k\geq r} \R\Gamma(\check{G}, F^j \R\Gamma_{\dr}(X))\otimes^{\LL_\Box}_K F^k\B_{\dr},
 \end{align*}
 we see that the theorem follows once we construct the natural quasi-isomorphisms in \eqref{berkeley2}.

First, suppose that
$X$ and $\check{X}$ are quasi-compact. Then Corollary \ref{quotG} and Galois descent (see Lemma \ref{HS1}) yield natural quasi-isomorphisms in $\sd(K_{\Box})$
\begin{align*}\rg(\check{G}, F^r\R\Gamma_{\dr}(X))   & \simeq \rg(\check{G}\times \sg_K, \R\Gamma_{\proet}(X_C, F^r \sbb_{\pdr}))
  \simeq \rg(\check{G}\times \sg_K, \rg(G, \R\Gamma_{\proet}(T_C, F^r \sbb_{\pdr}))\\
   & \simeq \rg(G\times\check{G}\times \sg_K,  
 \R\Gamma_{\proet}(T_C, F^r \sbb_{\pdr}))
\end{align*}
and similarly with the roles of $G$ and $\check{G}$ (as well as $X$ and $\check{X}$) exchanged. Combining these quasi-isomorphisms we obtain the desired relation \eqref{berkeley2} when $X$ and $\check{X}$ are quasi-compact. 
 
 To treat the general case of \eqref{berkeley2}, let $\{X_n\}$, $n\in\N$, be an increasing covering of  $X$ by $\check{G}$-stable quasi-compact opens. Since $\check{G}$ is compact, such a covering always exists 
 by \cite[Lemma 2.2]{LT}. Moreover, if $X$ is Stein we can even take the $X_n$'s to be affinoid and the covering can be chosen to be strictly increasing. Let $\{\check{X}_n\}$,
     $n\in\N$, be the corresponding covering of $\check{X}$, i.e., $\check{X}_n:=\check{\pi}(\pi^{-1}(X_n))$. This is an increasing covering of $\check{X}$ by ${G}$-stable quasi-compact opens (again, strictly increasing if necessary). We compute  in $\sd(K_{\Box})$:  
  \begin{align*}
   \rg(\check{G},F^r\rg_{\dr}(X))&  \stackrel{\sim}{\to}\rg(\check{G},\R\lim_nF^r\rg_{\dr}(X_{n})) \stackrel{\sim}{\to}
   \R\lim_n\rg(\check{G},F^r\rg_{\dr}(X_{n}))\\
&  \simeq   \R\lim_n\rg({G},F^r\rg_{\dr}(\check{X}_{n}))\stackrel{\sim}{\leftarrow} \rg({G},\R\lim_nF^r\rg_{\dr}(\check{X}_{n})) \stackrel{\sim}{\leftarrow} \rg({G},F^r\rg_{\dr}(\check{X})).
  \end{align*}
This finishes the proof of \eqref{berkeley2} and our theorem.
\end{proof}
  \subsubsection{Compactly supported de Rham cohomology} Theorem \ref{derham1} has a version for de Rham cohomology with compact support. 
\begin{corollary}{\rm ({\em Compactly supported de Rham flip-flopping})}\label{bonn15} Assume that $X, \check{X}$ are partially proper. 
\begin{enumerate}[leftmargin=*]
\item  There are natural quasi-isomorphisms in $\sd({\B^+_{\dr,\Box}})$
\begin{equation}\label{berkeley1c}
\rg(\check{G},F^r\R\Gamma_{\dr,c}(X_C/\B_{\dr}))  \simeq \rg({G},F^r\R\Gamma_{\dr,c}(\check{X}_C/\B_{\dr})),\quad r\in\Z.
\end{equation}
\item There are
natural quasi-isomorphisms in $\sd(K_{\Box})$
\begin{equation}\label{berkeley2c}
\rg(\check{G},F^r\R\Gamma_{\dr,c}(X))  \simeq \rg({G},F^r\R\Gamma_{\dr,c}(\check{X})),\quad r\in\Z.
\end{equation}
\end{enumerate}
\end{corollary}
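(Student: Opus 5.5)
The plan is to reduce the compactly supported statements to Theorem \ref{derham1} by writing compactly supported cohomology as a mapping fiber and checking that the boundary terms also flip-flop. By definition $F^r\rg_{\dr,c}(X)=[F^r\rg_{\dr}(X)\to F^r\rg_{\dr}(\partial X)]$, with $F^r\rg_{\dr}(\partial X)=\colim_{Z\in\Phi(X)}F^r\rg_{\dr}(X\setminus Z)$. Since $\rg(\check G,-)$ is exact, it commutes with mapping fibers. So it suffices to produce compatible natural quasi-isomorphisms
$$\rg(\check G,F^r\rg_{\dr}(X))\simeq\rg(G,F^r\rg_{\dr}(\check X)),\qquad \rg(\check G,F^r\rg_{\dr}(\partial X))\simeq\rg(G,F^r\rg_{\dr}(\partial\check X)).$$
The first is \eqref{berkeley2}. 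Part (1) will then follow from part (2) exactly as in the proof of Theorem \ref{derham1}: by Lemma \ref{GBcoh} (which applies to mapping fibers and colimits, since $F^k\B_{\dr}$ is nuclear and Proposition \ref{magic} applies), one tensors with $F^k\B_{\dr}$ and takes $\colim_{j+k\geq r}$.

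For the boundary, I would not use all quasi-compact $Z$ but a cofinal system adapted to the tower. As in the proof of Theorem \ref{derham1}, choose an increasing covering $\{X_n\}$ of $X$ by $\check G$-stable quasi-compact opens (via \cite[Lemma 2.2]{LT}), and set $\check X_n:=\check\pi(\pi^{-1}(X_n))$. These $\check X_n$ form a $G$-stable quasi-compact increasing covering of $\check X$. Partial properness lets us take $\partial X\simeq\colim_n(X\setminus X_n)$ (Remark after the definitions), because the $X_n$ are cofinal in $\Phi(X)$. The complements $X\setminus X_n$ and $\check X\setminus\check X_n$ are again related by the torsor $\pi^{-1}(X\setminus X_n)=\check\pi^{-1}(\check X\setminus\check X_n)$, since the preimages of $X_n$ and $\check X_n$ in $T$ coincide. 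They are no longer quasi-compact, but Theorem \ref{derham1} (1) was proved for arbitrary smooth $X,\check X$ in the diagram, so it gives
$$\rg(\check G,F^r\rg_{\dr}(X\setminus X_n))\simeq\rg(G,F^r\rg_{\dr}(\check X\setminus\check X_n)),$$
naturally in $n$ by the naturality of the construction (restriction along open $G\times\check G$-stable inclusions in $T$). Passing to $\colim_n$, and using that $\rg(\check G,-)$ commutes with filtered colimits in $\sd(K_\Box)$ because $\check G$ is profinite and the bar complex has compact terms $K_\Box[\check G^n]$, gives the boundary isomorphism.

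The main obstacle is naturality. The quasi-isomorphism of Theorem \ref{derham1} must be functorial for the restriction maps $X\to X\setminus X_n$ and $X\setminus X_n\to X\setminus X_{n+1}$, compatibly with the diagram on the $\check X$ side, so that it induces a map of fiber sequences in the $\infty$-category rather than just objectwise isomorphisms. I would secure this by observing that every step in that proof is a natural transformation of functors on $G\times\check G$-stable opens $V\subset T$: Corollary \ref{quotG}, the Hochschild--Serre quasi-isomorphism of Lemma \ref{HS1} and the $\R\lim$ over exhaustions all are. The common target $\rg(G\times\check G\times\sg_K,\rg_{\proet}(V_C,F^r\sbb_{\pdr}))$ is visibly functorial in $V$. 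Alternatively, one could write the compactly supported statements directly through $\rg_{\proet,c}(-,F^r\sbb_{\pdr})$ using Proposition \ref{commutefilcolim}, and argue on $T$ as in the quasi-compact case.
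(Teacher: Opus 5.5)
Your proposal is correct and follows essentially the same route as the paper. Both write compactly supported cohomology as a mapping fiber, describe $\partial X$ and $\partial\check X$ through the compatible exhaustions $\{X_n\}$ and $\{\check X_n=\check\pi(\pi^{-1}(X_n))\}$, apply the non-compactly supported flip-flop to the complements, and commute $\rg(\check G,-)$ with $\colim_n$ using compactness of $\Z_{\Box}[\check G]$. The only difference is the order: the paper proves part (1) directly from \eqref{berkeley1} and says (2) is analogous via \eqref{berkeley2}, whereas you prove (2) first and recover (1) through Lemma \ref{GBcoh}.
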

\begin{proof} 
Let $r\in\Z$.  We compute  in  $\sd({\B^+_{\dr,\Box}})$ using the quasi-isomorphism \eqref{berkeley1}:
 \begin{align*}
 \rg(\check{G},F^r\R\Gamma_{\dr,c}(X_C/\B_{\dr})) & \simeq \rg(\check{G},[F^r\R\Gamma_{\dr}(X_C/\B_{\dr})\to F^r\R\Gamma_{\dr}(\partial X_C/\B_{\dr})])\\
  & \simeq [\rg(\check{G},F^r\R\Gamma_{\dr}(X_C/\B_{\dr}))\to \rg(\check{G},F^r\R\Gamma_{\dr}(\partial X_C/\B_{\dr})]\\
  & \simeq [\rg({G},F^r\R\Gamma_{\dr}(\check{X}_C/\B_{\dr}))\to \rg({G},F^r\R\Gamma_{\dr}(\partial \check{X}_C/\B_{\dr})]\\
   &\simeq \rg({G},F^r\R\Gamma_{\dr,c}(\check{X}_C/\B_{\dr})).
 \end{align*}
Here we used the fact that  in  $\sd({\B_{\dr,\Box}})$
\begin{align*}
\rg(\check{G},F^r\R\Gamma_{\dr}(\partial X_C/\B_{\dr})) & \simeq  \rg(\check{G},\colim_n F^r\R\Gamma_{\dr}((X\setminus X_{n})_C/\B_{\dr})\\
   & \simeq \colim_n\rg(\check{G},F^r\R\Gamma_{\dr}((X\setminus X_{n})_C/\B_{\dr}),
\end{align*}
 where the covering $\{X_n\}$ is chosen as in the proof of Theorem \ref{derham1}. The second quasi-isomorphism follows from the fact that $\Z_{\Box}[\check{G}]$, $\Z_{\Box}[{G}]$ are  internally compact objects in solid abelian groups. 
And similarly for the pair $G, \check{X}$ and the compatible covering $\{\check{X}_n\}$.  This proves claim (1) of our corollary. 
   
   The above argument can be easily adapted to yield quasi-isomorphism \eqref{berkeley2c} using quasi-isomorphism  \eqref{berkeley2}, hence proving claim  (2) of our corollary.  
  \end{proof}
 \subsubsection{Hodge and Lie algebra cohomologies}
    The following two results are simple corollaries of Theorem \ref{derham1} and Corollary \ref{bonn15}. 

\begin{corollary}{\rm ({\em Hodge  flip-flopping})}\label{IAS11} Assume that $X, \check{X}$ are Stein   of dimension $d$.  Let $j\in\N$.
  We have  natural quasi-isomorphisms in $\sd(K_{\Box})$
\begin{align*}\R\Gamma(\check{G},\Omega^j(X) )\simeq \R\Gamma(G, \Omega^j(\check{X})),\quad 
 \R\Gamma(\check{G}, H^d_c(X,\Omega^j) ) \simeq \R\Gamma(G, H^d_c(\check{X},\Omega^j)).
\end{align*}
\end{corollary}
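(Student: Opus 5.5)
The plan is to extract the graded pieces of the Hodge filtration from the filtered flip-flop isomorphisms \eqref{berkeley2} and \eqref{berkeley2c}, using naturality in $r$. For Stein $X$ of dimension $d$, the complex $F^r\R\Gamma_{\dr}(X)$ is represented by the naive truncation $(\Omega^r(X)\to\Omega^{r+1}(X)\to\cdots)[-r]$, because higher coherent cohomology vanishes. Hence there is a cofiber sequence
\[
F^{j+1}\R\Gamma_{\dr}(X)\to F^j\R\Gamma_{\dr}(X)\to \Omega^j(X)[-j],
\]
which is $\check G$-equivariant, and which lives in $\sd(K_\Box[\check G])$ by Lemma \ref{zurich1}. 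The same holds for $\check X$ with $G$.

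Dually, \eqref{drwinter} shows that $F^r\R\Gamma_{\dr,c}(X)$ is represented by $(H^d_c(X,\Omega^r)\to H^d_c(X,\Omega^{r+1})\to\cdots)[-d-r]$. Its graded piece is therefore $H^d_c(X,\Omega^j)[-d-j]$.

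Applying $\R\Gamma(\check G,-)$, which is exact, we get $\mathrm{gr}^j$ of $\R\Gamma(\check G,F^\bullet\R\Gamma_{\dr}(X))$ equal to $\R\Gamma(\check G,\Omega^j(X))[-j]$, and similarly in the compactly supported case.

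The key step is to check that the quasi-isomorphisms \eqref{berkeley2}, \eqref{berkeley2c} are compatible with the transition maps $F^{j+1}\to F^j$, i.e. that they form an equivalence of filtered objects. Then they induce equivalences on cofibers, which gives the claim after shifting by $j$ (respectively $d+j$).

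Compatibility follows by going back through the construction in the proof of Theorem \ref{derham1}. In the quasi-compact case, the isomorphism is the composite
\[
\rg(\check G,F^r\R\Gamma_{\dr}(X))\simeq \rg(G\times\check G\times\sg_K,\R\Gamma_{\proet}(T_C,F^r\sbb_{\pdr})).
\]
Every ingredient here is natural in the filtered sheaf $F^r\sbb_{\pdr}\subset F^{r-1}\sbb_{\pdr}$:
\begin{itemize}
\item Corollary \ref{pGuid}, via Bosco's comparison, which is a filtered statement;
\item Lemma \ref{killing1}, a colimit over $j+k\geq r$, visibly functorial in $r$;
\item Hochschild--Serre (Lemma \ref{HS1}), functorial in the sheaf.
\end{itemize}
The passage to general $X$ through $\R\lim_n$ over the covering $\{X_n\}$ also preserves this functoriality, since the $X_n$ may be taken affinoid and the lim commutes with cofibers. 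For compact support, the proof of Corollary \ref{bonn15} forms fibers $[F^r(X)\to F^r(\partial X)]$ and colimits, all compatible with varying $r$.

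The main obstacle is therefore to make sure these identifications are natural in $r$ at the $\infty$-categorical level, so that the isomorphism is one of filtered objects rather than a family of isomorphisms for each $r$. I would handle this by working throughout in $\sff\sd(K_\Box)$, treating $F^\bullet\sbb_{\pdr}$ as a filtered sheaf, so that each step is a functor of filtered objects. Taking $\mathrm{gr}^j$ then gives the two stated quasi-isomorphisms directly.
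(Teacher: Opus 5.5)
Your proposal is correct and is essentially the paper's proof. The paper takes $\gr^j_F$ of the natural filtered quasi-isomorphisms \eqref{berkeley2} and \eqref{berkeley2c}, using $\gr^j_F\R\Gamma_{\dr}(Y)\simeq\Omega^j(Y)[-j]$ and \eqref{drwinter}; your check that the construction is natural in $r$ spells out what the paper's two-line argument leaves implicit. (Minor point: the $\check G$-equivariant structure on $F^r\R\Gamma_{\dr}(X)$ comes from the continuous action on the Fr\'echet spaces $\Omega^i(X)$. It does not come from Lemma \ref{zurich1}, which concerns smoothness of de Rham cohomology of affinoids and does not apply to the non-smooth Hodge pieces.)
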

\begin{proof} Since $\gr^j_F\R\Gamma_{\dr}(Y)\simeq \Omega^j(Y)[-j]$, for $Y=X,\check{X}$, the first quasi-isomorphism follows from the quasi-isomorphism \eqref{berkeley2}. Since $\gr^j_F\R\Gamma_{\dr,c}(Y)\simeq H^d_c(Y,\Omega^j)[-d-j]$ (see \eqref{drwinter}), the second 
quasi-isomorphism follows from the quasi-isomorphism \eqref{berkeley2c}. 
\end{proof}
\begin{remark}
 One might be tempted to prove the above corollary using the easier period sheaf $\hat{\mathcal{O}}$ and the isomorphism 
 $R^i \nu_* \hat{\mathcal{O}}\simeq \Omega^i(-i)$, where $\nu: X_{C, \proet}\to X_{C,\eet}$ is the natural projection. Unfortunately, the spectral sequences that arise when passing to cohomology are rather complicated and this approach does not seem to work. 
\end{remark}

 We set in $\sd(K_{\Box})$
$$
\R\Gamma_{\dr,c}(X_{\infty})  :=\colim_{H}\R\Gamma_{\dr,c}(X_{H}),\quad 
\R\Gamma_{\dr,c}(\check{X}_{\infty}) :=\colim_{\check{H}}\R\Gamma_{\dr,c}(X_{\check{H}}),
$$
where $H, \check{H}$ are normal compact open subgroups of $G, \check{G}$, respectively, and $X_H, \check{X}_H$ are the respective quotients of $T$ (representable in rigid analytic spaces). 
 Let us assume that $X$ and $\check{X}$
    are partially proper. Then so are all $X_H, \check{X}_H$ (as they are finite \'etale over $X$ and $\check{X}$ respectively). By Proposition \ref{derham-prop1} we have $\R\Gamma_{\dr,c}(X_{H})\in {\rm Rep}^{\rm sm}_{K_{\Box}}(\check{G})$. The natural action of the finite group $G/H$ on $X_H$ over $X$ then shows that 
    $\R\Gamma_{\dr,c}(X_{H})\in {\rm Rep}^{\rm sm}_{K_{\Box}}(G\times\check{G})$ and so $\R\Gamma_{\dr,c}(X_{\infty}) \in {\rm Rep}^{\rm sm}_{K_{\Box}}(G\times\check{G})$. Similarly, $\R\Gamma_{\dr,c}(\check{X}_{\infty})\in   {\rm Rep}^{\rm sm}_{K_{\Box}}(G\times\check{G})$.
    
\begin{corollary}{\rm(Lie algebra  flip-flopping)} \label{dom1} Assume that $X, \check{X}$ are partially proper  and that  $G$, $\check{G}$ are compact $p$-adic Lie groups over $\Q_p$. Let $\mathfrak{g}, \check{\mathfrak{g}}$  be their respective Lie algebras.  
There are $G\times\check{G}$-equivariant natural quasi-isomorphisms  in $\sd(K_{\Box})$
\begin{align}\label{paris12}
\R\Gamma(\check{\mathfrak{g}},\R\Gamma_{\dr,c}(X_{\infty})) & \simeq \R\Gamma(\mathfrak{g},\R\Gamma_{\dr,c}(\check{X}_{\infty})),\\
\R\Gamma(\check{\mathfrak{g}}, K)\otimes_K^{\LL_{\Box}}\R\Gamma_{\dr,c}(X_{\infty}) & \simeq \R\Gamma(\mathfrak{g},K)\otimes^{\LL_{\Box}}_K\R\Gamma_{\dr,c}(\check{X}_{\infty}).\notag
\end{align}
\end{corollary}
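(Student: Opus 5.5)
Apply Corollary \ref{bonn15}(2) with $r=0$ (or with $F^r$ for $r$ small, where $F^r\R\Gamma_{\dr,c}=\R\Gamma_{\dr,c}$) to each pair of finite levels, then pass to a colimit over the levels, and finally convert stable group cohomology into Lie algebra cohomology using Proposition \ref{stcoh}.

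\textbf{Step 1: finite levels.} For normal compact open subgroups $H\subset G$ and $\check H\subset\check G$, consider the diagram with $T$ fixed, $X_H=[T/H]$ and $\check X_{\check H}=[T/\check H]$. The torsor $T\to X_H$ has group $H$, and $\check H$ acts on $X_H$; symmetrically for $\check X_{\check H}$. Both spaces are partially proper, being finite étale over $X$ and $\check X$. Corollary \ref{bonn15}(2), applied to the pair $(H,\check H)$, gives natural quasi-isomorphisms
$$\R\Gamma(\check H,\R\Gamma_{\dr,c}(X_H))\simeq \R\Gamma(H,\R\Gamma_{\dr,c}(\check X_{\check H})).$$
These are compatible with shrinking $H,\check H$ because the construction goes through $\R\Gamma(H\times\check H\times\sg_K,\R\Gamma_{\proet,c}(T_C,\sbb_{\pdr}))$. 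Here shrinking means using restriction maps on group cohomology together with the pullback maps on de Rham cohomology, which correspond to the trace/inflation compatibilities of Hochschild--Serre (Lemma \ref{HS1}). They are also equivariant for the residual actions of $G/H\times\check G/\check H$, by the same naturality.

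\textbf{Step 2: colimit.} Take $\colim_{H,\check H}$ on both sides. Group cohomology of profinite groups commutes with filtered colimits, since $\Z_\Box[\check H^n]$ is compact. Hence the left side becomes $\colim_{\check H}\R\Gamma(\check H,\R\Gamma_{\dr,c}(X_\infty))$. Proposition \ref{derham-prop1} shows that $\R\Gamma_{\dr,c}(X_\infty)\in{\rm Rep}^{\rm sm}_{K_\Box}(G\times\check G)$, so Proposition \ref{stcoh} identifies this with
$$\R\Gamma(\check{\mathfrak g},K)\otimes^{\LL_\Box}_K\R\Gamma_{\dr,c}(X_\infty).$$
The same holds on the other side, which gives the second quasi-isomorphism in \eqref{paris12}.

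For the first quasi-isomorphism, I would note that for a smooth representation $\pi$, Lazard's comparison together with smoothness gives
$$\R\Gamma(\check{\mathfrak g},\pi)\simeq\colim_{\check H}\R\Gamma(\check H,\pi),$$
since $\check{\mathfrak g}$ acts trivially on $\pi^{\check H}$. This makes $\R\Gamma(\check{\mathfrak g},\R\Gamma_{\dr,c}(X_\infty))$ equal to the same colimit. Alternatively, it follows from the second isomorphism via $\R\Gamma(\check{\mathfrak g},\pi)\simeq\R\Gamma(\check{\mathfrak g},K)\otimes\pi$ for smooth $\pi$.

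\textbf{Main obstacle.} The delicate point is the naturality needed to form the colimit: the finite-level isomorphisms must be strictly compatible in $H$ and $\check H$, and $G\times\check G$-equivariant, in the $\infty$-categorical sense. I would handle this by working throughout with the single object $\R\Gamma_{\proet,c}(T_C,\sbb_{\pdr})$ with its $G\times\check G\times\sg_K$-action. Both sides are then the functor $(H,\check H)\mapsto\R\Gamma(H\times\check H\times\sg_K,-)$ applied to this one object, so functoriality in the pair and the $G\times\check G$-action by conjugation come for free.
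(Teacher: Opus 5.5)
Your proposal is correct and is essentially the paper's proof: apply \eqref{berkeley2c} to each pair of levels $(H,\check H)$, pass to the colimit over levels (using compactness of $\Z_{\Box}[\check H^n]$), and use smoothness (Proposition \ref{derham-prop1}) together with Proposition \ref{stcoh} to replace stable group cohomology by Lie algebra cohomology. The differences are cosmetic: the paper proves the first isomorphism this way and deduces the second from it because the Lie algebras act trivially on smooth complexes, whereas you go in the opposite order; also, the compatibility you need when shrinking levels is only that pullback along $X_{H'}\to X_H$ matches restriction from $H$ to $H'$ under Hochschild--Serre, and no trace map is involved.
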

\begin{remark}
The reader will notice that we do not claim that the quasi-isomorphism in the proposition is filtered.
\end{remark}
\begin{proof} The second quasi-isomorphism follows from the first one because the complexes $\R\Gamma_{\dr,c}(X_{\infty})$ and $\R\Gamma_{\dr,c}(\check{X}_{\infty})$ are in ${\rm Rep}^{\rm sm}_{K_{\Box}}(G\times\check{G})$, hence, 
 by Proposition \ref{RR1}, can be assumed to be represented by complexes of smooth representations (thus the respective actions of 
$\mathfrak{g}$ and $\check{\mathfrak{g}}$ are trivial).

  For the first quasi-isomorphism,
 by Proposition \ref{stcoh},  we have natural  (hence $G\times\check{G}$-equvariant) quasi-isomorphisms in $\sd(K_{\Box})$
$$\R\Gamma(\check{\mathfrak{g}},\R\Gamma_{\dr,c}(X_{\infty})) \simeq \colim_{\check{H}}R\Gamma(\check{H},\R\Gamma_{\dr,c}(X_{\infty}))\simeq \colim_{\check{H}}\colim_H R\Gamma(\check{H},\R\Gamma_{\dr,c}(X_{H}))$$
and similarly
$$\R\Gamma({\mathfrak{g}},\R\Gamma_{\dr,c}(\check{X}_{\infty}))  \simeq \colim_{{H}}\colim_{\check{H}}\R\Gamma({H},\R\Gamma_{\dr,c}(X_{\check{H}})).$$
Since, by \eqref{berkeley2c}, we have  natural quasi-isomorphisms $\R\Gamma(\check{H},\R\Gamma_{\dr,c}(X_{H}))\simeq \R\Gamma({H},\R\Gamma_{\dr,c}(X_{\check{H}}))$, the result follows. 
 \end{proof}
\subsection{Hyodo-Kato  flip-flopping} Assume that $k$ is algebraically closed.  
Our second main theorem is the following\footnote{See \cite{AGN} for the definition of compactly supported Hyodo-Kato cohomology.}: 
\begin{theorem}{\rm ({\em Hyodo-Kato flip-flopping})}  \label{HKFF} Let $X$, $\check{X}$ be  partially proper. 
 We have   natural  $\sg_K$-equivariant quasi-isomorphisms in $\sd_{\phi,N}({\breve{C}_{\Box}})$
\begin{align}\label{berkeley1HK}\rg(\check{G},\R\Gamma_{\hk}(X_C))  & \simeq \rg({G},\R\Gamma_{\hk}(\check{X}_C)),\\
\rg(\check{G},\R\Gamma_{\hk,c}(X_C))  & \simeq \rg({G},\R\Gamma_{\hk,c}(\check{X}_C)).\label{berkeley4HK}
\end{align}
\end{theorem}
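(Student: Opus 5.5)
The plan mirrors the proof of Theorem \ref{derham1}, with the period sheaf $\sbb$ in place of $F^r\sbb_{\pdr}$. The untwisting step of Lemma \ref{killing1} is replaced by Corollary \ref{killing1HK}.

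\textbf{Quasi-compact case.} First suppose $X$ and $\check{X}$ are quasi-compact; we may also reduce to pieces with finite rank de Rham cohomology, as explained below. Corollary \ref{killing1HK} expresses $\R\Gamma_{\hk}(X_C)$ as
$$([\R\Gamma_{\proeet}(X_C,\sbb)[1/t]]^{\phi=1}\otimes^{\LL_{\Box}}_{\B_e}\B_{\rm pFF})^{\R\sg_K-{\rm sm}}.$$
By Proposition \ref{action1}(2), this identification lives in $\sd(\B_{\Box}[\check{G}])$. Apply $\rg(\check{G},-)$ to it. Group cohomology of the profinite group $\check{G}$ commutes with the following operations:
\begin{itemize}
\item the filtered colimits defining $(-)^{\R\sg_K-{\rm sm}}$ and $[1/t]$;
\item the fiber of $\phi-1$;
\item the base change $-\otimes^{\LL_{\Box}}_{\B_e}\B_{\rm pFF}$.
\end{itemize}
The last point needs the analogue of Proposition \ref{magic} over $\B_e$, or alternatively rewriting via \eqref{dec1} as a tensor over $\breve{C}$ with $\B_{\rm pFF}$, which is nuclear. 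Since $\Z_{\Box}[\check{G}]$ is compact, the colimits pass through as well. This gives
$$\rg(\check{G},\R\Gamma_{\hk}(X_C))\simeq \Big(\big[\rg(\check{G},\R\Gamma_{\proeet}(X_C,\sbb))[1/t]\big]^{\phi=1}\otimes^{\LL_{\Box}}_{\B_e}\B_{\rm pFF}\Big)^{\R\sg_K-{\rm sm}}.$$
Next, Lemma \ref{HS1} for $\sbb^+$ (and then $[1/t]$) gives
$$\rg(\check{G},\R\Gamma_{\proeet}(X_C,\sbb))\simeq \rg(G\times\check{G},\R\Gamma_{\proeet}(T_C,\sbb)).$$
This expression is symmetric in $G$ and $\check{G}$. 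Running the same argument for $\check{X}$ gives \eqref{berkeley1HK} in the quasi-compact case. The structures match as follows:
\begin{itemize}
\item $\phi$-equivariance is automatic;
\item $\sg_K$-equivariance comes from the residual action in the smooth-vectors construction;
\item $N$ is induced by $N=-d/dU$ on $\B_{\rm pFF}$.
\end{itemize}

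\textbf{General case for \eqref{berkeley1HK}.} Take the compatible coverings $\{X_n\}$ and $\{\check{X}_n=\check{\pi}(\pi^{-1}(X_n))\}$ as in Theorem \ref{derham1}, and pass to $\R\lim_n$. One caveat: Corollary \ref{killing1HK} needs finite rank Hyodo-Kato cohomology. I would therefore refine the covering by quasi-compact pieces of the required type, for instance relatively compact opens inside Stein pieces, or affinoid dagger neighbourhoods. Alternatively I would check that Corollary \ref{derived-FFB} extends by colimits to bounded complexes of solid $(\phi,N,\sg_K)$-modules that are filtered colimits of finite rank ones, which is the case for dagger affinoids.

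\textbf{Compact support, \eqref{berkeley4HK}.} Use the fiber $[\R\Gamma_{\hk}(X_C)\to\R\Gamma_{\hk}(\partial X_C)]$. Write $\partial$ as $\colim_n$ over $X\setminus X_n$, and commute $\rg(\check{G},-)$ with this colimit using compactness of $\Z_{\Box}[\check{G}]$, exactly as in Corollary \ref{bonn15}. Naturality of the quasi-compact isomorphisms in the open subspace makes the fiber sequences match.

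\textbf{Main obstacle.} I expect the hardest part to be monodromy compatibility. The comparison \eqref{dark3} only sees the $N=0$ part, so $N$ on both sides must be recovered through the $\B_{\rm pFF}$-untwisting. The point to check is that the identification in Corollary \ref{killing1HK} intertwines the monodromy on $\R\Gamma_{\hk}$ with the $N$ on $\B_{\rm pFF}$, functorially enough to survive $\R\lim$ and colimits. The finiteness hypothesis in the non-quasi-compact case is the secondary technical issue.
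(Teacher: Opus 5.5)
\textbf{Verdict: there is a genuine gap, in the reduction to the finite rank case.} The rest of your argument is the paper's, with the steps in the opposite order. That is: descent of $\rg_{\proet}(-,\sbb^+)$ to $T_C$ by Lemma~\ref{HS1}, then the comparison \eqref{dark3}, then $\phi=1$, $-\otimes^{\LL_\Box}_{\B_e}\B_{\rm pFF}$ and $\R\sg_K$-smooth vectors via Corollary~\ref{killing1HK}. The compactly supported case via the fibre argument of Corollary~\ref{bonn15} is also what the paper does. Monodromy is not a separate obstacle: Corollary~\ref{killing1HK} is already a quasi-isomorphism in $\sd_{\phi,N}$, with $N$ on the right coming from $\B_{\rm pFF}$.

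The hypothesis that matters is finite rank, not quasi-compactness. Both \eqref{dark3} and Lemma~\ref{HS1} hold for any $X\in{\rm Sm}_K$. Corollary~\ref{killing1HK}, and the commutation of $\rg(\check G,-)$ with $-\otimes^{\LL_\Box}_{\B_e}\B_{\rm pFF}$, need finite rank.

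Your proposed refinement by quasi-compact pieces of finite rank cannot be carried out. Quasi-compact smooth spaces, such as affinoids of positive dimension, have infinite-dimensional, non-Hausdorff de Rham (hence Hyodo-Kato) cohomology (cf.\ Remark~\ref{drcond}). So they are neither of finite rank nor filtered colimits of finite rank modules, and your colimit alternative does not apply to them. Passing to dagger neighbourhoods would require redoing the comparison and descent statements for dagger spaces, which you do not do. You also never address the dual side: finiteness for a piece of $X$ says nothing a priori about its image $\check\pi(\pi^{-1}(-))$ in $\check X$.

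The paper fixes this by using non-quasi-compact pieces, as follows.
\begin{enumerate}
\item Start from $\check G$-stable quasi-compact opens $X_{n-1}\Subset X_n$. Take a partially proper $U$ with $X_{n-1}\subset U\subset X_n$ and $X_n\setminus U$ quasi-compact, as in \cite[Lemma 2.28]{AGN}. By \cite[Th. 3.6]{GK}, $U$ has finite rank de Rham cohomology.
\item An open subgroup of $\check G$ stabilizes the quasi-compact complement, hence stabilizes $U$. So $X^0_n:=\bigcup_{g\in\check G}gU$ is a finite union of opens of the same type, and it is $\check G$-stable.
\item By Huber's lemma \cite[Lemma 1.10.17]{Hub}, $\check X^0_n:=\check\pi(\pi^{-1}(X^0_n))$ has the same properties, hence also has finite rank.
\end{enumerate}
Applying the finite rank case to $X^0_n$ and $\check X^0_n$, none of which is quasi-compact, and then taking $\R\lim_n$ gives \eqref{berkeley1HK}.
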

\begin{proof}   We will first prove a flip-flopping quasi-isomorphism between  $\sbb^+$-cohomologies (see \eqref{kicia1HK} below).
Since $\rg_{\proet}(-,\sbb^+)$ satisfies Galois descent (see Lemma \ref{HS1}),  we have natural  quasi-isomorphisms in $\sd_{\phi}({\B^+_{\Box}})$ compatible with  
the  $\sg_K$-action: 
\begin{align*}
 \rg({G},\rg_{\proet}(T_C,\sbb^+))  &  \simeq \rg_{\proet}(X_C,\sbb^+),\\
 \rg(\check{G},\rg_{\proet}(T_C,\sbb^+))   & \simeq \rg_{\proet}(\check{X}_C,\sbb^+),\\
  \rg({G},\rg_{\proet}(\check{X}_C,\sbb^+)) & \simeq \rg(G\times\check{G},\rg_{\proet}(T_C,\sbb^+))   \simeq \rg(\check{G},\rg_{\proet}(X_C,\sbb^+)).
 \end{align*}
In particular, we have a natural  quasi-isomorphism in $\sd_{\phi}({\B^+_{\Box}})$
\begin{equation}
\label{kicia1HK}
\rg(\check{G},\rg_{\proet}(X_C,\sbb^+))\simeq \rg({G},\rg_{\proet}(\check{X}_C,\sbb^+)),
\end{equation}
which is  compatible with the  $\sg_K$-action.  It yields a natural  quasi-isomorphism in $\sd_{\phi}({\B_{\Box}})$
\begin{equation}
\label{kicia11HK}
\rg(\check{G},\rg_{\proet}(X_C,\sbb^+)\otimes^{\LL_{\Box}}_{\B^+}\B)\simeq \rg({G},\rg_{\proet}(\check{X}_C,\sbb^+)\otimes^{\LL_{\Box}}_{\B^+}\B),
\end{equation}
which is  compatible with the  $\sg_K$-action.

     Now we pass to the  Hyodo-Kato flip-flopping. Assume first that the de Rham cohomology of $X, \check{X}$ is of finite rank. Set 
 $Z=X,\check{X}$.  
Combining quasi-isomorphism from Proposition \ref{Guid2}  with the quasi-isomorphism \eqref{kicia11HK}, we get a natural  $\sg_K$-equivariant quasi-isomorphism in $\sd_{\phi}(\B_{\Box})$ 
$$
\rg(\check{G},[\rg_{\hk}(X_C)\otimes^{\LL_{\Box}}_{\breve{C}}\B_{\rm FF}]^{N=0})\simeq \rg(G,[\rg_{\hk}(\check{X}_C)\otimes^{\LL_{\Box}}_{\breve{C}}\B_{\rm FF}]^{N=0})
$$
By taking $\phi$-invariants, this yields a natural  $\sg_K$-equivariant quasi-isomorphism in $\sd(\B_{e,\Box})$
$$
\rg(\check{G},[\rg_{\hk}(X_C)\otimes^{\LL_{\Box}}_{\breve{C}}\B_{\rm FF}]^{N=0,\phi=1})\simeq \rg(G,[\rg_{\hk}(\check{X}_C)\otimes^{\LL_{\Box}}_{\breve{C}}\B_{\rm FF}]^{N=0,\phi=1}).
$$
By tensoring it  with $\B_{\rm pFF}$ over $\B_e$, we get a natural  $\sg_K$-equivariant quasi-isomorphism in $\sd_{\phi,N}(\B_{{\rm pFF},\Box})$ 
\begin{equation}\label{rain21}
\rg(\check{G},[\rg_{\hk}(X_C)\otimes^{\LL_{\Box}}_{\breve{C}}\B_{\rm FF}]^{N=0,\phi=1}\otimes^{\LL_{\Box}}_{\B_e}\B_{\rm pFF})\simeq \rg(G,[\rg_{\hk}(\check{X}_C)\otimes^{\LL_{\Box}}_{\breve{C}}\B_{\rm FF}]^{N=0,\phi=1}\otimes^{\LL_{\Box}}_{\B_e}\B_{\rm pFF}),
\end{equation}
We used here the following computation: 
\begin{lemma} We have a canonical quasi-isomorphism in $\sd_{\phi,N}(\B_{\mathrm{pFF}, \Box})$ 
$$\rg(\check{G},[\rg_{\hk}(X_C)\otimes^{\LL_{\Box}}_{\breve{C}}\B_{\rm FF}]^{N=0,\phi=1})\otimes^{\LL_{\Box}}_{\B_e}\B_{\rm pFF}\simeq \rg(\check{G},[\rg_{\hk}(X_C)\otimes^{\LL_{\Box}}_{\breve{C}}\B_{\rm FF}]^{N=0,\phi=1}\otimes^{\LL_{\Box}}_{\B_e}\B_{\rm pFF}).$$
Similarly for ${G}$ and $\check{X}$. 
\end{lemma}
\begin{proof} Since we have assumed the Hyodo-Kato cohomology has finite rank over $\breve{C}$, it suffices to show that,  for a finite rank  $\check{G}$ representation $V$ over $\breve{C}$, which is a $(\phi,N)$-module, we have in $\sd_{\phi,N}(\B_{\mathrm{pFF}, \Box})$ 
$$\rg(\check{G},(V\otimes^{{\Box}}_{\breve{C}}\B_{\rm FF})^{N=0,\phi=1})\otimes^{\LL_{\Box}}_{\B_e}\B_{\rm pFF}\simeq \rg(\check{G},(V\otimes^{{\Box}}_{\breve{C}}\B_{\rm FF})^{N=0,\phi=1}\otimes^{\LL_{\Box}}_{\B_e}\B_{\rm pFF}).$$
 Or that, for a finite free $\B_e$-module $V_e$ with a $\B_e$-linear continuous action of $\check{G}$, we have a quasi-isomorphism in $\sd_{\phi,N}(\B_{\mathrm{pFF}, \Box})$ 
$$\rg(\check{G},V_e)\otimes^{\LL_{\Box}}_{\B_e}\B_{\rm pFF}\simeq \rg(\check{G},V_e\otimes^{\LL_{\Box}}_{\B_e}\B_{\rm pFF}).$$
By taking the bar resolution, it suffices to show that, for a profinite set $S$, the canonical map
$$
\underline{\Hom}_{\breve{C}}(\breve{C}_{\Box}[S],V_e)\otimes^{\LL_{\Box}}_{\B_e}\B_{\rm pFF}\to  \underline{\Hom}_{\breve{C}}(\breve{C}_{\Box}[S],V_e\otimes^{{\Box}}_{\B_e}\B_{\rm pFF})
$$
is an isomorphism in $\sd_{\phi,N}(\B_{\mathrm{pFF}, \Box})$.
But this is clear since both sides are isomorphic to (see the proof of Lemma \ref{magic})
$$
\underline{\Hom}_{\breve{C}}(\breve{C}_{\Box}[S],V_{\breve{C}})\otimes^{{\Box}}_{\breve{C}}\B_{\rm pFF},
$$
for a $\breve{C}$-vector space such that $V_e\simeq V_{\breve{C}}\otimes^{\Box}_{\breve{C}}\B_e$. 
\end{proof}

   The quasi-isomorphism \eqref{rain21} yields 
a natural  $\sg_K$-equivariant quasi-isomorphism in $\sd_{\phi,N}(\breve{C}_{\Box})$
$$
\rg(\check{G},([\rg_{\hk}(X_C)\otimes^{\LL_{\Box}}_{\breve{C}}\B_{\rm FF}]^{N=0,\phi=1}\otimes^{\LL_{\Box}}_{\B_e}\B_{\rm pFF})^{\R\sg_K-{\rm sm}})
\simeq \rg(G,([\rg_{\hk}(\check{X}_C)\otimes^{\LL_{\Box}}_{\breve{C}}\B_{\rm FF}]^{N=0,\phi=1}\otimes^{\LL_{\Box}}_{\B_e}\B_{\rm pFF})^{\R\sg_K-{\rm sm}}).
$$
Now, we can combine it with the $\sg_K$-equivariant quasi-isomorphism in $\sd_{\phi,N}(\breve{C}_{\Box})$ (see Corollary \ref{killing1HK})
$$
\rg_{\hk}(Z_C)\stackrel{\sim}{\to} ([\rg_{\hk}(Z_C)\otimes^{\LL_{\Box}}_{\breve{C}}\B_{\rm FF}]^{N=0,\phi=1}\otimes^{\LL_{\Box}}_{\B_e}\B_{\rm pFF})^{\R\sg_K-{\rm sm}}, \quad Z=X, \check{X},
$$
to get  \eqref{berkeley1HK} in our theorem in the case of finite rank de Rham cohomology. 

     To treat the general case, let $\{X_n\}$, $n\in\N$, be a strictly increasing covering of  $X$ by $\check{G}$-stable quasi-compact opens.  We claim that we can find a covering $\{X^0_n\}$ of $X$ such that:  $X_{n-1}\subset X^0_n\subset X_n$, $X^0_n$ is partially proper,  $H^i_{\dr,c}(X^0_n)$ is of finite rank over $K$, and  the covering $\{X^0_n\}$ is  $\check{G}$-equivariant. Indeed, it suffices to find such $X^0_n$ for a pair  of quasi-compact opens $X_{n-1}\Subset X_n$. Arguing as in the proof of \cite[Lemma 2.28]{AGN}, we can find $U$ such that $X_{n-1}\subset U\subset X_n,$ $U$ is partially proper, and the complement $W:=X_n\setminus U$ is quasi-compact. By \cite[Th. 3.6]{GK}  this implies that de Rham cohomology of $U$ has finite rank. We do not know however that $U$ is $\check{G}$-stable. What we do know though  is that there exists 
     an open subgroup $H\subset \check{G}$ that stabilizes $U$: by continuity of the action of $\check{G}$ on $X$  this is the case for $W$, which is quasi-compact, and if $H$ stabilizes $W$ it also stabilizes  $U$ because $U$
     is partially proper. It follows that we can set $X^0_n:=\cup_{g\in \check{G}}gU$ to get the partially proper open that we wanted. 
     
     Let $\{\check{X}^0_n\}$,
     $n\in\N$, be the corresponding covering of $\check{X}$, i.e., $\check{X}^0_n:=\check{\pi}(\pi^{-1}(X^0_n))$. This is an increasing covering of $\check{X}$ by ${G}$-stable partially proper opens $\check{X}^0_n$ with analogous properties to $X^0_n$ (we use here    \cite[Lemma 1.1017 (vii)]{Hub}) . By construction,  the coverings $\{X^0_n\}$ and 
     $\{\check{X}^0_n\}$ are both  $G$ and $\check{G}$ equivariant. 
     
     \begin{remark}
     Let $\pi_Y: Y\to X$ be a finite \'etale map from the tower. Then $\pi^{-1}_Y(X^0_n)$ has analogous properties to $X^0_n$. Similarly, for the other tower. This follows from 
     \cite[Lemma 1.1017 (iv)]{Hub}. 
     \end{remark}

     We compute  in $\sd_{\phi,N,\sg_K}(\breve{C}_{\Box})$:  
  \begin{align*}
   \rg(\check{G},\rg_{\hk}(X_C))&  \stackrel{\sim}{\to}\rg(\check{G},\R\lim_n\rg_{\hk}(X^0_{n,C})) \stackrel{\sim}{\to}
   \R\lim_n\rg(\check{G},\rg_{\hk}(X^0_{n,C}))\\
&  \simeq   \R\lim_n\rg({G},\rg_{\hk}(\check{X}^0_{n,C}))\stackrel{\sim}{\leftarrow} \rg({G},\R\lim_n\rg_{\dr}(\check{X}^0_{n,C})) \stackrel{\sim}{\leftarrow} \rg({G},\rg_{\dr}(\check{X}_C)).
  \end{align*}
This finishes the proof of \eqref{berkeley1HK} in our theorem.

  For compactly supported cohomology we argue as in the proof of Corollary \ref{bonn15} using \eqref{berkeley1HK} that we have already shown.
   \end{proof}
 \subsubsection{Lie algebra flip-flopping} 
  We set
$$
\R\Gamma_{\hk,c}(X_{\infty,C})  \simeq\colim_{H}\R\Gamma_{\hk,c}(X_{H,C}),\quad 
\R\Gamma_{\hk,c}(\check{X}_{\infty,C})  \simeq \colim_{\check{H}}\R\Gamma_{\hk,c}(X_{\check{H},C}),
$$
where $H, \check{H}$ are compact open subgroups of $G, \check{G}$, respectively, and $X_H, \check{X}_H$ are the respective quotients of $T$.  
 By Proposition \ref{derham-prop1}, $\R\Gamma_{\hk,c}(X_{\infty,C})\in {\rm Rep}^{\rm sm}_{\breve{C}_{\Box}}(G\times\check{G})$. Similarly for $\R\Gamma_{\hk,c}(\check{X}_{\infty,C})$. 

 \begin{corollary}{\rm(Lie algebra  flip-flopping)}  \label{paris30} Assume that  $G$, $\check{G}$ are compact $p$-adic Lie groups over $\Q_p$. Let $\mathfrak{g}, \check{\mathfrak{g}}$  be their respective Lie algebras.  
There are $G\times\check{G}$-equivariant natural quasi-isomorphisms in $\sd_{\phi,N,\sg_K}(\breve{C}_{\Box})$
\begin{align}\label{paris12HK}
\R\Gamma(\check{\mathfrak{g}},\R\Gamma_{\hk,c}(X_{\infty,C}))& \simeq \R\Gamma(\mathfrak{g},\R\Gamma_{\hk,c}(\check{X}_{\infty,C})),\\
\R\Gamma(\check{\mathfrak{g}}, \breve{C})\otimes_{\breve{C}}^{\LL_{\Box}}\R\Gamma_{\hk,c}(X_{\infty,C}) & \simeq 
\R\Gamma(\mathfrak{g},\breve{C})\otimes^{\LL_{\Box}}_{\breve{C}}\R\Gamma_{\hk,c}(\check{X}_{\infty,C}).\notag
\end{align}
%\item Assume that $\mathfrak{g}\simeq\mathfrak{\check{g}}$. Then, for $i\geq 0$, there is a $G\times\check{G}$-equivariant isomorphism\wiesia{have no proof of that so far}
%\begin{equation}\label{paris11}
%H^i_{\dr,c}(X_{\infty})  \simeq H^i_{\dr,c}(\check{X}_{\infty}).
%\end{equation}
%\end{enumerate}
\end{corollary}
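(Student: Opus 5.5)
The argument follows the proof of Corollary \ref{dom1} almost verbatim, with the de Rham input \eqref{berkeley2c} replaced by the compactly supported Hyodo-Kato flip-flopping \eqref{berkeley4HK} of Theorem \ref{HKFF}. The only extra care is to keep track of the $(\phi,N,\sg_K)$-structures. Throughout we use that $X_H$ and $\check{X}_{\check{H}}$ are partially proper (being finite \'etale over $X$, $\check{X}$). We also use that the diagram with $T$, $X_H$, $\check{X}_{\check{H}}$ and the groups $H$, $\check{H}$ is again of the type considered in Section \ref{five}. Hence Theorem \ref{HKFF} applies and gives natural $\sg_K$-equivariant quasi-isomorphisms in $\sd_{\phi,N}(\breve{C}_{\Box})$:
$$\rg(\check{H},\R\Gamma_{\hk,c}(X_{H,C}))\simeq \rg(H,\R\Gamma_{\hk,c}(\check{X}_{\check{H},C})).$$
These are natural in the pair $(H,\check{H})$, so they are compatible with the transition maps when $H,\check{H}$ shrink and with the conjugation action of $G\times\check{G}$.

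First step. By Proposition \ref{derham-prop1}, $\R\Gamma_{\hk,c}(X_{\infty,C})$ lies in ${\rm Rep}^{\rm sm}_{\breve{C}_{\Box}}(G\times\check{G})$. Proposition \ref{stcoh}, applied with $\breve{C}$ in place of $K$, then gives
$$\R\Gamma(\check{\mathfrak{g}},\R\Gamma_{\hk,c}(X_{\infty,C}))\simeq\colim_{\check{H}}\rg(\check{H},\R\Gamma_{\hk,c}(X_{\infty,C})).$$
Its proof is purely formal: d\'evissage, smoothness, and the commutation of $\rg(\check{H},-)$ and $\otimes^{\LL_{\Box}}$ with filtered colimits. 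All maps involved are natural, so they commute with $\phi$, $N$ and the action of $\sg_K$. This is where one has to be careful, because the monodromy operator must be carried along. Since $\Z_{\Box}[\check{H}]$ is compact, $\rg(\check{H},-)$ commutes with the filtered colimit over $H$, and we obtain
$$\R\Gamma(\check{\mathfrak{g}},\R\Gamma_{\hk,c}(X_{\infty,C}))\simeq\colim_{\check{H}}\colim_H\rg(\check{H},\R\Gamma_{\hk,c}(X_{H,C})).$$
The symmetric argument gives the analogous double colimit for the $\mathfrak{g}$-side.

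Second step. Apply the displayed quasi-isomorphism termwise and exchange the two colimits. Functoriality in $(H,\check{H})$ makes the identifications compatible, which yields the first quasi-isomorphism in \eqref{paris12HK}.

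Third step. Deduce the second quasi-isomorphism from the first. By Proposition \ref{RR1}, the complexes $\R\Gamma_{\hk,c}(X_{\infty,C})$ and $\R\Gamma_{\hk,c}(\check{X}_{\infty,C})$ are represented by complexes of smooth representations. On these the Lie algebra actions are trivial, so
$$\R\Gamma(\check{\mathfrak{g}},\R\Gamma_{\hk,c}(X_{\infty,C}))\simeq\R\Gamma(\check{\mathfrak{g}},\breve{C})\otimes^{\LL_{\Box}}_{\breve{C}}\R\Gamma_{\hk,c}(X_{\infty,C}).$$
Equivalently, this is the second part of Proposition \ref{stcoh} via Lazard. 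The same holds on the other side, which finishes the proof.

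The main obstacle is $G\times\check{G}$-equivariance, because conjugation permutes the subgroups $H,\check{H}$. I would handle it by restricting the colimits to normal open subgroups and invoking the naturality of Theorem \ref{HKFF} with respect to automorphisms of the whole diagram, exactly as in Corollary \ref{dom1}.
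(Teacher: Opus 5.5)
Your proposal is correct and matches the paper's proof. The paper's proof simply says to rerun the proof of Corollary \ref{dom1} with \eqref{berkeley4HK} in place of \eqref{berkeley2c}: rewrite each side via Proposition \ref{stcoh} as a double colimit over $\check{H},H$ of finite-level group cohomologies, identify these termwise by Hyodo-Kato flip-flopping, and get the second form from smoothness via Proposition \ref{RR1}. That is exactly what you do; your remarks on carrying $(\phi,N,\sg_K)$ through the natural maps and on $G\times\check{G}$-equivariance via normal subgroups only make explicit what the paper leaves under the word ``natural''.
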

\begin{proof}  Having the quasi-isomorphism \eqref{berkeley4HK},  we argue exactly as in the proof of Corollary \ref{dom1}.
\end{proof}
\subsection{Flip-flopping at infinity}
 Assume from now on that $k$ is algebraically closed\footnote{Though this is not necessary in the case of de Rham cohomology}.  Below, a $(\phi,N,\sg_K)$-module over $\breve{C}$ will always refer to a solid module, where  $\phi$ an isomorphism and the action of  $\sg_K$ is smooth. In the case of compactly supported cohomology we have the following underived version of flip-flopping if we are willing to pass to infinity. 
\begin{theorem}\label{dept1} Assume that $X, \check{X}$ are partially proper and 
 that $G, \check{G}$ are $p$-adic Lie groups with
$$
{\dim}_{K}H^i(\mathfrak{g},K)={\dim}_{K}H^i(\check{\mathfrak{g}},K),\quad i\geq 0.
$$
Then: 
\begin{enumerate}[leftmargin=*]
\item There exist compact open subgroups $H\subset G$, $\check{H}\subset \check{G}$ and a $H\times\check{H}$-equivariant isomorphism in $K_{\Box}$ and $\breve{C}_{\Box}$, respectively
\begin{align}\label{step1}
H^i_{\dr,c}(X_{\infty}) & \simeq H^i_{\dr,c}(\check{X}_{\infty}),\quad i\geq 0,\\
H^i_{\hk,c}(X_{\infty}) & \simeq H^i_{\hk,c}(\check{X}_{\infty}),\quad i\geq 0.\notag
\end{align}
\item If moreover in (1) the action of $H\times\check{H}$ is admissible, then the second isomorphism in \eqref{step1} could also be made $N$-equivariant. 
\end{enumerate}
\end{theorem}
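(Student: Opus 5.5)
We start from the Lie algebra flip-flopping of Corollary \ref{dom1} and Corollary \ref{paris30}, which give $G\times\check{G}$-equivariant quasi-isomorphisms
$$\R\Gamma(\check{\mathfrak{g}}, K)\otimes_K^{\LL_{\Box}}\R\Gamma_{\dr,c}(X_{\infty}) \simeq \R\Gamma(\mathfrak{g},K)\otimes^{\LL_{\Box}}_K\R\Gamma_{\dr,c}(\check{X}_{\infty}),$$
and the analogous one for Hyodo-Kato cohomology in $\sd_{\phi,N,\sg_K}(\breve{C}_{\Box})$. The group $G\times\check{G}$ acts on $H^j(\mathfrak{g},K)$ and $H^j(\check{\mathfrak{g}},K)$ through its adjoint action. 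These are finite dimensional, so there are compact open subgroups $H\subset G$ and $\check{H}\subset\check{G}$ acting trivially on all of them. After restricting to $H\times\check{H}$, the Lie algebra cohomology factors are therefore trivial finite dimensional representations. Since $K$ is a field, both complexes $\R\Gamma(\mathfrak{g},K)$ and $\R\Gamma(\check{\mathfrak{g}},K)$ are formal. Taking cohomology of both sides then gives $H\times\check{H}$-equivariant isomorphisms
$$\bigoplus_{a+b=i} H^a(\check{\mathfrak{g}},K)\otimes_K H^b_{\dr,c}(X_\infty)\simeq \bigoplus_{a+b=i} H^a(\mathfrak{g},K)\otimes_K H^b_{\dr,c}(\check{X}_\infty),\qquad i\geq 0.$$
With $h_a:=\dim H^a(\mathfrak{g},K)=\dim H^a(\check{\mathfrak{g}},K)$ (so $h_0=1$), this reads $\bigoplus_a (H^{i-a}_{\dr,c}(X_\infty))^{\oplus h_a}\simeq \bigoplus_a (H^{i-a}_{\dr,c}(\check{X}_\infty))^{\oplus h_a}$ as representations of $H\times\check{H}$.

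The plan is induction on $i$. For $i=0$ the identity directly gives $H^0_{\dr,c}(X_\infty)\simeq H^0_{\dr,c}(\check{X}_\infty)$. For the inductive step, the terms with $a\geq 1$ are already isomorphic by the induction hypothesis. Denote their common sum by $\pi$. We then have $\pi_1\oplus\pi\simeq\pi_2\oplus\pi$ with $\pi_1=H^i_{\dr,c}(X_\infty)$ and $\pi_2=H^i_{\dr,c}(\check{X}_\infty)$, and we want to cancel $\pi$. In the de Rham case we argue as follows. The representations are smooth (Proposition \ref{derham-prop1}) and $H\times\check{H}$ is profinite, so smooth representations are semisimple with finite dimensional irreducibles. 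Even without admissibility, we can compare isotypic components: the $\sigma$-isotypic part of each side is $\sigma\otimes_{{\rm End}(\sigma)}M_\sigma$, and we compare the multiplicity spaces $M_\sigma$. If all the multiplicities are finite, Proposition \ref{cancel1} applies. Otherwise we use the cardinal (dimension) count, which cancels provided we know it matches. Here lies a real subtlety: cancellation of infinite multiplicities fails in general. I would first try to show that $\pi$ contributes at most the same multiplicity as $\pi_1,\pi_2$. This follows inductively, because $\pi$ is a finite sum of copies of lower-degree groups, and these agree with each other by induction. If an infinite multiplicity appears in lower degree, though, the cancellation is ambiguous. To be safe I would note that a multiplicity is either finite, in which case it cancels, or infinite, in which case we compare dimensions of $M_\sigma$ within $\pi_1$ and $\pi_2$ directly. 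For the Stein/partially proper towers of interest these have countable dimension, and countable dimension absorbs the extra copies: if $M_\sigma(\pi)$ is infinite of countable dimension, then both $M_\sigma(\pi_1)$ and $M_\sigma(\pi_2)$ have dimension in $\{\text{finite}\}\cup\{\aleph_0\}$, and the equality $\dim M_\sigma(\pi_1)+\dim M_\sigma(\pi)=\dim M_\sigma(\pi_2)+\dim M_\sigma(\pi)$ only forces equality up to this ambiguity. I would therefore look for an extra argument, for instance working degree by degree from the top using the finite cohomological dimension of $\mathfrak{g}$.

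For Hyodo-Kato cohomology the same decomposition holds in $(\phi,\sg_{\breve{C}})$-modules, but now the cancellation must also respect $\phi$ and the smooth $\sg_K$-action. Without the monodromy operator we use semisimplicity of $\phi$-modules over $\breve{C}$ (Dieudonné–Manin), combined with the semisimplicity of smooth representations of the profinite group $H\times\check{H}\times\sg_K$ (acting smoothly). Isotypic comparison works as above, giving part (1) without $N$. This is why $N$ is dropped there: $(\phi,N)$-modules are not semisimple, and the quasi-isomorphism is only $N$-equivariant on the derived level. For part (2), under admissibility, each $\sigma$-isotypic multiplicity space is a finite rank $(\phi,N,\sg_{\breve{C}})$-module. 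Krull–Schmidt for finite length modules over the ring $R$ of the Remark preceding Section \ref{five} (as in Proposition \ref{rep1HK}(b), with $H(G,H)$ replaced by the representation ring of $H\times\check{H}$) then cancels $\pi$ $N$-equivariantly.

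I expect the main obstacle to be this cancellation step in the non-admissible case of part (1), namely controlling infinite multiplicities so that $\pi_1\oplus\pi\simeq\pi_2\oplus\pi$ really forces $\pi_1\simeq\pi_2$.
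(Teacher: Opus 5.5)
Your treatment of the admissible situation matches the paper's. That means shrinking to $H\times\check H$ acting trivially on $H^*(\mathfrak g,K)$ and $H^*(\check{\mathfrak g},K)$, taking $H^i$ of the Lie algebra flip-flop of Corollaries \ref{dom1} and \ref{paris30}, inducting on $i$, and cancelling via finite multiplicities (Proposition \ref{cancel1}) or via Krull--Schmidt for part (2). But part (1) has no admissibility hypothesis, and there your proposal has a genuine gap. You flag it but do not close it. The fixes you suggest cannot work, because the K\"unneth identities by themselves do not determine infinite multiplicities. For example, take $h_0=h_1=1$ and $h_a=0$ for $a\geq 2$. The multiplicity sequences $(\aleph_0,0,\aleph_0)$ and $(\aleph_0,\aleph_0,\aleph_0)$ of an irreducible $\sigma$ in degrees $0,1,2$ both give multiplicity $\aleph_0$ in every degree of $\R\Gamma(\mathfrak g,K)\otimes^{\LL_\Box}_K\R\Gamma_{\dr,c}$. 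Neither a countable-dimension count nor a top-down induction from the top Lie algebra cohomology can tell them apart, so extra geometric input is needed.

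The missing idea in the paper is to reduce to the admissible case by an equivariant exhaustion.
\begin{enumerate}[leftmargin=*]
\item As in the proof of Theorem \ref{HKFF}, $X$ is written as an increasing union of $\check G$-stable partially proper opens $X^0_n$ with finite-dimensional compactly supported de Rham cohomology. Finiteness comes from Grosse-Kl\"onne's theorem; stability comes from taking the finitely many $\check G$-translates of a suitable $U$, which is stabilized by an open subgroup. One then sets $\check X^0_n:=\check\pi(\pi^{-1}(X^0_n))$.
\item The infinite-level cohomologies of these pieces are admissible, so your admissible argument applies to them.
\item The Lie algebra flip-flop is natural with respect to the open immersions $X^0_n\subset X$. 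Hence the K\"unneth identity also holds for the images $M^i_n$ of $H^i_{\dr,c}(X^0_{n,\infty})$ in $H^i_{\dr,c}(X_\infty)$ and for their counterparts $\check M^i_n$. These images are admissible, so inducting on $i$ gives $M^i_n\simeq\check M^i_n$.
\item Then $H^i_{\dr,c}(X_\infty)=\colim_n M^i_n=M^i_0\oplus W_0\oplus W_1\oplus\cdots$, and finite cancellation gives $W_n\simeq\check W_n$ for the complements. Equivalently, each multiplicity in the colimit is the supremum of the finite, matching multiplicities of the $M^i_n$.
\end{enumerate}
The Hyodo--Kato case is the same, with the complements taken in the semisimple category of $(H\times\check H,\phi,\sg_K)$-representations. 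That step is exactly where $N$ is lost; the K\"unneth isomorphisms on cohomology are themselves $N$-equivariant, so your explanation for dropping $N$ is slightly off.
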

\begin{remark}
(1)  The proof of the theorem will show that we can choose $H,\check{H}$ to be any subgroups acting trivially on all Lie algebra cohomology groups. 

(2) The second claim of the theorem seems abstractly false without the admissibility assumption. 
\end{remark}
\begin{proof} We note that  the groups $H^i(\check{\mathfrak{g}},{K})$, $H^i({\mathfrak{g}},{K})$, for $i\geq 0$,   are of finite rank over ${K}$. We  choose $H\subset G, \check{H}\subset \check{G}$ to be any compact open subgroups acting trivially on all  Lie algebra cohomology groups. \\

 (1)  {\em De Rham cohomology.} Let us  start with  de Rham cohomology.  We will first assume that the $G\times\check{G}$-representations
$H^i_{\dr,c}(X_{\infty}),  H^i_{\dr,c}(\check{X}_{\infty})$ are admissible. 
We will argue  by induction on $i$. We start with $i=0$, where the claims are clear since both groups $H^0_{\dr,c}(X_{\infty}), H^0_{\dr,c}(\check{X}_{\infty})$ are trivial.  For the inductive step, we assume the isomorphism \eqref{step1}   for  all degrees $\leq  i$ and we want to prove it for $i+1$. 

  From \eqref{paris12} we get a $G\times\check{G}$-equivariant  isomorphism in ${K}_{\Box}$
 \begin{equation}\label{ias2s}
H^{i+1}(\R\Gamma(\check{\mathfrak{g}}, {K})\otimes_{{K}}^{\LL_{\Box}}\R\Gamma_{\dr,c}(X_{\infty}))  \simeq H^{i+1}(\R\Gamma(\mathfrak{g},{K})\otimes^{\LL_{\Box}}_{{K}}\R\Gamma_{\dr,c}(\check{X}_{\infty})).
 \end{equation}
By K\"unneth formula, it  yields a $G\times \check{G}$-equivariant
   isomorphism in ${K}_{\Box}$
   \begin{align}\label{kun10}
     & H^0(\check{\mathfrak{g}},{K})\otimes^{\Box}_{{K}}H^{i+1}_{\dr,c}(X_{\infty})  \oplus  H^1(\check{\mathfrak{g}},{K})\otimes^{\Box}_{{K}}H^{i}_{\dr,c}(X_{\infty})\oplus\cdots\oplus   H^{i+1}(\check{\mathfrak{g}},{K})\otimes^{\Box}_{{K}}H^{0}_{\dr,c}(X_{\infty})\\
      & \simeq  H^0(\mathfrak{g},{K})\otimes^{\Box}_{{K}}H^{i+1}_{\dr,c}(\check{X}_{\infty})\oplus  H^1(\mathfrak{g},{K})\otimes^{\Box}_{{K}}H^{i}_{\dr,c}(\check{X}_{\infty})\oplus\cdots\oplus   H^{i+1}(\mathfrak{g},{K})\otimes^{\Box}_{{K}}H^{0}_{\dr,c}(\check{X}_{\infty})\notag
   \end{align}
  Since  the groups $H^j(\check{\mathfrak{g}},{K})$, $H^j({\mathfrak{g}},{K})$, for $j\geq 0$,   are of finite rank over ${K}$ and   $H^0(\check{\mathfrak{g}},{K})=H^0(\mathfrak{g},{K})={K}$, if we want to prove the  claim of the theorem for $i+1$, by the inductive assumption, we find ourselves in the situation where  we have an isomorphism of
   admissible representations of $H\times\check{H}$ over $K$: 
$$
  \pi_1\oplus \pi \simeq \pi_2\oplus \pi
$$
  and we want to conclude that $\pi_1\simeq \pi_2$, as $H\times\check{H}$-representations.  This holds by Proposition \ref{cancel1}.
 
        To treat the general case, choose the coverings  $\{X^0_{n,\infty}, \check{X}^0_{n,\infty},\}$, $n\in\N$, as in the proof of Theorem \ref{HKFF}. 
    Now, let $i\geq 0$. We have $G\times\check{G}$-equivariant  isomorphisms  in $K_{\Box}$: 
\begin{equation}\label{sobota1}
  H^i_{\dr,c}(X_{\infty})  \stackrel{\sim}{\to}\colim_nH^i_{\dr,c}(X^0_{n,\infty}),\quad 
   H^i_{\dr,c}(\check{X}_{\infty}) \stackrel{\sim}{\to}\colim_nH^i_{\dr,c}(\check{X}^0_{n,\infty}).
\end{equation}
  Moreover, $H^i_{\dr,c}(X^0_{n,\infty})$ and $H^i_{\dr,c}(\check{X}^0_{n,\infty})$ are admissible,  
   hence, by the above, 
  isomorphic as representations of $H\times\check{H}$:
  $$\alpha_n: H^i_{\dr,c}(X^0_{n,\infty})\simeq H^i_{\dr,c}(\check{X}^0_{n,\infty}).
  $$ 
  Assume first that the transition maps in the inductive systems
  in \eqref{sobota1} are injective. Then, 
 we can write
  \begin{align*}
    H^i_{\dr,c}(X_{\infty}) & =H^i_{\dr,c}(X^0_{0,\infty})\oplus W_0\oplus W_1\oplus\cdots\\
      H^i_{\dr,c}(\check{X}_{\infty}) & =H^i_{\dr,c}(\check{X}^0_{0,\infty})\oplus \check{W}_0\oplus \check{W}_1\oplus\cdots
  \end{align*}
  where $W_n$ is a complementary representation of $   H^i_{\dr,c}(X^0_{n,\infty})$ in $   H^i_{\dr,c}(X^0_{n+1,\infty})$ and, similarly, for $\check{W}_n$. Starting with the isomorphism $\alpha_0$ and using the computations in the admissible case we get isomorphisms $\beta_n: W_n\simeq \check{W}_n$. They yield the wanted isomorphism 
  $ H^i_{\dr,c}(X_{\infty}) \simeq H^i_{\dr,c}(\check{X}_{\infty})$. 
  
     In the general case, we set, for $n\in\N$,  
   $$
   M^i_n:=H^i_{\dr,c}(X^0_{n,\infty})/\ker^i_{n,\infty}, \quad \ker^i_{n,\infty}:=\colim_m\ker(H^i_{\dr,c}(X^0_{n,\infty})\to H^i_{\dr,c}(X^0_{m,\infty})). 
   $$
   We define similarly, $\check{M}^i_n$. We have 
   \begin{equation}\label{sobota11}
  H^i_{\dr,c}(X_{\infty})  \stackrel{\sim}{\to}\colim_n M^i_n,\quad 
   H^i_{\dr,c}(\check{X}_{\infty}) \stackrel{\sim}{\to}\colim_n\check{M}^i_n.
\end{equation}
   We use now the naturality of the Lie algebra flip-flopping from Corollary \ref{dom1} to conclude that we  have the K\"unneth formula \eqref{kun10} with $  H^i_{\dr,c}(X_{n,\infty}) $ replaced by $M^i_n$ and $H^i_{\dr,c}(\check{X}_{n,\infty})$ replaced by $\check{M}^i_n$. The argument below \eqref{kun10} goes through and shows that
   $$
   M^i_n\simeq \check{M}^i_n.
   $$
  Having that, we can argue as above to show that  $ H^i_{\dr,c}(X_{\infty}) \simeq H^i_{\dr,c}(\check{X}_{\infty})$, as wanted.

    (2)  {\em Hyodo-Kato cohomology.} We move now to Hyodo-Kato  cohomology.  We will first assume that the $G\times\check{G}$-representations
$H^i_{\hk,c}(X_{\infty}),  H^i_{\hk,c}(\check{X}_{\infty})$ are admissible. 
We will argue  by induction on $i$ as in the de Rham case. We start with $i=0$, where the claims are clear since both groups $H^0_{\hk,c}(X_{\infty}), H^0_{\hk,c}(\check{X}_{\infty})$ are trivial.  For the inductive step, we assume the isomorphism \eqref{step1}   for  all degrees $\leq  i$ and we want to prove it for $i+1$. 
 
    From Corollary \ref{paris30}  we get a quasi-isomorphism in $\sd_{\phi,N,\sg_K}(\breve{C}_{\Box})$
$$
H^{i+1}(\R\Gamma(\check{\mathfrak{g}}, \breve{C})\otimes_{\breve{C}}^{\LL_{\Box}}\R\Gamma_{\hk,c}(X_{\infty,C})) 
 \simeq H^{i+1}(\R\Gamma(\mathfrak{g},\breve{C})\otimes^{\LL_{\Box}}_{\breve{C}}\R\Gamma_{\hk,c}(\check{X}_{\infty,C})).
$$
This isomorphism is $\breve{C}[G\times \check{G}]$-equivariant.
Using the K\"unneth formula, we get  a $\breve{C}[G\times \check{G}]$-equivariant
   quasi-isomorphism in $\sd_{\phi,N,\sg_K}(\breve{C}_{\Box})$
   \begin{align}\label{kunnethHK1}
     & H^0(\check{\mathfrak{g}},\breve{C})\otimes^{\Box}_{\breve{C}}H^{i+1}_{\hk,c}(X_{\infty,C}) \oplus  H^1(\check{\mathfrak{g}},\breve{C})\otimes^{\Box}_{\breve{C}}H^{i}_{\hk,c}(X_{\infty,C})\oplus\cdots\oplus   H^{i+1}(\check{\mathfrak{g}},\breve{C})\otimes^{\Box}_{\breve{C}}H^{0}_{\hk,c}(X_{\infty,C})\\
      & \simeq  H^0(\mathfrak{g},\breve{C})\otimes^{\Box}_{\breve{C}}H^{i+1}_{\hk,c}(\check{X}_{\infty,C})\oplus  H^1(\mathfrak{g},\breve{C})\otimes^{\Box}_{\breve{C}}H^{i}_{\hk,c}(\check{X}_{\infty,C})\oplus\cdots\oplus   H^{i+1}(\mathfrak{g},\breve{C})\otimes^{\Box}_{\breve{C}}H^{0}_{\hk,c}(\check{X}_{\infty,C}).\notag
   \end{align}
   
 Since  the groups $H^j(\check{\mathfrak{g}},\breve{C})$, $H^j({\mathfrak{g}},\breve{C})$, for $j\geq 0$,   are of finite rank over $\breve{C}$ and   $H^0(\check{\mathfrak{g}},\breve{C})=H^0(\mathfrak{g},\breve{C})=\breve{C}$, if we want to prove the  claim of the theorem for $i+1$, by the inductive assumption, we find ourselves in the situation where  we have an isomorphism of  admissible representations of $H\times\check{H}$ on solid $(\phi,N,\sg_K)$-modules over $\breve{C}$: 
  \begin{equation}\label{paris101}
  \pi_1\oplus \pi \simeq \pi_2\oplus \pi
  \end{equation}
  and we want to conclude that $\pi_1\simeq \pi_2$, as $H\times\check{H}$-representations on solid $(\phi,N,\sg_K)$-modules over $\breve{C}$. This follows from Proposition \ref{rep1HK}.

     For the general case,  we argue as in the de Rham case. Let $i\geq 0$. We have $G\times\check{G}$-equivariant  isomorphisms  of $(\phi,N,\sg_K)$-modules over $\breve{C}_{\Box}$: 
\begin{equation}\label{sobota1HK}
  H^i_{\hk,c}(X_{\infty,C})  \stackrel{\sim}{\to}\colim_nH^i_{\hk,c}(X^0_{n,\infty,C}),\quad 
   H^i_{\hk,c}(\check{X}_{\infty,C}) \stackrel{\sim}{\to}\colim_nH^i_{\hk,c}(\check{X}^0_{n,\infty,C}).
\end{equation}
  Moreover, $H^i_{\hk,c}(X^0_{n,\infty,C})$ and $H^i_{\hk,c}(\check{X}^0_{n,\infty,C})$ are admissible,  
   hence, by the above, 
  isomorphic as representations of $H\times\check{H}$ on  $(\phi,N,\sg_K)$-modules over $\breve{C}_{\Box}$:
  $$\alpha_n: H^i_{\hk,c}(X^0_{n,\infty})\simeq H^i_{\hk,c}(\check{X}^0_{n,\infty}).
  $$ 
 Arguing exactly as in the de Rham case, we can assume  that the transition maps in the inductive systems \eqref{sobota1HK} are injective.
 Then, 
 we can write
  \begin{align*}
    H^i_{\hk,c}(X_{\infty}) & =H^i_{\hk,c}(X^0_{0,\infty})\oplus W_0\oplus W_1\oplus\cdots\\
      H^i_{\hk,c}(\check{X}_{\infty}) & =H^i_{\hk,c}(\check{X}^0_{0,\infty})\oplus \check{W}_0\oplus \check{W}_1\oplus\cdots
  \end{align*}
  where $W_n$ is a complementary $(H\times\check{H}, \phi, \sg_K)$-representation\footnote{It is here that we lost the monodromy $N$ !}  of $   H^i_{\hk,c}(X^0_{n,\infty})$ in $   H^i_{\hk,c}(X^0_{n+1,\infty})$ and, similarly, for $\check{W}_n$. 
  Starting with the isomorphism $\alpha_0$ and using the computations in the admissible case we get isomorphisms $\beta_n: W_n\simeq \check{W}_n$. They yield the wanted isomorphism 
  $ H^i_{\hk,c}(X_{\infty}) \simeq H^i_{\hk,c}(\check{X}_{\infty})$. This finishes the proof of the theorem.
 \end{proof}

\section{Flip-flopping for dual  towers of local Shimura varieties} We will now show that the compactly supported de Rham and Hyodo-Kato cohomologies of basic dual towers of local Shimura varieties flip-flop together with the actions of the associated $p$-adic Lie groups. 

\subsection{Flip-flopping for Drinfeld and Lubin-Tate towers}  We start with the key motivating example of the Drinfeld and Lubin-Tate towers.

   Let $K$ be a finite extension of $\Q_p$, let  $\varpi$ be a uniformizer of $K$, and let $d\geq 1$. Let $G=\mathbb{GL}_{d+1}(K)$ and let $\check{G}=D^{\times}$  be the invertible elements in the central
division algebra $D$ of dimension $(d+1)^2$ and  invariant $1/(d+1)$ over $K$. Let $\O_D$ be the maximal order in  $D$ and  let  $\varpi_D$ be an  uniformisant of  $\O_D$.

\subsubsection{Drinfeld and Lubin-Tate towers}\label{recall1} We start with a quick review of the Drinfeld and  Lubin-Tate towers.  

 ($\bullet$) {\em Drinfeld tower.} Recall that 
the Drinfeld  space 
${\mathbb P}^d_K\setminus \cup_{H\in\sh}H$, where $\sh$ is the set of $K$-rational hyperplanes,  admits a natural rigid analytic structure 
 ${\mathbb H}^d_K$ over  $K$ and an action of $G$ by  homographies,
which respects that structure.
Drinfeld defined a tower of coverings 
 $\breve{\cal M}_n$, for $n\in\N$, 
of ${\mathbb H}^d_{\breve{C}}$ with the following properties:
\begin{enumerate}[leftmargin=*]
\item  $\breve{\cal M}_n$ is defined over  $\breve{C}$
and is equipped with an action of the Weil group ${\rm W}_K$ compatible with its natural action on $\breve{C}$.

\item  $\breve{\cal M}_n$ is equipped with commuting  actions of  $G$ and  $\check G$
which also commute with the action of  ${\rm W}_K$. The transition maps 
 $\breve{\cal M}_{n+1}\to \breve{\cal M}_n\to
{\mathbb H}^d_{\breve{C}}$ are  ${\rm W}_K$, $\check G$ and  $G$-equivariant (the action of 
$\check G$ on  ${\mathbb H}^d_{\breve{C}}$ being trivial).

\item  $\breve{\cal M}_0=\Z\times{\mathbb H}^d_{\breve{C}}$ and,
for  $n\geq 1$,  $\breve{\cal M}_n$ is a Galois covering 
of  $\breve{\cal M}_0$, with  Galois group $\O_D^\times/(1+\varpi_D^n\O_D)$.
\end{enumerate}

   We set ${\cal M}_n:= \breve{\cal M}_n\times_{\breve{C}}C.$
Let  ${\cal M}_\infty$ be the  (non completed) projective system 
of  ${\cal M}_n$. If  $H=G,\check G,{\rm W}_K$, we have a natural morphism
$\nu_H:H{\hskip.5mm\to\hskip.5mm}K^\times,$
where  $\nu_G=\det$, $\nu_{\check G}$ is the reduced norm,
and $\nu_{{\rm W}_K}$ is the composition of  ${\rm W}_K\to {\rm W}_K^{\rm ab}$ and 
the isomorphism  ${\rm W}_K^{\rm ab}\simeq  K^\times$ of local class field theory.
The set  $\pi_0({\cal M}_\infty)$ of the connected components of 
${\cal M}_\infty$ is a principal homogenous space under the action of 
 $K^\times$ and  $H=G,\check G,{\rm W}_K$ acts on  $\pi_0({\cal M}_\infty)$
via  $\nu_H:H\to K^\times$.  In particular, $\check G$ acts on 
$\pi_0({\cal M}_\infty)$  via the reduced norm.

  Let  $\widehat{\sm}_\infty$ be the completion of $\sm_{\infty}$. We denote by $\widehat{\sm}_{\infty,\breve{C}}, \sm_{\infty,\breve{C}}$ the analogous  spaces over $\breve{C}$.

  \vskip2mm ($\bullet$) {\em Lubin-Tate tower.}
 Lubin-Tate theory yields a  formal scheme
    ${\rm Spf}(A_0)$
    over $\mathcal{O}_{\breve{C}}$, classifying deformations by quasi-isogenies of the unique 
  $\mathcal{O}_K$-formal module of dimension $1$  and height  $d+1$ (relative to  $K$) over ~$\overline{\mathbf{F}}_p$. 
  Adding level structures to these deformations, Drinfeld  constructed a tower of formal schemes
        $({\rm Spf}(A_n))_{n\geq 1}$ over 
         $\mathcal{O}_{\breve{C}}$. The rigid analytic  generic fiber $\breve{{\rm LT}}_n$ of ~${\rm Spf}(A_n)$ is a finite \'etale Galois covering of $\breve{{\rm LT}}_0$ with Galois groupe
       $\mathbb{GL}_2(\mathcal{O}_K/\varpi^n)$. $\breve{{\rm LT}}_0$ is 
 a countable disjoint union of open unit discs over $\breve{C}$.

     One can define an action of   $G\times \check{G}$ on the tower $({\rm LT}_n)_{n\geq 0}$ (unlike in the case of  Drinfeld tower, we do not have an action of $G\times \check{G}$ on every level of the tower). Set  ${\rm LT}_n:=\breve{{\rm LT}}_n\times_{\breve{C}} C$.
Let  $\widehat{\rm LT}_\infty$ be the completion of the limit ${\rm LT}_\infty$ of the tower of 
$({\rm LT}_n)_{n\geq 0}$. We denote by $\widehat{\rm LT}_{\infty,\breve{C}}, {\rm LT}_{\infty,\breve{C}}$ the analogous  spaces over $\breve{C}$.
 \subsubsection{Duality of the towers}
The completions  $\widehat{\rm LT}_\infty$ and  $\widehat{\cal M}_{\infty}$ are 
equipped with an action of  $G\times \check{G}$
and isomorphic as  $(G\times\check G)$-spaces~\cite[Th. 7.2.3]{SW}. Same holds for their $\breve{C}$-analogous  $\widehat{\rm LT}_{\infty,\breve{C}}$ and 
 $\widehat{\cal M}_{\infty,\breve{C}}$. Moreover, the completions  $\widehat{\rm LT}_\infty$ and  $\widehat{\cal M}_{\infty}$ are 
 perfectoid spaces~\cite[Th. 6.5.4]{SW}. 
  Write
$\widehat{\rm LT}_{\infty}^\varpi$ and  $\widehat{\cal M}_{\infty}^\varpi$
(resp.~$\wh{\rm LT}_{\infty,\breve{C}}^\varpi$ and  $\wh{\cal M}_{\infty,\breve{C}}^\varpi$, etc.) for the  quotients of 
$\widehat{\rm LT}_{\infty}$ and $\widehat{\cal M}_{\infty}$
(resp.~$\wh{\rm LT}_{\infty,\breve{C}}$ and $\wh{\cal M}_{\infty,\breve{C}}$, etc.) 
     by the action of  $\varpi$ seen as an  element of the center of  $G$ (or, what amounts to the same,  of  $\check G$). We have duality isomorphisms 
     $$\widehat{\rm LT}_{\infty,\breve{C}}^\varpi\simeq \widehat{\cal M}_{\infty,\breve{C}}^\varpi,\quad \widehat{\rm LT}_{\infty}^\varpi\simeq \widehat{\cal M}_{\infty}^\varpi.
     $$

 Set 
$$G_n:=\begin{cases}\mathbb{GL}_{d+1}(\O_K)&{\text{if $n=0$,}}\\
1+\varpi^n{\mathbb M}_{d+1}(\O_K)&{\text{if $n\geq 1$,}}\end{cases}
\quad
\check G_n:=\begin{cases}\O_D^\times&{\text{if $n=0$,}}\\
1+\varpi_D^n\O_D&{\text{if $n\geq 1$.}}\end{cases}$$ 
Let $n,m\geq 0$. We have the following diagram of dual towers
$$
\xymatrix{
& T\ar[dl]^-{{\check{G}_m}}_{\pi}\ar@(u,r)[]^{G_n\times \check{G}_m} \ar[dr]_{G_n}^{\check{\pi}}\\
{\cal M}_m^\varpi\ar@(l,u)[]^{{G_n}} & &  {\rm LT}_n^\varpi\ar@(u,r)[]^{{{\check{G}_m}}}, 
 }
$$
where we wrote  $T=\widehat{\rm LT}_{\infty,\breve{C}}^\varpi\simeq \widehat{\cal M}_{\infty,\breve{C}}^\varpi$.

\subsubsection{Admissibility}  We pass now to the  flip-flopping.  We start with some preparation. We have the following finiteness property of compactly supported de Rham and Hyodo-Kato cohomologies. 
\begin{theorem}\label{adm1}Let $i\geq 0$. 
The $G$-representations $H^i_{\dr,c}({\cal M}_{\infty,\breve{C}}^\varpi)$ and $H^i_{\hk,c}({\cal M}_{\infty, \breve{C}}^\varpi)$ over $\breve{C}$ are admissible.
\end{theorem}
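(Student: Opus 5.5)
The plan is to transfer admissibility from the Lubin--Tate side to the Drinfeld side, using the Lie algebra flip-flopping of Corollary \ref{dom1} and Corollary \ref{paris30} together with a direct-summand argument. I will treat de Rham cohomology first. The Hyodo--Kato case is identical in shape, and its finiteness input is deduced from the de Rham one.

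\emph{Step 1: admissibility on the Lubin--Tate side.} I first show that $H^i_{\dr,c}({\rm LT}^\varpi_{\infty,\breve{C}})$ is an admissible $G$-representation. The groups $G_n$ act on the tower through the finite Galois coverings ${\rm LT}^\varpi_{n}\to{\rm LT}^\varpi_0$. Since we are in characteristic $0$, finite Galois descent gives
$$H^i_{\dr,c}({\rm LT}^\varpi_{\infty,\breve{C}})^{G_n}\simeq H^i_{\dr,c}({\rm LT}^\varpi_{n,\breve{C}}).$$
It therefore suffices to show that $H^i_{\dr,c}({\rm LT}^\varpi_{n})$ is finite dimensional for every $n$. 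This is the step I expect to be the main obstacle, and it is where the global input mentioned in the introduction enters. I would try the following two routes in order.
\begin{enumerate}[leftmargin=*]
\item Realize ${\rm LT}_n$ (modulo $\varpi$) as the tube of a supersingular point in a unitary Shimura variety of Harris--Taylor type with level $n$ at $p$, via Serre--Tate. Then use the finiteness of rigid cohomology of that point's neighbourhood, i.e.\ the overconvergent structure of the tube.
\item Alternatively, argue as Grosse-Kl\"onne does: ${\rm LT}_n$ is a finite \'etale cover of an open ball which extends to a slightly larger wide open. Such spaces have finite-dimensional de Rham cohomology with compact support, by the results quoted from \cite{GK}.
\end{enumerate}
For Hyodo--Kato cohomology, finiteness follows from the de Rham case via the Hyodo--Kato isomorphism $H^i_{\hk,c}\otimes_{\breve{C}}C\simeq H^i_{\dr,c}\otimes_K C$, applied at each finite level.

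\emph{Step 2: flip-flopping at infinity, degree by degree.} I apply Corollary \ref{dom1} to the dual diagram with $G_0$ and $\check{G}_0$. This is legitimate because ${\cal M}^\varpi_m$ and ${\rm LT}^\varpi_n$ are partially proper and the towers are dual, by \cite[Th. 7.2.3]{SW}. It gives a $G\times\check{G}$-equivariant quasi-isomorphism
$$\R\Gamma(\check{\mathfrak g},K)\otimes^{\LL_\Box}_K\R\Gamma_{\dr,c}({\cal M}^\varpi_\infty)\simeq \R\Gamma(\mathfrak g,K)\otimes^{\LL_\Box}_K\R\Gamma_{\dr,c}({\rm LT}^\varpi_\infty).$$
Both Lie algebras are forms of $\mathfrak{gl}_{d+1}$. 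Their cohomology is finite dimensional, with $H^0=K$, and it becomes a trivial representation after restricting to small enough compact open subgroups. Taking $H^{i}$ and applying the K\"unneth formula as in \eqref{kun10}, I get that $H^i_{\dr,c}({\cal M}^\varpi_\infty)$ is a direct summand of
$$\bigoplus_{j\le i}H^{i-j}(\mathfrak g,K)\otimes_K H^{j}_{\dr,c}({\rm LT}^\varpi_\infty).$$
This is an isomorphism of smooth $G$-representations, possibly after restricting to an open compact subgroup $H\subset G$ acting trivially on $H^*(\mathfrak g,K)$.

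\emph{Step 3: conclusion.} By Step 1, the displayed sum is a finite direct sum of admissible $H$-representations, hence admissible. A direct summand of an admissible representation is admissible, so $H^i_{\dr,c}({\cal M}^\varpi_\infty)$ is admissible for $H$. Admissibility for an open compact subgroup is equivalent to admissibility for $G$, so we are done in the de Rham case. The Hyodo--Kato case runs identically, using Corollary \ref{paris30} and the Hyodo--Kato finiteness from Step 1.
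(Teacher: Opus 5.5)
Your proposal is correct. For the transfer from Lubin--Tate to Drinfeld it takes a genuinely leaner route than the paper.

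The paper applies Theorem \ref{dept1} with $H=G_0$, $\check H=\check G_0$. Before doing so it checks, via Lemma \ref{luminy1} and a base change of Lie algebras, that the Lie algebra cohomologies have equal dimensions and trivial actions. This yields an honest $G_0\times\check G_0$-equivariant isomorphism $H^i_{\dr,c}({\cal M}^\varpi_{\infty})\simeq H^i_{\dr,c}({\rm LT}^\varpi_{\infty})$, and only then does the paper take $G_n$-invariants. Producing that isomorphism costs three things: the induction on $i$, the cancellation Proposition \ref{cancel1}, and the exhaustion by partially proper opens with finite-rank cohomology from the general case of Theorem \ref{dept1}, since admissibility is not yet available.

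You instead read off a single fact from the K\"unneth decomposition of Corollary \ref{dom1} in one degree. Namely, $H^i_{\dr,c}({\cal M}^\varpi_\infty)$ is a $G_0$-stable summand of finitely many copies of $H^j_{\dr,c}({\rm LT}^\varpi_\infty)$, $j\le i$, tensored with finite-dimensional spaces on which a small open compact subgroup acts trivially. Left exactness of $(-)^{G_n}$ then finishes the argument. You need no induction, no cancellation and no equality of dimensions, only finiteness of the Lie algebra cohomology. What you give up is the isomorphism itself, which the paper proves again in Theorem \ref{adm11} with admissibility as input. Note that Corollary \ref{dom1} is a priori only $G_0\times\check G_0$-equivariant, but that is all you use.

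For Hyodo--Kato the paper is quicker. It deduces admissibility on the Drinfeld side directly from the de Rham case, via the Hyodo--Kato isomorphism and a splitting $C=\breve C\oplus W$. Your rerun with Corollary \ref{paris30} also works.

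The place to tighten is Step 1.
\begin{itemize}
\item Route (2) rests on an unjustified claim. The universal deformation only lives over the open polydisc, and nothing makes ${\rm LT}_n\to{\rm LT}_0$ extend as a finite \'etale cover over a larger one.
\item In route (1), ``rigid cohomology of the point'' is the wrong mechanism. With level structure at $p$ the integral model of the Harris--Taylor variety is not smooth, so the de Rham cohomology of the tube is not Berthelot's rigid cohomology of $Z$.
\end{itemize}
The paper closes this step as follows. By Poincar\'e duality for the Stein space ${\rm LT}^\varpi_n$ (see \cite{Chi90}), it suffices to bound $H^i_{\dr}$; you also omitted this reduction. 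The tube $U$ of the supersingular point $Z$ equals $\widehat{\sx}_{\breve C}\setminus V$, where $V$ is the tube of the open $\sx_0\setminus Z$ and is quasi-compact. Grosse-Kl\"onne's theorem \cite[Th. 3.6]{GK} on such complements of quasi-compact opens then gives finiteness.
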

\begin{proof}
Let $X_{\infty}:={\cal M}_{\infty,\breve{C}}^\varpi, \check{X}_{\infty}:={\rm LT}_{\infty,\breve{C}}^\varpi$. We write $\mathfrak{g}_{K}$, $\check{\mathfrak{g}}_{K}$ for the $K$-Lie algebras of $G$ and $\check{G}$, respectively;  $\mathfrak{g}$, $\check{\mathfrak{g}}$ for their $\Q_p$-versions. 
We claim that, for $i\geq 0$, we have
\begin{equation}\label{niedziela1}
\dim_{\breve{C}} H^i(\mathfrak{g},\breve{C})=\dim_{\breve{C}} H^i(\check{\mathfrak{g}},\breve{C})
\end{equation}
and, moreover, that the actions of $G, \check{G}$ on the respective Lie algebra cohomologies in \eqref{niedziela1} are trivial. The first claim follows from the fact that we have an isomorphism of Lie algebras
$$
\mathfrak{g}_K\otimes_KL\stackrel{\sim}{\to}\mathfrak{g}_L\simeq \check{\mathfrak{g}}_L\stackrel{\sim}{\leftarrow} \check{\mathfrak{g}}_K\otimes_KL,
$$
for a finite field extension $L/K$, and it suffices to show \eqref{niedziela1} after replacing $\breve{C}$ by $C$. For the second claim we have the following result:
\begin{lemma}\label{luminy1}
Let $\mathbb{G}$ be a connected algebraic group over $\Q_p$ and let  $\mathfrak{g}={\rm Lie}(\mathbb{G}(\Q_p))$. Then $\mathbb{G}(\Q_p)$ acts trivially on $H^*(\mathfrak{g}, \Q_p)$. 
\end{lemma}
\begin{proof}Note that  $H^*(\mathfrak{g}, \Q_p)$ is finite dimensional and smooth, so fixed by an open subgroup $H$ of $\mathbb{G}(\Q_p)$. Now, $H$ is Zariski dense in $\mathbb{G}$, since it is open in 
$\mathbb{G}(\Q_p)$ and $\mathbb{G}(\Q_p)$ is Zariski dense in $\mathbb{G}$ (see \cite[Cor. 18.3]{Bor}). But the action of $\mathbb{G}$ on 
$H^*(\mathfrak{g}, \Q_p)$ is clearly algebraic (use the explicit Chevalley-Eilenberg complex and the fact that $\mathbb{G}$ acts algebraically on its Lie algebra), so $\mathbb{G}(\Q_p)$ acts trivially on $H^*(\mathfrak{g}, \Q_p)$, as wanted.
\end{proof}

 The above implies  that we can use  Theorem \ref{dept1} with $H=G_0, \check{H}=\check{G}_0$. We get  a $G_0\times\check{G}_0$-equivariant isomorphism  in $\breve{C}_{\Box}$
$$
H^i_{\dr,c}(X_{\infty})\simeq H^i_{\dr,c}(\check{X}_{\infty}).
$$
Hence isomorphisms in $\breve{C}_{\Box}$
$$
H^i_{\dr,c}(X_{\infty})^{G_n}\simeq H^i_{\dr,c}(\check{X}_{\infty})^{G_n}\simeq H^i_{\dr,c}({\rm LT}_{n}^\varpi).
$$
Our theorem now follows from the following lemma:
\begin{lemma} The cohomology group $H^i_{\dr,c}({\rm LT}_n^\varpi)$ is of finite rank over $\breve{C}$. 
\end{lemma}
\begin{proof} Set $\check{X}:={\rm LT}_n^\varpi$. Since $\check{X}$ is Stein, by Poincar\'e duality (see \cite{Chi90})  it suffices to show that
$H^i_{\dr}(\check{X})$ has finite rank. We recall that, by \cite[III.4.1]{HT}, there exists a Shimura variety\footnote{The relevant Shimura variety is described in \cite{HT}; see also \cite[2.4]{Yosh}.} over $\breve{C}$ with a proper flat integral model $\sx$ over $\so_{\breve{C}}$ such that the tube  $U$ in the rigid analytic variety $\widehat{\sx}_{\breve{C}}$ of any supersingular closed point $Z$ is isomorphic to $\check{X}$. 
We note that tubes of locally closed subsets of $\sx_0$ are admissible open of $\widehat{\sx}_{\breve{C}}$. (For more details concerning tubes see \cite[Sec. 5.1.]{Ay}, \cite[Sec. 2.1.2]{LSt}.)
We want to show that the de Rham cohomology of the tube $U$ is finite dimensional.  But $U=\widehat{\sx}_{\breve{C}}\setminus V$, where $V$ is the tube of $\sx_0\setminus Z$. 
Since $\sx_0\setminus Z$ is quasi-compact open, the tube $V$ is quasi-compact.  Hence we can use  \cite[Th. 3.6]{GK} to finish the argument. 
\end{proof}
\begin{remark} The above lemma is well-known though we did not find a reference in the literature. The presented argument is a version of an argument communicated to us by Laurent Fargues.
\end{remark}

  Concerning Hyodo-Kato cohomology, we have a Hyodo-Kato isomorphism in $\breve{C}_{\Box}$
$$
\iota_{\hk}:\quad H^i_{\hk,c}(\sm^{\varpi}_{\infty})\otimes^{\LL_{\Box}}_{\breve{C}}C\stackrel{\sim}{\to}H^i_{\dr,c}(\sm^{\varpi}_{\infty}).
$$
It is $G_0$-equivariant. Moreover, we have a $G_0$-equivariant isomorphism in $\breve{C}_{\Box}$: 
$$H^i_{\hk,c}(\sm^{\varpi}_{\infty})\otimes^{\LL_{\Box}}_{\breve{C}}C\simeq 
H^i_{\hk,c}(\sm^{\varpi}_{\infty})\otimes^{\LL_{\Box}}_{\breve{C}}(\breve{C}\oplus W)\simeq H^i_{\hk,c}(\sm^{\varpi}_{\infty})\oplus H^i_{\hk,c}(\sm^{\varpi}_{\infty})\otimes^{\LL_{\Box}}_{\breve{C}}W,
$$
for a $\breve{C}$-Banach subspace $W$ of $C$. The wanted admissibility of $H^i_{\hk,c}(\sm^{\varpi}_{\infty})$ follows now from the one of $H^i_{\dr,c}(\sm^{\varpi}_{\infty})$ proved above. 
\end{proof}

\subsubsection{Flip-flopping} We are now ready to prove the main flip-flopping result of this section. 
\begin{theorem}\label{adm11}Let $i\geq 0$. 
 There are 
 $G\times \check{G}$-equivariant  isomorphisms in $\breve{C}_{\Box}$ and $(\phi,N,\sg_{\breve{C}})$-modules over $\breve{C}$, respectively:
 \begin{equation}\label{niedziela2}
 H^i_{\dr,c}({\cal M}_{\infty,\breve{C}}^\varpi)\simeq H^i_{\dr,c}({\rm LT}_{\infty,\breve{C}}^\varpi),\quad  H^i_{\hk,c}({\cal M}_{\infty,C}^\varpi)\simeq H^i_{\hk,c}({\rm LT}_{\infty,C}^\varpi).
 \end{equation}
\end{theorem}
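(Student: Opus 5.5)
We plan to deduce Theorem \ref{adm11} from Theorem \ref{dept1} applied to the dual towers over ${\cal M}^\varpi_m$ and ${\rm LT}^\varpi_n$ with $T=\widehat{\rm LT}^\varpi_{\infty,\breve{C}}\simeq\widehat{\cal M}^\varpi_{\infty,\breve{C}}$. We then upgrade the resulting $G_0\times\check{G}_0$-equivariant isomorphism to a $G\times\check{G}$-equivariant one using admissibility and Hecke operators.

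\textbf{Step 1: reduction to the compact groups.} As in the proof of Theorem \ref{adm1}, Lemma \ref{luminy1} and the isomorphism $\mathfrak{g}_L\simeq\check{\mathfrak{g}}_L$ give the dimension condition of Theorem \ref{dept1} together with triviality of the actions on Lie algebra cohomology. Hence we may take $H=G_0$, $\check{H}=\check{G}_0$. Theorem \ref{adm1} gives admissibility of the $G$-representations on the Drinfeld side. The same holds on the Lubin--Tate side, since the isomorphism of Theorem \ref{adm1} identifies $H^i_{\dr,c}({\rm LT}^\varpi_{\infty})^{G_n}$ with $H^i_{\dr,c}({\rm LT}^\varpi_n)$, which is of finite rank, and similarly for $\check{G}_m$-invariants. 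Consequently both part (1) and part (2) of Theorem \ref{dept1} apply. This yields $G_0\times\check{G}_0$-equivariant isomorphisms in \eqref{niedziela2}, and in the Hyodo--Kato case these isomorphisms are also $N$-equivariant.

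\textbf{Step 2: extension to $G\times\check{G}$.} The groups $G\times\check{G}$ are generated by $G_0\times\check{G}_0$ together with the Hecke correspondences, plus the central element $\varpi$, which acts trivially after passing to the $\varpi$-quotients. We would therefore avoid extending a noncanonical isomorphism directly. Instead we rerun the cancellation argument of Theorem \ref{dept1} in the full category of smooth admissible $G\times\check{G}$-representations, using Proposition \ref{rep1HK} for the reductive group $G\times\check{G}$ (more precisely $G\times\check{G}$ modulo $\varpi$). The inputs needed for this are the following.
\begin{enumerate}
\item The Lie algebra flip-flopping \eqref{paris12} and \eqref{paris12HK} is natural, hence $G\times\check{G}$-equivariant. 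Its construction only used the finite-level quasi-isomorphisms \eqref{berkeley2c} and \eqref{berkeley4HK}, and these are functorial with respect to the Hecke correspondences $T\to T$ induced by elements $g\in G$, which conjugate $G_n$ to $gG_ng^{-1}$.
\item The Lie algebra cohomology carries the trivial $G\times\check{G}$-action by Lemma \ref{luminy1}. The Künneth decomposition \eqref{kun10} is therefore an isomorphism of $G\times\check{G}$-representations $\pi_1\oplus\pi\simeq\pi_2\oplus\pi$ with $\pi,\pi_1,\pi_2$ admissible.
\end{enumerate}
Induction on $i$ together with Proposition \ref{rep1HK}(a), respectively (b) for $(\phi,N,\sg_{\breve{C}})$-modules, then gives $\pi_1\simeq\pi_2$ for the full group. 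In this setting all cohomology groups are already admissible, so no exhaustion by the $X^0_n$ is needed and $N$ is not lost.

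\textbf{Main obstacle.} The delicate point is item 1: checking that the quasi-isomorphism of Corollary \ref{bonn15} and Theorem \ref{HKFF} is compatible with the Hecke action for noncompact $G$, $\check{G}$. At each finite level the diagram only carries a compact open group action, so we must verify that the identifications via $\rg(G\times\check{G}\times\sg_K,\R\Gamma_{\proet}(T_C,\sbb_{\pdr}))$, respectively with $\sbb^+$, are natural for the maps $T\to T$ given by $g$, and that they pass to the colimit over levels. Our plan is to deduce this from the pro-\'etale descent description, which is functorial in $T$, together with the $G$-equivariance of the comparison maps established in Proposition \ref{action1}.
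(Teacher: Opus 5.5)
Your proposal is correct and is essentially the paper's proof. The paper likewise reruns the induction on $i$ directly for $G\times\check G$. It shows, via the commutative diagram \eqref{evening2}, that the Lie algebra flip-flop isomorphism is $G\times\check G$-equivariant: this comes from naturality of the quasi-isomorphisms of Corollary \ref{bonn15} under the maps $g\colon T\to T$ and passage to the colimit. It then concludes with K\"unneth, Lemma \ref{luminy1}, admissibility from Theorem \ref{adm1}, and cancellation via Proposition \ref{rep1HK}.

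Two minor remarks. Your Step 1 appeal to Theorem \ref{dept1} is superfluous, since admissibility on the Lubin--Tate side is immediate from its $G_n$-invariants being the finite-rank groups $H^i_{\dr,c}({\rm LT}^\varpi_n)$. Also, Proposition \ref{rep1HK} can be applied to $G\times\check G$ itself, with no quotient by $\varpi$.
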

\begin{proof}  Let $X_{\infty}:={\cal M}_{\infty,\breve{C}}^\varpi, \check{X}_{\infty}:={\rm LT}_{\infty,\breve{C}}^\varpi$.   

(1)  {\em De Rham cohomology.} We will first deal with de Rham cohomology by passing to cohomology groups in \eqref{paris12}. 
 For the inductive step, we assume  the isomorphism 
 \eqref{niedziela2}   for $\leq  i$ and we want to prove it for $i+1$.  We write $\mathfrak{g}$, $\check{\mathfrak{g}}$ for the $\Q_p$-Lie algebras of $G$ and $\check{G}$, respectively. From \eqref{paris12} we get an isomorphism in $\sd(\breve{C}_{\Box})$
 \begin{equation}\label{ias22}
H^{i+1}(\R\Gamma(\check{\mathfrak{g}}, \breve{C})\otimes_{\breve{C}}^{\LL_{\Box}}\R\Gamma_{\dr,c}(X_{\infty}))  \simeq H^{i+1}(\R\Gamma(\mathfrak{g},\breve{C})\otimes^{\LL_{\Box}}_{\breve{C}}\R\Gamma_{\dr,c}(\check{X}_{\infty})).
 \end{equation}
A priori, this isomorphism is just $\breve{C}[G_0\times \check{G}_0]$-equivariant but we have: 
\begin{lemma}
The isomorphism \eqref{ias22} is  $\breve{C}[G\times \check{G}]$-equivariant.
\end{lemma}
\begin{proof}  To see this recall that the isomorphism \eqref{ias22} is obtained 
from  a $G_0\times\check{G}_0$-equvariant quasi-isomorphism in $\sd(\breve{C}_{\Box})$
\begin{align}\label{evening1}
 \colim_{\check{H}}\colim_HR\Gamma(\check{H},\R\Gamma_{\dr,c}(X_{H}))\simeq 
 \colim_{{H}}\colim_{\check{H}}\R\Gamma({H},\R\Gamma_{\dr,c}(X_{\check{H}})),
\end{align}
where $H, \check{H}$ run through the normal compact open subgroups of $G_0$ and $\check{G}_0$, respectively. This quasi-isomorphism   is induced by the natural 
 quasi-isomorphisms $\R\Gamma(\check{H},\R\Gamma_{\dr,c}(X_{H}))\simeq \R\Gamma({H},\R\Gamma_{\dr,c}(X_{\check{H}}))$ from \eqref{berkeley2c}.

  Let $g\in G$ and choose  ${H}$ small enough so that $g^{-1}Hg\subset G_0$. We claim that, for every $\check{H}\subset \check{G}_0$ as in \eqref{evening1}, we have a commutative diagram
\begin{equation}\label{evening2}
\xymatrix{
 \R\Gamma(g^{-1}{H}g,\R\Gamma_{\dr,c}(X_{\check{H}}))\ar[d]^{\wr}_{\alpha_1}\ar[d]^{\wr}\ar[r]^-{g^*} &\R\Gamma({H},\R\Gamma_{\dr,c}(X_{\check{H}}))\ar[d]^{\wr}_{\alpha_2}\\
\R\Gamma(\check{H},\R\Gamma_{\dr,c}(X_{g^{-1}{H}g}))\ar[r]^-{g^*} &\R\Gamma(\check{H},\R\Gamma_{\dr,c}({X}_{\check{H}})),
}
\end{equation}
where the quasi-isomorphisms $\alpha_1, \alpha_2$ come from Corollary \ref{bonn15}. 
But this follows from the naturality of the maps $\alpha_1, \alpha_2$  with respect to the map $g$ acting on the two torsors data: 
$$
\xymatrix{
& T\ar[dl]^-{{H}}_{\pi}\ar@(u,r)[]^{H\times\check{H}} \ar[dr]_{\check{H}}^{\check{\pi}}\ar[rrrr]^{g} & &  &  & T\ar[dl]^-{{g^{-1}Hg}}_{\pi}\ar@(u,r)[]^{H\times\check{H}} \ar[dr]_{\check{H}}^{\check{\pi}}\\
X_H\ar@(l,u)[]^{\check{H}}  \ar@/_10pt/[rrrr]_{g} & &   \check{X}_{\check{H}}\ar@(u,r)[]^{{{H}}}   \ar@/_10pt/[rrrr]_{g} & & X_{g^{-1}Hg}\ar@(l,u)[]^{\check{H}} & &   \check{X}_{\check{H}}\ar@(u,r)[]^{{{H}}} 
 }
$$

  Now, taking colimit of the diagram \eqref{evening2} over $H$ and $\check{H}$ we get an action of $g$ on the isomorphism \eqref{ias22}, as wanted. A similar argument gives us an action of $\check{G}$. 
\end{proof}

  By K\"unneth formula, the isomorphism \eqref{ias22}  yields a $\breve{C}[G\times \check{G}]$-equivariant
   isomorphism in $\sd(\breve{C}_{\Box})$
   \begin{align}\label{kunneth11}
     & H^0(\check{\mathfrak{g}},\breve{C})\otimes^{\Box}_{\breve{C}}H^{i+1}_{\dr,c}(X_{\infty})  \oplus  H^1(\check{\mathfrak{g}},\breve{C})\otimes^{\Box}_{\breve{C}}H^{i}_{\dr,c}(X_{\infty})\oplus\cdots\oplus   H^{i+1}(\check{\mathfrak{g}},\breve{C})\otimes^{\Box}_{\breve{C}}H^{0}_{\dr,c}(X_{\infty})\\
      & \simeq  H^0(\mathfrak{g},\breve{C})\otimes^{\Box}_{\breve{C}}H^{i+1}_{\dr,c}(\check{X}_{\infty})\oplus  H^1(\mathfrak{g},\breve{C})\otimes^{\Box}_{\breve{C}}H^{i}_{\dr,c}(\check{X}_{\infty})\oplus\cdots\oplus   H^{i+1}(\mathfrak{g},\breve{C})\otimes^{\Box}_{\breve{C}}H^{0}_{\dr,c}(\check{X}_{\infty})\notag
   \end{align}
  Since   the groups $H^j(\check{\mathfrak{g}},\breve{C})$, $H^j({\mathfrak{g}},\breve{C})$, for $j\geq 0$,   are of finite rank over $\breve{C}$,  $H^0(\check{\mathfrak{g}},\breve{C})=H^0(\mathfrak{g},\breve{C})=\breve{C}$, and 
we have the two properties of Lie algebra cohomologies shown in the proof of Theorem  \ref{adm1},  if we want to prove the  claim of the theorem for $i+1$, by the inductive assumption, we find ourselves in the situation, where  we have an isomorphism of admissible representations of $G\times\check{G}$ over $\breve{C}$ (by Theorem \ref{adm1}): 
  $$
  \pi_1\oplus \pi \simeq \pi_2\oplus \pi
  $$
  and we want to conclude that $\pi_1\simeq \pi_2$, as $G\times\check{G}$-representations.  But this follows from Proposition \ref{rep1HK}.

   (2)  {\em Hyodo-Kato cohomology.} We will now pass to Hyodo-Kato cohomology.  We will argue by induction on $i$ as in the de Rham case starting with $i=0$, where the claim of the theorem is clear by passing to cohomology in \eqref{paris12HK}. For the inductive step, we assume  the isomorphism 
 \eqref{niedziela2}   for $\leq  i$ and we want to prove it for $i+1$.
 
  From \eqref{paris12HK} we get an isomorphism in $\sd_{\phi,N,\sg_K}(\breve{C}_{\Box})$
$$
H^{i+1}(\R\Gamma(\check{\mathfrak{g}}, \breve{C})\otimes_{\breve{C}}^{\LL_{\Box}}\R\Gamma_{\hk,c}(X_{\infty,C})) 
 \simeq H^{i+1}(\R\Gamma(\mathfrak{g},\breve{C})\otimes^{\LL_{\Box}}_{\breve{C}}\R\Gamma_{\hk,c}(\check{X}_{\infty,C})).
$$
A priori, this isomorphism is just $\breve{C}[G_0\times \check{G}_0]$-equivariant but arguing as in the de Rham case we see that  it is in fact $\breve{C}[G\times \check{G}]$-equivariant.
Using the K\"unneth formula, we get  a $G\times \check{G}$-equivariant
   isomorphism in $\sd_{\phi,N,\sg_K}(\breve{C}_{\Box})$
   \begin{align*}
     & H^0(\check{\mathfrak{g}},\breve{C})\otimes^{\Box}_{\breve{C}}H^{i+1}_{\hk,c}(X_{\infty,C})  \oplus  H^1(\check{\mathfrak{g}},\breve{C})\otimes^{\Box}_{\breve{C}}H^{i}_{\hk,c}(X_{\infty,C})\oplus\cdots\oplus   H^{i+1}(\check{\mathfrak{g}},\breve{C})\otimes^{\Box}_{\breve{C}}H^{0}_{\hk,c}(X_{\infty,C})\\
      & \simeq  H^0(\mathfrak{g},\breve{C})\otimes^{\Box}_{\breve{C}}H^{i+1}_{\hk,c}(\check{X}_{\infty,C})\oplus  H^1(\mathfrak{g},\breve{C})\otimes^{\Box}_{\breve{C}}H^{i}_{\hk,c}(\check{X}_{\infty,C})\oplus\cdots\oplus   H^{i+1}(\mathfrak{g},\breve{C})\otimes^{\Box}_{\breve{C}}H^{0}_{\hk,c}(\check{X}_{\infty,C})\notag
   \end{align*}

  Since the groups $H^j(\check{\mathfrak{g}},\breve{C})$, $H^j({\mathfrak{g}},\breve{C})$, for $j\geq 0$ are of finite rank over $\breve{C}$,  $H^0(\check{\mathfrak{g}},\breve{C})=H^0(\mathfrak{g},\breve{C})=\breve{C}$, and 
we have the two properties of Lie algebra cohomologies shown in the proof of Theorem \ref{adm1},  if we want to prove the  claim of the theorem for $i+1$, by the inductive assumption, we find ourselves in the situation, where  we have an isomorphism of admissible representations of $G\times\check{G}$ on $(\phi,N,\sg_{\breve{C}})$-modules over 
$\breve{C}$ (by Theorem \ref{adm1}): 
  $$
  \pi_1\oplus \pi \simeq \pi_2\oplus \pi
  $$
  and we want to conclude that $\pi_1\simeq \pi_2$, as $G\times\check{G}$-representations on $(\phi,N,\sg_{\breve{C}})$-modules over 
$\breve{C}$.  But this follows from the    representation theoretical Proposition  \ref{rep1HK}
\end{proof}

     The following fact is an immediate corollary of Theorem \ref{adm11} (obtained by taking $G_n\times\check{G}_m$-invariants of the isomorphisms \eqref{niedziela2}):
 \begin{corollary}
 For any $m,n\geq 0$, we have  isomorphisms in $\breve{C}_{\Box}$ and  solid $(\phi,N,\sg_{\breve{C}})$-modules over $\breve{C}$
 $$
 H^i_{\dr,c}({\cal M}_m^\varpi)^{G_n}\simeq H^i_{\dr,c}({\rm LT}_n^\varpi)^{\check{G}_m},\quad  
 H^i_{\hk,c}({\cal M}_{m,C}^\varpi)^{G_n}\simeq H^i_{\hk,c}({\rm LT}_{n,C}^\varpi)^{\check{G}_m},
 $$
 \end{corollary}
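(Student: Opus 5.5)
The plan is to take invariants in the isomorphisms \eqref{niedziela2} of Theorem \ref{adm11} and identify each side with finite-level cohomology.

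First I would record what Theorem \ref{adm11} provides. Its isomorphisms
$$H^i_{\dr,c}({\cal M}_{\infty,\breve{C}}^\varpi)\simeq H^i_{\dr,c}({\rm LT}_{\infty,\breve{C}}^\varpi),\qquad H^i_{\hk,c}({\cal M}_{\infty,C}^\varpi)\simeq H^i_{\hk,c}({\rm LT}_{\infty,C}^\varpi)$$
are $G\times\check{G}$-equivariant. The Hyodo-Kato one is moreover an isomorphism of $(\phi,N,\sg_{\breve{C}})$-modules. Since these structures commute with the group action, applying the functor $(-)^{G_n\times\check{G}_m}$ to both sides gives isomorphisms in $\breve{C}_{\Box}$, respectively in solid $(\phi,N,\sg_{\breve{C}})$-modules. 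The fixed points are taken in the heart, so they are underived.

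The key step is the identification
$$H^i_{\dr,c}({\cal M}_{\infty}^\varpi)^{\check{G}_m}\simeq H^i_{\dr,c}({\cal M}_m^\varpi),$$
and similarly on the Lubin-Tate side with $G_n$ and ${\rm LT}_n^\varpi$. I would argue it as follows. By definition, $\R\Gamma_{\dr,c}({\cal M}_\infty^\varpi)=\colim_{m'\geq m}\R\Gamma_{\dr,c}({\cal M}_{m'}^\varpi)$. Each ${\cal M}_{m'}^\varpi\to{\cal M}_m^\varpi$ is a finite étale Galois covering with group $\check{G}_m/\check{G}_{m'}$. For a finite étale Galois covering we have trace (pushforward) maps on compactly supported de Rham cohomology, and in characteristic $0$ these give
$$H^i_{\dr,c}({\cal M}_{m'}^\varpi)^{\check{G}_m/\check{G}_{m'}}\simeq H^i_{\dr,c}({\cal M}_m^\varpi).$$
Equivalently, the Hochschild-Serre spectral sequence for a finite group degenerates on $\Q$-vector spaces. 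Filtered colimits commute with invariants of a profinite group acting smoothly through finite quotients: invariants of a smooth $\check{G}_m$-module are the colimit of invariants at the finite levels. So the colimit over $m'$ gives the claim. The same argument applies verbatim to $H_{\hk,c}$, using its finite étale functoriality compatible with $(\phi,N,\sg_{\breve{C}})$.

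Having done this, the remaining steps are bookkeeping. On the left I take $\check{G}_m$-invariants first to land on ${\cal M}_m^\varpi$, then $G_n$-invariants, which act on ${\cal M}_m^\varpi$. On the right I do the reverse, and the two orders agree since the actions commute. Finally I base change from $\breve{C}$ to $C$ where needed.

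The main obstacle is the equality of invariants and finite-level cohomology in the solid setting. I expect it to be manageable: the relevant groups are admissible by Theorem \ref{adm1}, so the fixed points are finite-dimensional and hence condensed-discrete, and the trace argument works termwise.
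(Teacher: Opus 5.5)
Your proposal is correct and is essentially the paper's argument. The paper obtains the corollary in one line by taking $G_n\times\check{G}_m$-invariants of the equivariant isomorphisms \eqref{niedziela2} of Theorem \ref{adm11}. It leaves implicit the identification $H^i_{\dr,c}({\cal M}_\infty^\varpi)^{\check{G}_m}\simeq H^i_{\dr,c}({\cal M}_m^\varpi)$ and its Lubin-Tate and Hyodo-Kato analogues; you supply these correctly via finite \'etale Galois descent in characteristic $0$ and the commutation of invariants with filtered colimits.

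Your closing appeal to admissibility is not needed. The averaging idempotent of the finite group $\check{G}_m/\check{G}_{m'}$ already makes invariants exact and the trace argument valid in solid $\breve{C}$-modules. Note also that $H^i_{\dr,c}({\cal M}_m^\varpi)$ itself is in general infinite-dimensional; only its $G_n$-invariants are finite-dimensional.
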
    
 \subsection{Flip-flopping for dual towers local Shimura varieties} The above computations for the Drinfeld and Lubin-Tate towers generalize easily to all basic local Shimura varieties. 
 
  \subsubsection{Local Shimura varieties}  We briefly review here a few facts about dual towers of (basic) local Shimura varieties. See \cite{Fa}, \cite{RV}, \cite{SW}, \cite{FS} for details.  Let $(G,[b],\{\mu\})$ be a local Shimura datum over $\Q_p$ with a basic $b$: $G$ is a connected reductive group over $\Q_p$, $\mu:\mathbb{G}_{m, \overline{\Q}_p}\to G_{\overline{\Q}_p}$ is a (conjugacy class of a) minuscule cocharacter, and $b\in B(G,\mu)$ is the unique basic neutral acceptable element. Let $E$ be the field of definition of $\mu$
 and let $E = E(G,\{\mu\})$ be the reflex field. Let  $\breve E$ be  the completion of its maximal unramified extension. 
 
   Let $K_0 \subset G(\Q_p)$ be a maximal compact subgroup and let $K \triangleleft K_0$ be an open normal subgroup. Then the finite-level local Shimura variety $\mathrm{Sh}_K(G,b,\mu)$ is a smooth rigid-analytic space (partially proper) over $\breve E$, equipped with {natural actions} of
\begin{enumerate}
    \item $K_0 / K$ via Hecke correspondences,
    \item the group $\breve{G}:=J_b(\Q_p)$ via quasi-isogenies, and
    \item $\sg_{\breve{E}}:=\mathrm{Gal}(\overline{\breve E}/\breve E)$.
\end{enumerate}

Symmetrically, the  finite-level space $\mathrm{Sh}_{\breve{K}}(\check G,\check b,\check\mu)$, for an open  normal  subgroup $\breve{K}$ of $\breve{K}_0$ -- a maximal compact subgroup of $\breve{G}$ -- carries the {action} of $ G(\Q_p)$ and of the quotient $\breve{K}_0/\breve{K}$.

    The collection of all levels forms the tower
\[
\mathrm{Sh}_\bullet(G,b,\mu) := \{\mathrm{Sh}_K(G,b,\mu)\}_K,
\]
with commuting actions of $G(\mathbb Q_p)$, $\check{G}(\mathbb Q_p)$, and $\sg_{\breve{E}}$. Its completed limit
\[
\widehat{\mathrm{Sh}}(G,b,\mu) := \varprojlim_K \mathrm{Sh}_K(G,b,\mu)
\]
makes sense as a diamond over ${\rm Spd}(\breve{E})$, endowed with 
 with continuous actions of the groups $G(\mathbb Q_p)$, $\check{G}(\mathbb Q_p)$, and $\sg_{\breve{G}}$. The dual completed space $\widehat{\mathrm{Sh}}(\check G,\check b,\check\mu)$ satisfies the same properties. 

By the duality theorem, there is a canonical isomorphism of diamonds 
$$
\widehat{\mathrm{Sh}}(G,b,\mu) \;\xrightarrow{\sim}\; \widehat{\mathrm{Sh}}(\check G,\check b,\check\mu)
$$
which is equivariant with respect to the actions of  $G(\Q_p), \check{G}(\Q_p)$,   and 
$\sg_{\breve{E}}$.
 \subsubsection{Flip-flopping}  The main theorem of this section is the following flip-flopping result: 
\begin{theorem} {\rm(Local Shimura varieties   flip-flopping)}\label{mainSh} There are $G\times \check G$-equivariant
 isomorphisms in solid $C$-modules  and solid $(\phi,\sg_{\breve{C}})$-modules\footnote{We write $\breve{C}$ for the fraction field of the Witt vectors $W(\overline{{\mathbf F}}_p)$.} over $\breve{C}$, respectively
 $$H^i_{\dr,c}(\mathrm{Sh}(\check{G},\check{b},\check{\mu})_{\infty})\simeq H^i_{\dr,c}(\mathrm{Sh}({G},{b},{\mu})_{\infty}),\quad H^i_{\hk,c}(\mathrm{Sh}(\check{G},\check{b},\check{\mu})_{\infty})\simeq H^i_{\hk,c}(\mathrm{Sh}({G},{b},{\mu})_{\infty}).
 $$   If these representations of $G\times \check{G}$  are admissible the Hyodo-Kato isomorphism lies  in the category of solid $(\phi,N,\sg_{\breve{C}})$-modules. 

\end{theorem}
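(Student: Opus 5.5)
The plan is to deduce the statement from Theorem \ref{dept1}, run with the maximal compact subgroups, and then upgrade equivariance to the full groups exactly as in the proof of Theorem \ref{adm11}.

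\emph{Setting up the diagram.} Fix maximal compact subgroups $K_0\subset G(\Q_p)$ and $\check{K}_0\subset \check G(\Q_p)$. Take $T$ to be the common completed limit $\widehat{\mathrm{Sh}}(G,b,\mu)\simeq\widehat{\mathrm{Sh}}(\check G,\check b,\check\mu)$, viewed over $\breve C$ and modulo a suitable discrete central lattice so that the quotients are well behaved. Set $X=\mathrm{Sh}_{\check K_0}(\check G,\check b,\check\mu)$ and $\check X=\mathrm{Sh}_{K_0}(G,b,\mu)$. Both are smooth and partially proper, and the maps $T\to X$ and $T\to\check X$ are pro-\'etale torsors under $\check K_0$ and $K_0$ respectively. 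Thus we are in the setting of Section \ref{five} with profinite groups $\check K_0,K_0$, and the towers $X_\infty,\check X_\infty$ give the cohomologies in the statement.

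\emph{Checking the hypotheses of Theorem \ref{dept1}.} The groups $G$ and $\check G=J_b$ are inner forms, because $b$ is basic. Hence their Lie algebras become isomorphic after a finite extension, and extending scalars as in the proof of Theorem \ref{adm1} gives condition \eqref{dimcond}. By Lemma \ref{luminy1}, $G(\Q_p)$ and $\check G(\Q_p)$ act trivially on their Lie algebra cohomology. By Remark (1) after Theorem \ref{dept1}, we may therefore take $H=K_0$ and $\check H=\check K_0$. Theorem \ref{dept1}(1) then gives $K_0\times\check K_0$-equivariant isomorphisms of the de Rham groups, and of the Hyodo--Kato groups as $(\phi,\sg_{\breve C})$-modules. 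If admissibility holds, part (2) makes the Hyodo--Kato isomorphism $N$-equivariant as well.

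\emph{Promoting to full equivariance.} This is the main obstacle: Theorem \ref{dept1} only produces $H\times\check H$-equivariant isomorphisms. I would argue as in the lemma inside the proof of Theorem \ref{adm11}. First, the Lie algebra flip-flop \eqref{paris12} is built as a colimit of the natural quasi-isomorphisms \eqref{berkeley2c}. For $g\in G(\Q_p)$, choose $H$ small so that $g^{-1}Hg\subset K_0$; naturality of \eqref{berkeley2c} with respect to the morphism of torsor data induced by $g$ gives commutative squares like \eqref{evening2}, and these make \eqref{paris12} equivariant for the full $G\times\check G$. Second, in the admissible case, the Künneth decomposition together with the inductive cancellation via Proposition \ref{rep1HK} (the reductive case, using the Bernstein decomposition) yields $G\times\check G$-equivariant isomorphisms, including $N$ for Hyodo--Kato. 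Without admissibility, Proposition \ref{rep1HK} is unavailable. Instead, I would use the covering by partially proper opens from the proof of Theorem \ref{HKFF}, together with the exhaustion argument in the proof of Theorem \ref{dept1}. The hard point here is to make the complements $W_n$ and the isomorphisms $\beta_n$ compatible with Hecke translates, and this is where I expect the genuine difficulty to lie. Finally, the $N$-compatibility is lost at exactly this step, which accounts for the admissibility caveat in the statement.
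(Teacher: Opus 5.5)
Your strategy is the paper's. It uses the torsor diagram of Section~\ref{five} with the compact groups $K_0,\check K_0$, and gets condition \eqref{dimcond} from the inner-form relation. Lemma~\ref{luminy1} then allows $H=K_0$, $\check H=\check K_0$ in Theorem~\ref{dept1}. When the cohomology is admissible, the squares \eqref{evening2}, the K\"unneth decomposition and Proposition~\ref{rep1HK} give $G\times\check G$-equivariance and $N$.

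Drop the quotient by a central lattice. The set-up of Section~\ref{five} only needs $K_0$ and $\check K_0$, and quotienting would replace the towers in the statement by different ones.

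The step you single out as the main obstacle is left open, and it is a genuine gap, not a technicality. Without admissibility, nothing in your argument produces a $G\times\check G$-equivariant isomorphism of the individual groups $H^i$. The only object whose full Hecke-equivariance you control is the derived Lie algebra flip-flop \eqref{paris12}. The passage from it to individual $H^i$ in Theorem~\ref{dept1} rests on non-canonical choices:
\begin{enumerate}
\item matching isotypic multiplicities in Proposition~\ref{cancel1};
\item the complements $W_n,\check W_n$ and the isomorphisms $\beta_n$;
\item exhausting opens $X^0_n$ that are only stable under the compact groups.
\end{enumerate}
So naturality has nothing to act on. Promoting to $G\times\check G$ would need a cancellation statement for smooth representations of the non-compact group $G\times\check G$. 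The available one, Proposition~\ref{rep1HK} (Bernstein blocks plus Krull--Schmidt for finite-dimensional Hecke modules), needs admissibility. In infinite length such cancellation fails in general: take $\pi=\sigma^{\oplus\infty}$, so that $\pi\oplus 0\simeq\pi\oplus\sigma$. Moreover, complements need not exist for the non-compact group.

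The paper's proof does not supply an argument for this step either. It says to rerun the Drinfeld--Lubin--Tate argument. There, full equivariance (Theorem~\ref{adm11}) came only after admissibility was established in Theorem~\ref{adm1}, via the global finiteness of $H^i_{\dr,c}({\rm LT}^\varpi_n)$. That input has no counterpart for a general basic datum.

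What your argument, like the paper's, actually proves is the following:
\begin{enumerate}
\item $K_0\times\check K_0$-equivariant isomorphisms unconditionally, in $(\phi,\sg_{\breve C})$-modules for Hyodo--Kato;
\item $G\times\check G$-equivariant isomorphisms, compatible with $N$, when the representations are admissible.
\end{enumerate}
The unconditional $G\times\check G$ claim needs a new ingredient, such as a flip-flop that is already natural on individual cohomology groups. The locally analytic approach mentioned in the introduction, described there as having stronger functoriality, is the natural place to look, at least for de Rham cohomology.
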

\begin{proof}  Let $K_0, \check K_0$ be maximal compact subgroups of $G,\check G$ and let $K\subset K_0, \check{K}\subset \check{K}_0$ be normal open subgroups. 
 We have the following diagram of dual towers
$$
\xymatrix{
 & \widehat{\mathrm{Sh}}(\check{G},\check{b},\check{\mu})\ar@(u,r)[]^{K\times \check{K}}  \ar[r]^{\sim} \ar[dl]^-{{\check{K}}}_{\pi} &   \widehat{\mathrm{Sh}}(G,b,\mu)\ar@(u,r)[]^{K\times \check{K}} \ar[dr]_{K}^{\check{\pi}}\\
\mathrm{Sh}_{\check K}(\check G,\check b,\check \mu)\ar@(l,u)[]^{K} & & &  \mathrm{Sh}_K(G,b,\mu)\ar@(u,r)[]^{\check K}
 } 
$$
Starting with this diagram  we can run the same argument as in the proof of Theorem \ref{adm1}, the Drinfeld-Lubin-Tate case (note that the groups $G$ and $\check G$ are inner forms since $b$ is basic) to prove our theorem. 
\end{proof}

\end{document}